\numberwithin{equation}{section}
\newcommand{\C}{\ensuremath{\mathbb C}}
\newcommand{\R}{\ensuremath{\mathbb R}}
\theoremstyle{plain}
\newtheorem{thm}{Theorem}[section]
\newtheorem*{thm*}{Theorem}
\newtheorem{lem}[thm]{Lemma}
\newtheorem{prop}[thm]{Proposition}
\newtheorem{cor}[thm]{Corollary}
\newtheorem*{cor*}{Corollary}
\newtheorem*{prop*}{Proposition}
\newtheorem{claim}[thm]{Claim}
\newtheorem*{lemma*}{Lemma}
\newtheorem*{claim*}{Claim}
\theoremstyle{definition}
\newtheorem{defn}[thm]{Definition}
\newtheorem*{exmp*}{Example}
\newtheorem*{defn*}{Definition}
\newtheorem*{rem*}{Remark}
\newtheorem*{note*}{Note}
\author{Hemanth Saratchandran}
\address{Institut f{\"u}r Mathematik \\
Differentialgeometrie \\
Universit{\"a}t Augsburg \\
Universit{\"a}tsstra{\ss}e 14 \\
86159 Augsburg \\
Germany}
\email{hemanth.saratchandran@math.uni-augsburg.de}
\title{Essential self-adjointness of perturbed quadharmonic operators on
Riemannian manifolds
with an application to the separation problem}
\keywords{quadharmonic operator, perturbation, self-adjointness, 
separation}
\subjclass{35P05,47B25, 58J05}
\begin{document}

\maketitle

\begin{abstract}
We consider perturbed quadharmonic operators, $\Delta^4 + V$, acting
on sections of a Hermitian vector bundle over a complete Riemannian
manifold, with the potential $V$ satisfying a bound from below
by a non-positive function depending on the distance from a point.
Under a bounded geometry assumption on the Hermitian vector bundle
and the underlying Riemannian manifold, we give a sufficient condition
for the essential self-adjointness of such operators. We then
apply this to prove the separation property in $L^2$ when the 
perturbed operator acts on functions.
\end{abstract}

\tableofcontents

\section{Introduction}

The study of the essential self-adjointness of differential operators
on Euclidean space has a long history leading to many works, see
\cite{kato}, \cite{reed}. 
The generalisation of this problem to the case of Riemannian
manifolds was initiated by M. Gaffney in \cite{gaffney}.  This work solely focused on the essential self-adjointness
of the scalar Laplacian and the Hodge Laplacian. Almost two decades later, generalisations to the case of positive integer powers of
the scalar Laplacian, and the Hodge Laplacian, were proved by H. Cordes in \cite{cordes}. Subsequently, P. Chernoff in \cite{chernoff} 
studied the essential 
self-adjointness of positive integer powers of first order differential
operators, using methods from hyperbolic pde theory. The previous works of Gaffney and Cordes occur as special cases of Chernoff's work.

After the works of these authors, a surge of activity increased in the study of the essential self adjointness of general differential operators on Riemannian manifolds. One special class of such operators,
that were singled out due to their importance in applications in mathematical physics, were second order Schr{\"o}dinger operators. 
There are now
various sufficient conditions for the essential self-adjointness of
second order Schr{\"o}dinger operators, see \cite{cycon}, \cite{kato},
\cite{reed}.

In the past
few decades, there has also been an interest regarding the question of
essential self-adjointness of higher order Schr{\"o}dinger operators.
A particular piece of work, that is worthy of mention in this context,
is the paper \cite{nguyen} by X. D. Nguyen. In this paper, Nguyen considers
2m-th order operators on $\R^n$ of the form
\begin{equation*}
Tu = \sum_{0 < \vert \alpha \vert, \vert\beta\vert \leq m}
D^{\alpha}(a_{\alpha, \beta}D^{\beta}u) + Vu
\end{equation*}
and proves the essential self-adjointness of such operators on 
$C_c^{\infty}(\R^n)$, see theorem 3.1 of \cite{nguyen}, under the following 
assumptions. $T$ is 
uniformly elliptic, $a_{\alpha, \beta}$ are bounded complex-valued 
functions with sufficient smoothness on $\R^n$, and the potential
$V \in L^{\infty}_{loc}(\R^n)$ is real-valued and satisfies 
the bound $V(x) \geq - q(\vert x\vert)$, where 
$q : [0, \infty) \rightarrow [0, \infty)$ is a non-decreasing 
function such that $q(x) = O(x^{2m/(2m-1)})$ as $x \rightarrow \infty$.
Additionally, by assuming the potential satisfies 
$1 \leq V \in C^m(\R^n)$, with some conditions on the derivatives
of $V$. Nguyen also proves the self-adjointness of the operator $T$ in this situation, see theorem 4.1 of \cite{nguyen}.

In the context of Riemannian manifolds, the study of the essential
self-adjointness of such higher order operators is still in its
infancy. 
An important piece of work in this direction was carried out by
O. Milatovic in \cite{milatovic}. In this paper, Milatovic considers 
perturbations of a biharmonic operator, $\Delta^2 + V$, acting on
sections of a Hermitian vector bundle $E$ over a complete Riemannian manifold $M$. His assumptions on the potential are that
$V \in L^{\infty}_{loc}(EndE)$, where $EndE$ denotes the endomorphism bundle associated to $E$, and that $V$
satisfies a bound from below
by a non-positive function depending on the distance from a point.
The key approach of Milatovic is to obtain suitable localised
derivative estimates, which he then employs to prove 
the essential self-adjointness of such operators on $C_c^{\infty}(E)$. A crucial assumption in his work, is that of the Riemannian manifold having Ricci curvature
bounded below by a certain non-positive function. 
The main reason for this assumption is that, by work 
of Bianch and Setti in \cite{bianchi}, it gives rise to a sequence of
cut-off functions satisfying suitable first and second order
derivative estimates. The existence of such functions are 
then used in a critical way to obtain the required local derivative 
estimates. \\
Milatovich then proves self-adjointness of perturbations of the form
$(\Delta_A)^2 + w$, where $\Delta_A$ denotes the magnetic Laplacian 
on a complete Riemannian manifold $M$ with Ricci curvature bounded below by a positive function,
$w \in C^2(M)$ is such that $w \geq 1$ and satisfies certain
derivative assumptions. As an application of this work, Milatovic shows 
that the operator $(\Delta_A)^2 + w$ is separated on $L^2(M)$, when $M$
is a complete Riemannian manifold satisfying the assumption that its
Ricci curvature is bounded below by a positive function.
The separation problem on $\R^n$ was first studied, in the context of the Laplacian on functions, in \cite{everitt} by Everitt and
Giertz. We say the expression $\Delta + V$ is separated if 
$u \in L^2(\R^n)$ and $(\Delta + V)u \in L^2(\R^n)$ imply 
$\Delta u \in L^2(\R^n)$ and $Vu \in L^2(\R^n)$. 
We should mention
that the separation problem for perturbations of the biharmonic 
operator has been studied, previous to Milatovic's work, by the authors 
of 
\cite{atia}. However, in that paper the authors assume that the 
potential $w$ satisfies certain derivative assumptions,
defined via testing it against suitable test functions $u \in 
C_c^{\infty}(M)$.

In this paper, we consider perturbations of the quadharmonic 
operator, $\Delta^4 + V$, acting
on sections of a Hermitian vector bundle $E$ over a complete Riemannian
manifold $M$. Here, $\Delta = \nabla^{\dagger}\nabla$ denotes a Bochner Laplacian associated to a Hermitian connection $\nabla$, $V$ denotes a potential satisfying the assumptions that
$V \in L^{\infty}_{loc}(EndE)$, and $V$
satisfies a bound from below
by a non-positive function depending on the distance from a point.
Assuming $M$ admits bounded geometry and $E$ admits 1-bounded geometry, see section \ref{BG} for the definition of bounded and 1-bounded geometry, we prove that such operators are essentially  
self-adjoint on $C_c^{\infty}(E)$. The primary need to assume that our
manifold admits bounded geometry is to do with the fact that
this assumption leads to the existence of a suitable sequence
of cut-off functions that satisfy higher order derivative
estimates. Unfortunately, the sequence constructed by Bianchi and
Setti in \cite{bianchi} is not adequate for our purposes. Our proof
in this context follows in the
spirit of Milatovic's in \cite{milatovic}. We obtain several localised derivative estimates, which are then used to establish essential self-adjointness. 

We also consider the operator $(\Delta_A)^4 + w$, where in our 
case $w \in C^4(M)$, $w \geq 1$ and satisfies certain derivative assumptions. We prove the self-adjointness of such operators, and then apply this to show that such operators are separated in the
sense of Everitt and Geiretz mentioned above. 

We should mention that recently, there has been an interest in the study of the quadharmonic operator in regards to the quadharmonic map equation,
$\Delta^4 = 0$ on $\R^n$. In \cite{luo}, the authors study the quadharmonic 
Lane-Emden equation $\Delta^4u = \vert u\vert^{p-1}u$ on $\R^n$, and
are able to classify the finite Morse index solutions. An application of their work is that they are able to then use this to obtain 
a monotonicity formula for the quadharmonic maps equation.

The reader may wonder, why these techniques of 
obtaining localised derivative estimates, for proving essential 
self-adjointness, cannot be made to work for higher order perturbations,
$\Delta^{2n} + V$. The key issue is that, in obtaining such 
derivative estimates for quadharmonic perturbations, one needs to resort 
to certain commutation formulae for connections, see section \ref{commutation_formulae}. The 
terms that come out of such a formula depend on 
derivatives of lower order powers of the Bochner Laplacian. As the power 
of the Bochner Laplacian grows, these terms that come out of the 
commutation formulae grow in number, and cannot be estimated as they can 
in the quadharmonic case. As of yet, at least to this author, there 
seems to be no way to by pass the use of such commutation formulae, and 
this seems to be the underlying stumbling block to making such an 
approach go through for higher order perturbations. This issue is 
explained in detail in section \ref{conclusion}.

Let us now describe the contents of the paper. Section \ref{prelims}
consists of preliminary material, where we setup the notation and
explain the assumptions of the paper. In section \ref{main_results},
we outline the main results of the paper. 
Section \ref{sec_covariant_derivatives} obtains localised covariant
derivative estimates, and
section \ref{derivative_est_powers_cut_offs} obtains 
derivative estimates for powers of cut-off functions. These two sections
are then used to obtain various localised estimates in 
section \ref{sec_laplacian_derivatives}. Section 
\ref{proof_main_theorem_1} gives the proof of our first main theorem, 
on the essential self-adjointness of perturbations of the form
$\Delta^4 + V$, where $\Delta$ is a Bochner Laplacian on a Hermitian 
vector bundle. Section \ref{localised_derivative_mag_lap} then
obtains localised derivative estimates for the magnetic Laplacian, and
section \ref{proof_main_theorem_2}
proves the self adjointness of $(\Delta_A)^4 + w$, where
$\Delta_A$ denotes the magnetic Laplacian. In section 
\ref{App_separation}, we prove a separation result for the
operator $(\Delta_A)^4 + w$. Finally, section 
\ref{conclusion} explains why these methods cannot be pushed to
prove the essential self-adjointness of higher order perturbations on
a Riemannian manifold.

\section*{Acknowledgements}
The author wishes to sincerely thank Ognjen Milatovic for several
discussions related to this work.

\section{Preliminaries}\label{prelims}

\subsection{Background and notation}\label{B_N}

Throughout this paper, $(M, g)$ will denote a smooth connected
Riemannian $n$-manifold without boundary, where $g$ denotes
the Riemannian metric on $M$. 
The canonical Levi-Civita connection
on $M$ will be denoted by $\nabla_M$, the associated Riemannian
volume form by $d\mu$, and the associated curvature tensor by $Rm$.
The Laplace Beltrami operator on functions on $M$ will be denoted by
$\Delta_M$.

We will fix a smooth Hermitian vector bundle $(E, h)$ over $M$, with
Hermitian metric $h$. We will also fix a metric connection $\nabla$
on $E$. This connection gives rise to a curvature tensor, which we will
denote by $F$. The formal adjoint of $\nabla$ will be denoted by
$\nabla^{\dagger}$, with the associated Bochner Laplacian being given
by $\Delta := \nabla^{\dagger}\nabla$.

The metric $g$ induces a metric (given by the inverse) on $T^*M$. 
Together with the metric $h$ we can then extend these metrics to the
bundles $\bigotimes_sT^*M \bigotimes_rTM \bigotimes_q E$. We will often 
denote
the norm of a section of any one of these bundles by $\vert \cdot \vert$. 
This should not cause any confusion, as the context should make it clear
which bundles our sections are mapping into.

Using the connection $\nabla_M$, we can extend the connection $\nabla$ 
on $E$ to the tensor products  
$\bigotimes_sT^*M \bigotimes_rTM \bigotimes_q E$. We 
will denote these extended connections by $\nabla$ as well, the context
making it clear as to which bundle it is acting on.

The magnetic Laplacian on functions will be denoted by $\Delta_A$. We 
remind the reader that this is constructed as follows. Let $d_A$ stand for
the magnetic differential
\begin{equation*}
d_Au := du + iuA
\end{equation*}
where $d$ is the exterior derivative, and $A \in T^*M$ is a real-valued
one-form on $M$. We then define the magnetic Laplacian by 
$\Delta_A := d^{\dagger}_Ad_A$, where $d^{\dagger}_A$ denotes the
formal adjoint of $d_A$. In the special case that $A = 0$, we recover
the Laplace-Beltrami operator $\Delta_M$.

We will use the notation $C^{\infty}(M)$, $C^{\infty}_c(M)$ to denote
the smooth functions and smooth functions with compact support on $M$
respectively. Similarly, we use the notation $C^{\infty}(E)$ and
$C^{\infty}_c(E)$ to denote smooth sections and smooth sections
with compact support of $E$ respectively.

The notation  $L^2(E)$ will denote the Hilbert space of square integrable sections of $E$, with inner product
\begin{equation*}
\langle u, v\rangle := \int_{M}h(u, v)d\mu.
\end{equation*}
We will denote the associated $L^2$-norm by
\begin{equation*}
\vert\vert u\vert\vert := \bigg{(} 
\int_M\vert u\vert^2d\mu\bigg{)}^{1/2}
\end{equation*}
where $\vert u\vert^2 = h(u, u)$.

For local Sobolev spaces of sections in $L^2(E)$, we use the notation 
$W^{k,2}_{loc}(E)$, with $k$ indicating the highest order of 
derivatives. For $k=0$, we simply
write $L^{2}_{loc}(E)$. Our potentials will be elements in
$L^{\infty}_{loc}(End E)$. We remind the reader that this 
consists of those measurable sections of $End E$ that have
finite essential supremum almost everywhere, 
over relatively compact open sets.

We will also need the distance from a point, which we denote
by $r$. That is, fixing a point $x_0 \in M$ we let 
\begin{equation}
r(x) := d(x_0, x) \label{distance}
\end{equation}
where
$d$ is the distance function induced from the Riemannian metric
$g$ on $M$, for all $x \in M$.

Given tensors $S$ and $T$ defined on bundles over $M$, we let $S * T$ denote any multilinear form obtained
from $S \otimes T$ in a universal bilinear way. Therefore, $S * T$ is obtained by starting with
$S \otimes T$, taking any linear combination of this tensor, raising and lowering indices, taking any
number of metric contractions (i.e. traces), and switching any number of factors in the product.
We then have that 
\begin{equation*}
\vert S * T\vert \leq C\vert S\vert\vert T\vert 
\end{equation*}
where $C > 0$ is a constant that will not depend on $S$ or $T$. 
For example, given a
smooth
vector field $X$ on $M$, and a smooth section $u$ of $E$. We can write
$\nabla_Xu = X * \nabla u$. To see this, one simply observes that
$\nabla_Xu = tr(X \otimes \nabla u)$, where $tr$ denotes a trace.
In particular, we see that we have the estimate
\begin{equation}
\vert \nabla_Xu\vert \leq \vert X\vert \vert \nabla u\vert. \label{star_notation_ex}
\end{equation}

Finally, we mention that during the course of many estimates constants
will change from line to line. We will often use the practise of 
denoting these new constants by the same letter.

\subsection{Bounded Geometry}\label{BG}

In this paper we will be making two assumptions on the geometry of our
Riemannian manifolds, and the vector bundles over them.

\begin{defn}\label{bouned_geometry_mfld}
Let $(M, g)$ be a smooth non-compact Riemannian manifold. We say $(M, g)$ admits 
bounded geometry if the following conditions are satisfied.
\begin{enumerate}
\item $r_{inj} > 0$

\item $\sup_{x\in M}|\nabla^kRm(x)| \leq C_k$ for $k \geq 0$, and $C_k > 0$ a constant
\end{enumerate}
where $r_{inj}$ denotes the injectivity radius of $M$, $\nabla$ is the
Levi-Civita connection, and
$Rm$ denotes the
curvature tensor.
\end{defn}

We point out that condition (1) implies that the manifold is complete.
The reader can consult chapter 2 of \cite{eldering}, for more 
on bounded geometry.

We will also need a version of bounded geometry for vector bundles over
a manifold.

\begin{defn}\label{bounded_geometry_vb}
Let $M$ be a smooth manifold, and $(E, h, \nabla)$ a Hermitian vector bundle
over $M$, with Hermitian metric $h$ and connection $\nabla$. We say
the triple $(E, h, \nabla)$ admits $k$-bounded geometry
if the following
condition is satisfied.
\begin{enumerate}
\item $\sup_{x\in M}|\nabla^jF(x)| \leq C_j$ for $0 \leq j \leq k$, and $C_j > 0$ a constant
\end{enumerate}
where $F$ denotes the curvature tensor associated to $\nabla$.

We say the triple $(E, h, \nabla)$ admits bounded geometry if it
admits $k$-bounded geometry for all $k \geq 0$.
\end{defn}

We point out to the reader that if $(M, g)$ is a smooth Riemannian manifold
and we take $(TM, g, \nabla)$, where $\nabla$ is the Levi-Civita connection.
Then the triple $(TM, g, \nabla)$ admitting bounded geometry, in the sense
of definition \ref{bounded_geometry_vb}, is weaker than $(M, g)$ admitting
bounded geometry, in the sense of definition \ref{bouned_geometry_mfld}. 
This is because definition \ref{bouned_geometry_mfld} has the extra condition
that the injectivity radius must be positive.

We have the following proposition, about derivatives of the metric
and the Christoffel symbols in the bounded geometry setting.
For the proof, the reader may consult theorem 2.4 and corollary 2.5 in
\cite{eichhorn}.

\begin{prop}\label{christoffel_derivatives}
Let $(M, g)$ be a Riemannian manifold of bounded geometry. Then there 
exists a $\delta > 0$ such that the metric and the Christoffel symbols
are bounded in normal coordinates of radius $\delta$ around each 
$x \in M$, and the bounds are uniform in $x$.
\end{prop}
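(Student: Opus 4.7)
The plan is to use the standard exponential-coordinate description of the metric and leverage the two bounded-geometry hypotheses: the lower injectivity-radius bound gives a uniform domain on which normal coordinates exist, while the curvature bounds control the geometry on that domain through the Jacobi equation.

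First I would fix any $\delta$ with $0 < \delta < r_{\mathrm{inj}}$. Because condition (1) of Definition \ref{bouned_geometry_mfld} gives $r_{\mathrm{inj}}>0$, this $\delta$ is independent of the point $x$, and for every $x \in M$ the exponential map $\exp_x : B_\delta(0)\subset T_xM \to B_\delta(x)\subset M$ is a diffeomorphism. Choosing any $g$-orthonormal frame at $x$ identifies $T_xM$ with $\mathbb{R}^n$ and yields the normal chart. The key observation is that in this chart the metric components along a radial geodesic $t\mapsto tv$ ($|v|=1$) are completely determined by Jacobi fields: if $Y_1(t),\dots,Y_n(t)$ are the Jacobi fields with $Y_i(0)=0$ and $Y_i'(0)=e_i$, then $g_{ij}(tv) = \langle Y_i(t), Y_j(t)\rangle$ and the $Y_i$ satisfy the Jacobi equation
\begin{equation*}
Y_i''(t) + R(Y_i(t),\dot\gamma(t))\dot\gamma(t) = 0,\qquad Y_i(0)=0,\ Y_i'(0)=e_i.
\end{equation*}

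Next I would use the uniform bound $\sup_M |Rm|\le C_0$ together with standard linear ODE estimates (Grönwall) applied to this equation to conclude that the $Y_i(t)$, and hence the components $g_{ij}$, are bounded on $[0,\delta]$ by a constant depending only on $C_0$ and $\delta$, i.e. uniformly in $x$. For the derivatives $\partial^\alpha g_{ij}$ one differentiates the Jacobi equation in the initial direction $v$: the derivatives of $Y_i$ in the transverse directions satisfy a higher-order variation-of-Jacobi system whose inhomogeneous terms are polynomial expressions in $\nabla^k Rm$ along the geodesic, with $k\le|\alpha|$. By hypothesis (2) of Definition \ref{bouned_geometry_mfld}, each $|\nabla^k Rm|\le C_k$ uniformly on $M$, so iterating Grönwall on these successive linear systems gives uniform bounds on every $\partial^\alpha g_{ij}$ up to any prescribed order, independent of $x$.

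Finally, the Christoffel symbols satisfy the algebraic identity
\begin{equation*}
\Gamma_{ij}^k = \tfrac{1}{2}g^{k\ell}\bigl(\partial_i g_{j\ell} + \partial_j g_{i\ell} - \partial_\ell g_{ij}\bigr),
\end{equation*}
so once uniform lower/upper bounds on $g_{ij}$ (ensuring uniform control on the inverse metric $g^{k\ell}$) and on $\partial g_{ij}$ are established, uniform bounds on $\Gamma_{ij}^k$ follow. The lower bound on $g$ comes from the fact that $g_{ij}(0)=\delta_{ij}$, combined with the continuity estimate on $[0,\delta]$ from Grönwall, after possibly shrinking $\delta$; this shrinking is still uniform because the constant in Grönwall is uniform in $x$. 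The main obstacle is the bookkeeping in the iterated Jacobi-variation system: one must carefully identify which covariant derivatives of $Rm$ appear at each order of differentiation of $g_{ij}$, in order to apply the bounds $|\nabla^k Rm|\le C_k$ cleanly. This combinatorial step is precisely what is carried out in Eichhorn's \cite{eichhorn} Theorem 2.4 and Corollary 2.5, which I would invoke to complete the argument.
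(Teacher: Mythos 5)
Your proposal is correct and follows essentially the same route as the paper, which simply cites Eichhorn's Theorem 2.4 and Corollary 2.5 for this fact: namely, use the injectivity-radius bound to get a uniform normal chart, control $g_{ij}$ and its derivatives along radial geodesics via the Jacobi equation and its higher variations using the uniform bounds on $\nabla^k Rm$, and then read off bounds on $\Gamma^k_{ij}$ from the standard formula. Since you explicitly defer the combinatorial bookkeeping to the same reference the paper invokes, there is no substantive divergence.
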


Throughout this paper,
we will always impose the following two assumptions on our 
Riemannian manifolds and the Hermitian vector bundles over them.

\begin{enumerate}
\item[\textbf{(A1)}] All Riemannian manifolds $(M, g)$ that we consider will 
admit bounded geometry.

\item[\textbf{(A2)}] All Hermitian vector bundles $(E, h, \nabla)$ with Hermitian
metric $h$, and connection $\nabla$, will be assumed to admit 
$1$-bounded
geometry.
\end{enumerate}

\subsection{Cut-off functions}\label{cut_offs}

We will be making use of generalised distance functions. For this we will
need the following result of Y. A. Kordyukov, see lemma 2.1 in 
\cite{shubin}, 
and \cite{kordyukov1}, \cite{kordyukov2}.

\begin{lem}\label{gen_distance}
Let $M$ be a smooth Riemannian manifold of bounded geometry. 
There exists a smooth function
$\widetilde{d} : M \times M \rightarrow [0, \infty)$ satisfying the following
conditions:
\begin{enumerate}
\item There exists $\rho > 0$ such that 
\begin{equation*}
|\widetilde{d}(x,y) - d(x,y)| < \rho
\end{equation*}
for every $x$, $y \in M$.

\item For every multi-index $\alpha$ with $|\alpha| > 0$ there exists 
a constant $C_{\alpha} > 0$ such that
\begin{equation*}
|\partial_y^{\alpha}\widetilde{d}(x,y)| \leq C_{\alpha}, x, y \in M
\end{equation*}
where the derivative $\partial_y^{\alpha}$ is taken with respect to normal
coordinates.
\end{enumerate}
Moreover for every $\epsilon > 0$, there exists a smooth function 
$\widetilde{d}_{\epsilon} : M \times M \rightarrow [0, \infty)$ satisfying
(1) with $\rho < \epsilon$.
\end{lem}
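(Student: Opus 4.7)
My plan is to construct $\widetilde{d}$ as a smoothing of $d(\cdot, y)$ against a uniformly controlled partition of unity on $M$. Bounded geometry is essential here: it ensures that, for any prescribed scale, there is a partition of unity whose derivatives are bounded uniformly across all points of $M$ when expressed in normal coordinates.

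Explicitly, fix $\epsilon > 0$. Using the positive lower bound on the injectivity radius together with a standard packing argument, one produces a locally finite cover of $M$ by geodesic balls $\{B(y_i, \epsilon/4)\}_{i \in I}$ with uniformly bounded multiplicity. A standard construction (see chapter 2 of \cite{eldering}) then yields a smooth partition of unity $\{\phi_i\}_{i \in I}$ with $\operatorname{supp} \phi_i \subset B(y_i, \epsilon/2)$ and, for each multi-index $\alpha$, uniform bounds on $\partial^{\alpha}\phi_i$ when expressed in normal coordinates of a fixed radius $\delta > 0$. I would then define
\begin{equation*}
\widetilde{d}_{\epsilon}(x, y) := \sum_{i \in I} \phi_i(y)\, d(x, y_i).
\end{equation*}
Property (1) follows immediately from the triangle inequality: for $y \in \operatorname{supp} \phi_i$ one has $|d(x, y_i) - d(x, y)| \leq \epsilon/2$, so since $\sum_i \phi_i \equiv 1$,
\begin{equation*}
|\widetilde{d}_{\epsilon}(x, y) - d(x, y)| = \Bigl|\sum_{i} \phi_i(y)\bigl[d(x, y_i) - d(x, y)\bigr]\Bigr| \leq \epsilon/2 < \epsilon.
\end{equation*}

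For property (2), differentiating the identity $\sum_i \phi_i \equiv 1$ yields $\sum_i \partial_y^{\alpha}\phi_i(y) = 0$ for every $|\alpha| \geq 1$. Picking a reference index $i_0$ with $y \in \operatorname{supp} \phi_{i_0}$, I would then write
\begin{equation*}
\partial_y^{\alpha} \widetilde{d}_{\epsilon}(x, y) = \sum_{i} \partial_y^{\alpha} \phi_i(y)\bigl[d(x, y_i) - d(x, y_{i_0})\bigr].
\end{equation*}
At each fixed $y$, only finitely many terms contribute (bounded by the uniform multiplicity of the cover), and each bracketed difference is at most $d(y_i, y_{i_0}) \leq \epsilon$. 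Combined with the uniform bounds on $\partial^{\alpha}\phi_i$, this gives $|\partial_y^{\alpha} \widetilde{d}_{\epsilon}(x, y)| \leq C_{\alpha, \epsilon}$, independent of $x$ and $y$.

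The main obstacle is securing the uniform partition of unity at an arbitrarily fine scale: one needs bumps in normal coordinates at $y_i$ whose derivatives in any ambient normal chart are bounded uniformly in $i$. This is precisely where bounded geometry enters, via Proposition \ref{christoffel_derivatives} and its higher-order analogues, which guarantee uniform smoothness of the change-of-coordinate maps between overlapping normal charts. Given this, setting $\widetilde{d} := \widetilde{d}_1$ proves the first assertion, while allowing $\epsilon$ to be arbitrarily small yields the ``moreover'' clause.
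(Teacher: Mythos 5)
The paper does not prove this lemma itself; it is quoted as a result of Kordyukov, cited via Lemma~2.1 of \cite{shubin} and the references \cite{kordyukov1}, \cite{kordyukov2}. Your construction --- mollifying the distance function against a uniformly controlled partition of unity built from bounded geometry --- is essentially the standard route taken in those references, so the overall strategy is sound. However, there is one genuine gap in the proposal as written.

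The function you define, $\widetilde{d}_{\epsilon}(x,y) = \sum_i \phi_i(y)\, d(x, y_i)$, is smooth in $y$ but not smooth in $x$: for each fixed $y$ only finitely many indices $i$ contribute, and $x \mapsto d(x, y_i)$ fails to be differentiable at $y_i$ and along the cut locus of $y_i$. The lemma asks for a function that is smooth on $M \times M$ jointly, so one must mollify in both slots. The standard repair is to set
\begin{equation*}
\widetilde{d}_{\epsilon}(x, y) := \sum_{i, j} \phi_i(x)\, \phi_j(y)\, d(y_i, y_j).
\end{equation*}
This is a locally finite sum of smooth bump functions multiplied by constants, hence genuinely smooth on $M \times M$; property (1) still follows from the triangle inequality (now using $d(x, y_i) < \epsilon/2$ and $d(y, y_j) < \epsilon/2$ to get $|d(y_i, y_j) - d(x, y)| < \epsilon$), and property (2) follows exactly by your argument of inserting the telescoping constant $d(y_{i_0}, y_{j_0})$ and using the derivative and multiplicity bounds. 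A minor further point: when you specialize to $\widetilde{d} := \widetilde{d}_1$ to get the first assertion, you are implicitly assuming the injectivity radius exceeds the scale of your cover; one should instead fix some $\epsilon_0 < r_{\mathrm{inj}}$ once and for all and take $\widetilde{d} := \widetilde{d}_{\epsilon_0}$, which makes the constants $C_\alpha$ in (2) depend only on the bounded-geometry data and not on a free parameter. With these two corrections the argument is complete and is, as far as one can tell, the same proof as in the cited sources.
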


We will be making use of lemma \ref{gen_distance} in the following way.
We once and for all fix a point $x_0 \in M$, and let 
Let $\hat{d}_{1/\epsilon}(y) := 
\widetilde{d}_{1/\epsilon}(x_0,y)$, where
$\widetilde{d}_{1/\epsilon}$ denotes the smooth generalised distance function
given by the last part of \ref{gen_distance}. We note that that
$\hat{d}_{1/\epsilon}$ is smooth on $M$.

Let $F : \R \rightarrow \R$ be a smooth function such that
\begin{equation*}
F(x) =
\begin{cases}
1 \text{ if } x \leq 1 \\
0 \text{ if } x \geq 4 \\
\text{ monotonically decreasing on } [1,4].
\end{cases}
\end{equation*}

We define $\chi_{\epsilon}(y) = F(\epsilon \hat{d}_{1/\epsilon}(y))$. 
We then see that $\chi_{\epsilon} = 1$ on $B_{2/\epsilon}(x_0)$ and
that $Supp(\chi_{\epsilon}) \subseteq B_{4/\epsilon}(x_0)$.

From lemma \ref{gen_distance} (2) we have the estimate
\begin{equation}
|\partial_y^{\alpha}\chi_{\epsilon}(y)| \leq C_{\alpha}\epsilon 
\label{cut-off_bound_1}
\end{equation}
where $\alpha$ is a multi-index, $C_{\alpha} > 0$ is a constant, and
the derivative $\partial_y^{\alpha}$ is taken with respect to normal
coordinates.

In particular, this implies we have pointwise bounds of the form
\begin{align}
|\Delta_M^k\chi_{\epsilon}| &\leq C_k\epsilon, \text{ for } k\geq 1 
\label{cut-off_bound_2} \\
|d\chi_{\epsilon}| &\leq C_0\epsilon \label{cut-off_bound_3}
\end{align}

In this paper, we will be making heavy use of the function 
$\chi_{\epsilon}$, which we can always assume exists by our assumption
(A1) from the end of section \ref{BG}.

Next we recall some well known derivative formulas for products. In the 
following, we assume $(M, g)$ is a Riemannian manifold, and 
$(E, h, \nabla)$ is a Hermitian vector bundle over $M$, with Hermitian
metric $h$ and metric connection $\nabla$.
We assume $u \in W^{4,2}_{loc}(E)$ and $f \in C_c^{\infty}(M)$.

We have the following formula for the adjoint.
\begin{equation}
\nabla^{\dagger}(f\nabla u) = f\nabla^{\dagger}\nabla u - \nabla_{(df)^\#}u. 
\label{adjoint_derivative}
\end{equation}

We also have the following formula for the Laplacian of a product.

\begin{equation}
\Delta(fu) = f\Delta u - 
2\nabla_{(df)^{\#}}u + \Delta_M(f)u.
\label{laplace_product_formula}
\end{equation}

Iterating this formula, we obtain the formula
\begin{align}
\Delta^2(fu) &= \Delta(f\Delta u) - 
2\Delta\nabla_{(df)^{\#}}u + \Delta(\Delta_M(f)u) 
\nonumber \\
&= 
f\Delta^2u - 2\nabla_{(df)^{\#}}\Delta u +
 2\Delta_M(f)\Delta u - 2\Delta\nabla_{(df)^{\#}}u \label{bi-laplace_product}
\\
&\hspace{2cm} 
 - 2\nabla_{(d\Delta_M(f))^{\#}}u + 
 (\Delta_M^2f)u. \nonumber
\end{align}

Finally, we have the following formula for a composition
\begin{equation}
\Delta(f \circ u) = f''(u)\vert du\vert^2 + f'(v)\Delta(u) 
\label{Laplace_comp}
\end{equation}

\subsection{Commutation formulae for connections}\label{commutation_formulae}

It will often be the case that we need to switch derivatives in certain
formulas we obtain. In the following subsection, we state the lemmas we 
will be using to carry out such a procedure.

The following lemma tells us how to switch covariant derivatives, see lemma 5.12 in \cite{kelleher}.
\begin{lem}\label{connection_1}
Let $E$ be a Hermitian vector bundle over a Riemannian manifold $(M, g)$, with metric compatible connection 
$\nabla$. Let $u$ denote a 
section of $E$. We have
\begin{align*}
\nabla_{i_k}\nabla_{i_{k-1}}\cdots\nabla_{i_1}\nabla_{j_1}\nabla_{j_2}\cdots\nabla_{j_k}u &=
\nabla_{i_k}\nabla_{j_k}\nabla_{i_{k-1}}\nabla_{j_{k-1}}\cdots\nabla_{i_1}\nabla_{j_1}u \\
&\hspace{0.5cm}+
\sum_{l=0}^{2k-2}\big{(}(\nabla_M^{(l)}Rm + \nabla^{(l)}F)*\nabla^{(2k-2-l)}u\big{)}.
\end{align*}
where $F$ denotes the curvature associated to $\nabla$, and $Rm$ is the Riemannian curvature.
\end{lem}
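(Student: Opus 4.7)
The natural approach is induction on $k$, with the base case $k=1$ being trivial: both sides collapse to $\nabla_{i_1}\nabla_{j_1}u$, and the sum on the right may be taken to be empty.

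For the inductive step, the plan is to reduce everything to repeated application of the elementary two-derivative commutator
$$[\nabla_a, \nabla_b]T = (Rm + F) * T,$$
valid for any section $T$ of a tensor bundle built from $TM$, $T^*M$ and $E$. Assuming the formula for $k-1$, applied now to sections of $T^*M \otimes E$, I would first set $w = \nabla_{j_k} u$ and rewrite
$$\nabla_{i_{k-1}}\cdots\nabla_{i_1}\nabla_{j_1}\cdots\nabla_{j_{k-1}}w$$
in its alternating form modulo a remainder of the type $\sum_{l=0}^{2k-4}(\nabla_M^{(l)}Rm + \nabla^{(l)}F)*\nabla^{(2k-4-l)}w$. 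Substituting $w = \nabla_{j_k}u$ re-expresses this remainder as a sum of terms $(\nabla_M^{(l)}Rm + \nabla^{(l)}F) * \nabla^{(2k-3-l)}u$ for $l = 0,\ldots, 2k-4$; once acted upon further by $\nabla_{i_k}$ via Leibniz, these fall cleanly into the right-hand side template.

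The remaining work is to commute $\nabla_{j_k}$ from the innermost position to the slot immediately after $\nabla_{i_k}$, which requires $2k-2$ applications of the basic commutator $[\nabla_X, \nabla_{j_k}]$, where $\nabla_X$ runs through the chain $\nabla_{i_{k-1}}\nabla_{j_{k-1}}\cdots\nabla_{i_1}\nabla_{j_1}$. Each such swap produces a curvature-valued error $(Rm+F)*T$ with $T$ some inner covariant derivative of $u$; the outer derivatives $\nabla_{i_k}, \nabla_{i_{k-1}}, \ldots$ then distribute through by Leibniz, producing further terms of the form $(\nabla_M^{(l)}Rm + \nabla^{(l)}F) * \nabla^{(2k-2-l)}u$ for appropriate $l$.

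The main obstacle is the purely combinatorial bookkeeping: one must verify that every curvature-valued error term generated in the course of the commutators and Leibniz expansions falls into the allowed template, and in particular that the total derivative count on curvature plus on $u$ is exactly $2k-2$. This invariant is preserved at each step, since a commutator always trades two $u$-derivatives for one curvature factor (reducing the overall derivative count by one, while adding one curvature tensor), and each subsequent outer $\nabla$ either increments the curvature derivative order by one or the $u$ derivative order by one while leaving their sum unchanged. The $\ast$-notation, which subsumes all universal contraction patterns, conveniently makes tracking precise index placements unnecessary.
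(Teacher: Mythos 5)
The paper does not actually prove this lemma; it simply cites Lemma~5.12 of Kelleher \cite{kelleher}, so there is no internal argument to compare yours against. That said, your induction on $k$ is the natural (and presumably Kelleher's) route, and your decomposition of the inductive step is correct: first invoke the case $k-1$ for the $T^*M\otimes E$\nobreakdash-valued section $w=\nabla_{j_k}u$, then distribute $\nabla_{i_k}$ by Leibniz, and finally perform the $2k-2$ elementary swaps to carry $\nabla_{j_k}$ to the second slot. Every error term produced either carries a net derivative count $2k-2$ distributed between one curvature factor and the section, or fits the $*$-template with a universal zero coefficient, so the argument closes.

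One small slip in the final paragraph: a single commutator $[\nabla_a,\nabla_b]T=(Rm+F)*T$ does not trade two $u$-derivatives for ``one'' derivative in the sense of reducing the count by one; it removes two derivatives from the section while inserting a curvature factor carrying zero explicit derivatives. The quantity actually preserved from the left-hand side (which has count $2k$) is ``(derivatives on $u$) $+$ (derivatives on curvature) $+\ 2\times$(number of curvature factors),'' since $Rm$ and $F$ each carry intrinsic weight two; restricted to the error terms, which have exactly one curvature factor, this gives the stated count $2k-2$. The phrasing ``reducing the overall derivative count by one'' should read ``by two,'' and ``this invariant is preserved at each step'' should distinguish the one-time drop of $2$ that occurs when the error is born from the genuine invariance under the subsequent Leibniz distribution of the outer $\nabla$'s. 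None of this affects the structural validity of your argument: the $l$-ranges you trace through ($0\le l\le 2k-3$ from the inductive hypothesis plus the extra $\nabla_{i_k}$, and $0\le l\le 2k-m-1$ from the $m$-th swap) all land inside the required $0\le l\le 2k-2$.
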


We will also need to commute derivatives with Laplacian terms. The following lemma shows us how to
do this, see corollary 5.15 in 
\cite{kelleher}.

\begin{lem}\label{connection_2}
Let $E$ be a Hermitian vector bundle over a Riemannian manifold $(M, g)$, with metric compatible connection 
$\nabla$. Let $\Delta = \nabla^{\dagger}\nabla$ denote the Bochner
Laplacian, and let $u$ be a section of $E$. We have
\begin{equation*}
\nabla^{(n)}\Delta^{(k)}u = \Delta^{(k)}\nabla^{(n)}u + 
\sum_{j=0}^{2k+n-2}\bigg{(}(\nabla_M^{(j)}Rm + \nabla^{(j)}F)*\nabla^{(2k+n-2-j)}u\bigg{)}.
\end{equation*}
\end{lem}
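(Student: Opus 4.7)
The plan is to induct on $k$, using Lemma \ref{connection_1} as the fundamental commutation device and the schematic Leibniz rule for $\Delta$ on a $*$-product as the accounting tool. The base case $k = 0$ is trivial: the sum on the right is empty and both sides reduce to $\nabla^{(n)}u$.

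For the inductive step, I would write $\Delta^{(k)}u = \Delta\Delta^{(k-1)}u$ and expand $\Delta = -g^{ab}\nabla_a\nabla_b$ in local coordinates so that
\[
\nabla^{(n)}\Delta^{(k)}u = -g^{ab}\nabla^{(n)}\nabla_a\nabla_b\Delta^{(k-1)}u.
\]
The task is to move the pair $\nabla_a\nabla_b$ past $\nabla^{(n)}$. By repeated application of Lemma \ref{connection_1} to swap adjacent pairs of covariant derivatives, each transposition produces a schematic error of the form $(\nabla_M^{(l)}Rm + \nabla^{(l)}F)*\nabla^{(m)}(\Delta^{(k-1)}u)$ with $l + m$ equal to two fewer than the total number of derivatives being rearranged. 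Contracting with $g^{ab}$ preserves this schematic structure since the metric is covariantly constant, so after all rearrangement I obtain
\[
\nabla^{(n)}\Delta^{(k)}u = \Delta\,\nabla^{(n)}\Delta^{(k-1)}u + \sum_{l + m = n} (\nabla_M^{(l)}Rm + \nabla^{(l)}F)*\nabla^{(m)}(\Delta^{(k-1)}u),
\]
where the sum is finite and absorbs all combinatorial constants into the $*$-notation.

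I would now invoke the inductive hypothesis twice. Applied to $\nabla^{(n)}\Delta^{(k-1)}u$, it gives $\Delta^{(k-1)}\nabla^{(n)}u$ plus curvature errors of total order at most $n+2(k-1)-2$; acting by the outer $\Delta$ produces $\Delta^{(k)}\nabla^{(n)}u$ plus additional curvature errors, where one uses that $\Delta$ applied to a schematic $*$-product redistributes at most two extra derivatives among the factors. For each summand in the remaining sum, I would apply the inductive hypothesis with $n$ replaced by $m$ to rewrite $\nabla^{(m)}\Delta^{(k-1)}u$ as $\Delta^{(k-1)}\nabla^{(m)}u$ plus further lower-order curvature errors; after $*$-contracting with the outer $(\nabla_M^{(l)}Rm + \nabla^{(l)}F)$ and using the standing assumptions \textbf{(A1)}--\textbf{(A2)} to absorb bounded curvature factors into the $*$-notation, these fold back into the schematic form of the lemma. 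Collecting all contributions and relabelling by $j$, the derivative order of the curvature factor, yields the stated range $0 \leq j \leq 2k+n-2$.

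The main obstacle is the bookkeeping of derivative orders. The key invariant to maintain is that each application of Lemma \ref{connection_1} absorbs exactly two covariant derivatives into a curvature tensor, so the total derivative order distributed between the curvature factor and $u$ is preserved modulo that subtraction. Since we start with $n + 2k$ derivatives in total on $u$, every curvature error term produced throughout the induction has a combined derivative order equal to $n + 2k - 2$, matching the upper index of the summation. Verifying this invariant carefully at each stage, including through the outer $\Delta$ application, is the only nontrivial point; once done, the formula assembles itself.
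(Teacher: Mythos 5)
The paper does not prove Lemma \ref{connection_2} itself; it cites it directly as corollary 5.15 of \cite{kelleher} without giving an argument, so your inductive scheme must stand on its own. The induction on $k$, the use of Lemma \ref{connection_1} to move $\nabla^{(n)}$ past a single Bochner Laplacian, the first application of the inductive hypothesis to $\nabla^{(n)}\Delta^{(k-1)}u$, and the Leibniz bookkeeping for pushing the outer $\Delta$ through the resulting error terms are all sound. Your invariant --- each commutation absorbs exactly two covariant derivatives into a curvature factor, so every error term preserves a total derivative budget of $n + 2k - 2$ split between the curvature factor and $u$ --- is the right thing to track and does reproduce the range $0 \leq j \leq 2k+n-2$.

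The gap is in the second application of the inductive hypothesis, to the error terms $(\nabla_M^{(l)}Rm + \nabla^{(l)}F)*\nabla^{(m)}\Delta^{(k-1)}u$. That application produces expressions carrying \emph{two} curvature factors, of shape $(\nabla_M^{(l)}Rm + \nabla^{(l)}F)*(\nabla_M^{(j')}Rm + \nabla^{(j')}F)*\nabla^{(\cdot)}u$, which are not of the form $(\nabla_M^{(j)}Rm + \nabla^{(j)}F)*\nabla^{(\cdot)}u$ that the statement requires. Your proposed remedy --- invoking \textbf{(A1)}--\textbf{(A2)} to ``absorb'' the extra curvature factor into the $*$-notation --- does not work here, because Lemma \ref{connection_2} is a formal pointwise \emph{identity} valid on any Riemannian manifold with any Hermitian bundle, independent of bounded geometry, and the $*$-notation of section \ref{B_N} denotes a universal multilinear operation, not a place to bury estimates. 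The repair is to drop the second application of the hypothesis entirely: since the metric is covariantly constant, $\Delta^{(k-1)}u$ is itself a metric contraction of $\nabla^{(2k-2)}u$, a contraction the definition of $*$ explicitly permits, so $\nabla^{(m)}\Delta^{(k-1)}u$ is \emph{already} of schematic type $\nabla^{(m+2k-2)}u$. With $l + m = n$, the error term reads $(\nabla_M^{(l)}Rm + \nabla^{(l)}F)*\nabla^{(2k+n-2-l)}u$ directly, which is the $j = l$ summand of the stated sum, and the induction closes.
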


We will be primarily applying the above lemma for the case $n = k = 1$. %and
%the case $n = 2$ and $k = 1$. We collect these two formulas in the %following
%corollary

\begin{cor}\label{connection_3}
Let $E$ be a Hermitian vector bundle over a Riemannian manifold $(M, g)$, with metric compatible connection 
$\nabla$. Let $\Delta = \nabla^{\dagger}\nabla$ denote the Bochner
Laplacian, and let $u$ be a section of $E$. We have
\begin{enumerate}
\item $\nabla\Delta u = \Delta\nabla u + (Rm + F)*\nabla u + 
\nabla(Rm + F)*u$

%\item $\nabla^2\Delta u = \Delta\nabla^2 u + (Rm + F)*\nabla^2u - 
%\nabla(Rm + F)*\nabla u - \nabla^2(Rm + F)*u $
\end{enumerate}
\end{cor}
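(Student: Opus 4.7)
The plan is to derive this corollary as an immediate specialization of Lemma \ref{connection_2}. I would set $n = 1$ and $k = 1$ in the statement of that lemma, so that the upper index of the sum becomes $2k + n - 2 = 1$, leaving only two terms in the sum.

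Writing out the two summands explicitly: the $j = 0$ term produces $(Rm + F) * \nabla u$, since the curvature tensors appear with no covariant derivatives applied and $u$ appears with $2k + n - 2 - 0 = 1$ derivative; the $j = 1$ term produces $(\nabla_M Rm + \nabla F) * u$, since the curvatures each pick up one derivative while $u$ is undifferentiated. Under the star-product convention of section \ref{B_N}, the pair $(\nabla_M Rm + \nabla F)$ can be bundled together and written simply as $\nabla(Rm + F)$, because the star notation absorbs the fact that the two curvature tensors live in different bundles but are acted on by their respective connections and then contracted against $u$ in a universal multilinear way.

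Adding the $\Delta \nabla u$ term from Lemma \ref{connection_2} then yields exactly the claimed identity
\begin{equation*}
\nabla \Delta u \;=\; \Delta \nabla u \;+\; (Rm + F) * \nabla u \;+\; \nabla(Rm + F) * u.
\end{equation*}

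Since Lemma \ref{connection_2} is invoked as a black box, there is essentially no computational obstacle here; the only point that requires a sentence of care is the legitimacy of condensing $\nabla_M Rm + \nabla F$ into $\nabla(Rm + F)$ under the $*$-notation, which follows directly from the definition of the $*$-operation as allowing any universal bilinear combination of tensor factors.
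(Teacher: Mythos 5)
Your proposal is exactly the intended derivation: the paper prefaces the corollary with the remark that Lemma \ref{connection_2} will be applied with $n = k = 1$, which gives precisely the two summands $j=0,1$ you identify, and the condensation of $\nabla_M Rm + \nabla F$ into $\nabla(Rm+F)$ under the $*$-convention is the same cosmetic step the paper uses. No gap.
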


\section{Main results}\label{main_results}

In this section, we state the main results of the paper.

\begin{thm}\label{main_theorem_1}
Let $(M, g)$ be a complete connected Riemannian manifold, 
and let $(E, h)$ be a 
Hermitian vector bundle over $M$ with metric connection $\nabla$.
Assume $M$ and $E$ satisfy the assumptions \textbf{(A1)} and
\textbf{(A2)}. Furthermore, assume we are given a potential
$V \in L^{\infty}_{loc}(End E)$ that is self-adjoint and such that
\begin{equation*}
V(x) \geq -q(r(x))I_x \text{ for a.e } x \in M,
\end{equation*}
where $I_x : E_x \rightarrow E_x$ is the identity endomorphism, 
$r(x)$ is as in \eqref{distance}, and 
$q : [0, \infty) \rightarrow [0, \infty)$ is a non-decreasing function
such that $q(x) = O(x)$ as $x \rightarrow \infty$.

Then the operator $T := \Delta^4 + V$, with domain $C_c^{\infty}(E)$,
is essentially self-adjoint.
\end{thm}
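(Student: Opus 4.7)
The operator $T = \Delta^4 + V$ is symmetric on $C_c^\infty(E)$ because $V$ is pointwise self-adjoint and $\Delta = \nabla^\dagger\nabla$ is formally self-adjoint. By the standard von Neumann criterion, it therefore suffices to show that any $u \in L^2(E)$ satisfying $(\Delta^4 + V + \lambda)u = 0$ distributionally, for a fixed $\lambda > 0$ chosen sufficiently large, must vanish. Since $V \in L^\infty_{\mathrm{loc}}(\mathrm{End}\,E)$ and $\Delta^4$ is elliptic of order eight, local elliptic regularity immediately gives $u \in W^{8,2}_{\mathrm{loc}}(E)$, which justifies all of the integrations by parts below.

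The core step is to test the equation against $\chi_\epsilon^{2N} u$, where $\chi_\epsilon$ is the cut-off of section \ref{cut_offs} and $N$ is taken large enough that up to four derivatives applied to $\chi_\epsilon^{2N}$ still leave a positive power of $\chi_\epsilon$ behind. Iterating the product formula \eqref{bi-laplace_product} to expand $\Delta^2(\chi_\epsilon^{2N} u)$ and then shifting two further Laplacians onto the test function via integration by parts yields a master identity of the form
\[
\int_M \chi_\epsilon^{2N}|\Delta^2 u|^2\, d\mu + \int_M \chi_\epsilon^{2N} h(Vu,u)\, d\mu + \lambda \int_M \chi_\epsilon^{2N} |u|^2\, d\mu = \mathcal{R}_\epsilon,
\]
where $\mathcal{R}_\epsilon$ is a sum of cross terms in which derivatives of $\chi_\epsilon^{2N}$ of order $j \in \{1,\dots,8\}$ pair with covariant derivatives of $u$ of complementary order at most $7$. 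Using the commutation formulas of lemmas \ref{connection_1} and \ref{connection_2}, every mixed $\nabla\Delta^{k}$ expression in $\mathcal{R}_\epsilon$ can be reorganised as a sum of products $\nabla^j \chi_\epsilon^{2N} \ast \nabla^k u$ plus curvature contributions involving $Rm$, $F$, and their covariant derivatives, all uniformly bounded by hypotheses (A1), (A2).

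The main task is then to bound $|\mathcal{R}_\epsilon|$. I would apply the derivative estimates for powers of $\chi_\epsilon$ to be proved in section \ref{derivative_est_powers_cut_offs}, of schematic form $|\nabla^j \chi_\epsilon^{2N}| \leq C_j\,\epsilon^j\,\chi_\epsilon^{2N-j}$ for $j \leq 2N$, and couple them with Cauchy-Schwarz and Young's inequality to dominate each term of $\mathcal{R}_\epsilon$ by a quantity of the form $\delta \int_M \chi_\epsilon^{2N}|\Delta^2 u|^2\, d\mu + C_\delta\,\epsilon^{a}\int_M \chi_\epsilon^{2(N-j)}|\nabla^k u|^2\, d\mu$ with $a>0$. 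The intermediate localised norms of $\nabla^k u$ with $k \leq 7$ would in turn be controlled by the covariant derivative estimates of sections \ref{sec_covariant_derivatives} and \ref{sec_laplacian_derivatives}, bounding them back in terms of $\|u\|_{L^2}^2$ and $\int_M \chi_\epsilon^{2N}|\Delta^2 u|^2\, d\mu$. Absorbing the $\delta$-terms on the left, and estimating the potential contribution via $\int \chi_\epsilon^{2N} h(Vu,u)\,d\mu \geq -\sup_{r \leq 4/\epsilon} q(r) \int \chi_\epsilon^{2N}|u|^2 \geq -(C/\epsilon)\int \chi_\epsilon^{2N}|u|^2$ thanks to $q(x)=O(x)$, I would arrive at an inequality forcing $\int_M \chi_\epsilon^{2N}|u|^2\, d\mu \to 0$ as $\epsilon \to 0$, whence $u \equiv 0$.

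The principal obstacle is the delicate bookkeeping of $\mathcal{R}_\epsilon$. With four Laplacians being iterated, commuting the resulting derivatives back through the Laplacians via lemma \ref{connection_2} creates many intermediate cross terms, and each of them must carry a net positive power of $\epsilon$ once the intermediate $\nabla^k u$ estimates have been substituted and the $\Delta^2 u$ contributions absorbed. The linear growth hypothesis $q(x)=O(x)$ is the sharp threshold at which the derivative-of-cut-off gains still dominate the $1/\epsilon$ loss coming from the potential on $\mathrm{supp}(\chi_\epsilon)$; as the introduction emphasises, it is precisely this balance that fails for $\Delta^{2n}$ with $n \geq 5$, where the number of terms produced by lemma \ref{connection_2} outgrows the available $\epsilon$-savings.
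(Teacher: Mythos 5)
Your overall framework --- testing the equation against $\chi_\epsilon^{2N}u$, expanding with the Laplacian product formula, reorganising mixed $\nabla\Delta^k$ terms via the commutation formulas of lemmas~\ref{connection_1}--\ref{connection_2}, estimating the cut-off derivatives, absorbing the $\delta\|\chi_\epsilon^N\Delta^2 u\|^2$ terms, and balancing the $\epsilon$-gain from cut-offs against the $1/\epsilon$-loss from $q(x)=O(x)$ on $\mathrm{supp}\,\chi_\epsilon$ --- matches the paper's machinery in sections~\ref{sec_covariant_derivatives}--\ref{sec_laplacian_derivatives}. But there is a genuine gap at the very start, and it propagates into the endgame and makes the argument as written fail.

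You invoke the von Neumann criterion with a \emph{real} shift: you claim it suffices to show $\ker(T^* + \lambda) = 0$ for one large $\lambda > 0$. That is not the deficiency-index criterion; for a symmetric operator, triviality of $\ker(T^* - z)$ must be checked for a non-real $z$ (and its conjugate). A real shift would be legitimate only if $T$ were bounded below, and here it is not: $V \geq -q(r)I$ with $q$ potentially of linear growth, so no fixed $\lambda$ dominates the potential on all of $M$. Indeed your own estimate exposes the problem: after substituting the pointwise lower bound $V \geq -q(4/\epsilon)I$ on $\mathrm{supp}\,\chi_\epsilon$, you would reach
\begin{equation*}
(1-\delta)\,\|\chi_\epsilon^N\Delta^2 u\|^2 \;+\; \bigl(\lambda - C/\epsilon\bigr)\,\|\chi_\epsilon^N u\|^2 \;\leq\; C_\epsilon\,\|u\|^2 ,
\end{equation*}
and for fixed $\lambda$ the coefficient $\lambda - C/\epsilon$ becomes arbitrarily negative as $\epsilon\to0$, so the inequality yields no control on $\|\chi_\epsilon^N u\|$. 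In particular it cannot force $\int \chi_\epsilon^{2N}|u|^2 \to 0$; that integral converges to $\|u\|^2$ by dominated convergence, and nothing you have written makes it vanish.

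The missing idea is the paper's use of the \emph{purely imaginary} spectral parameter $T_{\max}u = i\lambda u$ with $\lambda\in\mathbb{R}$, combined with a split into real and imaginary parts of the tested identity. Taking the real part eliminates the $\lambda$ term (since $\lambda\langle u,\chi_\epsilon^{4k}u\rangle$ is real and $i\lambda\langle u,\chi_\epsilon^{4k}u\rangle$ is then imaginary) and leaves an identity relating $\|\chi_\epsilon^{2k}\Delta^2 u\|^2$ to the cross terms and to $\langle Vu,\chi_\epsilon^{4k}u\rangle$; this gives the bound $\|\chi_\epsilon^{2k}\Delta^2 u\|^2 \lesssim (C/\epsilon)\|u\|^2$ after using $q(x)=O(x)$ (lemma~\ref{main_lemma_1} and \eqref{square_est}). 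Taking the imaginary part eliminates the $V$ term entirely (since $V$ is self-adjoint, $\langle Vu,\chi_\epsilon^{4k}u\rangle$ is real) and leaves $\lambda\|\chi_\epsilon^{2k}u\|^2$ equal to a sum of cross terms, each of which is bounded by $\epsilon\, C_1(\epsilon)\,\|\chi_\epsilon^{2k}\Delta^2 u\|^2 + C_2(\epsilon)\|u\|^2$; the prefactor $\epsilon$ exactly cancels the $1/\epsilon$ from the first bound, giving $\lambda\|\chi_\epsilon^{2k}u\|^2 \leq \widetilde{C}\|u\|^2$ with $\widetilde{C}$ independent of $\epsilon$. Letting $\epsilon\to 0$ and then $|\lambda|\to\infty$ forces $u=0$. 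Without this real/imaginary split your master identity has no useful sign structure, and the potential's $1/\epsilon$ loss cannot be removed. A minor further point: your schematic bound $|\nabla^j\chi_\epsilon^{2N}| \lesssim \epsilon^j\chi_\epsilon^{2N-j}$ is too optimistic --- the Kordyukov construction in \eqref{cut-off_bound_1} yields only a single power of $\epsilon$ regardless of $|\alpha|$ (see corollaries~\ref{laplacian_prod_est_1}, \ref{bilaplace_prod_est_1}, \ref{dlaplace_prod_est_1}); fortunately one power of $\epsilon$ is all that is needed, but only after the real/imaginary rearrangement above.
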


Our next theorem will restrict to the case of the magnetic Laplacian
acting on functions. The reader who is not familiar with the magnetic
Laplacian can see section \ref{B_N} for a brief introduction.

We define the following two domains. Let 
$\mathcal{D}_1 := \{u \in L^2(M) : (\Delta_A)^4u \in L^2(M)\}$, where
$(\Delta_A)^4u$ is defined in the sense of distributions. This is 
the maximal domain of the operator $(\Delta_A)^4$. Given
$w \in C^4(M)$, let 
$\mathcal{D}_2 = \{u \in L^2(M) : wu \in L^2(M)\}$.

\begin{thm}\label{main_theorem_2}
Let $(M, g)$ be a complete connected Riemannian manifold. Let 
$w \in C^4(M)$, such that $w \geq 1$, and let $h = w^{-1}$. 
Then there exists $0 < \sigma_0 < 1$, such that if $0 < \sigma \leq \sigma_0$
and
$h$ satisfies the following pointwise estimates
\begin{align}
\vert dh\vert &\leq \sigma h^{7/4} \label{h_assump_1}\\
\vert \Delta_M h\vert &\leq \sigma h^{3/4} \label{h_assump_2}\\
\vert \Delta_M^2h\vert &\leq \sigma h^{1/2} \label{h_assump_3}\\
\vert \Delta_M \vert dh\vert^2\vert &\leq \sigma h^{3/2} \label{h_assump_4}\\
\vert d\vert dh\vert^2\vert &\leq \sigma h^{22/8} \label{h_assump_5}\\
\vert d\Delta_M h\vert &\leq \sigma h^{3/2}. \label{h_assump_6}
\end{align}
Then the operator $(\Delta_A)^4 + w$ is self adjoint on the domain
$\mathcal{D}_1 \cap \mathcal{D}_2$.
\end{thm}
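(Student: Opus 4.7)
The plan is to reduce the theorem to two ingredients: essential self-adjointness of $T := (\Delta_A)^4 + w$ on $C_c^\infty(M)$, and a separation estimate identifying the maximal distributional domain $\mathcal{D}_{\max} := \{u\in L^2(M) : Tu \in L^2(M)\}$ with $\mathcal{D}_1 \cap \mathcal{D}_2$. Once essential self-adjointness is in hand, the unique self-adjoint closure $\overline{T}$ coincides with $T^{*}$, whose domain equals $\mathcal{D}_{\max}$; if moreover $\mathcal{D}_{\max} = \mathcal{D}_1 \cap \mathcal{D}_2$ then $\overline{T}$ acts as $(\Delta_A)^4 + w$ on the prescribed domain and the theorem follows. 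For the first ingredient I would apply Theorem \ref{main_theorem_1} directly to the trivial Hermitian line bundle $M \times \C$ equipped with the connection $d_A$ and potential $V = w$: assumptions \textbf{(A1)} and \textbf{(A2)} are in force throughout the paper, and $V = w \geq 1 \geq -q(r(x))$ holds with $q \equiv 0$, so the hypotheses of Theorem \ref{main_theorem_1} are met.

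For the separation step, the plan is to prove the a priori inequality
\begin{equation*}
\|(\Delta_A)^4 u\|^2 + \|wu\|^2 \leq C_1 \|Tu\|^2 + C_2 \|u\|^2, \qquad u \in C_c^\infty(M),
\end{equation*}
and then lift it to $\mathcal{D}_{\max}$ by approximation. Expanding $\|Tu\|^2 = \|(\Delta_A)^4 u\|^2 + \|wu\|^2 + 2\,\mathrm{Re}\langle (\Delta_A)^4 u, wu\rangle$ and integrating by parts twice rewrites the cross term as $\langle (\Delta_A)^2 u, (\Delta_A)^2(wu)\rangle$, which via the product formula \eqref{bi-laplace_product} decomposes into the non-negative piece $\int w\,|(\Delta_A)^2 u|^2\,d\mu$ plus a remainder $R$ pairing each derivative of $w$ of order $1,2,3,4$ with the complementary derivative of $u$. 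Converting derivatives of $w$ to derivatives of $h = w^{-1}$ via the chain rule produces precisely the six quantities controlled by \eqref{h_assump_1}--\eqref{h_assump_6}; the specific fractional powers of $h$ on the right-hand sides are engineered so that each individual term in $R$, after applying Young's inequality, splits into a piece absorbable into $\int w\,|(\Delta_A)^2 u|^2$ and $\|wu\|^2$ plus an intermediate weighted term of the form $\int w^{\alpha}|\nabla_A^{(k)} u|^2\,d\mu$ with $k\leq 3$. These intermediate terms are then bounded by $\varepsilon(\|(\Delta_A)^4 u\|^2 + \|wu\|^2) + C_\varepsilon \|u\|^2$ through further integration by parts and the localised magnetic-Laplacian estimates developed in section \ref{localised_derivative_mag_lap}. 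Choosing $\sigma_0$ small enough drives the total absorbing constant below a fixed fraction strictly less than $1$, giving the separation inequality.

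To extend from $C_c^\infty(M)$ to $\mathcal{D}_{\max}$, for $u \in \mathcal{D}_{\max}$ I would take $u_n$ to be a mollification (in Kordyukov-type normal coordinate patches, whose existence is guaranteed by \textbf{(A1)} via Proposition \ref{christoffel_derivatives}) of the cut-off $\chi_{1/n}u$ from section \ref{cut_offs}. One checks that $u_n \in C_c^\infty(M)$, $u_n \to u$ in $L^2(M)$, and $Tu_n \to Tu$ in $L^2(M)$, where the last convergence uses the pointwise bounds \eqref{cut-off_bound_2}--\eqref{cut-off_bound_3} to force the commutator $[T, \chi_{1/n}]$ to zero in $L^2$. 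Passing the separation inequality to the limit yields $(\Delta_A)^4 u, wu \in L^2(M)$; together with the trivial reverse inclusion this proves $\mathcal{D}_{\max} = \mathcal{D}_1 \cap \mathcal{D}_2$.

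The main obstacle is the separation step: one must track every term arising in the expansion of $(\Delta_A)^2(wu)$ and $\nabla_A\Delta_A(wu)$ (the latter after invoking Corollary \ref{connection_3}), confirm that only the six specific combinations of derivatives of $h$ listed in \eqref{h_assump_1}--\eqref{h_assump_6} appear, and verify that the fractional exponents $7/4, 3/4, 1/2, 3/2, 22/8, 3/2$ are exactly those for which a uniform choice of $\sigma_0$ closes the Young's inequality scheme. The somewhat unusual power $22/8$ in \eqref{h_assump_5} in particular is expected to arise when commuting $\nabla_A$ past $\Delta_A$ via Corollary \ref{connection_3} at a specific stage of the expansion, where it is the exponent forced by the weight factor required to balance the two other derivative factors in the resulting trilinear pairing.
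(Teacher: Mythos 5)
Your proposal takes a genuinely different route from the paper, and that route has a gap at its central step. The paper proves Theorem~\ref{main_theorem_2} via Sohr's abstract criterion (Lemma~\ref{sohr}): having observed that completeness alone (Chernoff/Cordes) makes $(\Delta_A)^4$ essentially self-adjoint on $C_c^\infty(M)$, it only needs the \emph{weak} lower bound
\begin{equation*}
Re\,\langle \Delta_A^4 u, h u\rangle + \xi\|u\|^2 \geq 0,\qquad u\in C_c^\infty(M),\ 0\leq\xi<1,
\end{equation*}
where the potential enters through its \emph{inverse} $h=w^{-1}$, and that bound is obtained from the decomposition of $Re\,\langle\Delta_A^2 u,\Delta_A^2(hu)\rangle$ into $\|\Delta_A^2(h^{1/2}u)\|^2$ plus a remainder (equation~\eqref{mag_eq_1}), together with Proposition~\ref{mag_bilaplace_est_main}. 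Theorem~\ref{main_theorem_1} is never invoked in the proof of Theorem~\ref{main_theorem_2}; rather, Theorems~\ref{main_theorem_1} and~\ref{main_theorem_2} are combined \emph{afterwards} to deduce the separation statement as Corollary~\ref{separation_cor}. You are running the deduction in the opposite direction.

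The concrete difficulty is a weight asymmetry you do not address. You propose to expand $Re\,\langle\Delta_A^4 u, w u\rangle = Re\,\langle\Delta_A^2 u,\Delta_A^2(wu)\rangle$, whose remainder involves derivatives of $w$, not $h$. Under the hypotheses~\eqref{h_assump_1}--\eqref{h_assump_6} one gets for instance $|dw|\lesssim\sigma w^{1/4}$ and $|\Delta_M w|\lesssim\sigma w^{5/4}$, which \emph{grow} with $w$. After Young-splitting a term such as $\langle\Delta_A^2 u, (\Delta_M w)\Delta_A u\rangle$ one is left with intermediate terms of the form $\int w^{3/2}|\Delta_A u|^2\,d\mu$, i.e.\ weighted by a large power $w^\alpha=h^{-\alpha}$, $\alpha>0$. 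The localised estimates of Section~\ref{localised_derivative_mag_lap} (Propositions~\ref{mag_covariant_est_1},~\ref{mag_covariant_est_2}, Lemmas~\ref{mag_laplace_est_1}--\ref{mag_main_est_2}) are built exclusively for \emph{decaying} weights $h^{1/4},h^{1/2},h^{5/4},\dots$, which is exactly what Sohr's $\langle T_1 u, T_2^{-1}u\rangle$ pairing produces. They do not control growing-weight quantities, and the exponents $7/4,3/4,1/2,3/2,22/8,3/2$ in~\eqref{h_assump_1}--\eqref{h_assump_6} were calibrated to make the $h$-weighted Sohr scheme close, not to make a $w$-weighted separation inequality close. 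Your a priori estimate $\|(\Delta_A)^4u\|^2+\|wu\|^2\leq C_1\|Tu\|^2+C_2\|u\|^2$ is therefore unsupported as stated; it is, in fact, essentially the content of Corollary~\ref{separation_cor}, which the paper \emph{derives from} Theorem~\ref{main_theorem_2} rather than proves independently. Finally, your cut-off-and-mollification closing step is unnecessary: once essential self-adjointness on $C_c^\infty(M)$ is known, $T_{\max}$ is the closure, so for $u\in D_{\max}$ a sequence $u_n\in C_c^\infty(M)$ with $u_n\to u$ and $Tu_n\to Tu$ in $L^2$ exists by definition of operator closure, with no need for the Kordyukov cut-offs or Proposition~\ref{christoffel_derivatives}.
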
 

In \cite{milatovic}, Milatovic is able to find an explicit bound for
$\sigma_0$, which comes down to solving two inequalities, see 
proof of theorem 2.2 in \cite{milatovic}. In our situation, the number of
inequalities is much more than two, and this makes it very difficult to 
find an explicit bound for $\sigma_0$.

An application of the above two theorems is the following
separation result.

\begin{cor}\label{separation_cor}
Let $(M, g)$ be a complete connected Riemannian manifold satisfying
assumption \textbf{(A1)}. Let $w\in C^4(M)$ satisfy the assumptions
of theorem \ref{main_theorem_2}. Then the operator
$(\Delta_A)^4 + w$ is separated in $L^2(M)$.
\end{cor}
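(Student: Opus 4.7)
The plan is to combine Theorems \ref{main_theorem_1} and \ref{main_theorem_2} via a standard uniqueness-of-self-adjoint-extensions argument, in order to identify the maximal $L^2$-realization of $(\Delta_A)^4 + w$ with its realization on $\mathcal{D}_1 \cap \mathcal{D}_2$. Separation amounts to showing that the maximal domain
\begin{equation*}
\mathrm{Dom}(T_{\max}) := \{ u \in L^2(M) : ((\Delta_A)^4 + w)u \in L^2(M) \text{ in the sense of distributions} \}
\end{equation*}
equals $\mathcal{D}_1 \cap \mathcal{D}_2$: once this is known, each such $u$ automatically has $(\Delta_A)^4 u$ and $wu$ separately in $L^2(M)$, which is exactly the Everitt--Gierz separation property.

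The first step is to invoke Theorem \ref{main_theorem_1} with $E = M \times \mathbb{C}$ the trivial Hermitian line bundle and $\nabla = d_A$ the magnetic connection (so that $\nabla^{\dagger}\nabla = \Delta_A$), taking $V = w$. Since $w \geq 1$, the lower bound $V \geq -q(r)$ holds with $q \equiv 0$, and $w \in C^4(M)$ is locally bounded and self-adjoint; assumption \textbf{(A1)} is given in the corollary, while assumption \textbf{(A2)} on the magnetic bundle reduces to uniform bounds on $|dA|$ and $|\nabla(dA)|$, standing in this magnetic Laplacian framework. Theorem \ref{main_theorem_1} then yields essential self-adjointness of $T_0 := (\Delta_A)^4 + w$ on $C_c^\infty(M)$, which is the statement $T_{\min} := \overline{T_0} = T_{\max}$. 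I would then pair this with Theorem \ref{main_theorem_2}, which supplies a self-adjoint operator $S := (\Delta_A)^4 + w$ on the domain $\mathcal{D}_1 \cap \mathcal{D}_2$. Since $C_c^\infty(M) \subseteq \mathcal{D}_1 \cap \mathcal{D}_2$ and $S$ restricts to $T_0$ there, we have $T_{\min} \subseteq S$; on the other hand $S \subseteq T_{\max}$ is immediate from the definitions of $\mathcal{D}_1$ and $\mathcal{D}_2$. The equality $T_{\min} = T_{\max}$ then forces $S = T_{\max}$, so $\mathrm{Dom}(T_{\max}) = \mathcal{D}_1 \cap \mathcal{D}_2$, which is the separation claim.

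The main obstacle I anticipate is the verification step that Theorem \ref{main_theorem_1} genuinely applies in the magnetic setting --- concretely, confirming the $1$-bounded geometry condition \textbf{(A2)} for the bundle $(M \times \mathbb{C}, d_A)$, namely that the curvature $F = i\,dA$ and its covariant derivative are uniformly bounded on $M$. Once this is in place, the remainder of the argument is purely operator-theoretic and relies only on the fact that an essentially self-adjoint symmetric operator has exactly one self-adjoint extension, so any two self-adjoint extensions of $T_0$ must coincide.
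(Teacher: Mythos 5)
Your argument is exactly the one the paper gives: Theorem~\ref{main_theorem_1} with $V = w$ (and $q \equiv 0$, since $w \geq 1$) yields essential self-adjointness of $(\Delta_A)^4 + w$ on $C_c^\infty(M)$ and hence self-adjointness of $T_{\max}$, Theorem~\ref{main_theorem_2} exhibits a self-adjoint operator with domain $\mathcal{D}_1 \cap \mathcal{D}_2 \subseteq D_{\max}$, and the fact that a self-adjoint operator admits no proper self-adjoint extension forces $D_{\max} = \mathcal{D}_1 \cap \mathcal{D}_2$, which is separation. Your caveat about verifying assumption \textbf{(A2)} for the line bundle $(M \times \C, d_A)$ is fair, but it is no more of a gap than in the paper's own one-line proof, which silently relies on the standing assumptions of Section~\ref{BG}.
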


\section{Localised covariant derivative estimates}\label{sec_covariant_derivatives}

This section is the first of two sections on localised derivative 
estimates needed for the proof of theorem \ref{main_theorem_1}. We obtain a localised covariant derivative estimate, and then
use this to obtain a localised estimate for two covariant derivatives
of a section. These will then be put to use in section
\ref{sec_laplacian_derivatives}, where we obtain several other estimates.

We will make use of the cut-off functions
$\chi_{\epsilon}$, whose existence was explained in section 
\ref{cut_offs}. We let $k$ denote a large fixed 
positive integer.

\begin{prop}\label{covariant_est_1}
For $u \in W^{2,2}_{loc}(E)$ and
$\epsilon > 0$ sufficiently small, we have the following estimate
\begin{equation}
\vert\vert\chi_{\epsilon}^k\nabla u\vert\vert^2 \leq 
\frac{1}{2(1-k\epsilon)}\vert\vert\chi_{\epsilon}^{k+1}\Delta u\vert\vert^2 +
\frac{1+2k\epsilon}{2(1-k\epsilon)}\vert\vert \chi_{\epsilon}^{k-1}u\vert\vert^2.
\end{equation}
\end{prop}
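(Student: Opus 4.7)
The plan is to derive the estimate by integrating by parts against $\chi_{\epsilon}^{2k}$ and then applying two weighted Cauchy–Schwarz/AM–GM inequalities. Concretely, I will start from the identity
\begin{equation*}
\|\chi_{\epsilon}^k \nabla u\|^2 = \int_M \chi_{\epsilon}^{2k} \, h(\nabla u, \nabla u)\, d\mu = \langle \chi_{\epsilon}^{2k} \nabla u, \nabla u\rangle,
\end{equation*}
and apply the formal adjoint formula \eqref{adjoint_derivative} with $f = \chi_{\epsilon}^{2k}$ (which is compactly supported and smooth), so that
\begin{equation*}
\|\chi_{\epsilon}^k \nabla u\|^2 = \langle \chi_{\epsilon}^{2k}\Delta u, u\rangle - \langle \nabla_{(d\chi_{\epsilon}^{2k})^{\#}} u, u\rangle.
\end{equation*}

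Next I would estimate the two terms separately. For the first, by the Cauchy–Schwarz inequality,
\begin{equation*}
|\langle \chi_{\epsilon}^{2k}\Delta u, u\rangle| \leq \|\chi_{\epsilon}^{k+1}\Delta u\|\,\|\chi_{\epsilon}^{k-1}u\| \leq \tfrac{1}{2}\|\chi_{\epsilon}^{k+1}\Delta u\|^2 + \tfrac{1}{2}\|\chi_{\epsilon}^{k-1}u\|^2,
\end{equation*}
where the splitting of powers $\chi_{\epsilon}^{k+1}$ and $\chi_{\epsilon}^{k-1}$ is chosen precisely to produce the two norms appearing on the right-hand side of the desired estimate. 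For the second term, differentiating gives $d\chi_{\epsilon}^{2k} = 2k\,\chi_{\epsilon}^{2k-1}\,d\chi_{\epsilon}$, so by \eqref{star_notation_ex} together with the pointwise bound \eqref{cut-off_bound_3} on $|d\chi_{\epsilon}|$,
\begin{equation*}
|\langle \nabla_{(d\chi_{\epsilon}^{2k})^{\#}} u, u\rangle| \leq 2k\epsilon \int_M \chi_{\epsilon}^{2k-1}|\nabla u|\,|u|\, d\mu = 2k\epsilon \,\langle \chi_{\epsilon}^{k}|\nabla u|, \chi_{\epsilon}^{k-1}|u|\rangle,
\end{equation*}
which AM–GM bounds by $k\epsilon\,\|\chi_{\epsilon}^k\nabla u\|^2 + k\epsilon\,\|\chi_{\epsilon}^{k-1}u\|^2$.

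Combining the two bounds and absorbing the $k\epsilon\|\chi_{\epsilon}^k\nabla u\|^2$ term onto the left-hand side (which is legal for $\epsilon$ small enough that $1-k\epsilon>0$) yields
\begin{equation*}
(1-k\epsilon)\|\chi_{\epsilon}^k\nabla u\|^2 \leq \tfrac{1}{2}\|\chi_{\epsilon}^{k+1}\Delta u\|^2 + \bigl(\tfrac{1}{2}+k\epsilon\bigr)\|\chi_{\epsilon}^{k-1}u\|^2,
\end{equation*}
and dividing by $(1-k\epsilon)$ gives the stated inequality.

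The only genuine subtlety, and the step I expect to need the most care, is justifying the integration by parts when $u$ is only in $W^{2,2}_{loc}(E)$ rather than smooth. Because $\chi_{\epsilon}$ has compact support and $\chi_{\epsilon}^{2k}\nabla u \in W^{1,2}$ with compact support, one can legitimise \eqref{adjoint_derivative} applied to this product either by approximating $u$ via Friedrichs mollification and passing to the limit in $W^{2,2}$ on a neighbourhood of $\mathrm{Supp}(\chi_{\epsilon})$, or by density of $C^{\infty}_c$ sections in the appropriate local Sobolev space. Everything else is a routine bookkeeping of Cauchy–Schwarz constants.
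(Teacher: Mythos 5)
Your proof is correct and follows exactly the same route as the paper: integrate by parts with $f=\chi_\epsilon^{2k}$, split $\chi_\epsilon^{2k}=\chi_\epsilon^{k+1}\chi_\epsilon^{k-1}$ in the first term and $\chi_\epsilon^{2k-1}=\chi_\epsilon^k\chi_\epsilon^{k-1}$ in the second, apply Cauchy--Schwarz and Young's inequality, and absorb the $k\epsilon\|\chi_\epsilon^k\nabla u\|^2$ term for $\epsilon$ small. Your additional remark on justifying the integration by parts for $u\in W^{2,2}_{loc}(E)$ via mollification is a reasonable piece of care that the paper leaves implicit.
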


\begin{proof}

\begin{align*}
\vert\vert \chi_{\epsilon}^k\nabla u\vert\vert^2 &= \langle\chi_{\epsilon}^{2k}
\nabla u, \nabla u\rangle \\ 
&= \langle\nabla^{\dagger}(\chi_{\epsilon}^{2k}\nabla u), u\rangle \\
&= \langle\chi_{\epsilon}^{2k}\nabla^{\dagger}\nabla u, u\rangle - \langle\nabla_{(d\chi_{\epsilon}
^{2k})^{\#}}u, u\rangle \\
&= \langle\chi_{\epsilon}^{2k}\Delta u, u\rangle - \langle 2k\chi_{\epsilon}^{2k-1}\nabla_{(d
\chi_{\epsilon})^{\#}}u, u\rangle\\
&\leq \langle\chi_{\epsilon}^{2k}\vert\Delta u\vert, \vert u\vert\rangle + 
2k\langle\chi_{\epsilon}^{2k-1}\vert d\chi_{\epsilon}\vert\vert 
\nabla u\vert, u\rangle \\
&\leq \langle\chi_{\epsilon}^{k+1}\vert\Delta u\vert, \chi_{\epsilon}^{k-1}\vert u
\vert\rangle + 
2k\epsilon\langle\chi_{\epsilon}^k\vert\nabla u\vert, \chi_{\epsilon}
^{k-1}\vert u\vert\rangle \\
&\leq \frac{1}{2}(\vert\vert\chi_{\epsilon}^{k+1}\Delta u\vert\vert^2 + \vert
\vert\chi_{\epsilon}^{k-1}u\vert\vert^2) + 
k\epsilon(\vert\vert\chi_{\epsilon}^k\nabla u\vert\vert^2 + 
\vert\vert\chi_{\epsilon}^{k-1}u\vert\vert^2).
\end{align*}

In order to obtain the second line, we are using integration by parts
noting that $\chi_{\epsilon}^{2k}$ has compact support.
To get the fifth line we are using the fact that 
$\vert\nabla_{(d\chi_{\epsilon})^{\#}}u\vert 
\leq \vert d\chi_{\epsilon}\vert\vert \nabla u\vert$, see
\eqref{star_notation_ex}, and to get the sixth line we are using
the estimate of the cut-off function \eqref{cut-off_bound_3}.
Finally, the last line follows by applying Cauchy-Schwarz and Young's
inequality.

We then find
\begin{equation}
(1 - k\epsilon)\vert\vert\chi_{\epsilon}^k\nabla u\vert\vert^2 
\leq
\frac{1}{2}\vert\vert\chi_{\epsilon}^{k+1}\Delta u\vert\vert^2 + 
(\frac{1}{2} + k\epsilon)\vert\vert\chi_{\epsilon}^{k-1}u\vert
\vert^2
\end{equation}
which immediately implies the proposition.
\end{proof}

\begin{prop}\label{covariant_est_2}
For $u \in W^{4,2}_{loc}(E)$ and
$\epsilon > 0$ sufficiently small, we have the following estimate.

\begin{align*}
\vert\vert \chi_{\epsilon}^k\nabla^2u\vert\vert^2 
&\leq 
\bigg{(}
\frac{1}{1-2k^2C^2\epsilon^2} 
\bigg{)}
\bigg{(}\frac{1}{4(1-k\epsilon)}\bigg{)}
\vert\vert \chi_{\epsilon}^{k+1}\Delta^2u\vert\vert^2 \\
&\hspace{0.5cm} + 
\bigg{(}\frac{1}{1-2k^2C^2\epsilon^2} \bigg{)}
\bigg{(} \frac{1}{2}+ C^2+ \frac{C^2}{2}\bigg{)}
\bigg{(}\frac{1}{2(1-k\epsilon)} \bigg{)}
\vert\vert\chi_{\epsilon}^{k+1}\Delta u\vert\vert^2  \\
&\hspace{0.5cm} + 
\bigg{(} \frac{1}{1-2k^2C^2\epsilon^2}
\bigg{)}
\bigg{(}\frac{1}{4(1-(k-1)\epsilon)}
\bigg{)}
\vert\vert\chi_{\epsilon}^k\Delta u\vert\vert^2 + 
\bigg{(}
\frac{1}{1-2k^2C^2\epsilon^2}
\bigg{)}
\bigg{(}
\frac{1+2k}{4(1-k\epsilon)}
\bigg{)}
\vert\vert\chi_{\epsilon}^{k-1}\Delta u\vert\vert^2  \\
&\hspace{0.5cm} +
\bigg{(}\frac{1}{1-2k^2C^2\epsilon^2}\bigg{)}
\bigg{(}\frac{C^2}{2}\bigg{)}
\vert\vert\chi_{\epsilon}^ku\vert\vert^2 
+
\bigg{(}\frac{1}{1-2k^2C^2\epsilon^2}\bigg{)}
\bigg{(}\frac{1}{2} + C^2 + \frac{C^2}{2}\bigg{)}
\bigg{(}\frac{1+2k\epsilon}{2(1-k\epsilon)} \bigg{)}
\vert\vert\chi_{\epsilon}^{k-1}u\vert\vert^2  \\
&\hspace{0.5cm} +
\bigg{(}\frac{1}{1-2k^2C^2\epsilon^2}\bigg{)}
\bigg{(}
\frac{1+2(k-1)\epsilon}{4(1-(k-1)\epsilon)} 
\bigg{)}
\vert\vert\chi_{\epsilon}^{k-2}u\vert\vert^2 
 \end{align*}
\end{prop}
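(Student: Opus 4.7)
The plan is to mimic the structure of Proposition \ref{covariant_est_1}, but now starting from $\|\chi_{\epsilon}^k\nabla^2 u\|^2$ and performing two integrations by parts with one application of the commutator from Corollary \ref{connection_3} in between. Concretely, I would write
\begin{equation*}
\|\chi_{\epsilon}^k\nabla^2u\|^2 = \langle\chi_{\epsilon}^{2k}\nabla(\nabla u), \nabla(\nabla u)\rangle
\end{equation*}
and use the adjoint product formula \eqref{adjoint_derivative} to move one factor of $\nabla$ off the second argument. This produces a main term $\langle\chi_{\epsilon}^{2k}\Delta(\nabla u), \nabla u\rangle$ together with a cut-off cross term of the form $-2k\langle\chi_{\epsilon}^{2k-1}\nabla_{(d\chi_{\epsilon})^{\#}}\nabla u,\nabla u\rangle$, both of which are finite since $\chi_{\epsilon}^{2k}$ has compact support and $u \in W^{4,2}_{loc}(E)$.

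Next I would use Corollary \ref{connection_3} to rewrite $\Delta\nabla u = \nabla\Delta u + (Rm+F)*\nabla u + \nabla(Rm+F)*u$; by assumptions \textbf{(A1)} and \textbf{(A2)}, the factors $|Rm|$, $|F|$, $|\nabla_M Rm|$, $|\nabla F|$ are all controlled by a single uniform constant $C$. After substituting, I would integrate by parts once more to convert the principal term $\langle\chi_{\epsilon}^{2k}\nabla\Delta u,\nabla u\rangle$ into $\|\chi_{\epsilon}^k\Delta u\|^2$ plus a further cut-off cross term $-2k\langle\chi_{\epsilon}^{2k-1}\Delta u,\nabla_{(d\chi_{\epsilon})^{\#}}u\rangle$. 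At this stage every term on the right involves either $\chi_{\epsilon}^?\Delta u$, $\chi_{\epsilon}^?\nabla u$, $\chi_{\epsilon}^?\nabla^2 u$, or $\chi_{\epsilon}^?u$, together with at most one factor of $d\chi_{\epsilon}$, which by \eqref{cut-off_bound_3} contributes a factor bounded by $C\epsilon$.

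To finish, I would apply Cauchy--Schwarz and Young's inequality to every cross term. The crucial one is the cross term that produces $|\nabla^2 u|$: pairing it with $|\nabla u|$ through $|d\chi_{\epsilon}|$ yields, after Young, a contribution of the form $k^2 C^2\epsilon^2\|\chi_{\epsilon}^k\nabla^2 u\|^2$ which must be absorbed into the left hand side; this absorption is exactly what produces the prefactor $\tfrac{1}{1-2k^2C^2\epsilon^2}$ and is legal provided $\epsilon$ is small enough. To convert the remaining $\|\chi_{\epsilon}^?\nabla u\|^2$ terms and $\|\chi_{\epsilon}^?\nabla\Delta u\|^2$ terms to pure Laplacian norms, I would apply Proposition \ref{covariant_est_1} twice: once at index $k-1$ with $u$ itself, and once at index $k$ with $u$ replaced by $\Delta u$. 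The latter substitution is what introduces the $\|\chi_{\epsilon}^{k+1}\Delta^2 u\|^2$ contribution, while the pair of factors $\tfrac{1}{2(1-k\epsilon)}$ and $\tfrac{1}{2(1-(k-1)\epsilon)}$ in the stated bound is exactly what comes out of these two invocations combined with the $\tfrac{1}{2}$ coefficients from Young's inequality.

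The main obstacle is bookkeeping: several different cross terms appear, each picking up its own factor of $\tfrac{1}{2(1-k\epsilon)}$ or $\tfrac{1}{2(1-(k-1)\epsilon)}$ depending on whether Proposition \ref{covariant_est_1} is applied in its original form or with $\Delta u$ in place of $u$, and the curvature terms coming from the commutator contribute the $C^2$ and $\tfrac{C^2}{2}$ summands through Young's inequality on $|(Rm+F)*\nabla u|\cdot|\nabla u|$ and $|\nabla(Rm+F)*u|\cdot|\nabla u|$. The other delicate point is that the absorption step fails unless one chooses $\epsilon$ small enough that $1-2k^2C^2\epsilon^2 > 0$ and $1-k\epsilon > 0$; this is the meaning of the phrase ``$\epsilon > 0$ sufficiently small'' in the statement, and forces the final estimate to carry the common prefactor $\tfrac{1}{1-2k^2C^2\epsilon^2}$ on every term.
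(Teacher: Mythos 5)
Your overall strategy is the paper's: one integration by parts, the commutation formula of Corollary~\ref{connection_3}, the cut-off bounds \eqref{cut-off_bound_3}, Cauchy--Schwarz and Young's inequality, absorption of the $\nabla^2 u$ cross term into the left-hand side, and then repeated invocations of Proposition~\ref{covariant_est_1} (once with $u$, once with $\Delta u$ in place of $u$) to reduce $\nabla$-norms to $\Delta$-norms. That much matches.

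However there is a genuine internal inconsistency in the middle of your plan. You propose to integrate by parts a \emph{second} time, converting the principal term $\langle\chi_{\epsilon}^{2k}\nabla\Delta u,\nabla u\rangle$ into $\|\chi_{\epsilon}^k\Delta u\|^2$ plus the cross term $-2k\langle\chi_{\epsilon}^{2k-1}\Delta u,\nabla_{(d\chi_{\epsilon})^{\#}}u\rangle$. But if you do this, there is no $\|\chi_{\epsilon}^{\ast}\nabla\Delta u\|^2$ term left anywhere in your inequality: the second integration by parts has removed it. You then nonetheless claim to ``apply Proposition~\ref{covariant_est_1} \ldots once at index $k$ with $u$ replaced by $\Delta u$,'' and that this ``introduces the $\|\chi_{\epsilon}^{k+1}\Delta^2 u\|^2$ contribution'' --- but this second invocation has no referent in your setup. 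Following your plan literally, after the second integration by parts the right-hand side consists only of terms in $\Delta u$, $\nabla u$, $\nabla^2 u$ and $u$, so the final estimate you would derive has \emph{no} $\|\chi_{\epsilon}^{k+1}\Delta^2 u\|^2$ term at all and therefore is not the stated inequality. The paper deliberately does \emph{not} perform a second integration by parts: it bounds $\vert\langle\chi_{\epsilon}^{2k}\nabla\Delta u, \nabla u\rangle\vert$ directly by Cauchy--Schwarz/Young as $\tfrac12\|\chi_{\epsilon}^k\nabla\Delta u\|^2 + \tfrac12\|\chi_{\epsilon}^k\nabla u\|^2$, precisely so that a $\nabla\Delta u$ norm survives to which Proposition~\ref{covariant_est_1} (applied with $\Delta u$ in place of $u$, legitimate since $u\in W^{4,2}_{loc}$) can then be applied; that step is what produces the $\tfrac{1}{4(1-k\epsilon)}\|\chi_{\epsilon}^{k+1}\Delta^2 u\|^2$ summand and the companion $\tfrac{1+2k}{4(1-k\epsilon)}\|\chi_{\epsilon}^{k-1}\Delta u\|^2$ summand in the stated bound. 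Drop the second integration by parts and use Young's there instead, and your plan lines up with the paper's proof.
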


\begin{proof}
We have 
\begin{align*}
\langle\chi_{\epsilon}^{2k}\nabla^2u, \nabla^2u\rangle &= \langle\nabla^{\dagger}(\chi_{\epsilon}^{2k}\nabla^2u), \nabla u\rangle \\
&= \langle\chi_{\epsilon}^{2k}\Delta\nabla u, \nabla u\rangle 
- \langle\nabla_{(d\chi_{\epsilon}^{2k})^{\#}}\nabla u, \nabla u\rangle \\
&= \langle\chi_{\epsilon}^{2k}(\nabla\Delta u - (Rm + F)*\nabla u - \nabla(Rm + F)*u), \nabla u\rangle -
 \langle\nabla_{(d\chi_{\epsilon}^{2k})^{\#}}\nabla u, \nabla u\rangle \\
&= 
\langle\chi_{\epsilon}^{2k}\nabla\Delta u, \nabla u\rangle - \langle\chi_{\epsilon}^{2k}(Rm+F)*\nabla u, \nabla u\rangle - 
\langle\chi_{\epsilon}^{2k}\nabla (Rm+F)*u, \nabla u\rangle \\
&\hspace{2.5cm} - 
 \langle\nabla_{(d\chi_{\epsilon}^{2k})^{\#}}\nabla u, \nabla u\rangle \\
&=
\langle\chi_{\epsilon}^{2k}\nabla\Delta u, \nabla u\rangle - \langle\chi_{\epsilon}^{2k}(Rm+F)*\nabla u, \nabla u\rangle - 
\langle\chi_{\epsilon}^{2k}\nabla (Rm+F)*u, \nabla u\rangle \\
&\hspace{2.4cm} -
 \langle 2k\chi_{\epsilon}^{2k-1}\nabla_{(d\chi_{\epsilon})^{\#}}\nabla u, \nabla u\rangle 
\end{align*}
where to get the first line, we are applying integration by parts noting
that $\chi_{\epsilon}^{2k}$ has compact support. To get the
the third line, we have applied the commutation formula 
from corollary \ref{connection_3}.

This implies
\begin{align*}
\vert\vert \chi_{\epsilon}^k\nabla^2u\vert\vert^2 \leq
\vert\langle\chi_{\epsilon}^{2k}\nabla\Delta u, \nabla u\rangle \vert &+ 
\vert \langle\chi_{\epsilon}^{2k}(Rm+F)*\nabla u, \nabla u\rangle \vert + 
\vert\langle\chi_{\epsilon}^{2k}\nabla (Rm+F)*u, \nabla u\rangle  \vert \\
&+
\vert \langle2k\chi_{\epsilon}^{2k-1}\nabla_{(d\chi_{\epsilon})^{\#}}\nabla u, 
\nabla u\rangle  \vert.
\end{align*}

Applying Cauchy-Schwarz and Young's inequality, we obtain

\begin{align*}
\vert\vert \chi_{\epsilon}^k\nabla^2u\vert\vert^2 &\leq 
\frac{1}{2}\vert\vert \chi_{\epsilon}^k\nabla\Delta u\vert\vert^2 +
\frac{1}{2}\vert\vert \chi_{\epsilon}^k\nabla u\vert\vert^2 +
C^2\vert\vert\chi_{\epsilon}^k\nabla u\vert\vert^2 + 
\frac{C^2}{2}\vert\vert\chi_{\epsilon}^k\nabla u\vert\vert^2 +
\frac{C^2}{2}\vert\vert\chi_{\epsilon}^ku\vert\vert^2 \\
&\hspace{1cm}+
2k^2C^2\epsilon^2\vert\vert\chi_{\epsilon}^k\nabla^2u\vert\vert^2 +
\frac{1}{2}\vert\vert\chi_{\epsilon}^{k-1}\nabla u\vert\vert^2 \\
&=
\frac{1}{2}\vert\vert \chi_{\epsilon}^k\nabla\Delta u\vert\vert^2 +
\bigg{(}\frac{1}{2} + C^2 + \frac{C^2}{2} \bigg{)}
\vert\vert\chi_{\epsilon}^k\nabla u\vert\vert^2 +
\frac{1}{2}\vert\vert\chi_{\epsilon}^{k-1}\nabla u\vert\vert^2 +
\frac{C^2}{2}\vert\vert\chi_{\epsilon}^ku\vert\vert^2 \\
&\hspace{1cm} 
+ 2k^2C^2\epsilon^2\vert\vert\chi_{\epsilon}^k\nabla^2u\vert\vert^2
\end{align*}
where for the first inequality, we have used our bounded geometry 
assumptions (A1) and (A2), see section \ref{BG}.

We can estimate the term 
$\vert\vert\chi_{\epsilon}^k\nabla\Delta u\vert\vert^2$ using
proposition \ref{covariant_est_1}.

\begin{equation*}
\vert\vert\chi_{\epsilon}^k\nabla\Delta u\vert\vert^2 \leq 
\frac{1}{2(1-k\epsilon)}
\vert\vert\chi_{\epsilon}^{k+1}\Delta^2u\vert\vert^2 + 
\frac{1+2k}{2(1-k\epsilon)}
\vert\vert \chi_{\epsilon}^{k-1}\Delta u\vert\vert^2.
\end{equation*}

We also estimate the term 
$\vert\vert\chi_{\epsilon}^{k-1}\nabla u\vert\vert^2$ using
proposition \ref{covariant_est_1}.

\begin{equation*}
\vert\vert\chi_{\epsilon}^{k-1}\nabla u\vert\vert^2 \leq 
\frac{1}{2(1-(k-1)\epsilon)}
\vert\vert\chi^k_{\epsilon}\Delta u\vert\vert^2 + 
\frac{1+2(k-1)\epsilon}{2(1-(k-1)\epsilon)}
\vert\vert\chi_{\epsilon}^{k-2}u\vert\vert^2.
\end{equation*}

Using these two estimates, along with proposition \ref{covariant_est_1}
to estimate the term 
$\vert\vert\chi_{\epsilon}^k\nabla u\vert\vert^2$, we obtain
\begin{align*}
\vert\vert\chi_{\epsilon}^k\nabla^2u\vert\vert^2 &\leq 
\frac{1}{4(1-k\epsilon)}
\vert\vert\chi_{\epsilon}^{k+1}\Delta^2u\vert\vert^2 + 
\frac{1+2k}{4(1-k\epsilon)}
\vert\vert\chi_{\epsilon}^{k-1}\Delta u\vert\vert^2 \\
&\hspace{1cm}+
\bigg{(}\frac{1}{2} + C^2 + \frac{C^2}{2}\bigg{)}
\bigg{(}\frac{1}{2(1-k\epsilon)} \bigg{)}
\vert\vert\chi_{\epsilon}^{k+1}\Delta u\vert\vert^2 \\
&\hspace{1cm} + 
\bigg{(} \frac{1}{2} + C^2 + \frac{C^2}{2}\bigg{)}
\bigg{(}
\frac{1+2k\epsilon}{2(1-k\epsilon)} 
\bigg{)}\vert\vert\chi_{\epsilon}^{k-1}u\vert\vert^2 \\
&\hspace{1cm} + \frac{1}{4(1-(k-1)\epsilon)}
\vert\vert\chi_{\epsilon}^k\Delta u\vert\vert^2 + 
\frac{1+2(k-1)\epsilon}{4(1-(k-1)\epsilon)}
\vert\vert\chi_{\epsilon}^{k-2}u\vert\vert^2 \\
&\hspace{1cm} + \frac{C^2}{2}\vert\vert\chi_{\epsilon}^ku\vert\vert^2 
+ 2k^2C^2\epsilon^2\vert\vert\chi_{\epsilon}^k\nabla^2u\vert\vert^2
\end{align*}

which gives
\begin{align*}
(1-2k^2C^2\epsilon^2)\vert\vert\chi_{\epsilon}^k\nabla^2u\vert\vert^2 &\leq 
\frac{1}{4(1-k\epsilon)}
\vert\vert\chi_{\epsilon}^{k+1}\Delta^2u\vert\vert^2 + 
\frac{1+2k}{4(1-k\epsilon)}
\vert\vert\chi_{\epsilon}^{k-1}\Delta u\vert\vert^2 \\
&\hspace{1cm}+
\bigg{(}\frac{1}{2} + C^2 + \frac{C^2}{2}\bigg{)}
\bigg{(}\frac{1}{2(1-k\epsilon)} \bigg{)}
\vert\vert\chi_{\epsilon}^{k+1}\Delta u\vert\vert^2 \\
&\hspace{1cm} + 
\bigg{(} \frac{1}{2} + C^2 + \frac{C^2}{2}\bigg{)}
\bigg{(}
\frac{1+2k\epsilon}{2(1-k\epsilon)} 
\bigg{)}\vert\vert\chi_{\epsilon}^{k-1}u\vert\vert^2 \\
&\hspace{1cm} + \frac{1}{4(1-(k-1)\epsilon)}
\vert\vert\chi_{\epsilon}^k\Delta u\vert\vert^2 + 
\frac{1+2(k-1)\epsilon}{4(1-(k-1)\epsilon)}
\vert\vert\chi_{\epsilon}^{k-2}u\vert\vert^2 \\
&\hspace{1cm} + \frac{C^2}{2}\vert\vert\chi_{\epsilon}^ku\vert\vert^2.
\end{align*}
Choosing $\epsilon$ small enough so that $1-2k^2C^2\epsilon^2 > 0$, we
obtain
\begin{align*}
\vert\vert \chi_{\epsilon}^k\nabla^2u\vert\vert^2 
&\leq 
\bigg{(}
\frac{1}{1-2k^2C^2\epsilon^2} 
\bigg{)}
\bigg{(}\frac{1}{4(1-k\epsilon)}\bigg{)}
\vert\vert \chi_{\epsilon}^{k+1}\Delta^2u\vert\vert^2 \\
&\hspace{0.5cm} + 
\bigg{(}\frac{1}{1-2k^2C^2\epsilon^2} \bigg{)}
\bigg{(} \frac{1}{2}+ C^2+ \frac{C^2}{2}\bigg{)}
\bigg{(}\frac{1}{2(1-k\epsilon)} \bigg{)}
\vert\vert\chi_{\epsilon}^{k+1}\Delta u\vert\vert^2  \\
&\hspace{0.5cm} + 
\bigg{(} \frac{1}{1-2k^2C^2\epsilon^2}
\bigg{)}
\bigg{(}\frac{1}{4(1-(k-1)\epsilon)}
\bigg{)}
\vert\vert\chi_{\epsilon}^k\Delta u\vert\vert^2 + 
\bigg{(}
\frac{1}{1-2k^2C^2\epsilon^2}
\bigg{)}
\bigg{(}
\frac{1+2k}{4(1-k\epsilon)}
\bigg{)}
\vert\vert\chi_{\epsilon}^{k-1}\Delta u\vert\vert^2  \\
&\hspace{0.5cm} +
\bigg{(}\frac{1}{1-2k^2C^2\epsilon^2}\bigg{)}
\bigg{(}\frac{C^2}{2}\bigg{)}
\vert\vert\chi_{\epsilon}^ku\vert\vert^2 
+
\bigg{(}\frac{1}{1-2k^2C^2\epsilon^2}\bigg{)}
\bigg{(}\frac{1}{2} + C^2 + \frac{C^2}{2}\bigg{)}
\bigg{(}\frac{1+2k\epsilon}{2(1-k\epsilon)} \bigg{)}
\vert\vert\chi_{\epsilon}^{k-1}u\vert\vert^2  \\
&\hspace{0.5cm} +
\bigg{(}\frac{1}{1-2k^2C^2\epsilon^2}\bigg{)}
\bigg{(}
\frac{1+2(k-1)\epsilon}{4(1-(k-1)\epsilon)} 
\bigg{)}
\vert\vert\chi_{\epsilon}^{k-2}u\vert\vert^2 
 \end{align*}
 which proves the result.
\end{proof}

\section{Derivative estimates for powers of cut-off functions}\label{derivative_est_powers_cut_offs}

The purpose of this section is to obtain certain higher order derivative
estimates for the cut-off functions $\chi_{\epsilon}$, see
section \ref{cut_offs} for their construction. These estimates will
then be used in the next section, where we obtain several more derivative
estimates for sections of a Hermitian bundle.

$k$ will denote a large fixed positive integer.

From \eqref{Laplace_comp}, we have the following formula for the
Laplacian of the cut-off function $\chi_{\epsilon}$

\begin{align}
\Delta_M(\chi_{\epsilon}^k)&= k(k-1)\chi_{\epsilon}^{k-2}\vert d\chi_{\epsilon}\vert^2 +
k\chi_{\epsilon}^{k-1}\Delta_M\chi_{\epsilon} \label{derivative_powers} \\
&= \chi_{\epsilon}^{k-2}(k(k-1)
\vert d\chi_{\epsilon}\vert^2 + k\chi_{\epsilon}\Delta_M\chi_{\epsilon}) 
\nonumber\\
&= \chi_{\epsilon}^{k-2}
G_1(\vert d\chi_{\epsilon}\vert, \Delta_M\chi_{\epsilon}, \chi_{\epsilon}). 
\label{derivative_powers_a}
\end{align}

From properties \eqref{cut-off_bound_2} and \eqref{cut-off_bound_3}, we 
immediately obtain the following claim.
\begin{claim}\label{G_1_est}
$G_1(\vert d\chi_{\epsilon}\vert, \Delta_M\chi_{\epsilon}, \chi_{\epsilon}) \leq 
C\epsilon$ for some constant $C$, which does not depend on $\epsilon$.
\end{claim}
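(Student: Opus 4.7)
The plan is a direct term-by-term estimate of the expression
$$G_1(\vert d\chi_\epsilon\vert, \Delta_M\chi_\epsilon, \chi_\epsilon) = k(k-1)\vert d\chi_\epsilon\vert^2 + k\chi_\epsilon\Delta_M\chi_\epsilon,$$
using the pointwise bounds on $\chi_\epsilon$, $d\chi_\epsilon$ and $\Delta_M\chi_\epsilon$ already recorded in Section \ref{cut_offs}. No new machinery is required; the only mild point is that $\epsilon^2$ must be absorbed into $\epsilon$, which we can always do since we are free to take $\epsilon$ small.

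First I would handle the gradient term. By \eqref{cut-off_bound_3} we have $\vert d\chi_\epsilon\vert \leq C_0\epsilon$, so $\vert d\chi_\epsilon\vert^2 \leq C_0^2\epsilon^2$. Restricting, without loss of generality, to $\epsilon \leq 1$, this upgrades to the linear bound $\vert d\chi_\epsilon\vert^2 \leq C_0^2\epsilon$.

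Next I would handle the mixed term $\chi_\epsilon\Delta_M\chi_\epsilon$. By the construction of $\chi_\epsilon$ in Section \ref{cut_offs}, namely $\chi_\epsilon = F(\epsilon\hat{d}_{1/\epsilon})$ with $F$ taking values in $[0,1]$ (equal to $1$ for small argument, monotonically decreasing on $[1,4]$ and $0$ beyond), we have $0 \leq \chi_\epsilon \leq 1$ pointwise. Combined with \eqref{cut-off_bound_2} applied at $k=1$, which gives $\vert \Delta_M\chi_\epsilon\vert \leq C_1\epsilon$, this yields $\vert \chi_\epsilon\Delta_M\chi_\epsilon\vert \leq C_1\epsilon$.

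Putting the two bounds together, the triangle inequality gives
$$\vert G_1(\vert d\chi_\epsilon\vert,\Delta_M\chi_\epsilon,\chi_\epsilon)\vert \leq k(k-1)C_0^2\epsilon + kC_1\epsilon = C\epsilon,$$
with $C := k(k-1)C_0^2 + kC_1$, which depends only on $k$ and the geometric constants $C_0, C_1$ from Section \ref{cut_offs} and not on $\epsilon$. This is precisely the claimed estimate; there is no real obstacle to overcome, so the argument is essentially a one-line consequence of the cut-off estimates already in hand.
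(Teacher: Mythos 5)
Your proof is correct and follows essentially the same approach the paper implicitly takes: the paper states the claim "immediately" follows from the cut-off bounds \eqref{cut-off_bound_2} and \eqref{cut-off_bound_3}, and your term-by-term estimate is exactly how that implicit argument unpacks, including the harmless use of $\epsilon^2 \leq \epsilon$ for $\epsilon \leq 1$.
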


The above formula for $\Delta_M(\chi_{\epsilon}^k)$, and claim \ref{G_1_est},
imply the following corollary.

\begin{cor}\label{laplacian_prod_est_1}
We have the estimate $\vert\Delta_M(\chi_{\epsilon}^k)\vert 
\leq C\epsilon\chi_{\epsilon}^{k-2}$ for some constant $C > 0$. 
\end{cor}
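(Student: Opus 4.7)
The plan is to extract the corollary directly from the identity \eqref{derivative_powers_a} together with Claim \ref{G_1_est}; no new work is required beyond taking absolute values. Starting from
\begin{equation*}
\Delta_M(\chi_{\epsilon}^k) = \chi_{\epsilon}^{k-2} G_1(\vert d\chi_{\epsilon}\vert, \Delta_M\chi_{\epsilon}, \chi_{\epsilon}),
\end{equation*}
I would first observe that, by construction of the cut-off in section \ref{cut_offs}, one has $0 \leq \chi_{\epsilon} \leq 1$ pointwise on $M$, so $\chi_{\epsilon}^{k-2} \geq 0$ and its absolute value equals itself. Therefore the absolute value of the right-hand side factors as $\chi_{\epsilon}^{k-2}\, |G_1|$.

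Next I would invoke Claim \ref{G_1_est}; strictly speaking the claim is stated as an upper bound, but inspecting its derivation it applies to $|G_1|$, since $G_1 = k(k-1)|d\chi_{\epsilon}|^2 + k\chi_{\epsilon}\Delta_M\chi_{\epsilon}$ is controlled in absolute value by $k(k-1)\,|d\chi_\epsilon|^2 + k\,|\chi_\epsilon|\,|\Delta_M\chi_\epsilon|$, which via \eqref{cut-off_bound_2} and \eqref{cut-off_bound_3} is bounded by $k(k-1)C_0^2\epsilon^2 + kC_1\epsilon \leq C\epsilon$ for $\epsilon$ small and a constant $C$ independent of $\epsilon$. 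Combining these two observations gives
\begin{equation*}
|\Delta_M(\chi_{\epsilon}^k)| = \chi_{\epsilon}^{k-2}\,|G_1| \leq C\epsilon\,\chi_{\epsilon}^{k-2},
\end{equation*}
which is the required estimate.

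There is no real obstacle here: the corollary is essentially a repackaging of Claim \ref{G_1_est} with the factor $\chi_\epsilon^{k-2}$ retained on the right-hand side rather than discarded. The only point worth noting is why one does not simply bound $\chi_\epsilon^{k-2}$ by $1$ and state the estimate as $|\Delta_M(\chi_\epsilon^k)| \leq C\epsilon$: the weighted form in terms of $\chi_\epsilon^{k-2}$ is exactly what is needed in the subsequent sections, where the factor $\chi_\epsilon^{k-2}$ will be paired against another $\chi_\epsilon$-weighted quantity to produce integrable cross terms.
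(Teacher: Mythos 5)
Your proposal is correct and follows exactly the route the paper takes: read off the factorization $\Delta_M(\chi_{\epsilon}^k)=\chi_{\epsilon}^{k-2}G_1$ from \eqref{derivative_powers_a}, use $0\le\chi_{\epsilon}\le 1$, and apply Claim \ref{G_1_est} (which, as you note, really controls $|G_1|$ via \eqref{cut-off_bound_2}--\eqref{cut-off_bound_3}) to get the weighted bound. Nothing more is needed.
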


Applying $\Delta_M$ to \eqref{derivative_powers}, and making use of formulas
\eqref{laplace_product_formula} and \eqref{Laplace_comp},
we obtain the following

\begin{align}
\Delta_M^2\chi_{\epsilon}^k &= 
k(k-1)\Delta_M(\chi_{\epsilon}^{k-2}\vert d\chi_{\epsilon}\vert^2) + 
k\Delta_M(\chi_{\epsilon}^{k-1}\Delta_M\chi_{\epsilon}) \nonumber \\
&=
k(k-1)\chi_{\epsilon}^{k-2}\Delta_M(\vert d\chi_{\epsilon}\vert^2) -
k(k-1)\nabla_{(d\chi_{\epsilon}^{k-2})^{\#}}\vert d\chi_{\epsilon}\vert^2 + 
k(k-1)\vert d\chi_{\epsilon}\vert^2\delta(\chi_{\epsilon}^{k-2}) 
\nonumber \\
&\hspace{0.5cm} +
k\chi_{\epsilon}^{k-1}\Delta_M^2\chi_{\epsilon} - 
2k\nabla_{(d\chi_{\epsilon}^{k-2})^{\#}}\Delta_M\chi_{\epsilon} +
k\Delta_M\chi_{\epsilon}\Delta_M(\chi_{\epsilon}^{k-1}) 
\nonumber \\
&=
k(k-1)\chi_{\epsilon}^{k-2}\Delta_M\vert d\chi_{\epsilon}\vert^2 - 
k(k-1)(k-2)\chi_{\epsilon}^{k-3}\nabla_{(d\chi_{\epsilon})^{\#}} 
\vert d\chi_{\epsilon}\vert^2 +
k(k-1)\vert d\chi_{\epsilon}\vert^2\chi_{\epsilon}^{k-4}
G_1(\vert d\chi_{\epsilon}\vert, \Delta_M\chi_{\epsilon}, \chi_{\epsilon}) 
\nonumber \\
&\hspace{0.5cm} +
k\chi_{\epsilon}^{k-1}\Delta_M^2\chi_{\epsilon} 
- 2k(k-1)\chi_{\epsilon}^{k-2}
\nabla_{d\chi_{\epsilon}^{\#}}\Delta_M\chi_{\epsilon} + 
\chi_{\epsilon}^{k-3}\Delta_M\chi_{\epsilon}
G_1(\vert d\chi_{\epsilon}\vert, \Delta_M\chi_{\epsilon}, \chi_{\epsilon}) 
\nonumber \\
&=
\chi_{\epsilon}^{k-4}\bigg{(}
k(k-1)\chi_{\epsilon}^{2}\Delta_M\vert d\chi_{\epsilon}\vert^2
 -
k(k-1)(k-2)\chi_{\epsilon}\nabla_{(d\chi_{\epsilon})^{\#}} 
\vert d\chi_{\epsilon}\vert^2 +
k(k-1)\vert d\chi_{\epsilon}\vert^2
G_1(\vert d\chi_{\epsilon}\vert, \Delta_M\chi_{\epsilon}, \chi_{\epsilon}) 
\nonumber \\
&\hspace{0.5cm} + 
k\chi_{\epsilon}^{3}\Delta_M^2\chi_{\epsilon} -
2k(k-1)\chi_{\epsilon}^{2}
\nabla_{d\chi_{\epsilon}^{\#}}\Delta_M\chi_{\epsilon} +
\chi_{\epsilon}\Delta_M\chi_{\epsilon}
G_1(\vert d\chi_{\epsilon}\vert, \Delta_M\chi_{\epsilon}, \chi_{\epsilon})
\bigg{)} 
\nonumber \\
&=
\chi_{\epsilon}^{k-4}
G_2(\vert d\chi_{\epsilon}\vert^2, \Delta_M\chi_{\epsilon}, \chi_{\epsilon}, 
\Delta_M\vert d\chi_{\epsilon}\vert^2, 
\nabla_{(d\chi_{\epsilon})^{\#}} 
\vert d\chi_{\epsilon}\vert^2, \Delta_M^2\chi_{\epsilon}, 
\nabla_{(d\chi_{\epsilon})^{\#}}\Delta_M\chi_{\epsilon}). 
\label{derivative_powers_b}
\end{align}

The goal now is to obtain an estimate for the quantity $G_2$.
We start with the following three lemmas.

\begin{lem}\label{G2_est_1}
We have the estimate 
$| \nabla_{(d\chi_{\epsilon})^{\#}} \vert d\chi_{\epsilon}\vert^2 | \leq 
C\epsilon$, for some constant $C$ independent of $\epsilon$. 
\end{lem}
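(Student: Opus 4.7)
The plan is to rewrite $\nabla_{(d\chi_\epsilon)^\#}|d\chi_\epsilon|^2$ as the pairing of the $1$-form $d|d\chi_\epsilon|^2$ with $d\chi_\epsilon$, apply Cauchy--Schwarz (equivalently, the $*$-notation estimate \eqref{star_notation_ex}), and then control each factor using the cut-off bounds \eqref{cut-off_bound_1}, \eqref{cut-off_bound_3} together with bounded geometry. Since $|d\chi_\epsilon|^2$ is a smooth function on $M$, one has
\[
\nabla_{(d\chi_\epsilon)^\#}|d\chi_\epsilon|^2 \;=\; \bigl\langle d|d\chi_\epsilon|^2,\, d\chi_\epsilon\bigr\rangle,
\]
so immediately
\[
\bigl|\nabla_{(d\chi_\epsilon)^\#}|d\chi_\epsilon|^2\bigr| \;\leq\; |d\chi_\epsilon|\cdot \bigl|d|d\chi_\epsilon|^2\bigr|,
\]
and the first factor on the right is already bounded by $C_0\epsilon$ thanks to \eqref{cut-off_bound_3}.

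For the second factor, I would work in normal coordinates of some fixed radius $\delta>0$ around an arbitrary point $y\in M$; by assumption \textbf{(A1)} and Proposition \ref{christoffel_derivatives} such a chart exists with $\delta$ and all ensuing constants independent of $y$, and both the components $g^{ij}$ of the inverse metric and their first coordinate derivatives are uniformly bounded. In these coordinates $|d\chi_\epsilon|^2 = g^{ij}\partial_i\chi_\epsilon\,\partial_j\chi_\epsilon$, so differentiating once produces a finite sum of terms of the form $(\partial_k g^{ij})\,\partial_i\chi_\epsilon\,\partial_j\chi_\epsilon$ and $g^{ij}\,\partial_k\partial_i\chi_\epsilon\,\partial_j\chi_\epsilon$. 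Each of these is bounded by $C\epsilon^2$ using \eqref{cut-off_bound_1} with $|\alpha|=1$ and $|\alpha|=2$, together with the uniform bounds on $g^{ij}$ and $\partial g^{ij}$. Converting back from the coordinate estimate to the Riemannian norm costs only another universal constant (again by Proposition \ref{christoffel_derivatives}), giving $|d|d\chi_\epsilon|^2|\leq C\epsilon^2$ pointwise on $M$.

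Combining the two factor estimates yields
\[
\bigl|\nabla_{(d\chi_\epsilon)^\#}|d\chi_\epsilon|^2\bigr| \;\leq\; C\epsilon\cdot C\epsilon^2 \;=\; C\epsilon^3,
\]
which, for $\epsilon \leq 1$, is certainly dominated by $C\epsilon$ as claimed. The only point that is more than a direct computation is ensuring that the normal coordinate expansion and the associated bounds are uniform in the basepoint $y$, and this is exactly what assumption \textbf{(A1)} provides through Proposition \ref{christoffel_derivatives}; so there is no genuine obstacle, and the argument is essentially an application of the product rule in a uniformly controlled chart.
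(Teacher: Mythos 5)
Your argument is correct, and it arrives at the same place as the paper's proof by a slightly different route. The paper uses metric compatibility to write $\nabla_{(d\chi_\epsilon)^\#}|d\chi_\epsilon|^2 = 2\langle\nabla_{(d\chi_\epsilon)^\#}d\chi_\epsilon, d\chi_\epsilon\rangle$, computes $\nabla_{(d\chi_\epsilon)^\#}d\chi_\epsilon$ in normal coordinates \emph{centered at} the evaluation point (where $g^{ij}=\delta^{ij}$, $\Gamma=0$), and then contracts. You instead observe that for the \emph{scalar} $f=|d\chi_\epsilon|^2$ one has $\nabla_{(d\chi_\epsilon)^\#}f = \langle df, d\chi_\epsilon\rangle$ with no connection needed, then apply Cauchy--Schwarz so that the quantity factors as $|d\chi_\epsilon|\cdot|d|d\chi_\epsilon|^2|$; the first factor is \eqref{cut-off_bound_3} and the second is exactly what the paper proves later in Lemma~\ref{G3_est_1}. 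This is a bit more modular, avoids Christoffel symbols entirely in this step, and makes explicit that the true order is $O(\epsilon^3)$ rather than merely $O(\epsilon)$ (which is of course also implicit in the paper's computation, since it reduces to a product $\partial\chi_\epsilon\cdot\partial\chi_\epsilon\cdot\partial^2\chi_\epsilon$). One small caveat worth keeping in mind: when you estimate $|d|d\chi_\epsilon|^2|$ at a general point of the chart rather than at the center, the pointwise norm is $g^{kl}(d|d\chi_\epsilon|^2)_k(d|d\chi_\epsilon|^2)_l$, so you rely on the uniform two-sided bounds on $g^{kl}$ from Proposition~\ref{christoffel_derivatives}; you say this, and it is enough, but the paper's device of evaluating at the center of the chart removes even that dependence.
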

\begin{proof}
We start by computing in normal coordinates about a point $p \in M$. Write
$d\chi_{\epsilon} = \frac{\partial{\chi_{\epsilon}}}{\partial{y^i}}dy^i$, 
we then have 
$(d\chi_{\epsilon})^{\#} = g^{ij}
\frac{\partial{\chi_{\epsilon}}}{\partial{y^i}}
\frac{\partial}{\partial{y^j}}$.
We can then compute
\begin{equation*}
\nabla_{(d\chi_{\epsilon})^{\#}}d\chi_{\epsilon} = 
\bigg{(} 
g^{ij}\frac{\partial{\chi_{\epsilon}}}{\partial{y^i}}
\frac{\partial^2{\chi_{\epsilon}}}{\partial{y^j}\partial{y^k}}
- g^{ij}\frac{\partial{\chi_{\epsilon}}}{\partial{y^i}}
\frac{\partial{\chi_{\epsilon}}}{\partial{y^l}}\Gamma^l_{jk}
\bigg{)}dy^k.
\end{equation*}
At the point $p$ we have $g^{ij} = \delta^{ij}$ and $\Gamma^l_{jk} = 0$, 
since we are in normal coordinates about $p$. Therefore, at the point $p$ 
we can write
\begin{equation*}
\nabla_{(d\chi_{\epsilon})^{\#}}d\chi_{\epsilon}(p) = 
\sum_{i}\frac{\partial{\chi_{\epsilon}}}{\partial{y^i}}
\frac{\partial^2{\chi_{\epsilon}}}{\partial{y^j}\partial{y^k}}dx^k(p).
\end{equation*}
Using the fact that $\nabla$ is a metric connection, at the point $p$, we 
have
\begin{align*}
\nabla_{(d\chi_{\epsilon})^{\#}} \vert d\chi_{\epsilon}\vert^2 &= 
2\langle \nabla_{(d\chi_{\epsilon})^{\#}}d\chi_{\epsilon}, 
d\chi_{\epsilon}\rangle \\
&=
2\sum_{i, k}
\frac{\partial{\chi_{\epsilon}}}{\partial{y^i}}
\frac{\partial{\chi_{\epsilon}}}{\partial{y^k}}
\frac{\partial^2{\chi_{\epsilon}}}{\partial{y^i}\partial{y^k}}.
\end{align*}
We note
that the quantity we are trying to bound is a tensor, therefore its value
at a point is independent of the coordinates chosen to compute it. 
The bound then follows from \eqref{cut-off_bound_1}.
\end{proof}

\begin{lem}\label{G2_est_2}
We have the estimate 
$\bigg{\vert}\nabla_{(d\chi_{\epsilon})^{\#}}\Delta_M\chi_{\epsilon}\bigg{\vert}
\leq C\epsilon$ for some constant $C > 0$ independent of $\epsilon$.
\end{lem}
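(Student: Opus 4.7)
The plan is to mimic the approach used for Lemma \ref{G2_est_1}: translate the covariant quantity into one involving only partial derivatives of $\chi_\epsilon$ by working in normal coordinates centred at an arbitrary point $p \in M$, and then combine the cut-off bound \eqref{cut-off_bound_1} with Proposition \ref{christoffel_derivatives}, which under assumption \textbf{(A1)} yields uniform control on the metric, the Christoffel symbols and their derivatives in such coordinates.

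The first step is to observe that, since $\Delta_M\chi_\epsilon$ is a scalar function,
\[
\nabla_{(d\chi_\epsilon)^\#}\Delta_M\chi_\epsilon \;=\; (d\chi_\epsilon)^\#(\Delta_M\chi_\epsilon) \;=\; \langle d\chi_\epsilon, d\Delta_M\chi_\epsilon\rangle,
\]
so Cauchy--Schwarz gives $\vert\nabla_{(d\chi_\epsilon)^\#}\Delta_M\chi_\epsilon\vert \leq \vert d\chi_\epsilon\vert\,\vert d\Delta_M\chi_\epsilon\vert$. The first factor is already bounded by $C\epsilon$ via \eqref{cut-off_bound_3}, so it suffices to show that $\vert d\Delta_M\chi_\epsilon\vert$ is controlled (in fact I will obtain a bound of order $\epsilon$, which is stronger than needed).

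To bound $\vert d\Delta_M\chi_\epsilon\vert$ at the point $p$, I would fix normal coordinates centred at $p$ and expand $\Delta_M\chi_\epsilon$ in the usual form as a linear combination of $g^{ij}\partial_i\partial_j\chi_\epsilon$ and $g^{ij}\Gamma^k_{ij}\partial_k\chi_\epsilon$. Differentiating once with respect to a coordinate direction $\partial_l$ produces a finite sum of terms, each a product of (i) a partial derivative of $\chi_\epsilon$ of order between one and three, bounded by $C\epsilon$ by \eqref{cut-off_bound_1}, and (ii) a smooth function of the metric, the Christoffel symbols and their first partial derivatives, which is bounded uniformly in $p$ by Proposition \ref{christoffel_derivatives}. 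Evaluating at $p$, where $g^{ij}(p) = \delta^{ij}$, and invoking the tensorial (hence coordinate-invariant) nature of $\vert d\Delta_M\chi_\epsilon\vert$, one concludes that $\vert d\Delta_M\chi_\epsilon\vert(p) \leq C\epsilon$ with $C$ independent of $p$. Combining with the bound on $\vert d\chi_\epsilon\vert$ then yields $\vert\nabla_{(d\chi_\epsilon)^\#}\Delta_M\chi_\epsilon\vert \leq C\epsilon^2 \leq C\epsilon$.

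The only genuine obstacle is the routine bookkeeping involved in writing out $\partial_l\Delta_M\chi_\epsilon$ and verifying that the constants produced by Proposition \ref{christoffel_derivatives} can be taken uniform in $p$ simultaneously for all terms of the expansion; beyond this, no new technical ingredient past what was already deployed in Lemma \ref{G2_est_1} is required.
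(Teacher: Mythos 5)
Your proof is correct, and it takes a slightly different route than the paper's. The paper expands $\nabla_{(d\chi_\epsilon)^\#}\Delta_M\chi_\epsilon$ directly in normal coordinates about $p$, evaluates at $p$ where $g^{ij}=\delta^{ij}$ and $\Gamma^i_{jk}=0$, and then applies \eqref{cut-off_bound_1} and Proposition~\ref{christoffel_derivatives} to the surviving terms. You instead exploit that $\Delta_M\chi_\epsilon$ is a scalar, so the covariant derivative is just the directional derivative $\langle d\chi_\epsilon, d\Delta_M\chi_\epsilon\rangle$, and then apply Cauchy--Schwarz; this reduces the claim to the two separate pointwise bounds $\vert d\chi_\epsilon\vert \le C\epsilon$ (already \eqref{cut-off_bound_3}) and $\vert d\Delta_M\chi_\epsilon\vert \le C\epsilon$ (which is exactly Lemma~\ref{G3_est_2}, proved later in the same section, and which you re-derive on the spot). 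The factorization is cleaner and actually yields the stronger bound $C\epsilon^2$, and it exposes the structural relationship between this lemma and Lemma~\ref{G3_est_2}, whereas the paper's version is self-contained at this point in the exposition without a forward dependency. Both approaches rest on the same ingredients and are entirely compatible.
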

\begin{proof}
We fix a point $p \in M$, and we compute in normal coordinates about $p$.
In these coordinates we have the following formula for 
$\Delta_M\chi_{\epsilon}$
\begin{equation*}
\Delta_M\chi_{\epsilon} = g^{jk}\bigg{(} 
\frac{\partial^2\chi_{\epsilon}}{\partial{x^j}\partial{x^k}} - 
\frac{\partial{\chi_{\epsilon}}}{\partial{x^i}}\Gamma^{i}_{jk}.
\bigg{)}
\end{equation*}
Writing $(d\chi_{\epsilon})^{\#} = g^{pq}
\frac{\partial{\chi_{\epsilon}}}{\partial{x^p}}
\frac{\partial}{\partial{x^q}}$. We have
\begin{equation*}
\nabla_{(d\chi_{\epsilon})^{\#}}\Delta_M\chi_{\epsilon} = 
g^{pq}\frac{\partial{\chi_{\epsilon}}}{\partial{x^p}}
\frac{\partial{g^{jk}}}{\partial{x^q}}\bigg{(}
\frac{\partial^2\chi_{\epsilon}}{\partial{x^j}\partial{x^k}} - 
\frac{\partial{\chi_{\epsilon}}}{\partial{x^i}}\Gamma^{i}_{jk}
\bigg{)} +
g^{pq}g^{jk}\frac{\partial{\chi_{\epsilon}}}{\partial{x^p}}
\bigg{(}
\frac{\partial^3\chi_{\epsilon}}
{\partial{x^q}\partial{x^j}\partial{x^k}} -
\frac{\partial^2\chi_{\epsilon}}{\partial{x^q}\partial{x^i}}\Gamma^i_{jk} -
\frac{\partial{\chi_{\epsilon}}}{\partial{x^i}}
\frac{\partial{\Gamma^{i}_{jk}}}{\partial{x^q}}
\bigg{)}.
\end{equation*}
Since we are in normal coordinates, evaluating at $p$ gives
\begin{equation*}
\nabla_{(d\chi_{\epsilon})^{\#}}\Delta_M\chi_{\epsilon}(p) = 
\sum_{p, j}\frac{\partial{\chi_{\epsilon}}}{\partial{x^p}}(p)\bigg{(}
\frac{\partial^3\chi_{\epsilon}}
{\partial{x^q}\partial{x^j}\partial{x^j}}(p) - 
\frac{\partial{\chi_{\epsilon}}}{\partial{x^i}}(p)
\frac{\partial{\Gamma^{i}_{jj}}}{\partial{x^p}}(p)
\bigg{)}.
\end{equation*}
Applying \eqref{cut-off_bound_1} and lemma \ref{christoffel_derivatives} 
we get the required estimate.
\end{proof}

\begin{lem}\label{G2_est_3}
We have the estimate
$\bigg{\vert}\Delta_M\vert d\chi_{\epsilon}\vert^2\bigg{\vert}
\leq C\epsilon$
\end{lem}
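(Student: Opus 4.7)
The plan is to follow the same template as in the proofs of lemmas \ref{G2_est_1} and \ref{G2_est_2}: fix an arbitrary point $p \in M$, work in normal coordinates about $p$ of radius $\delta$ (as in proposition \ref{christoffel_derivatives}), compute $\Delta_M |d\chi_\epsilon|^2$ coordinate-wise, evaluate at $p$, and then invoke the uniformity of the constants in $p$.

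Concretely, I would write $|d\chi_\epsilon|^2 = g^{ij}\,\partial_i\chi_\epsilon\,\partial_j\chi_\epsilon$ and apply the local formula
\begin{equation*}
\Delta_M f = g^{jk}\bigl(\partial_j\partial_k f - \Gamma^l_{jk}\partial_l f\bigr)
\end{equation*}
to $f = |d\chi_\epsilon|^2$. Expanding the two partial derivatives acting on $g^{ij}\partial_i\chi_\epsilon\,\partial_j\chi_\epsilon$ by the product rule produces a finite linear combination of terms of the schematic form
\begin{equation*}
(\text{derivatives of order }\le 2\text{ of } g^{ij})\cdot(\text{derivatives of orders } 1,2,\text{ or }3\text{ of }\chi_\epsilon)
\end{equation*}
together with a correction coming from $-g^{jk}\Gamma^l_{jk}\partial_l|d\chi_\epsilon|^2$, which similarly involves $\Gamma$, its first derivative, and derivatives of $\chi_\epsilon$ of order $\le 2$.

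Evaluating at $p$, the metric entries reduce to $\delta^{ij}$ and the Christoffel symbols vanish, leaving a finite sum whose factors are bounded uniformly: the derivatives of the metric and of the Christoffel symbols by proposition \ref{christoffel_derivatives}, and each factor that is a derivative of $\chi_\epsilon$ by $C\epsilon$ thanks to \eqref{cut-off_bound_1}. Every surviving summand contains at least one factor with a derivative of $\chi_\epsilon$, so we pick up at least one factor of $\epsilon$. Since $\Delta_M|d\chi_\epsilon|^2$ is a scalar, the value at $p$ does not depend on the coordinate chart used, and the constant is independent of $p$ by the bounded geometry assumption \textbf{(A1)}. This gives the pointwise bound $|\Delta_M|d\chi_\epsilon|^2| \leq C\epsilon$ on all of $M$.

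There is no real obstacle: the proof is a direct, slightly longer version of the computations in lemmas \ref{G2_est_1} and \ref{G2_est_2}. The only thing to watch is the bookkeeping, i.e.\ to verify that \emph{every} term in the expansion of $\Delta_M|d\chi_\epsilon|^2$ contains at least one derivative of $\chi_\epsilon$ (so that the factor of $\epsilon$ is not lost) and that the highest-order derivatives of $\chi_\epsilon$ that appear (order $3$) are still covered by \eqref{cut-off_bound_1}.
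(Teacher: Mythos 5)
Your proof is correct, but it takes a different and more self-contained route than the paper's: the paper disposes of this lemma with a one-line appeal to Theorem 1.3 of \cite{hebey}, whereas you carry out the normal-coordinate computation explicitly, in exactly the style the author uses for Lemmas \ref{G2_est_1}, \ref{G2_est_2}, \ref{G3_est_1}, and \ref{G3_est_2}. The computation does go through as you describe: writing $|d\chi_\epsilon|^2 = g^{ij}\,\frac{\partial\chi_\epsilon}{\partial x^i}\,\frac{\partial\chi_\epsilon}{\partial x^j}$, applying the coordinate formula for $\Delta_M$, and evaluating at the center $p$ of the normal chart (where $g^{ij}(p)=\delta^{ij}$ and $\Gamma^l_{jk}(p)=0$), every surviving term is a product of metric or Christoffel derivatives, uniformly bounded by Proposition \ref{christoffel_derivatives} together with assumption \textbf{(A1)}, with derivatives of $\chi_\epsilon$ of orders $1$ through $3$, each of size $O(\epsilon)$ by \eqref{cut-off_bound_1}. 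In fact, since $|d\chi_\epsilon|^2$ is a product of two first derivatives of $\chi_\epsilon$, every surviving summand retains at least two such factors, so one even gets $C\epsilon^2$; the stated $C\epsilon$ bound follows a fortiori. Your explicit version buys internal uniformity with the surrounding lemmas and avoids relying on an external theorem; the paper's citation buys brevity. Both are sound.
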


The above lemma follows from theorem 1.3 in \cite{hebey}.

Using the above three lemmas, we can now estimate the quantity $G_2$.

\begin{lem}\label{G_2_est_4}
We have the following estimate 
\begin{equation*}
\vert
G_2\vert
\leq
C\epsilon
\end{equation*}
for some constant $C$ independent of $\epsilon$.
\end{lem}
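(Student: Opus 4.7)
The plan is to unpack the explicit formula for $G_2$ displayed in \eqref{derivative_powers_b} and bound each of its six summands individually, using the bounds already established in this section together with the standard pointwise bounds on $\chi_\epsilon$.

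More precisely, $G_2$ is a linear combination (with coefficients depending only on $k$) of the following six terms:
\begin{align*}
\text{(i)} \;& k(k-1)\chi_\epsilon^2\,\Delta_M|d\chi_\epsilon|^2, \\
\text{(ii)} \;& k(k-1)(k-2)\chi_\epsilon\,\nabla_{(d\chi_\epsilon)^{\#}}|d\chi_\epsilon|^2, \\
\text{(iii)} \;& k(k-1)|d\chi_\epsilon|^2 G_1, \\
\text{(iv)} \;& k\chi_\epsilon^3\,\Delta_M^2\chi_\epsilon, \\
\text{(v)} \;& 2k(k-1)\chi_\epsilon^2\,\nabla_{(d\chi_\epsilon)^{\#}}\Delta_M\chi_\epsilon, \\
\text{(vi)} \;& \chi_\epsilon\,\Delta_M\chi_\epsilon\,G_1.
\end{align*}
By construction of $\chi_\epsilon$ we have $0 \leq \chi_\epsilon \leq 1$. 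Applying the pointwise bound \eqref{cut-off_bound_2} to (iv), the bound \eqref{cut-off_bound_3} together with Claim \ref{G_1_est} to (iii), Lemma \ref{G2_est_3} to (i), Lemma \ref{G2_est_1} to (ii), Lemma \ref{G2_est_2} to (v), and \eqref{cut-off_bound_2} together with Claim \ref{G_1_est} to (vi), each term is controlled pointwise by a constant (depending on $k$ but not on $\epsilon$) times $\epsilon$, $\epsilon^2$, or $\epsilon^3$.

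The triangle inequality then gives $|G_2| \leq C_1\epsilon + C_2\epsilon^2 + C_3\epsilon^3$ for constants $C_1,C_2,C_3$ independent of $\epsilon$. Since $\epsilon$ is assumed sufficiently small, in particular $\epsilon \leq 1$, the higher powers can be absorbed into the first term, yielding $|G_2| \leq C\epsilon$ as claimed.

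I do not anticipate any real obstacle here: the lemma is a bookkeeping consequence of the three preceding estimates (Lemmas \ref{G2_est_1}--\ref{G2_est_3}) combined with Claim \ref{G_1_est} and the elementary cut-off bounds \eqref{cut-off_bound_2}--\eqref{cut-off_bound_3}. The only subtlety worth flagging is that the constant $C$ depends on the fixed integer $k$ through the combinatorial factors $k(k-1)$, $k(k-1)(k-2)$, etc., but since $k$ is fixed once and for all at the start of the section, this dependence is harmless.
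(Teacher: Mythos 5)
Your proof is correct and follows the same route as the paper: decompose $G_2$ into its six summands, drop the $\chi_\epsilon$ factors (since $0 \le \chi_\epsilon \le 1$), bound each piece via Lemmas \ref{G2_est_1}--\ref{G2_est_3}, Claim \ref{G_1_est}, and the pointwise bounds \eqref{cut-off_bound_2}--\eqref{cut-off_bound_3}, then sum. The only difference is that you make explicit the absorption of $\epsilon^2$ and $\epsilon^3$ contributions into $C\epsilon$, which the paper does implicitly.
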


\begin{proof}
It is clear we have the estimate
\begin{align*}
\vert G_2\vert \leq
k(k-1)\bigg{\vert}\Delta_M\vert d\chi_{\epsilon}\vert^2\bigg{\vert}
 &+
k(k-1)(k-2)\bigg{\vert}\nabla_{(d\chi_{\epsilon})^{\#}} 
\vert d\chi_{\epsilon}\vert^2\bigg{\vert} +
k(k-1)\vert d\chi_{\epsilon}\vert^2
\bigg{\vert}G_1(\vert d\chi_{\epsilon}\vert, \Delta_M\chi_{\epsilon}, \chi_{\epsilon})
\bigg{\vert} \\
&+ 
k\bigg{\vert}\Delta_M^2\chi_{\epsilon}\bigg{\vert} +
2k(k-1)\bigg{\vert}
\nabla_{(d\chi_{\epsilon})^{\#}}\Delta_M\chi_{\epsilon}\bigg{\vert} +
\bigg{\vert}\Delta_M\chi_{\epsilon}\bigg{\vert}
\bigg{\vert}
G_1(\vert d\chi_{\epsilon}\vert, \Delta_M\chi_{\epsilon}, \chi_{\epsilon})
\bigg{\vert}.
\end{align*}
We then estimate each term of the sum on the right.
\begin{itemize}
\item[1.] We have the estimate $\bigg{\vert}\Delta_M\vert d\chi_{\epsilon}\vert^2\bigg{\vert}
\leq C\epsilon$. This follows from lemma \ref{G2_est_3}

\item[2.] We have
$\vert\nabla_{(d\chi_{\epsilon})^{\#}} 
\vert d\chi_{\epsilon}\vert^2\vert \leq C\epsilon$ by lemma \ref{G2_est_1}.

\item[3.] The estimate 
$\vert d\chi_{\epsilon}\vert^2
\bigg{\vert}G_1(\vert d\chi_{\epsilon}\vert, \Delta_M\chi_{\epsilon}, \chi_{\epsilon})
\bigg{\vert} \leq C\epsilon$ follows from \eqref{cut-off_bound_3} and claim
\ref{G_1_est}.

\item[4.] The estimate $\bigg{\vert}\Delta_M^2\chi_{\epsilon}\bigg{\vert} \leq C\epsilon$
follows from \eqref{cut-off_bound_2}.

\item[5.] The estimate 
$\bigg{\vert}\nabla_{(d\chi_{\epsilon})^{\#}}\Delta_M\chi_{\epsilon}\bigg{\vert}
\leq C\epsilon$ follows from lemma \ref{G2_est_2}.

\item[6.] The estimate 
$\bigg{\vert}\Delta_M\chi_{\epsilon}\bigg{\vert}
\bigg{\vert}
G_1(\vert d\chi_{\epsilon}\vert, \Delta_M\chi_{\epsilon}, \chi_{\epsilon})
\bigg{\vert} \leq C\epsilon$ follows from \eqref{cut-off_bound_2} and
claim \ref{G_1_est}.
\end{itemize}
Putting these estimates together gives the result.
\end{proof}

Using the above formula for $\Delta_M^2(\chi_{\epsilon}^k)$ and lemma
\ref{G_2_est_4}, we obtain the following corollary.

\begin{cor}\label{bilaplace_prod_est_1}
We have the estimate 
$|\Delta_M^2(\chi_{\epsilon}^k)\vert \leq C\epsilon\chi_{\epsilon}^{k-4}$
for some constant $C > 0$.
\end{cor}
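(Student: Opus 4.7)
The plan is to read off the corollary directly from the identity for $\Delta_M^2(\chi_\epsilon^k)$ derived earlier in this section, namely display \eqref{derivative_powers_b}, which rewrites
\begin{equation*}
\Delta_M^2\chi_{\epsilon}^k = \chi_{\epsilon}^{k-4}\, G_2\bigl(\vert d\chi_{\epsilon}\vert^2, \Delta_M\chi_{\epsilon}, \chi_{\epsilon}, \Delta_M\vert d\chi_{\epsilon}\vert^2, \nabla_{(d\chi_{\epsilon})^{\#}} \vert d\chi_{\epsilon}\vert^2, \Delta_M^2\chi_{\epsilon}, \nabla_{(d\chi_{\epsilon})^{\#}}\Delta_M\chi_{\epsilon}\bigr).
\end{equation*}
Since $k$ is fixed, the scalar function $\chi_\epsilon^{k-4}$ is already pulled out of $G_2$, so the only nontrivial work left is a uniform bound on the scalar $G_2$ itself.

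First I would take absolute values pointwise, which gives $|\Delta_M^2(\chi_\epsilon^k)| \leq \chi_\epsilon^{k-4}\,|G_2|$ (note that $\chi_\epsilon \geq 0$). Next I would invoke lemma \ref{G_2_est_4}, whose statement is precisely $|G_2| \leq C\epsilon$ with $C$ independent of $\epsilon$. Substituting this bound yields
\begin{equation*}
\bigl|\Delta_M^2(\chi_{\epsilon}^k)\bigr| \leq C\epsilon\, \chi_{\epsilon}^{k-4},
\end{equation*}
which is exactly the claim.

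There is no real obstacle here: all of the labor has been front-loaded into the derivation of the product formula \eqref{derivative_powers_b} via iterated application of \eqref{laplace_product_formula} and \eqref{Laplace_comp}, and into the estimates of lemmas \ref{G2_est_1}, \ref{G2_est_2}, \ref{G2_est_3}, and claim \ref{G_1_est}, which together assemble to lemma \ref{G_2_est_4}. The corollary is thus a one-line consequence, mirroring how corollary \ref{laplacian_prod_est_1} was deduced from \eqref{derivative_powers_a} and claim \ref{G_1_est}. The only thing to double check in writing it up is that $k$ is taken large enough (in particular $k \geq 4$) so that $\chi_\epsilon^{k-4}$ is a genuine nonnegative bounded function on $M$; this is already part of the running assumption at the start of section \ref{derivative_est_powers_cut_offs}.
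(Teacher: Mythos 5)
Your proof is correct and is exactly the argument the paper intends: read off the factorization $\Delta_M^2(\chi_\epsilon^k)=\chi_\epsilon^{k-4}G_2$ from \eqref{derivative_powers_b} and apply lemma \ref{G_2_est_4}. The paper leaves this as a one-line remark following lemma \ref{G_2_est_4}, and your write-up supplies precisely that step.
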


Applying $d$ to the above formula for $\Delta_M\chi_{\epsilon}^k$
we obtain

\begin{align}
d\Delta_M\chi_{\epsilon}^k &= (k-2)\chi_{\epsilon}^{k-3}(d\chi_{\epsilon})
(k(k-1)\vert d\chi_{\epsilon}\vert^2 + k\chi_{\epsilon}
\Delta_M\chi_{\epsilon}) \nonumber\\
&\hspace{1cm}+
\chi_{\epsilon}^{k-2}(k(k-1)d\vert d\chi_{\epsilon}\vert^2 + 
k(d\chi_{\epsilon})(\Delta_M\chi_{\epsilon}) + k\chi_{\epsilon}
d\Delta_M\chi_{\epsilon}) \nonumber\\
&=
\chi_{\epsilon}^{k-3}
\bigg{(}
 k(k-1)(k-2)d\chi_{\epsilon}\vert d\chi_{\epsilon}\vert^2 + 
k(k-2)\chi_{\epsilon}(d\chi_{\epsilon})(\Delta_M\chi_{\epsilon}) \nonumber\\
&\hspace{1cm} +
k(k-1)\chi_{\epsilon}d\vert d\chi_{\epsilon}\vert^2 + 
k\chi_{\epsilon}(d\chi_{\epsilon})(\Delta_M\chi_{\epsilon}) + 
k\chi_{\epsilon}^2d\Delta_M\chi_{\epsilon}
\bigg{)} \label{dlaplace_form_1} \\
&=
\chi_{\epsilon}^{k-3}G_3(\chi_{\epsilon}, d\chi_{\epsilon}, 
\Delta_M\chi_{\epsilon}, d|d\chi_{\epsilon}|^2, d\Delta_M\chi_{\epsilon}).
\label{dlaplace_form_2}
\end{align}

In order to estimate the quantity $G_3$, we proceed as we did in our 
estimation for $G_2$. We start with two lemmas that shows how to estimate
$|d|d\chi_{\epsilon}|^2|$ and $|d\Delta_M\chi_{\epsilon}|$.

\begin{lem}\label{G3_est_1}
We have the estimate 
$|d|d\chi_{\epsilon}|^2| \leq C\epsilon$, for some constant $C > 0$ 
independent of $\epsilon$.
\end{lem}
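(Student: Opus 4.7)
The plan is to follow the normal-coordinate strategy already used in Lemmas \ref{G2_est_1} and \ref{G2_est_2}. Since $|d|d\chi_\epsilon|^2|$ is the pointwise norm of a $1$-form, its value at a point $p \in M$ may be computed in any coordinate chart, and I would pick Riemannian normal coordinates $(y^1,\ldots,y^n)$ centered at $p$, in which $g^{ij}(p) = \delta^{ij}$ and $\partial_k g^{ij}(p) = 0$. By Proposition \ref{christoffel_derivatives}, the radius of such a chart can be chosen uniformly in $p$ under assumption (A1), which will give uniformity of the final estimate.

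First I would write $|d\chi_\epsilon|^2 = g^{ij}\,\partial_i\chi_\epsilon\,\partial_j\chi_\epsilon$ and take the exterior derivative component by component,
\[
d|d\chi_\epsilon|^2 \;=\; (\partial_k g^{ij})\,\partial_i\chi_\epsilon\,\partial_j\chi_\epsilon\,dy^k \;+\; 2 g^{ij}(\partial_k\partial_i\chi_\epsilon)(\partial_j\chi_\epsilon)\,dy^k.
\]
Evaluating at $p$, the first term vanishes because $\partial_k g^{ij}(p) = 0$ in normal coordinates, so at $p$ the one-form reduces to
\[
d|d\chi_\epsilon|^2(p) \;=\; 2\sum_{i,k}(\partial_i\chi_\epsilon)(p)\,(\partial_k\partial_i\chi_\epsilon)(p)\,dy^k.
\]

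I would then apply the uniform coordinate estimate \eqref{cut-off_bound_1} to each derivative of $\chi_\epsilon$ appearing above. Each summand is then controlled by $C\epsilon\cdot C\epsilon = C\epsilon^2$, which is bounded by $C\epsilon$ for $\epsilon$ sufficiently small. Taking the Euclidean norm of the coefficient vector at $p$, which coincides with the metric norm since $g^{ij}(p) = \delta^{ij}$, yields $|d|d\chi_\epsilon|^2|(p) \leq C\epsilon$, with $C$ independent of $p$ by the uniformity already noted.

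I do not anticipate a serious obstacle: the structure is identical to Lemma \ref{G2_est_1}, just one order of differentiation higher, and the vanishing of $\partial_k g^{ij}$ at the origin of normal coordinates is exactly what is needed to prevent first-derivative metric terms from contributing. The only point requiring care is the uniformity of the constant across $M$, which is supplied by (A1) through Proposition \ref{christoffel_derivatives} and by the uniform derivative bounds on $\chi_\epsilon$ coming from Lemma \ref{gen_distance} via the construction in Section \ref{cut_offs}.
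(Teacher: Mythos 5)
Your proof is correct and follows essentially the same route as the paper's: both work in normal coordinates about a point $p$, differentiate $g^{ij}\partial_i\chi_\epsilon\partial_j\chi_\epsilon$, use that $\partial_k g^{ij}(p)=0$ to drop the metric-derivative term at the center, and then invoke the uniform coordinate bound \eqref{cut-off_bound_1} on the derivatives of $\chi_\epsilon$ together with Proposition \ref{christoffel_derivatives} for uniformity in $p$. No issues.
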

\begin{proof}
We will work in normal coordinates about a point $p \in M$. We can write
\begin{equation*}
d|d\chi_{\epsilon}|^2 = d\bigg{(} g^{ij}
\frac{\partial{\chi_{\epsilon}}}{\partial{x^i}}
\frac{\partial{\chi_{\epsilon}}}{\partial{x^j}}
\bigg{)} = 
\bigg{(}
\frac{\partial{g^{ij}}}{\partial{x^p}}
\frac{\partial{\chi_{\epsilon}}}{\partial{x^i}}
\frac{\partial{\chi_{\epsilon}}}{\partial{x^j}} +
g^{ij}\frac{\partial^2{\chi_{\epsilon}}}{\partial{x^p}\partial{x^i}}
\frac{\partial{\chi_{\epsilon}}}{\partial{x^j}} + 
g^{ij}\frac{\partial{\chi_{\epsilon}}}{\partial{x^i}}
\frac{\partial^2{\chi_{\epsilon}}}{\partial{x^p}\partial{x^j}}
\bigg{)}dx^p.
\end{equation*}
This then implies
\begin{align*}
|d|d\chi_{\epsilon}|^2|^2 &= 
\bigg{(}
\frac{\partial{g^{ij}}}{\partial{x^p}}
\frac{\partial{\chi_{\epsilon}}}{\partial{x^i}}
\frac{\partial{\chi_{\epsilon}}}{\partial{x^j}} +
g^{ij}\frac{\partial^2{\chi_{\epsilon}}}{\partial{x^p}\partial{x^i}}
\frac{\partial{\chi_{\epsilon}}}{\partial{x^j}} + 
g^{ij}\frac{\partial{\chi_{\epsilon}}}{\partial{x^i}}
\frac{\partial^2{\chi_{\epsilon}}}{\partial{x^p}\partial{x^j}}
\bigg{)} \\
&\hspace{1cm}
\bigg{(}
\frac{\partial{g^{ij}}}{\partial{x^q}}
\frac{\partial{\chi_{\epsilon}}}{\partial{x^i}}
\frac{\partial{\chi_{\epsilon}}}{\partial{x^j}} +
g^{ij}\frac{\partial^2{\chi_{\epsilon}}}{\partial{x^q}\partial{x^i}}
\frac{\partial{\chi_{\epsilon}}}{\partial{x^j}} + 
g^{ij}\frac{\partial{\chi_{\epsilon}}}{\partial{x^i}}
\frac{\partial^2{\chi_{\epsilon}}}{\partial{x^q}\partial{x^j}}
\bigg{)}g^{pq}.
\end{align*}
Evaluating at the point $p$, remembering that we are in normal coordinates
about $p$, we obtain
\begin{equation*}
|d|d\chi_{\epsilon}|^2|^2(p) = 4\sum_{i, p}\bigg{(}
\frac{\partial^2{\chi_{\epsilon}}}{\partial{x^p}\partial{x^i}}(p) 
\frac{\partial{\chi_{\epsilon}}}{\partial{x^i}}(p)
\bigg{)}^2.
\end{equation*}
The estimate then follows from \eqref{cut-off_bound_1}.
\end{proof}

\begin{lem}\label{G3_est_2}
We have the estimate $|d\Delta_M\chi_{\epsilon}| \leq C\epsilon$ for some
constant $C > 0$ independent of $\epsilon$.
\end{lem}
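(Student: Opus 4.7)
The plan is to adapt the normal coordinate argument already used in Lemmas \ref{G2_est_2} and \ref{G3_est_1}. Fix an arbitrary $p \in M$ and work in a geodesic normal coordinate chart of radius $\delta$ around $p$ (which exists uniformly by assumption (A1) and Proposition \ref{christoffel_derivatives}). In such coordinates
\begin{equation*}
\Delta_M \chi_\epsilon = g^{jk}\!\left(\frac{\partial^2 \chi_\epsilon}{\partial x^j \partial x^k} - \Gamma^i_{jk}\frac{\partial \chi_\epsilon}{\partial x^i}\right),
\end{equation*}
so $d\Delta_M\chi_\epsilon = \partial_p(\Delta_M\chi_\epsilon)\,dx^p$ can be written as a finite sum whose summands are products of: (i) the metric $g^{jk}$, the Christoffel symbols $\Gamma^i_{jk}$, or their first partial derivatives; and (ii) partial derivatives of $\chi_\epsilon$ of orders one, two, or three.

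Next I would evaluate at $p$ itself. Because the coordinates are normal at $p$, the terms containing an undifferentiated $\Gamma^i_{jk}$ drop out and $g^{jk}(p)=\delta^{jk}$, leaving (schematically)
\begin{equation*}
\partial_p(\Delta_M\chi_\epsilon)(p) = \sum_{j}\frac{\partial^3 \chi_\epsilon}{\partial x^p\partial x^j\partial x^j}(p) + (\partial_p g^{jk})(p)\,\frac{\partial^2\chi_\epsilon}{\partial x^j\partial x^k}(p) - \sum_j (\partial_p\Gamma^i_{jj})(p)\,\frac{\partial \chi_\epsilon}{\partial x^i}(p).
\end{equation*}
Each pure derivative of $\chi_\epsilon$ appearing here (of order one, two, or three) is bounded by $C\epsilon$ thanks to the cut-off estimate \eqref{cut-off_bound_1}, while $\partial_p g^{jk}(p)$ and $\partial_p \Gamma^i_{jj}(p)$ are uniformly bounded in $p$ by the bounded geometry assumption and Proposition \ref{christoffel_derivatives}. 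Summing finitely many terms of this form therefore yields $|\partial_p(\Delta_M\chi_\epsilon)(p)| \leq C\epsilon$ with $C$ independent of both $p$ and $\epsilon$.

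Finally, since $d\Delta_M\chi_\epsilon$ is a 1-form and its pointwise norm is invariant under the choice of coordinates, the pointwise estimate just obtained at $p$ (where normal coordinates make $|dx^p| = 1$) gives $|d\Delta_M\chi_\epsilon|(p) \leq C\epsilon$; as $p$ was arbitrary, the lemma follows. The only mildly delicate point in this argument is verifying that the bounded geometry hypothesis (A1) really does yield a uniform-in-$p$ bound on the first coordinate derivatives of $g$ and $\Gamma$ in geodesic normal charts — this is exactly what Proposition \ref{christoffel_derivatives} (and the same invocation used in Lemma \ref{G2_est_2}) supplies, and it is this step that encapsulates the main technical input of the proof.
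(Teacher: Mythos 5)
Your proof is correct and takes essentially the same route as the paper: compute $d\Delta_M\chi_\epsilon$ in normal coordinates centered at an arbitrary $p$, evaluate there, and invoke the cut-off bound \eqref{cut-off_bound_1} together with Proposition \ref{christoffel_derivatives} (via assumption (A1)) for the uniform bounds on the Christoffel derivatives. The only cosmetic difference is that you retain the $(\partial_p g^{jk})(p)$ term and bound it, whereas the paper drops it outright since the first metric derivatives vanish at the center of a normal coordinate chart; either way the conclusion follows.
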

\begin{proof}
We work in normal coordinates about a point $p \in M$. We remind the reader
that we have the formula
\begin{equation*}
\Delta_M\chi_{\epsilon} = g^{jk}\bigg{(} 
\frac{\partial^2\chi_{\epsilon}}{\partial{x^j}\partial{x^k}} - 
\frac{\partial{\chi_{\epsilon}}}{\partial{x^i}}\Gamma^{i}_{jk}.
\bigg{)}.
\end{equation*}
We then see that
\begin{equation*}
d\Delta_M\chi_{\epsilon} = 
\frac{\partial{g^{jk}}}{\partial{x^l}}\bigg{(}
\frac{\partial^2\chi_{\epsilon}}{\partial{x^j}\partial{x^k}} - 
\frac{\partial{\chi_{\epsilon}}}{\partial{x^i}}\Gamma^{i}_{jk}
\bigg{)}dx^l +
g^{jk}\bigg{(}
\frac{\partial^3{\chi_{\epsilon}}}{\partial{x^l}\partial{x^j}\partial{x^k}}
- 
\frac{\partial^2{\chi_{\epsilon}}}{\partial{x^l}\partial{x^i}}\Gamma^i_{jk}
- \frac{\partial{\chi_{\epsilon}}}{\partial{x^i}}
\frac{\partial{\Gamma_{jk}^i}}{\partial{x^l}}
\bigg{)}dx^l.
\end{equation*}
This implies 
\begin{align*}
|d\Delta_M\chi_{\epsilon}|^2 &= \bigg{[}
\frac{\partial{g^{jk}}}{\partial{x^l}}\bigg{(}
\frac{\partial^2\chi_{\epsilon}}{\partial{x^j}\partial{x^k}} - 
\frac{\partial{\chi_{\epsilon}}}{\partial{x^i}}\Gamma^{i}_{jk}
\bigg{)} +
g^{jk}\bigg{(}
\frac{\partial^3{\chi_{\epsilon}}}{\partial{x^l}\partial{x^j}\partial{x^k}}
- 
\frac{\partial^2{\chi_{\epsilon}}}{\partial{x^l}\partial{x^i}}\Gamma^i_{jk}
- \frac{\partial{\chi_{\epsilon}}}{\partial{x^i}}
\frac{\partial{\Gamma_{jk}^i}}{\partial{x^l}}
\bigg{)}
\bigg{]} \\
&\hspace{1cm}
\bigg{[}
\frac{\partial{g^{jk}}}{\partial{x^p}}\bigg{(}
\frac{\partial^2\chi_{\epsilon}}{\partial{x^j}\partial{x^k}} - 
\frac{\partial{\chi_{\epsilon}}}{\partial{x^i}}\Gamma^{i}_{jk}
\bigg{)} +
g^{jk}\bigg{(}
\frac{\partial^3{\chi_{\epsilon}}}{\partial{x^p}\partial{x^j}\partial{x^k}}
- 
\frac{\partial^2{\chi_{\epsilon}}}{\partial{x^p}\partial{x^i}}\Gamma^i_{jk}
- \frac{\partial{\chi_{\epsilon}}}{\partial{x^i}}
\frac{\partial{\Gamma_{jk}^i}}{\partial{x^p}}
\bigg{)}
\bigg{]}g^{pl}.
\end{align*}
Evaluating at $p$, and using the fact that we are in normal coordinates about $p$, we obtain
\begin{equation*}
|d\Delta_M\chi_{\epsilon}|^2(p) = 
\sum_{j, l}
\bigg{(} 
\frac{\partial^3{\chi_{\epsilon}}}{\partial{x^p}\partial{x^j}\partial{x^j}}(p) 
-
\frac{\partial{\chi_{\epsilon}}}{\partial{x^i}}(p)
\frac{\partial{\Gamma_{jj}^i}}{\partial{x^l}}(p)
\bigg{)}^2.
\end{equation*}
The estimate then follows from \eqref{cut-off_bound_1} and lemma 
\ref{christoffel_derivatives}.
\end{proof}

\begin{lem}\label{G3_est_3}
We have $\vert G_3\vert \leq C\epsilon$, for some constant $C$ independent of $\epsilon$.
\end{lem}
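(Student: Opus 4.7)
The plan is to mirror the strategy used in Lemma \ref{G_2_est_4} and estimate $G_3$ term-by-term. Recall from equation \eqref{dlaplace_form_1} that
\begin{equation*}
G_3 = k(k-1)(k-2)(d\chi_{\epsilon})\vert d\chi_{\epsilon}\vert^2 + k(k-2)\chi_{\epsilon}(d\chi_{\epsilon})(\Delta_M\chi_{\epsilon}) + k(k-1)\chi_{\epsilon}d\vert d\chi_{\epsilon}\vert^2 + k\chi_{\epsilon}(d\chi_{\epsilon})(\Delta_M\chi_{\epsilon}) + k\chi_{\epsilon}^2 d\Delta_M\chi_{\epsilon}.
\end{equation*}
By the triangle inequality, it suffices to bound each of the five summands by $C\epsilon$, with a constant independent of $\epsilon$.

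First I would use the pointwise bound $\chi_{\epsilon} \leq 1$, together with $\vert d\chi_{\epsilon}\vert \leq C_0\epsilon$ from \eqref{cut-off_bound_3} and $\vert \Delta_M\chi_{\epsilon}\vert \leq C_1\epsilon$ from \eqref{cut-off_bound_2}. This immediately handles the first, second, and fourth summands, each of which carries at least one factor of $d\chi_{\epsilon}$ or $\Delta_M\chi_{\epsilon}$ (and is thus in fact $O(\epsilon^2)$ or better, hence certainly $O(\epsilon)$). The third summand $k(k-1)\chi_{\epsilon}d\vert d\chi_{\epsilon}\vert^2$ is controlled by Lemma \ref{G3_est_1}, which gives $\vert d\vert d\chi_{\epsilon}\vert^2\vert \leq C\epsilon$. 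The fifth summand $k\chi_{\epsilon}^2 d\Delta_M\chi_{\epsilon}$ is controlled by Lemma \ref{G3_est_2}, which gives $\vert d\Delta_M\chi_{\epsilon}\vert \leq C\epsilon$.

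Summing these six-style estimates (with $k$ fixed) and absorbing all $k$-dependent factors into the final constant $C$, we obtain $\vert G_3\vert \leq C\epsilon$ as required. The step is routine given the preparatory lemmas; the only mild obstacle would have been the terms $\vert d\vert d\chi_{\epsilon}\vert^2\vert$ and $\vert d\Delta_M\chi_{\epsilon}\vert$, but these are precisely what Lemmas \ref{G3_est_1} and \ref{G3_est_2} dispose of via the normal-coordinate computations together with Proposition \ref{christoffel_derivatives}. No additional input beyond the bounded geometry assumption \textbf{(A1)} and the cut-off estimates \eqref{cut-off_bound_1}--\eqref{cut-off_bound_3} is needed.
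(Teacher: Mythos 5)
Your proposal is correct and takes exactly the approach the paper uses: decompose $G_3$ into its five summands from \eqref{dlaplace_form_1}, bound those involving only $d\chi_{\epsilon}$ and $\Delta_M\chi_{\epsilon}$ via \eqref{cut-off_bound_2}--\eqref{cut-off_bound_3} and $\chi_{\epsilon}\leq 1$, and handle the remaining two via Lemmas \ref{G3_est_1} and \ref{G3_est_2}. The paper's proof is just a one-line summary of precisely this term-by-term argument.
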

\begin{proof}
It follows from \eqref{cut-off_bound_2}, \eqref{cut-off_bound_3}, lemmas
\ref{G3_est_1} and \ref{G3_est_2} that we can bound each term, making up the 
sum of $G_3$, by $C\epsilon$. The result follows.
\end{proof}

Using the above formula for $d\Delta_M\chi_{\epsilon}^k$ and lemma 
\ref{G3_est_3}, we have the following corollary.

\begin{cor}\label{dlaplace_prod_est_1}
We have the estimate $|d\Delta_M\chi_{\epsilon}^k| 
\leq C\epsilon\chi_{\epsilon}^{k-3}$ for some constant $C > 0$.
\end{cor}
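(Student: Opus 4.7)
The statement is an immediate corollary of the work done earlier in the section, so my proposal is very short. The plan is to start from the factorised identity \eqref{dlaplace_form_2}, namely
\begin{equation*}
d\Delta_M\chi_{\epsilon}^k = \chi_{\epsilon}^{k-3} G_3(\chi_{\epsilon}, d\chi_{\epsilon}, \Delta_M\chi_{\epsilon}, d|d\chi_{\epsilon}|^2, d\Delta_M\chi_{\epsilon}),
\end{equation*}
which was obtained by direct computation using \eqref{Laplace_comp} and \eqref{laplace_product_formula} together with the fact that $k$ is fixed so that the combinatorial coefficients $k$, $k-1$, $k-2$ are absorbed into the definition of $G_3$.

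Taking the pointwise norm of both sides, one gets
\begin{equation*}
|d\Delta_M\chi_{\epsilon}^k| \leq \chi_{\epsilon}^{k-3}\, |G_3|.
\end{equation*}
At this point I would simply invoke lemma \ref{G3_est_3}, which already provides the key bound $|G_3| \leq C\epsilon$ for a constant $C > 0$ independent of $\epsilon$. Substituting this into the previous inequality yields the claimed estimate $|d\Delta_M\chi_{\epsilon}^k| \leq C\epsilon\, \chi_{\epsilon}^{k-3}$.

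There is essentially no obstacle here, since the hard analytic work (bounding $d|d\chi_{\epsilon}|^2$ and $d\Delta_M\chi_{\epsilon}$ in normal coordinates via \eqref{cut-off_bound_1}, \eqref{cut-off_bound_2}, \eqref{cut-off_bound_3}, and proposition \ref{christoffel_derivatives}) was already performed in lemmas \ref{G3_est_1}, \ref{G3_est_2}, and packaged into lemma \ref{G3_est_3}. The only thing one should double check is that the power $\chi_{\epsilon}^{k-3}$ is correct, which can be verified by reading off the lowest power of $\chi_{\epsilon}$ appearing when the $\chi_{\epsilon}^{k-3}$ factor is pulled out in \eqref{dlaplace_form_1}; for $\epsilon$ small enough and $k$ large (as assumed at the start of section \ref{derivative_est_powers_cut_offs}), this power is nonnegative and the estimate is meaningful on all of $M$.
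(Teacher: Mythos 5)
Your proposal is correct and matches the paper's argument exactly: the corollary is derived by combining the factorisation \eqref{dlaplace_form_2}, $d\Delta_M\chi_{\epsilon}^k = \chi_{\epsilon}^{k-3} G_3$, with the bound $|G_3|\le C\epsilon$ of lemma \ref{G3_est_3}. Nothing to add.
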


\section{Localised Laplacian estimates}\label{sec_laplacian_derivatives}

In this section, we derive several localised estimates that involve the
Laplacian of a section. These will be crucial for the proof of
theorem \ref{main_theorem_1}.

We remind the reader that $\chi_{\epsilon}$ denotes the 
cut-off function constructed in section \ref{cut_offs}.
Furthermore, $k$ will be a large fixed positive integer.

\begin{prop}\label{Laplace_est}
Given $u \in W^{4,2}_{loc}(E)\cap L^2(E)$ and
$\epsilon > 0$ sufficiently small,
we have the following estimate
\begin{align*}
\vert\vert\chi_{\epsilon}^k\Delta u\vert\vert^2 &\leq 
\bigg{(} 
1 - \frac{C^2\epsilon^2}{2} - 
\frac{(2kC^2\epsilon^2)(1+2(2k-1)\epsilon)}
{2(1-(2k-1)\epsilon)}
\bigg{)}^{-1}
\bigg{(}
\frac{\epsilon}{2} + \frac{kC^2\epsilon^2}{1-(2k-1)\epsilon}
\bigg{)} \vert\vert\chi_{\epsilon}^{2k}\Delta^2u\vert\vert^2 \\
&\hspace{0.5cm} +
\bigg{(} 
1 - \frac{C^2\epsilon^2}{2} - 
\frac{(2kC^2\epsilon^2)(1+2(2k-1)\epsilon)}
{2(1-(2k-1)\epsilon)}
\bigg{)}^{-1}
\bigg{(}
\frac{1}{2\epsilon} + 2k + \frac{1}{2}
\bigg{)}\vert\vert u\vert\vert^2 
\end{align*}
\end{prop}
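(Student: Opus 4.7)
The plan is to estimate $\|\chi_\epsilon^k \Delta u\|^2$ by a double integration by parts, producing a principal term involving $\Delta^2 u$ together with commutator errors that are small in $\epsilon$. Since $\chi_\epsilon^{2k}\Delta u$ is compactly supported and $u \in W^{4,2}_{loc}(E)$, the symmetry of $\Delta = \nabla^{\dagger}\nabla$ applied twice gives $\|\chi_\epsilon^k \Delta u\|^2 = \langle \chi_\epsilon^{2k}\Delta u, \Delta u\rangle = \langle \Delta(\chi_\epsilon^{2k}\Delta u), u\rangle$. Expanding the right hand side via the product formula \eqref{laplace_product_formula} yields the identity $\|\chi_\epsilon^k \Delta u\|^2 = \langle \chi_\epsilon^{2k}\Delta^2 u, u\rangle - 2\langle \nabla_{(d\chi_\epsilon^{2k})^{\#}}\Delta u, u\rangle + \langle \Delta_M(\chi_\epsilon^{2k})\Delta u, u\rangle$.

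The first and third terms on the right are routine. For the first, Cauchy--Schwarz combined with Young's inequality using parameter $\epsilon$ produces the contribution $\frac{\epsilon}{2}\|\chi_\epsilon^{2k}\Delta^2 u\|^2 + \frac{1}{2\epsilon}\|u\|^2$. For the third, Corollary \ref{laplacian_prod_est_1} gives the pointwise bound $|\Delta_M(\chi_\epsilon^{2k})| \leq C\epsilon\,\chi_\epsilon^{2k-2}$; splitting the cut-off factor as $\chi_\epsilon^k \cdot \chi_\epsilon^{k-2}$ and applying Young's inequality with parameter $C\epsilon$ yields $\frac{C^2\epsilon^2}{2}\|\chi_\epsilon^k \Delta u\|^2 + \frac{1}{2}\|u\|^2$, where $\chi_\epsilon \leq 1$ was used to drop the residual factor $\chi_\epsilon^{k-2}$.

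The middle term is the principal obstacle, because it features $\nabla \Delta u$ rather than $\Delta^2 u$. To handle it, I would first use $|d\chi_\epsilon^{2k}| \leq 2kC\epsilon\,\chi_\epsilon^{2k-1}$, coming from \eqref{cut-off_bound_3}, together with \eqref{star_notation_ex}, to reduce the estimate to a pairing of $\chi_\epsilon^{2k-1}|\nabla \Delta u|$ against $|u|$. Young's inequality with the carefully chosen parameter $C\epsilon$ then splits this into $2kC^2\epsilon^2\|\chi_\epsilon^{2k-1}\nabla \Delta u\|^2 + 2k\|u\|^2$, and the first summand is bounded via Proposition \ref{covariant_est_1} applied to the section $\Delta u$ with index $2k-1$. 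Since $k$ is large, the factor $\chi_\epsilon^{2k-2}$ appearing in the error term of that proposition is dominated by $\chi_\epsilon^k$ (using $\chi_\epsilon \leq 1$), and hence produces a further contribution to $\|\chi_\epsilon^k \Delta u\|^2$ on the right.

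The final step is to collect all contributions and absorb the two occurrences of $\|\chi_\epsilon^k \Delta u\|^2$ into the left hand side; their combined coefficient equals $\frac{C^2\epsilon^2}{2} + \frac{2kC^2\epsilon^2(1+2(2k-1)\epsilon)}{2(1-(2k-1)\epsilon)}$, which is strictly less than $1$ once $\epsilon$ is small enough, and dividing yields the stated estimate. The main subtlety lies in calibrating both Young parameters to $C\epsilon$, so that the resulting coefficients match the precise expression in the statement rather than merely producing a bound of the same qualitative form.
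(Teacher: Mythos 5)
Your proposal follows essentially the same route as the paper: the same double integration by parts reducing to $\langle\Delta(\chi_\epsilon^{2k}\Delta u),u\rangle$, the same three-term expansion via \eqref{laplace_product_formula}, the same calibration of Young's parameters to $\epsilon$ and $C\epsilon$, the same invocation of Proposition \ref{covariant_est_1} with index $2k-1$, and the same absorption of the residual $\|\chi_\epsilon^{k}\Delta u\|^2$ terms into the left-hand side before dividing. The only cosmetic difference is that you dominate $\chi_\epsilon^{2k-2}$ by $\chi_\epsilon^{k}$ at the Young step rather than after applying Proposition \ref{covariant_est_1}; this is immaterial since $\chi_\epsilon \leq 1$.
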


\begin{proof}
Integrating by parts and using \eqref{laplace_product_formula}, we can write
\begin{align*}
\langle \chi_{\epsilon}^k\Delta u, \chi_{\epsilon}^k\Delta u\rangle &=
\langle \Delta(\chi_{\epsilon}^k\Delta u), u\rangle \\
&=
\langle  \chi_{\epsilon}^{2k}\Delta^2u, u\rangle - 
2\langle \nabla_{(d\chi_{\epsilon}^{2k})^{\#}}\Delta u, u\rangle +
\langle (\Delta u)(\Delta_M\chi_{\epsilon}^{2k}), u\rangle.
\end{align*}

Using Cauchy-Schwartz and Youngs inequality, we obtain
\begin{align*}
\vert\vert\chi_{\epsilon}^k\Delta u\vert\vert^2 &\leq 
\vert \langle  \chi_{\epsilon}^{2k}\Delta^2u, u\rangle \vert + 
2\vert 
\langle \nabla_{(d\chi_{\epsilon}^{2k})^{\#}}\Delta u, u\rangle
\vert +
\vert
\langle (\Delta u)(\Delta_M\chi_{\epsilon}^{2k}), u\rangle
\vert \\
&\leq
\vert \langle  \chi_{\epsilon}^{2k}\Delta^2u, u\rangle \vert + 
2\vert 
\langle 2k\chi_{\epsilon}^{2k-1} 
\nabla_{(d\chi_{\epsilon})^{\#}}\Delta u, u\rangle
\vert +
\vert
\langle (\Delta u)(\Delta_M\chi_{\epsilon}^{2k}), u\rangle
\vert \\
&\leq 
\frac{\epsilon}{2}\vert\vert\chi_{\epsilon}^{2k}\Delta^2u \vert\vert^2 + 
\frac{1}{2\epsilon}\vert\vert u\vert\vert^2 +
2kC^2\epsilon^2\vert\vert\chi_{\epsilon}^{2k-1}\nabla\Delta u\vert\vert^2 + 
2k\vert\vert u\vert\vert^2 +
\frac{C^2\epsilon^2}{2}\vert\vert \chi_{\epsilon}^{2k-2}
\Delta u\vert\vert^2 + 
\frac{1}{2}\vert\vert u\vert\vert^2
\end{align*}
where to obtain the last inequality we have used \eqref{star_notation_ex},
\eqref{cut-off_bound_3}, and corollary \ref{laplacian_prod_est_1}.

By
proposition \ref{covariant_est_1}, we have
\begin{align*}
\vert\vert\chi_{\epsilon}^{2k-1}\nabla\Delta u\vert\vert^2 \leq 
\frac{1}{2(1-(2k-1)\epsilon)}
\vert\vert\chi_{\epsilon}^{2k}\Delta^2u\vert\vert^2 + 
\frac{1+2(2k-1)\epsilon}{2(1-(2k-1)\epsilon)}
\vert\vert\chi_{\epsilon}^{2k-2}\Delta u\vert\vert^2.
\end{align*}

This then gives
\begin{align*}
\vert\vert\chi_{\epsilon}^k\Delta u\vert\vert^2 &\leq 
\frac{\epsilon}{2}\vert\vert\chi_{\epsilon}^{2k}\Delta^2u\vert\vert^2 +
\frac{2kC^2\epsilon^2}{2(1-(2k-1)\epsilon)}
\vert\vert\chi_{\epsilon}^{2k}\Delta^2u\vert\vert^2 + 
\frac{(2kC^2\epsilon^2)(1+2(2k-1))\epsilon}
{2(1-(2k-1)\epsilon)}
\vert\vert\chi_{\epsilon}^{2k-2}\Delta u\vert\vert^2 \\
&\hspace{0.5cm} +
\frac{C^2\epsilon^2}{2}\vert\vert\chi_{\epsilon}^{2k-2}\Delta u\vert\vert^2
+ \bigg{(}\frac{1}{2\epsilon} + 2k + \frac{1}{2} \bigg{)}
\vert\vert u\vert\vert^2
\end{align*}
which in turn implies
\begin{align*}
\bigg{(}
1 - \frac{C^2\epsilon^2}{2} - \frac{(2kC^2\epsilon^2)(1+2(2k-1))\epsilon}
{2(1-(2k-1)\epsilon)}
\bigg{)}\vert\vert\chi_{\epsilon}^k\Delta u\vert\vert^2 &\leq
\bigg{(}
\frac{\epsilon}{2} + 
\frac{kC^2\epsilon^2}{1-(2k-1))\epsilon}
\bigg{)}\vert\vert\chi_{\epsilon}^{2k}\Delta^2u\vert\vert^2 \\
&\hspace{0.5cm} + 
\bigg{(}
\frac{1}{2\epsilon} + 2k + \frac{1}{2}
\bigg{)}\vert\vert u\vert\vert^2.
\end{align*}

Choosing $\epsilon$ small we can make it so that
$\bigg{(}
1 - \frac{C^2\epsilon^2}{2} - \frac{(2kC^2\epsilon^2)(1+2(2k-1))\epsilon}
{2(1-(2k-1)\epsilon)}
\bigg{)} > 0.$ 

Dividing the above through by
$\bigg{(}
1 - \frac{C^2\epsilon^2}{2} - \frac{(2kC^2\epsilon^2)(1+2(2k-1))\epsilon}
{2(1-(2k-1)\epsilon)}
\bigg{)}$ gives the result.

\end{proof}

We will also need various estimates on the absolute value of the inner 
product (defined on the Hermitian bundle $E$) of various
sections in $W^{4,2}_{loc}(E)\cap L^2(E)$.

\begin{lem}\label{bilaplacian_est_1}
Given $u \in W^{4,2}_{loc}(E) \cap L^2(E)$ and $\epsilon > 0$ sufficiently 
small, we have
\begin{align*}
\vert\langle \chi_{\epsilon}^{2k}\Delta^2u, 
\nabla_{(d\chi_{\epsilon}^{2k})^{\#}}\Delta u\rangle\vert &\leq 
\bigg{[}
\frac{\epsilon}{2} + \frac{2k^2C^2\epsilon}
{2(1-(2k-1)\epsilon)} +
\bigg{(}
\frac{(2k^2C^2\epsilon)(1+2(2k-1)\epsilon)}
{2(1-(2k-1)\epsilon)} 
\bigg{)} \\
&\hspace{0.5cm}
\bigg{(}
1 - \frac{C^2\epsilon^2}{2} - 
\frac{(2(2k-2)C^2\epsilon^2)(1+2(4k-5)\epsilon)}
{2(1-(4k-5)\epsilon)}
\bigg{)}^{-1} \\
&\hspace{0.5cm}
\bigg{(}
\frac{\epsilon}{2} + 
\frac{(2k-2)C^2\epsilon^2}{1-(4k-5)\epsilon}
\bigg{)}
\bigg{]}\vert\vert\chi_{\epsilon}^{2k}\Delta^2u\vert\vert^2 \\
&\hspace{0.5cm} +
\bigg{(}
\frac{(2k^2C^2\epsilon)(1+2(2k-1)\epsilon)}
{2(1-(2k-1)\epsilon)} 
\bigg{)} \\
&\hspace{0.5cm}
\bigg{(}
1 - \frac{C^2\epsilon^2}{2} - 
\frac{(2(2k-2)C^2\epsilon^2)(1+2(4k-5)\epsilon)}
{2(1-(4k-5)\epsilon)}
\bigg{)}^{-1} 
\bigg{(}
\frac{1}{2\epsilon} + 2(2k-2) + \frac{1}{2}
\bigg{)}\vert\vert u\vert\vert^2.
\end{align*}
\end{lem}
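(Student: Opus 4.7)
The plan is to reduce the inner product to the already-established localised estimates of Propositions \ref{covariant_est_1} and \ref{Laplace_est} by peeling off one derivative at a time with Cauchy--Schwarz and Young's inequality.

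First, I would rewrite
\begin{equation*}
\nabla_{(d\chi_{\epsilon}^{2k})^{\#}}\Delta u = 2k\chi_{\epsilon}^{2k-1}\nabla_{(d\chi_{\epsilon})^{\#}}\Delta u
\end{equation*}
and use \eqref{star_notation_ex} together with the cut-off bound \eqref{cut-off_bound_3} to get the pointwise estimate
\begin{equation*}
\vert\nabla_{(d\chi_{\epsilon}^{2k})^{\#}}\Delta u\vert \leq 2kC\epsilon\,\chi_{\epsilon}^{2k-1}\vert\nabla\Delta u\vert.
\end{equation*}
Then I would apply Cauchy--Schwarz pointwise on $E$, integrate, and use Young's inequality $ab \leq \tfrac{1}{2\lambda}a^2 + \tfrac{\lambda}{2}b^2$ with the specific choice $\lambda = 2kC$ on $a = \chi_{\epsilon}^{2k}\vert\Delta^2 u\vert$ and $b = \chi_{\epsilon}^{2k-1}\vert\nabla\Delta u\vert$ (absorbing the factor $2kC\epsilon$). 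This yields
\begin{equation*}
\vert\langle \chi_{\epsilon}^{2k}\Delta^2 u, \nabla_{(d\chi_{\epsilon}^{2k})^{\#}}\Delta u\rangle\vert \leq \frac{\epsilon}{2}\vert\vert\chi_{\epsilon}^{2k}\Delta^2 u\vert\vert^2 + 2k^2C^2\epsilon\,\vert\vert\chi_{\epsilon}^{2k-1}\nabla\Delta u\vert\vert^2,
\end{equation*}
matching the two leading $\epsilon/2$ and $\tfrac{2k^2C^2\epsilon}{2(1-(2k-1)\epsilon)}$ style coefficients that appear in the stated bound.

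Next I would invoke Proposition \ref{covariant_est_1} with the index $k$ there replaced by $2k-1$ (applied to $\Delta u$ in place of $u$), producing
\begin{equation*}
\vert\vert\chi_{\epsilon}^{2k-1}\nabla\Delta u\vert\vert^2 \leq \frac{1}{2(1-(2k-1)\epsilon)}\vert\vert\chi_{\epsilon}^{2k}\Delta^2 u\vert\vert^2 + \frac{1+2(2k-1)\epsilon}{2(1-(2k-1)\epsilon)}\vert\vert\chi_{\epsilon}^{2k-2}\Delta u\vert\vert^2.
\end{equation*}
The first term on the right then contributes to the $\vert\vert\chi_{\epsilon}^{2k}\Delta^2 u\vert\vert^2$ coefficient in the lemma, while the second term still needs to be controlled.

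To handle $\vert\vert\chi_{\epsilon}^{2k-2}\Delta u\vert\vert^2$, I would apply Proposition \ref{Laplace_est} with its index $k$ replaced by $2k-2$, which introduces the denominators $1-(4k-5)\epsilon$ and the parenthesised prefactors that appear in the final statement. This produces $\vert\vert\chi_{\epsilon}^{2(2k-2)}\Delta^2 u\vert\vert^2 = \vert\vert\chi_{\epsilon}^{4k-4}\Delta^2 u\vert\vert^2$ and $\vert\vert u\vert\vert^2$ terms. Since $0 \leq \chi_{\epsilon} \leq 1$ and $4k-4 \geq 2k$ for $k\geq 2$ (we are in the regime of large fixed $k$), we have $\chi_{\epsilon}^{4k-4} \leq \chi_{\epsilon}^{2k}$ pointwise, hence $\vert\vert\chi_{\epsilon}^{4k-4}\Delta^2 u\vert\vert^2 \leq \vert\vert\chi_{\epsilon}^{2k}\Delta^2 u\vert\vert^2$, which allows the final bound to be expressed solely in terms of $\vert\vert\chi_{\epsilon}^{2k}\Delta^2 u\vert\vert^2$ and $\vert\vert u\vert\vert^2$.

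Assembling all coefficients from these three steps produces exactly the displayed inequality. The only real bookkeeping obstacle is keeping the indices and the denominators $1-(2k-1)\epsilon$ and $1-(4k-5)\epsilon$ straight when chaining the two propositions; choosing $\epsilon$ small enough that all denominators stay positive (which is permitted since $k$ is fixed) is automatic from the smallness hypothesis. There is no analytic content beyond Cauchy--Schwarz, Young's inequality, and the two already-proven estimates, so no genuinely hard step is expected.
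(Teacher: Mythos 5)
Your proposal follows essentially the same route as the paper: Young's inequality on the inner product, the expansion $d\chi_{\epsilon}^{2k} = 2k\chi_{\epsilon}^{2k-1}d\chi_{\epsilon}$ with the cut-off bound to produce the $2k^2C^2\epsilon\|\chi_{\epsilon}^{2k-1}\nabla\Delta u\|^2$ term, Proposition \ref{covariant_est_1} with index $2k-1$ applied to $\Delta u$, Proposition \ref{Laplace_est} with index $2k-2$, and the monotonicity $\chi_{\epsilon}^{2(2k-2)} \leq \chi_{\epsilon}^{2k}$ to close the bound. The only cosmetic difference is that you expand the cut-off derivative and apply Young with $\lambda = 2kC$ on the scaled product, whereas the paper applies Young with weight $\epsilon$ first and then expands the cut-off norm; both yield the identical intermediate inequality.
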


\begin{proof}
Applying Cauchy-Schwartz and Youngs inequality, we obtain
\begin{align*}
\vert\langle \chi_{\epsilon}^{2k}\Delta^2u, 
\nabla_{(d\chi_{\epsilon}^{2k})^{\#}}\Delta u\rangle\vert &\leq
\frac{\epsilon}{2}\vert\vert\chi_{\epsilon}^{2k}\Delta^2u\vert\vert^2 
+\frac{1}{2\epsilon}
\vert\vert\nabla_{(d\chi_{\epsilon}^{2k})^{\#}}\Delta u\vert\vert^2 \\
&=
\frac{\epsilon}{2}\vert\vert\chi_{\epsilon}^{2k}\Delta^2u\vert\vert^2 +
\frac{1}{2\epsilon}
\vert\vert 2k\chi_{\epsilon}^{2k-1}
\nabla_{(d\chi_{\epsilon})^{\#}}\Delta u\vert\vert^2 \\
&\leq
\frac{\epsilon}{2}\vert\vert\chi_{\epsilon}^{2k}\Delta^2u\vert\vert^2 +
2k^2C^2\epsilon\vert\vert\chi_{\epsilon}^{2k-1}
\nabla\Delta u\vert\vert^2
\end{align*}
where to get the last line we have used \eqref{star_notation_ex} and 
\eqref{cut-off_bound_3}.

Estimating $\vert\vert\chi_{\epsilon}^{2k-1}
\nabla\Delta u\vert\vert^2$ using proposition \ref{covariant_est_1} gives
\begin{align*}
\vert\langle \chi_{\epsilon}^{2k}\Delta^2u, 
\nabla_{(d\chi_{\epsilon}^{2k})^{\#}}\Delta u\rangle\vert \leq 
\frac{\epsilon}{2}\vert\vert\chi_{\epsilon}^{2k}\Delta^2u\vert\vert^2 &+
\frac{2k^2C^2\epsilon}{2(1-(2k-1)\epsilon)}
\vert\vert\chi_{\epsilon}^{2k}\Delta^2u\vert\vert^2 \\
&+
\frac{(2k^2C^2\epsilon)(1+2(2k-1)\epsilon)}
{2(1-(2k-1)\epsilon)}\vert\vert\chi_{\epsilon}^{2k-2}
\Delta u\vert\vert^2.
\end{align*}
Using proposition \ref{Laplace_est}, we can estimate 
$\vert\vert\chi_{\epsilon}^{2k-2}
\Delta u\vert\vert^2$ and obtain
\begin{align*}
\vert\vert\chi_{\epsilon}^{2k-2}
\Delta u\vert\vert^2 &\leq 
\bigg{(}
1 - \frac{C^2\epsilon^2}{2} - 
\frac{(2(2k-2)C^2\epsilon^2)(1+2(4k-5)\epsilon)}
{2(1-(4k-5)\epsilon)}
\bigg{)}^{-1}
\bigg{(}
\frac{\epsilon}{2} + 
\frac{(2k-2)C^2\epsilon^2}{1-(4k-5)\epsilon}
\bigg{)}\vert\vert\chi_{\epsilon}^{2(2k-2)}\Delta^2u\vert\vert^2 \\
&\hspace{0.5cm} +
\bigg{(}
1 - \frac{C^2\epsilon^2}{2} - 
\frac{(2(2k-2)C^2\epsilon^2)(1+2(4k-5)\epsilon)}
{2(1-(4k-5)\epsilon)}
\bigg{)}^{-1}
\bigg{(}
\frac{1}{2\epsilon} + 2(2k-2) + \frac{1}{2}
\bigg{)}\vert\vert u\vert\vert^2.
\end{align*}
Using this estimate, we then obtain
\begin{align*}
\vert\langle \chi_{\epsilon}^{2k}\Delta^2u, 
\nabla_{(d\chi_{\epsilon}^{2k})^{\#}}\Delta u\rangle\vert \leq
\bigg{[}
\frac{\epsilon}{2} &+ \frac{2k^2C^2\epsilon}
{2(1-(2k-1)\epsilon)} + \bigg{(} 
\frac{(2k^2C^2\epsilon)(1+2(2k-1)\epsilon)}
{2(1-(2k-1)\epsilon)}
\bigg{)} \\
&
\bigg{(}
1 - \frac{C^2\epsilon^2}{2} - 
\frac{(2(2k-2)C^2\epsilon^2)(1+2(4k-5)\epsilon)}
{2(1-(4k-5)\epsilon)}
\bigg{)}^{-1} \\
&
\bigg{(}
\frac{\epsilon}{2} + \frac{(2k-2)C^2\epsilon^2}
{1-(4k-5)\epsilon}
\bigg{)}
\bigg{]}\vert\vert\chi_{\epsilon}^{2k}\Delta^2u\vert\vert^2 \\
&+
\bigg{(} 
\frac{(2k^2C^2\epsilon)(1+2(2k-1)\epsilon)}
{2(1-(2k-1)\epsilon)}
\bigg{)} \\
&
\bigg{(}
1 - \frac{C^2\epsilon^2}{2} - 
\frac{(2(2k-2)C^2\epsilon^2)(1+2(4k-5)\epsilon)}
{2(1-(4k-5)\epsilon)}
\bigg{)}^{-1} \\
&
\bigg{(} 
\frac{1}{2\epsilon} + 2(2k-2) + \frac{1}{2}
\bigg{)}\vert\vert u\vert\vert^2
\end{align*}
which proves the result.
\end{proof}

\begin{cor}\label{bilaplace_est_1_a}
For $u \in W^{4,2}_{loc}\cap L^2(E)$ and
$\epsilon > 0$ sufficiently small, we can write
\begin{equation*}
\vert\langle \chi_{\epsilon}^{2k}\Delta^2u, 
\nabla_{(d\chi_{\epsilon}^{2k})^{\#}}\Delta u\rangle\vert \leq 
\epsilon C_1(\epsilon)\vert\vert\chi_{\epsilon}^{2k}\Delta^2u\vert\vert^2
+
C_2(\epsilon)\vert\vert u\vert\vert^2
\end{equation*}
where $C_1(\epsilon)$ and $C_2(\epsilon)$ are constants, depending 
on $\epsilon$, such that 
$\lim_{\epsilon \rightarrow 0} C_1(\epsilon) < \infty$ and 
$\lim_{\epsilon \rightarrow 0} C_2(\epsilon) < \infty$.
\end{cor}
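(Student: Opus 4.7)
The plan is to read off Corollary \ref{bilaplace_est_1_a} directly from Lemma \ref{bilaplacian_est_1} by inspection, grouping terms and factoring out the appropriate power of $\epsilon$. No new analytic input is needed; this is purely an algebraic rewriting aimed at producing a clean form that will be convenient when it is invoked later in Section \ref{sec_laplacian_derivatives}.

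First I would examine the coefficient of $\Vert \chi_\epsilon^{2k}\Delta^2 u\Vert^2$ in the Lemma. It is the sum of three summands: $\tfrac{\epsilon}{2}$, the fraction $\tfrac{2k^2 C^2\epsilon}{2(1-(2k-1)\epsilon)}$, and the triple product whose last factor is $\tfrac{\epsilon}{2}+\tfrac{(2k-2)C^2\epsilon^2}{1-(4k-5)\epsilon}$. Each summand has at least one manifest factor of $\epsilon$. I would pull this factor out and collect the remaining expressions into a single function $C_1(\epsilon)$. Each of the remaining pieces has a finite limit as $\epsilon \to 0$: the first is $\tfrac{1}{2}$, the second tends to $k^2C^2$, and the third carries an \emph{extra} $\epsilon$ (coming from $2k^2 C^2 \epsilon$ multiplied by the leading $\tfrac{\epsilon}{2}$ inside the last bracket) and therefore goes to $0$. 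The inverse bracket appearing in the middle factor, namely $\bigl(1 - \tfrac{C^2\epsilon^2}{2} - \tfrac{(2(2k-2)C^2\epsilon^2)(1+2(4k-5)\epsilon)}{2(1-(4k-5)\epsilon)}\bigr)^{-1}$, tends to $1$ as $\epsilon \to 0$, so it does not affect boundedness; this is why we must require $\epsilon$ sufficiently small. Hence $\lim_{\epsilon\to 0} C_1(\epsilon) < \infty$.

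Second, for the coefficient $C_2(\epsilon)$ of $\Vert u\Vert^2$, I would note that the prefactor $\tfrac{(2k^2C^2\epsilon)(1+2(2k-1)\epsilon)}{2(1-(2k-1)\epsilon)}$ contains one power of $\epsilon$, which cancels against the $\tfrac{1}{2\epsilon}$ inside the bracket $\bigl(\tfrac{1}{2\epsilon} + 2(2k-2) + \tfrac{1}{2}\bigr)$. The remaining $\epsilon\cdot$(bounded) contributions vanish in the limit, and the inverse bracket again tends to $1$. Hence $\lim_{\epsilon\to 0} C_2(\epsilon) < \infty$ as well. The only technical point worth stating is the restriction on $\epsilon$ needed to make the inverse bracket well-defined and positive, but this is already required in Lemma \ref{bilaplacian_est_1}, so no further smallness assumption is introduced. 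The statement of the corollary then follows directly.
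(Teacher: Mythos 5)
Your proposal matches the paper's own proof: in both, one simply takes the bound from Lemma \ref{bilaplacian_est_1}, factors a single $\epsilon$ out of the coefficient of $\Vert\chi_\epsilon^{2k}\Delta^2 u\Vert^2$ to form $C_1(\epsilon)$, and defines $C_2(\epsilon)$ as the coefficient of $\Vert u\Vert^2$, using the cancellation of the prefactor's $\epsilon$ against the $\tfrac{1}{2\epsilon}$ term and the fact that the inverse bracket tends to $1$ for small $\epsilon$. No substantive difference from the paper's argument (incidentally, your limit $k^2C^2$ for the middle term of $C_1$ is the correct one; the paper writes $2k^2C^2$, which is a harmless slip).
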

\begin{proof}
From lemma \ref{bilaplacian_est_1}, we can write
\begin{align*}
\vert\langle \chi_{\epsilon}^{2k}\Delta^2u, 
\nabla_{(d\chi_{\epsilon}^{2k})^{\#}}\Delta u\rangle\vert \leq
\bigg{[}
\frac{\epsilon}{2} &+ \frac{2k^2C^2\epsilon}
{2(1-(2k-1)\epsilon)} + \bigg{(} 
\frac{(2k^2C^2\epsilon)(1+2(2k-1)\epsilon)}
{2(1-(2k-1)\epsilon)}
\bigg{)} \\
&
\bigg{(}
1 - \frac{C^2\epsilon^2}{2} - 
\frac{(2(2k-2)C^2\epsilon^2)(1+2(4k-5)\epsilon)}
{2(1-(4k-5)\epsilon)}
\bigg{)}^{-1} \\
&
\bigg{(}
\frac{\epsilon}{2} + \frac{(2k-2)C^2\epsilon^2}
{1-(4k-5)\epsilon}
\bigg{)}
\bigg{]}\vert\vert\chi_{\epsilon}^{2k}\Delta^2u\vert\vert^2 \\
&+
\bigg{(} 
\frac{(2k^2C^2\epsilon)(1+2(2k-1)\epsilon)}
{2(1-(2k-1)\epsilon)}
\bigg{)} \\
&
\bigg{(}
1 - \frac{C^2\epsilon^2}{2} - 
\frac{(2(2k-2)C^2\epsilon^2)(1+2(4k-5)\epsilon)}
{2(1-(4k-5)\epsilon)}
\bigg{)}^{-1} \\
&
\bigg{(} 
\frac{1}{2\epsilon} + 2(2k-2) + \frac{1}{2}
\bigg{)}\vert\vert u\vert\vert^2.
\end{align*}
We then write
\begin{align*}
\bigg{[}
\frac{\epsilon}{2} &+ \frac{2k^2C^2\epsilon}
{2(1-(2k-1)\epsilon)} 
+ \bigg{(} 
\frac{(2k^2C^2\epsilon)(1+2(2k-1)\epsilon)}
{2(1-(2k-1)\epsilon)}
\bigg{)} \\
&
\bigg{(}
1 - \frac{C^2\epsilon^2}{2} - 
\frac{(2(2k-2)C^2\epsilon^2)(1+2(4k-5)\epsilon)}
{2(1-(4k-5)\epsilon)}
\bigg{)}^{-1} 
\bigg{(}
\frac{\epsilon}{2} + \frac{(2k-2)C^2\epsilon^2}
{1-(4k-5)\epsilon}
\bigg{)}
\bigg{]} \\
&= \\
\epsilon\bigg{[}
\frac{1}{2} &+ \frac{2k^2C^2}
{2(1-(2k-1)\epsilon)} 
+ \bigg{(} 
\frac{(2k^2C^2)(1+2(2k-1)\epsilon)}
{2(1-(2k-1)\epsilon)}
\bigg{)} \\
&
\bigg{(}
1 - \frac{C^2\epsilon^2}{2} - 
\frac{(2(2k-2)C^2\epsilon^2)(1+2(4k-5)\epsilon)}
{2(1-(4k-5)\epsilon)}
\bigg{)}^{-1} 
\bigg{(}
\frac{\epsilon}{2} + \frac{(2k-2)C^2\epsilon^2}
{1-(4k-5)\epsilon}
\bigg{)}
\bigg{]}
\end{align*}
and define
\begin{align*}
C_1(\epsilon) &=
\bigg{[}
\frac{1}{2} + \frac{2k^2C^2}
{2(1-(2k-1)\epsilon)} 
+ \bigg{(} 
\frac{(2k^2C^2)(1+2(2k-1)\epsilon)}
{2(1-(2k-1)\epsilon)}
\bigg{)} \\
&\hspace{0.5cm}
\bigg{(}
1 - \frac{C^2\epsilon^2}{2} - 
\frac{(2(2k-2)C^2\epsilon^2)(1+2(4k-5)\epsilon)}
{2(1-(4k-5)\epsilon)}
\bigg{)}^{-1} 
\bigg{(}
\frac{\epsilon}{2} + \frac{(2k-2)C^2\epsilon^2}
{1-(4k-5)\epsilon}
\bigg{)}
\bigg{]}.
\end{align*}
It is easy to see that we have
\begin{equation*}
\lim_{\epsilon \rightarrow 0} C_1(\epsilon) = 
\frac{1}{2} + 2k^2C^2.
\end{equation*}
We then define
\begin{align*}
C_2(\epsilon) &= 
\bigg{(} 
\frac{(2k^2C^2\epsilon)(1+2(2k-1)\epsilon)}
{2(1-(2k-1)\epsilon)}
\bigg{)} 
\bigg{(}
1 - \frac{C^2\epsilon^2}{2} - 
\frac{(2(2k-2)C^2\epsilon^2)(1+2(4k-5)\epsilon)}
{2(1-(4k-5)\epsilon)}
\bigg{)}^{-1} \\
&\hspace{0.5cm}
\bigg{(} 
\frac{1}{2\epsilon} + 2(2k-2) + \frac{1}{2}
\bigg{)} \\
&=
\bigg{(} 
\frac{(2k^2C^2)(1+2(2k-1)\epsilon)}
{2(1-(2k-1)\epsilon)}
\bigg{)} 
\bigg{(}
1 - \frac{C^2\epsilon^2}{2} - 
\frac{(2(2k-2)C^2\epsilon^2)(1+2(4k-5)\epsilon)}
{2(1-(4k-5)\epsilon)}
\bigg{)}^{-1} \\
&\hspace{0.5cm}
\bigg{(} 
\frac{1}{2} + 2(2k-2)\epsilon + \frac{\epsilon}{2}
\bigg{)}.
\end{align*}
It is then easy to see that we have
\begin{equation*}
\lim_{\epsilon\rightarrow 0} C_2(\epsilon) = \frac{k^2C^2}{2}.
\end{equation*}
\end{proof}

\begin{lem}\label{bilaplace_est_2}
Given $u \in W^{4,2}_{loc}(E) \cap L^2(E)$ and $\epsilon > 0$ sufficiently small, we have
\begin{align*}
\vert \langle \chi_{\epsilon}^{2k}\Delta^2u, \Delta\nabla_{(d\chi_{\epsilon}^{2k})^{\#}}u
\rangle\vert \leq
\bigg{[}
\frac{3\epsilon}{2} &+ 
\frac{2k^2C^2\epsilon}{2(1-(2k-1)\epsilon)} +
\bigg{(}
\frac{(2k^2C^2\epsilon)(1+2(2k-1)\epsilon)}
{2(1-(2k-1)\epsilon)}
\bigg{)} \\
&
\bigg{(}
1 - \frac{C^2\epsilon^2}{2} - 
\frac{2(2k-2)C^2\epsilon^2)(1+2(4k-5)\epsilon)}
{2(1-(4k-5)\epsilon)}
\bigg{)}^{-1}
\bigg{(}
\frac{\epsilon}{2} + \frac{(2k-2)C^2\epsilon^2}
{1-(4k-5)\epsilon}
\bigg{)} \\
&+
\bigg{(}
\frac{2k^2C^2\epsilon}{2(1-(2k-1)\epsilon)}
\bigg{)}
\bigg{(}
1 - \frac{C^2\epsilon^2}{2} - 
\frac{(4kC^2\epsilon^2)(1+2(4k-1)\epsilon)}
{2(1-(4k-1)\epsilon)}
\bigg{)}^{-1} \\
&
\bigg{(}
\frac{\epsilon}{2} + \frac{2kC^2\epsilon^2}
{(1-(4k-1)\epsilon)}
\bigg{)}\bigg{]}\vert\vert\chi_{\epsilon}^{2k}\Delta^2u\vert\vert^2 \\
&+
\bigg{[}
\bigg{(}
\frac{(2k^2C^2\epsilon)(1+2(2k-1)\epsilon)}
{2(1-(2k-1)\epsilon)}
\bigg{)} \\
&
\bigg{(}
1 - \frac{C^2\epsilon^2}{2} - 
\frac{2(2k-2)C^2\epsilon^2)(1+2(4k-5)\epsilon)}
{2(1-(4k-5)\epsilon)}
\bigg{)}^{-1}
\bigg{(}
\frac{1}{2\epsilon} + 2(2k-2) + \frac{1}{2}
\bigg{)} \\
&+ 2k^2C^2\epsilon \\
&+ 
\bigg{(}
\frac{2k^2C^2\epsilon}{2(1-(2k-1)\epsilon)}
\bigg{)}
\bigg{(}
1 - \frac{C^2\epsilon^2}{2} - 
\frac{(4kC^2\epsilon^2)(1+2(4k-1)\epsilon)}
{2(1-(4k-1)\epsilon)}
\bigg{)}^{-1} \\
&+
\bigg{(}
\frac{1}{2\epsilon} + 2k + \frac{1}{2}
\bigg{)} +
\bigg{(}
\frac{(2k^2C^2\epsilon)(1+2(2k-1)\epsilon)}
{2(1-(2k-1)\epsilon)}
\bigg{)}
\bigg{]}\vert\vert u\vert\vert^2.
\end{align*}
\end{lem}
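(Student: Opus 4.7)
The plan is to commute the Bochner Laplacian past the directional derivative, thereby reducing the quantity of interest to one bounded by Lemma \ref{bilaplacian_est_1} together with four lower-order correction pieces. Writing $Y := (d\chi_{\epsilon}^{2k})^{\#}$, a computation in normal coordinates using Corollary \ref{connection_3} (to swap $\nabla^2$ and $\Delta$ when three covariant derivatives fall on $u$) together with the Leibniz rule yields the schematic identity
\begin{equation*}
\Delta(\nabla_Y u) = \nabla_Y(\Delta u) + (\Delta Y)*\nabla u + (\nabla Y)*\nabla^2 u + Y*(Rm+F)*\nabla u + Y*\nabla(Rm+F)*u,
\end{equation*}
where $\Delta Y$ denotes the Bochner Laplacian applied to the vector field $Y$.

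Taking the inner product with $\chi_{\epsilon}^{2k}\Delta^2 u$ and applying the triangle inequality splits the estimate into five terms. The first piece, $\langle \chi_{\epsilon}^{2k}\Delta^2 u, \nabla_Y \Delta u\rangle$, is precisely what is controlled by Lemma \ref{bilaplacian_est_1}, and accounts for every contribution in the target bound carrying the nested factor $\bigl(1-\tfrac{C^2\epsilon^2}{2} - \tfrac{(2(2k-2)C^2\epsilon^2)(1+2(4k-5)\epsilon)}{2(1-(4k-5)\epsilon)}\bigr)^{-1}$.

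Each of the remaining four pieces is then treated by Cauchy-Schwarz followed by Young's inequality with weights $(\tfrac{\epsilon}{2},\tfrac{1}{2\epsilon})$. Each such application contributes an extra $\tfrac{\epsilon}{2}\|\chi_{\epsilon}^{2k}\Delta^2 u\|^2$, and three applications together with the $\tfrac{\epsilon}{2}$ inherited from Lemma \ref{bilaplacian_est_1} combine to give the leading $\tfrac{3\epsilon}{2}$ appearing in the target. On the complementary side of each Young inequality, the pointwise norms $|Y|$, $|\nabla Y|$, $|\Delta Y|$ are bounded via \eqref{cut-off_bound_1}--\eqref{cut-off_bound_3} and the product-rule estimates of Section \ref{derivative_est_powers_cut_offs} (namely Corollaries \ref{laplacian_prod_est_1}, \ref{bilaplace_prod_est_1}, \ref{dlaplace_prod_est_1}, applied to $\chi_{\epsilon}^{2k}$ and $d\chi_{\epsilon}^{2k}$), yielding factors of order $\epsilon\chi_{\epsilon}^{2k-1}$, $\epsilon\chi_{\epsilon}^{2k-2}$, $\epsilon\chi_{\epsilon}^{2k-3}$ respectively, while $Rm$, $F$, $\nabla Rm$, $\nabla F$ are uniformly bounded by assumptions (A1), (A2). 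The resulting weighted $L^2$-norms of $\nabla u$, $\nabla^2 u$, $\Delta u$, and $u$ are then converted back into $\|\chi_{\epsilon}^{2k}\Delta^2 u\|^2$ and $\|u\|^2$ via Propositions \ref{covariant_est_1}, \ref{covariant_est_2}, and \ref{Laplace_est}. In particular, the denominator $(1-(4k-1)\epsilon)$ appearing in the target comes from invoking Proposition \ref{Laplace_est} with its internal index specialised to $2k$, while $(1-(4k-5)\epsilon)$ comes from the application already used inside Lemma \ref{bilaplacian_est_1} (with index $2k-2$).

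The main obstacle is bookkeeping rather than conceptual: one must track the precise power of $\chi_{\epsilon}$ carried by each of the many intermediate terms and verify that the corresponding applications of Propositions \ref{covariant_est_1}--\ref{Laplace_est} remain legal, which forces $\epsilon$ small enough that every denominator of the form $(1-j\epsilon)$ (for $j \in \{k-1,k,2k-1,4k-5,4k-1\}$) stays positive. No new analytic input is required beyond that already present in Lemma \ref{bilaplacian_est_1}; the extra length of the bound here simply reflects the four additional commutator pieces.
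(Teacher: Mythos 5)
Your commutation identity is different from, and in fact more careful than, the one the paper's proof actually uses. The paper's proof opens by applying Corollary~\ref{connection_3} directly to rewrite $\Delta\nabla_{(d\chi_{\epsilon}^{2k})^{\#}}u$ as the sum of just three pieces,
\[
\nabla_{(d\chi_{\epsilon}^{2k})^{\#}}\Delta u
+ \nabla_{(d\chi_{\epsilon}^{2k})^{\#}}(Rm+F)*u
+ (Rm+F)*\nabla_{(d\chi_{\epsilon}^{2k})^{\#}}u,
\]
treating $Y=(d\chi_{\epsilon}^{2k})^{\#}$ as inert; the rest of the proof then bounds these three terms with Lemma~\ref{bilaplacian_est_1}, the cut-off bounds and Propositions~\ref{covariant_est_1},~\ref{Laplace_est}. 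Your five-term identity, with the additional Leibniz contributions $(\Delta Y)*\nabla u$ and $(\nabla Y)*\nabla^2 u$, is what one actually gets from $\Delta(\nabla_Y u)$ once one distinguishes $\Delta(\nabla_Y u)$ from the $Y$-contraction of $\Delta\nabla u$; Corollary~\ref{connection_3} by itself only gives the latter. In that sense you are recording terms that the paper's derivation silently drops.

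The difficulty is that this difference is not cosmetic: it changes the structure of the resulting bound, so your route cannot reproduce the explicit formula claimed in the lemma. The extra piece $(\nabla Y)*\nabla^2 u$ carries a genuine second covariant derivative of $u$, and after Cauchy--Schwarz and Young it forces you through Proposition~\ref{covariant_est_2}, whose output contains the distinctive factor $(1-2j^2C^2\epsilon^2)^{-1}$. No factor of that shape appears anywhere in the stated estimate (the only denominators are of the form $1-(2k-1)\epsilon$, $1-(4k-5)\epsilon$, $1-(4k-1)\epsilon$, together with the nested $1-\tfrac{C^2\epsilon^2}{2}-\cdots$ blocks coming from Proposition~\ref{Laplace_est}). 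So your proposal, carried out honestly, proves a related but different inequality, not the one in the statement. Your arithmetic for the leading coefficient is also inconsistent with your own decomposition: you list four correction pieces, each contributing $\tfrac{\epsilon}{2}$ from a Young split, on top of the $\tfrac{\epsilon}{2}$ already inside Lemma~\ref{bilaplacian_est_1}; that totals $\tfrac{5\epsilon}{2}$, not the stated $\tfrac{3\epsilon}{2}$. The $\tfrac{3\epsilon}{2}$ in the lemma is precisely what the paper's three-piece split produces. To recover the lemma as stated you would need to suppress, as the paper does, the two terms built from $\nabla Y$ and $\Delta Y$; if you believe (as your identity suggests) that these terms are genuinely present, the correct conclusion is that the lemma's constant needs additional contributions, not that your decomposition secretly yields the same numbers.
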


\begin{proof}
We start by using the commutation formula, 
see corollary \ref{connection_3},
to obtain
\begin{align*}
\vert \langle \chi_{\epsilon}^{2k}\Delta^2u, \Delta\nabla_{(d\chi_{\epsilon}^{2k})^{\#}}u
\rangle\vert &=
\vert \langle \chi_{\epsilon}^{2k}\Delta^2u, \nabla_{(d\chi_{\epsilon}^{2k})^{\#}}\Delta u + 
\nabla_{(d\chi_{\epsilon}^{2k})^{\#}}(Rm+F)*u + (Rm+F)*\nabla_{(d\chi_{\epsilon}^{2k})^{\#}}u
\rangle\vert \\
&\leq
\vert\langle \chi_{\epsilon}^{2k}\Delta^2u, \nabla_{(d\chi_{\epsilon}^{2k})^{\#}}\Delta u\rangle\vert
+
\vert\langle  \chi_{\epsilon}^{2k}\Delta^2u, \nabla_{(d\chi_{\epsilon}^{2k})^{\#}}(Rm+F)*u
\rangle\vert \\
&\hspace{1cm}+
\vert\langle \chi_{\epsilon}^{2k}\Delta^2u, (Rm+F)*\nabla_{(d\chi_{\epsilon}^{2k})^{\#}}u
\rangle\vert.
\end{align*}

The term 
$\vert\langle  \chi_{\epsilon}^{2k}\Delta^2u, \nabla_{(d\chi_{\epsilon}^{2k})^{\#}}(Rm+F)*u
\rangle\vert$ is estimated as follows.
\begin{equation*}
\vert\langle  \chi_{\epsilon}^{2k}\Delta^2u, \nabla_{(d\chi_{\epsilon}^{2k})^{\#}}(Rm+F)*u
\rangle\vert = 
\vert\langle  \chi_{\epsilon}^{2k}\Delta^2u, 2k\chi_{\epsilon}^{2k-1}
\nabla_{(d\chi_{\epsilon})^{\#}}(Rm+F)*u
\rangle\vert.
\end{equation*}
Apply Cauchy-Schwarz and Young's inequality to get
\begin{align*}
\vert\langle  \chi_{\epsilon}^{2k}\Delta^2u, 2k\chi_{\epsilon}^{2k-1}
\nabla_{(d\chi_{\epsilon})^{\#}}(Rm+F)*u
\rangle\vert &\leq 
\frac{\epsilon}{2}\vert\vert\chi_{\epsilon}^{2k}\Delta^2u\vert\vert^2 +
\frac{1}{2\epsilon}\vert\vert 2k\chi_{\epsilon}^{2k-1}
\nabla_{(d\chi_{\epsilon})^{\#}}(Rm+F)*u\vert\vert^2 \\
&\leq 
\frac{\epsilon}{2}\vert\vert\chi_{\epsilon}^{2k}\Delta^2u\vert\vert^2 +
2k^2C^2\epsilon\vert\vert\chi_{\epsilon}^{2k-1}u\vert\vert^2
\end{align*}
where to get the second inequality we have used our bounded
geometry assumption (A1) and (A2).

The term 
$\vert\langle \chi_{\epsilon}^{2k}\Delta^2u, (Rm+F)*\nabla_{(d\chi_{\epsilon}^{2k})^{\#}}u
\rangle\vert$ can be estimated as follows.

\begin{align*}
\vert\langle \chi_{\epsilon}^{2k}\Delta^2u, (Rm+F)*\nabla_{(d
\chi_{\epsilon}^{2k})^{\#}}u
\rangle\vert &\leq \frac{\epsilon}{2}
\vert\vert\chi_{\epsilon}^{2k}\Delta^2u\vert\vert^2 + \frac{1}{2\epsilon}
\vert\vert (Rm+F)*\nabla_{(d\chi_{\epsilon}^{2k})^{\#}}u\vert\vert^2 
 \\
&\leq
\frac{\epsilon}{2}
\vert\vert\chi_{\epsilon}^{2k}\Delta^2u\vert\vert^2 + 
\frac{1}{2\epsilon}
\vert\vert (Rm+F)*2k
\chi_{\epsilon}^{2k-1}\nabla_{(d\chi_{\epsilon})^{\#}}u\vert\vert^2 
 \\
&\leq 
\frac{\epsilon}{2}
\vert\vert\chi_{\epsilon}^{2k}\Delta^2u\vert\vert^2 +
2k^2C^2\epsilon\vert\vert\chi_{\epsilon}^{2k-1}\nabla u\vert\vert^2 
 \\
&\leq
\frac{\epsilon}{2}
\vert\vert\chi_{\epsilon}^{2k}\Delta^2u\vert\vert^2 + 
\frac{2k^2C^2\epsilon}{2(2-(2k-1)\epsilon)}
\vert\vert\chi_{\epsilon}^{2k}\Delta u\vert\vert^2 
 \\
&\hspace{0.5cm}+ 
\frac{(2k^2C^2\epsilon)(1+2(2k-1)\epsilon)}
{2(1-(2k-1)\epsilon)}\vert\vert u\vert\vert^2
\end{align*}
where to get the first inequality we have used Cauchy-Schwarz and Young's 
inequality, to get the third inequality we have used our bounded
geometry assumption (A1) and (A2),
and to get the last inequality we have used proposition 
\ref{covariant_est_1}.

The term $\vert\langle \chi_{\epsilon}^{2k}\Delta^2u, 
\nabla_{(d\chi_{\epsilon}^{2k})^{\#}}
\Delta u\rangle\vert$ was estimated in the previous lemma, and
we can use proposition \ref{Laplace_est} to estimate the term 
$\vert\vert\chi_{\epsilon}^{2k}\Delta u\vert\vert^2$. This gives
\begin{align*}
\vert \langle \chi_{\epsilon}^{2k}\Delta^2u, \Delta\nabla_{(d\chi_{\epsilon}^{2k})^{\#}}u
\rangle\vert \leq
\bigg{[}
\frac{3\epsilon}{2} &+ 
\frac{2k^2C^2\epsilon}{2(1-(2k-1)\epsilon)} +
\bigg{(}
\frac{(2k^2C^2\epsilon)(1+2(2k-1)\epsilon)}
{2(1-(2k-1)\epsilon)}
\bigg{)} \\
&
\bigg{(}
1 - \frac{C^2\epsilon^2}{2} - 
\frac{2(2k-2)C^2\epsilon^2)(1+2(4k-5)\epsilon)}
{2(1-(4k-5)\epsilon)}
\bigg{)}^{-1}
\bigg{(}
\frac{\epsilon}{2} + \frac{(2k-2)C^2\epsilon^2}
{1-(4k-5)\epsilon}
\bigg{)} \\
&+
\bigg{(}
\frac{2k^2C^2\epsilon}{2(1-(2k-1)\epsilon)}
\bigg{)}
\bigg{(}
1 - \frac{C^2\epsilon^2}{2} - 
\frac{(4kC^2\epsilon^2)(1+2(4k-1)\epsilon)}
{2(1-(4k-1)\epsilon)}
\bigg{)}^{-1} \\
&
\bigg{(}
\frac{\epsilon}{2} + \frac{2kC^2\epsilon^2}
{(1-(4k-1)\epsilon)}
\bigg{)}\bigg{]}\vert\vert\chi_{\epsilon}^{2k}\Delta^2u\vert\vert^2 \\
&+
\bigg{[}
\bigg{(}
\frac{(2k^2C^2\epsilon)(1+2(2k-1)\epsilon)}
{2(1-(2k-1)\epsilon)}
\bigg{)} \\
&
\bigg{(}
1 - \frac{C^2\epsilon^2}{2} - 
\frac{2(2k-2)C^2\epsilon^2)(1+2(4k-5)\epsilon)}
{2(1-(4k-5)\epsilon)}
\bigg{)}^{-1}
\bigg{(}
\frac{1}{2\epsilon} + 2(2k-2) + \frac{1}{2}
\bigg{)} \\
&+ 2k^2C^2\epsilon \\
&+ 
\bigg{(}
\frac{2k^2C^2\epsilon}{2(1-(2k-1)\epsilon)}
\bigg{)}
\bigg{(}
1 - \frac{C^2\epsilon^2}{2} - 
\frac{(4kC^2\epsilon^2)(1+2(4k-1)\epsilon)}
{2(1-(4k-1)\epsilon)}
\bigg{)}^{-1} \\
&+
\bigg{(}
\frac{1}{2\epsilon} + 2k + \frac{1}{2}
\bigg{)} +
\bigg{(}
\frac{(2k^2C^2\epsilon)(1+2(2k-1)\epsilon)}
{2(1-(2k-1)\epsilon)}
\bigg{)}
\bigg{]}\vert\vert u\vert\vert^2
\end{align*}
which proves the result.
\end{proof}

\begin{cor}\label{bilaplace_est_2_a}
For $u \in W^{4,2}_{loc}(E)\cap L^2(E)$ and $\epsilon > 0$ sufficiently 
small, we can write
\begin{equation*}
\vert \langle \chi_{\epsilon}^{2k}\Delta^2u, \Delta\nabla_{(d
\chi_{\epsilon}^{2k})^{\#}}u
\rangle\vert \leq 
\epsilon C_1(\epsilon)\vert\vert\chi_{\epsilon}^{2k}\Delta^2u\vert\vert^2
+
C_2(\epsilon)\vert\vert u\vert\vert^2
\end{equation*}
where $C_1(\epsilon)$ and $C_2(\epsilon)$ are constants, depending 
on $\epsilon$, such that 
$\lim_{\epsilon \rightarrow 0} C_1(\epsilon) < \infty$ and 
$\lim_{\epsilon \rightarrow 0} C_2(\epsilon) < \infty$.
\end{cor}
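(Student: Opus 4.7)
The plan is to mirror the argument used for Corollary \ref{bilaplace_est_1_a}, simply making the $\epsilon$-dependence on the right-hand side of Lemma \ref{bilaplace_est_2} explicit. I would begin by writing the bound from Lemma \ref{bilaplace_est_2} schematically as
\[
\vert \langle \chi_{\epsilon}^{2k}\Delta^2u, \Delta\nabla_{(d\chi_{\epsilon}^{2k})^{\#}}u\rangle\vert \leq A(\epsilon)\,\vert\vert\chi_{\epsilon}^{2k}\Delta^2u\vert\vert^2 + B(\epsilon)\,\vert\vert u\vert\vert^2,
\]
where $A(\epsilon)$ and $B(\epsilon)$ are the two bracketed expressions appearing in that lemma.

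Next I would inspect $A(\epsilon)$ term by term and observe that every summand carries an explicit factor of $\epsilon$: either the leading $\tfrac{3\epsilon}{2}$, or a factor of the form $\epsilon\cdot(1-c\epsilon)^{-1}$, or a product of two such factors against a $(1-c\epsilon-c'\epsilon^2)^{-1}$-type expression. Pulling out one $\epsilon$ gives $A(\epsilon) = \epsilon\,C_1(\epsilon)$ with
\[
C_1(\epsilon) = \tfrac{3}{2} + \tfrac{2k^2C^2}{2(1-(2k-1)\epsilon)} + (\text{bounded products involving }(1-c\epsilon)^{-1}\text{ factors}).
\]
Since every $(1-c\epsilon)^{-1}$ factor converges to $1$ and every remaining $\epsilon^j$ with $j\geq 1$ vanishes, $\lim_{\epsilon\to 0} C_1(\epsilon)$ exists and is finite (for the record it equals $\tfrac{3}{2} + 2k^2C^2$, up to lower-order contributions).

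For $C_2(\epsilon) := B(\epsilon)$, the only potentially problematic feature is the appearance of expressions of the form $\tfrac{1}{2\epsilon} + O(1)$, which come from Proposition \ref{Laplace_est} and from Lemma \ref{bilaplacian_est_1}. However, in each case these expressions are multiplied by a coefficient proportional to $\epsilon$ (namely the factor $\tfrac{(2k^2C^2\epsilon)(1+2(2k-1)\epsilon)}{2(1-(2k-1)\epsilon)}$ and the isolated $\tfrac{1}{2\epsilon}+2k+\tfrac{1}{2}$ term, which when traced back to its origin in Proposition \ref{Laplace_est} is also paired with a multiplicative $\epsilon$-factor). The $\epsilon\cdot \tfrac{1}{\epsilon}$ cancellations produce a bounded contribution, while the remaining summands are either of order $\epsilon$ or already bounded uniformly as $\epsilon\to 0$. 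Therefore $\lim_{\epsilon\to 0} C_2(\epsilon) < \infty$.

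The whole argument is essentially bookkeeping: the substantive work has already been done in Lemma \ref{bilaplace_est_2}. The only mildly delicate step is to verify that every $1/\epsilon$ singularity in $B(\epsilon)$ is genuinely paired with a compensating $\epsilon$ factor coming from the Young's-inequality weights applied in the proofs of Lemma \ref{bilaplacian_est_1} and Proposition \ref{Laplace_est}. Once this accounting is carried out carefully, the conclusion of the corollary follows exactly as in the proof of Corollary \ref{bilaplace_est_1_a}.
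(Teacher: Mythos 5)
Your proposal follows the paper's own proof essentially verbatim: extract the overall $\epsilon$-factor from the coefficient of $\|\chi_\epsilon^{2k}\Delta^2 u\|^2$, then verify the $1/\epsilon$ singularities in the coefficient of $\|u\|^2$ are compensated by the $\epsilon$ weights coming from Young's inequality in Lemma \ref{bilaplacian_est_1} and Proposition \ref{Laplace_est}. One small arithmetic slip: the limit of $C_1(\epsilon)$ is $\tfrac{3}{2}+k^2C^2$, not $\tfrac{3}{2}+2k^2C^2$ (the factor $\tfrac{2k^2C^2}{2(1-(2k-1)\epsilon)}\to k^2C^2$), but this is immaterial since the corollary asserts only finiteness of the limit.
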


\begin{proof}
We use lemma \ref{bilaplace_est_2}, and write 
\begin{align*}
\vert \langle \chi_{\epsilon}^{2k}\Delta^2u, \Delta\nabla_{(d\chi_{\epsilon}^{2k})^{\#}}u
\rangle\vert \leq
\bigg{[}
\frac{3\epsilon}{2} &+ 
\frac{2k^2C^2\epsilon}{2(1-(2k-1)\epsilon)} +
\bigg{(}
\frac{(2k^2C^2\epsilon)(1+2(2k-1)\epsilon)}
{2(1-(2k-1)\epsilon)}
\bigg{)} \\
&
\bigg{(}
1 - \frac{C^2\epsilon^2}{2} - 
\frac{2(2k-2)C^2\epsilon^2)(1+2(4k-5)\epsilon)}
{2(1-(4k-5)\epsilon)}
\bigg{)}^{-1}
\bigg{(}
\frac{\epsilon}{2} + \frac{(2k-2)C^2\epsilon^2}
{1-(4k-5)\epsilon}
\bigg{)} \\
&+
\bigg{(}
\frac{2k^2C^2\epsilon}{2(1-(2k-1)\epsilon)}
\bigg{)}
\bigg{(}
1 - \frac{C^2\epsilon^2}{2} - 
\frac{(4kC^2\epsilon^2)(1+2(4k-1)\epsilon)}
{2(1-(4k-1)\epsilon)}
\bigg{)}^{-1} \\
&
\bigg{(}
\frac{\epsilon}{2} + \frac{2kC^2\epsilon^2}
{(1-(4k-1)\epsilon)}
\bigg{)}\bigg{]}\vert\vert\chi_{\epsilon}^{2k}\Delta^2u\vert\vert^2 \\
&+
\bigg{[}
\bigg{(}
\frac{(2k^2C^2\epsilon)(1+2(2k-1)\epsilon)}
{2(1-(2k-1)\epsilon)}
\bigg{)} \\
&
\bigg{(}
1 - \frac{C^2\epsilon^2}{2} - 
\frac{2(2k-2)C^2\epsilon^2)(1+2(4k-5)\epsilon)}
{2(1-(4k-5)\epsilon)}
\bigg{)}^{-1}
\bigg{(}
\frac{1}{2\epsilon} + 2(2k-2) + \frac{1}{2}
\bigg{)} \\
&+ 2k^2C^2\epsilon \\
&+ 
\bigg{(}
\frac{2k^2C^2\epsilon}{2(1-(2k-1)\epsilon)}
\bigg{)}
\bigg{(}
1 - \frac{C^2\epsilon^2}{2} - 
\frac{(4kC^2\epsilon^2)(1+2(4k-1)\epsilon)}
{2(1-(4k-1)\epsilon)}
\bigg{)}^{-1} \\
&+
\bigg{(}
\frac{1}{2\epsilon} + 2k + \frac{1}{2}
\bigg{)} +
\bigg{(}
\frac{(2k^2C^2\epsilon)(1+2(2k-1)\epsilon)}
{2(1-(2k-1)\epsilon)}
\bigg{)}
\bigg{]}\vert\vert u\vert\vert^2.
\end{align*}

Then observe that we can write
\begin{align*}
\bigg{[}
\frac{3\epsilon}{2} &+ 
\frac{2k^2C^2\epsilon}{2(1-(2k-1)\epsilon)} +
\bigg{(}
\frac{(2k^2C^2\epsilon)(1+2(2k-1)\epsilon)}
{2(1-(2k-1)\epsilon)}
\bigg{)} \\
&
\bigg{(}
1 - \frac{C^2\epsilon^2}{2} - 
\frac{2(2k-2)C^2\epsilon^2)(1+2(4k-5)\epsilon)}
{2(1-(4k-5)\epsilon)}
\bigg{)}^{-1}
\bigg{(}
\frac{\epsilon}{2} + \frac{(2k-2)C^2\epsilon^2}
{1-(4k-5)\epsilon}
\bigg{)} \\
&+
\bigg{(}
\frac{2k^2C^2\epsilon}{2(1-(2k-1)\epsilon)}
\bigg{)}
\bigg{(}
1 - \frac{C^2\epsilon^2}{2} - 
\frac{(4kC^2\epsilon^2)(1+2(4k-1)\epsilon)}
{2(1-(4k-1)\epsilon)}
\bigg{)}^{-1} \\
&
\bigg{(}
\frac{\epsilon}{2} + \frac{2kC^2\epsilon^2}
{(1-(4k-1)\epsilon)}
\bigg{)}\bigg{]} \\
= \\
\epsilon\bigg{[}
\frac{3}{2} &+ 
\frac{2k^2C^2}{2(1-(2k-1)\epsilon)} +
\bigg{(}
\frac{(2k^2C^2)(1+2(2k-1)\epsilon)}
{2(1-(2k-1)\epsilon)}
\bigg{)} \\
&
\bigg{(}
1 - \frac{C^2\epsilon^2}{2} - 
\frac{2(2k-2)C^2\epsilon^2)(1+2(4k-5)\epsilon)}
{2(1-(4k-5)\epsilon)}
\bigg{)}^{-1}
\bigg{(}
\frac{\epsilon}{2} + \frac{(2k-2)C^2\epsilon^2}
{1-(4k-5)\epsilon}
\bigg{)} \\
&+
\bigg{(}
\frac{2k^2C^2}{2(1-(2k-1)\epsilon)}
\bigg{)}
\bigg{(}
1 - \frac{C^2\epsilon^2}{2} - 
\frac{(4kC^2\epsilon^2)(1+2(4k-1)\epsilon)}
{2(1-(4k-1)\epsilon)}
\bigg{)}^{-1} \\
&
\bigg{(}
\frac{\epsilon}{2} + \frac{2kC^2\epsilon^2}
{(1-(4k-1)\epsilon)}
\bigg{)}\bigg{]}.
\end{align*}

Defining 
\begin{align*}
C_1(\epsilon) =
\frac{3}{2} &+ 
\frac{2k^2C^2}{2(1-(2k-1)\epsilon)} +
\bigg{(}
\frac{(2k^2C^2)(1+2(2k-1)\epsilon)}
{2(1-(2k-1)\epsilon)}
\bigg{)} \\
&
\bigg{(}
1 - \frac{C^2\epsilon^2}{2} - 
\frac{2(2k-2)C^2\epsilon^2)(1+2(4k-5)\epsilon)}
{2(1-(4k-5)\epsilon)}
\bigg{)}^{-1}
\bigg{(}
\frac{\epsilon}{2} + \frac{(2k-2)C^2\epsilon^2}
{1-(4k-5)\epsilon}
\bigg{)} \\
&+
\bigg{(}
\frac{2k^2C^2}{2(1-(2k-1)\epsilon)}
\bigg{)}
\bigg{(}
1 - \frac{C^2\epsilon^2}{2} - 
\frac{(4kC^2\epsilon^2)(1+2(4k-1)\epsilon)}
{2(1-(4k-1)\epsilon)}
\bigg{)}^{-1} \\
&
\bigg{(}
\frac{\epsilon}{2} + \frac{2kC^2\epsilon^2}
{(1-(4k-1)\epsilon)}
\bigg{)}.
\end{align*}
It is easy to see that 
$\lim_{\epsilon\rightarrow 0}C_1(\epsilon) = \frac{3}{2} + k^2C$.

We then define $C_2(\epsilon)$ to be the coefficient of 
$\vert\vert u\vert\vert^2$
\begin{align*}
C_2(\epsilon) &=
\bigg{(}
\frac{(2k^2C^2\epsilon)(1+2(2k-1)\epsilon)}
{2(1-(2k-1)\epsilon)}
\bigg{)} \\
&\hspace{0.5cm}
\bigg{(}
1 - \frac{C^2\epsilon^2}{2} - 
\frac{2(2k-2)C^2\epsilon^2)(1+2(4k-5)\epsilon)}
{2(1-(4k-5)\epsilon)}
\bigg{)}^{-1}
\bigg{(}
\frac{1}{2\epsilon} + 2(2k-2) + \frac{1}{2}
\bigg{)} \\
&\hspace{0.5cm}+ 2k^2C^2\epsilon +
 \bigg{(}
\frac{2k^2C^2\epsilon}{2(1-(2k-1)\epsilon)}
\bigg{)}
\bigg{(}
1 - \frac{C^2\epsilon^2}{2} - 
\frac{(4kC^2\epsilon^2)(1+2(4k-1)\epsilon)}
{2(1-(4k-1)\epsilon)}
\bigg{)}^{-1} \\
&\hspace{0.5cm}+
\bigg{(}
\frac{1}{2\epsilon} + 2k + \frac{1}{2}
\bigg{)} +
\bigg{(}
\frac{(2k^2C^2\epsilon)(1+2(2k-1)\epsilon)}
{2(1-(2k-1)\epsilon)}
\bigg{)}.
\end{align*}
We then observe that writing
\begin{align*}
&\bigg{(}
\frac{(2k^2C^2\epsilon)(1+2(2k-1)\epsilon)}
{2(1-(2k-1)\epsilon)}
\bigg{)} 
\bigg{(}
1 - \frac{C^2\epsilon^2}{2} - 
\frac{2(2k-2)C^2\epsilon^2)(1+2(4k-5)\epsilon)}
{2(1-(4k-5)\epsilon)}
\bigg{)}^{-1}
\bigg{(}
\frac{1}{2\epsilon} + 2(2k-2) + \frac{1}{2}
\bigg{)} \\
&\hspace{0.5cm}+ 2k^2C^2\epsilon +
 \bigg{(}
\frac{2k^2C^2\epsilon}{2(1-(2k-1)\epsilon)}
\bigg{)}
\bigg{(}
1 - \frac{C^2\epsilon^2}{2} - 
\frac{(4kC^2\epsilon^2)(1+2(4k-1)\epsilon)}
{2(1-(4k-1)\epsilon)}
\bigg{)}^{-1} \\
&\hspace{0.5cm}+
\bigg{(}
\frac{1}{2\epsilon} + 2k + \frac{1}{2}
\bigg{)} +
\bigg{(}
\frac{(2k^2C^2\epsilon)(1+2(2k-1)\epsilon)}
{2(1-(2k-1)\epsilon)}
\bigg{)} \\
&= \\
&\bigg{(}
\frac{(2k^2C^2)(1+2(2k-1)\epsilon)}
{2(1-(2k-1)\epsilon)}
\bigg{)} 
\bigg{(}
1 - \frac{C^2\epsilon^2}{2} - 
\frac{2(2k-2)C^2\epsilon^2)(1+2(4k-5)\epsilon)}
{2(1-(4k-5)\epsilon)}
\bigg{)}^{-1}
\bigg{(}
\frac{1}{2} + 2(2k-2)\epsilon + \frac{\epsilon}{2}
\bigg{)} \\
&\hspace{0.5cm}+ 2k^2C^2\epsilon +
 \bigg{(}
\frac{2k^2C^2}{2(1-(2k-1)\epsilon)}
\bigg{)}
\bigg{(}
1 - \frac{C^2\epsilon^2}{2} - 
\frac{(4kC^2\epsilon^2)(1+2(4k-1)\epsilon)}
{2(1-(4k-1)\epsilon)}
\bigg{)}^{-1} \\
&\hspace{0.5cm}+
\bigg{(}
\frac{1}{2} + 2k\epsilon + \frac{\epsilon}{2}
\bigg{)} +
\bigg{(}
\frac{(2k^2C^2\epsilon)(1+2(2k-1)\epsilon)}
{2(1-(2k-1)\epsilon)}
\bigg{)}.
\end{align*}
we have $\lim_{\epsilon \rightarrow 0}C_2(\epsilon) = k^2C^2$.

This proves the corollary.

\end{proof}

\begin{lem}\label{bilaplace_est_3}
Given $u \in W^{4,2}_{loc}(E)$ and $\epsilon > 0$ sufficiently small, we have
\begin{align*}
\vert\langle \chi_{\epsilon}^{2k}\Delta^2u, 
\nabla_{(d\Delta_M\chi_{\epsilon}^{2k})^{\#}}u\rangle\vert &\leq
\bigg{[}
\frac{\epsilon}{2} + 
\bigg{(}\frac{C^2\epsilon}{4(1-(2k-3)\epsilon)} \bigg{)}
\bigg{(}
1 - \frac{C^2\epsilon^2}{2} - 
\frac{(2(2k-2)C^2\epsilon^2)(1+2(4k-5)\epsilon)}
{2(1-(4k-5)\epsilon)}
\bigg{)}^{-1} \\
&\hspace{0.5cm}
\bigg{(} 
\frac{\epsilon}{2} + 
\frac{(2k-2)C^2\epsilon^2}{1-(4k-5)\epsilon)} 
\bigg{)}
\bigg{]}\vert\vert\chi_{\epsilon}^{2k}\Delta^2u\vert\vert^2 \\
&\hspace{0.5cm} + 
\bigg{[}
\bigg{(}\frac{C^2\epsilon}{4(1-(2k-3)\epsilon)} \bigg{)}
\bigg{(}
1 - \frac{C^2\epsilon^2}{2} - 
\frac{(2(2k-2)C^2\epsilon^2)(1+2(4k-5)\epsilon)}
{2(1-(4k-5)\epsilon)}
\bigg{)}^{-1} \\
&\hspace{0.5cm} 
\bigg{(} 
\frac{1}{2\epsilon} + 2(2k-2) + \frac{1}{2}
\bigg{)} 
+
\bigg{(}
\frac{(C^2\epsilon)(1+2(2k-3)\epsilon)}
{4(1-(2k-3)\epsilon)}
\bigg{)} 
\bigg{]}\vert\vert u\vert\vert^2
\end{align*}
\end{lem}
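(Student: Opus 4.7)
The plan is to reduce the inner product to one involving only $\nabla u$, using the pointwise bound on covariant derivatives together with the higher order derivative estimate for powers of the cut-off function established in section \ref{derivative_est_powers_cut_offs}, and then to chain together propositions \ref{covariant_est_1} and \ref{Laplace_est} to produce the right hand side of the stated inequality.

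First, from the pointwise bound $|\nabla_{X}u| \leq |X||\nabla u|$ in \eqref{star_notation_ex}, I would write
\begin{equation*}
|\nabla_{(d\Delta_M\chi_{\epsilon}^{2k})^{\#}}u| \leq |d\Delta_M\chi_{\epsilon}^{2k}||\nabla u|,
\end{equation*}
and then invoke corollary \ref{dlaplace_prod_est_1} to upgrade this to $|\nabla_{(d\Delta_M\chi_{\epsilon}^{2k})^{\#}}u| \leq C\epsilon\chi_{\epsilon}^{2k-3}|\nabla u|$. Pairing this pointwise bound with $|\chi_{\epsilon}^{2k}\Delta^2u|$ under the integral, then applying Cauchy-Schwarz yields
\begin{equation*}
\vert\langle \chi_{\epsilon}^{2k}\Delta^2u, \nabla_{(d\Delta_M\chi_{\epsilon}^{2k})^{\#}}u\rangle\vert \leq C\epsilon\vert\vert\chi_{\epsilon}^{2k}\Delta^2u\vert\vert \cdot \vert\vert\chi_{\epsilon}^{2k-3}\nabla u\vert\vert.
\end{equation*}

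Next, I would apply Young's inequality with the weight chosen as $1/C$ (so that the $C^2$ factor is absorbed on the side of $\chi_{\epsilon}^{2k-3}\nabla u$), producing the split
\begin{equation*}
\frac{\epsilon}{2}\vert\vert\chi_{\epsilon}^{2k}\Delta^2u\vert\vert^2 + \frac{C^2\epsilon}{2}\vert\vert\chi_{\epsilon}^{2k-3}\nabla u\vert\vert^2.
\end{equation*}
The first summand matches the $\frac{\epsilon}{2}\vert\vert\chi_{\epsilon}^{2k}\Delta^2u\vert\vert^2$ term in the statement. To handle the second, I invoke proposition \ref{covariant_est_1} with the integer index $k$ replaced by $2k-3$, which produces the factors $\frac{1}{2(1-(2k-3)\epsilon)}$ in front of $\vert\vert\chi_{\epsilon}^{2k-2}\Delta u\vert\vert^2$ and $\frac{1+2(2k-3)\epsilon}{2(1-(2k-3)\epsilon)}$ in front of $\vert\vert\chi_{\epsilon}^{2k-4}u\vert\vert^2$. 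Combined with the prefactor $\frac{C^2\epsilon}{2}$, these become precisely the coefficients $\frac{C^2\epsilon}{4(1-(2k-3)\epsilon)}$ and $\frac{(C^2\epsilon)(1+2(2k-3)\epsilon)}{4(1-(2k-3)\epsilon)}$ that appear in the claim.

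Finally, the remaining $\vert\vert\chi_{\epsilon}^{2k-2}\Delta u\vert\vert^2$ is absorbed via proposition \ref{Laplace_est}, applied with $k$ replaced by $2k-2$, so that the argument $2k-1$ becomes $4k-5$ and the exponent $2k$ in $\chi_{\epsilon}^{2k}\Delta^2u$ becomes $4k-4$. Because $\chi_{\epsilon} \leq 1$ and $4k-4 \geq 2k$ for $k \geq 2$, I may estimate $\vert\vert\chi_{\epsilon}^{4k-4}\Delta^2u\vert\vert^2 \leq \vert\vert\chi_{\epsilon}^{2k}\Delta^2u\vert\vert^2$; similarly $\vert\vert\chi_{\epsilon}^{2k-4}u\vert\vert^2 \leq \vert\vert u\vert\vert^2$. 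Collecting terms then reproduces the inequality exactly as stated. The whole argument is largely bookkeeping; the only thing requiring care is the calibration of Young's inequality (choosing the weight to separate out the $C^2$ factor into the coefficient of $\vert\vert\chi_{\epsilon}^{2k-3}\nabla u\vert\vert^2$), which is what makes the first coefficient in the statement come out to $\epsilon/2$ rather than depending on $C$.
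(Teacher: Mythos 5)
Your proposal is correct and takes essentially the same route as the paper's proof: bound $\vert\vert\nabla_{(d\Delta_M\chi_{\epsilon}^{2k})^{\#}}u\vert\vert$ via \eqref{star_notation_ex} and Corollary \ref{dlaplace_prod_est_1}, apply Young's inequality to produce $\frac{\epsilon}{2}\vert\vert\chi_{\epsilon}^{2k}\Delta^2u\vert\vert^2 + \frac{C^2\epsilon}{2}\vert\vert\chi_{\epsilon}^{2k-3}\nabla u\vert\vert^2$, and then chain Proposition \ref{covariant_est_1} (with index $2k-3$) and Proposition \ref{Laplace_est} (with index $2k-2$, producing the $4k-5$ arguments). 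The only cosmetic difference is that the paper applies Young's inequality with weight $\epsilon$ first and then invokes the pointwise cut-off bound, whereas you apply the pointwise bound first and then recalibrate Young's weight to $1/C$; both orderings yield identical coefficients, and your explicit use of $\chi_{\epsilon}^{4k-4}\leq\chi_{\epsilon}^{2k}$ and $\chi_{\epsilon}^{2k-4}\leq 1$ at the end just makes explicit a step the paper leaves tacit.
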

\begin{proof}

Applying Cauchy-Schwarz and Young's inequality, we obtain
\begin{align*}
\vert\langle \chi_{\epsilon}^{2k}\Delta^2u, 
\nabla_{(d\Delta_M\chi_{\epsilon}^{2k})^{\#}}u\rangle\vert &\leq 
\frac{\epsilon}{2}\vert\vert\chi_{\epsilon}^{2k}\Delta^2u\vert\vert^2 
+ \frac{1}{2\epsilon}\vert\vert 
\nabla_{(d\Delta_M\chi_{\epsilon}^{2k})^{\#}}u\vert\vert^2 \\
&\leq
\frac{\epsilon}{2}\vert\vert\chi_{\epsilon}^{2k}\Delta^2u\vert\vert^2 
+
\frac{C^2\epsilon}{2}\vert\vert\chi_{\epsilon}^{2k-3}\nabla u\vert\vert^2
\end{align*}
where we have used corollary \ref{dlaplace_prod_est_1} to get the 
second inequality.

Using proposition \ref{covariant_est_1}, we can estimate the term
$\vert\vert \chi_{\epsilon}^{2k-3}\nabla u\vert\vert^2$ and obtain
\begin{align*}
\vert\langle \chi_{\epsilon}^{2k}\Delta^2u, 
\nabla_{(d\Delta_M\chi_{\epsilon}^{2k})^{\#}}u\rangle\vert \leq 
\frac{\epsilon}{2}\vert\vert\chi_{\epsilon}^{2k}\Delta^2u\vert\vert^2 
&+ \frac{C^2\epsilon}{4(1-(2k-3)\epsilon)}
\vert\vert\chi_{\epsilon}^{2k-2}\Delta u\vert\vert^2 \\
&+
\frac{(C^2\epsilon)(1+2(2k-3)\epsilon)}
{4(1-(2k-3)\epsilon)}\vert\vert 
 \chi_{\epsilon}^{2k-4}u\vert\vert^2.
\end{align*}
Estimating the term $\vert\vert\chi_{\epsilon}^{2k-2}\Delta u\vert\vert^2$
using proposition \ref{Laplace_est}, and substituting it into the above proves the lemma.
\end{proof}

\begin{cor}\label{bilaplace_est_3_a}
For $u \in W^{4,2}_{loc}(E) \cap L^2(E)$ and $\epsilon > 0$ sufficiently 
small, we can write
\begin{equation*}
\vert\langle \chi_{\epsilon}^{2k}\Delta^2u, 
\nabla_{(d\Delta_M\chi_{\epsilon}^{2k})^{\#}}u\rangle\vert \leq 
\epsilon C_1(\epsilon)\vert\vert\chi_{\epsilon}^{2k}\Delta^2u\vert\vert^2
+
C_2(\epsilon)\vert\vert u\vert\vert^2
\end{equation*}
where $C_1(\epsilon)$ and $C_2(\epsilon)$ are constants, depending 
on $\epsilon$, such that 
$\lim_{\epsilon \rightarrow 0} C_1(\epsilon) < \infty$ and 
$\lim_{\epsilon \rightarrow 0} C_2(\epsilon) < \infty$.
\end{cor}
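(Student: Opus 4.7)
The plan is to follow the pattern already established by Corollary \ref{bilaplace_est_1_a} and Corollary \ref{bilaplace_est_2_a}: start from the explicit estimate in Lemma \ref{bilaplace_est_3}, factor an $\epsilon$ out of the coefficient of $\vert\vert\chi_{\epsilon}^{2k}\Delta^2u\vert\vert^2$, and read off the coefficient of $\vert\vert u\vert\vert^2$ as $C_2(\epsilon)$. The essential observation is that every term making up the coefficient of $\vert\vert\chi_{\epsilon}^{2k}\Delta^2u\vert\vert^2$ in Lemma \ref{bilaplace_est_3} carries at least one factor of $\epsilon$, so this factoring is legitimate.

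Concretely, I would define
\begin{align*}
C_1(\epsilon) &:= \frac{1}{2} + \bigg{(}\frac{C^2}{4(1-(2k-3)\epsilon)}\bigg{)}\bigg{(}1 - \frac{C^2\epsilon^2}{2} - \frac{(2(2k-2)C^2\epsilon^2)(1+2(4k-5)\epsilon)}{2(1-(4k-5)\epsilon)}\bigg{)}^{-1} \\
&\qquad\cdot\bigg{(}\frac{\epsilon}{2} + \frac{(2k-2)C^2\epsilon^2}{1-(4k-5)\epsilon}\bigg{)},
\end{align*}
so that after pulling $\epsilon$ in front, $\epsilon C_1(\epsilon)$ coincides with the bracketed coefficient of $\vert\vert\chi_{\epsilon}^{2k}\Delta^2u\vert\vert^2$ in Lemma \ref{bilaplace_est_3}. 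Passing to the limit, the denominators tend to $1$ and the right factor tends to $0$, giving $\lim_{\epsilon\to 0} C_1(\epsilon) = \tfrac{1}{2}$, which is finite.

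For $C_2(\epsilon)$ I would take it to be exactly the coefficient of $\vert\vert u\vert\vert^2$ appearing in Lemma \ref{bilaplace_est_3}. The only subtle point is the apparently singular factor $\tfrac{1}{2\epsilon}$ inside; however it is multiplied by $\tfrac{C^2\epsilon}{4(1-(2k-3)\epsilon)}$, and rewriting
\begin{equation*}
\bigg{(}\frac{C^2\epsilon}{4(1-(2k-3)\epsilon)}\bigg{)}\bigg{(}\frac{1}{2\epsilon} + 2(2k-2) + \frac{1}{2}\bigg{)} = \bigg{(}\frac{C^2}{4(1-(2k-3)\epsilon)}\bigg{)}\bigg{(}\frac{1}{2} + 2(2k-2)\epsilon + \frac{\epsilon}{2}\bigg{)}
\end{equation*}
resolves the singularity and shows this contribution tends to $\tfrac{C^2}{8}$. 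The last summand $\tfrac{(C^2\epsilon)(1+2(2k-3)\epsilon)}{4(1-(2k-3)\epsilon)}$ tends to $0$, so $\lim_{\epsilon\to 0} C_2(\epsilon) = \tfrac{C^2}{8} < \infty$.

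There is no genuine obstacle here: the result is purely bookkeeping on the explicit inequality of Lemma \ref{bilaplace_est_3}, and the only thing to be slightly careful about is the $\tfrac{1}{2\epsilon}$ factor in the $\vert\vert u\vert\vert^2$ term, which is absorbed by the $\epsilon$ already present in its coefficient. Thus the corollary follows directly from Lemma \ref{bilaplace_est_3} by these algebraic manipulations, in complete analogy with Corollaries \ref{bilaplace_est_1_a} and \ref{bilaplace_est_2_a}.
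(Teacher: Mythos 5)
Your proposal is correct and follows essentially the same route as the paper: factor $\epsilon$ out of the coefficient of $\vert\vert\chi_{\epsilon}^{2k}\Delta^2u\vert\vert^2$ in Lemma \ref{bilaplace_est_3} to define $C_1(\epsilon)$ with $\lim_{\epsilon\to 0}C_1(\epsilon)=\tfrac{1}{2}$, and take $C_2(\epsilon)$ to be the coefficient of $\vert\vert u\vert\vert^2$, absorbing the $\tfrac{1}{2\epsilon}$ into the accompanying $\epsilon$ factor so that $\lim_{\epsilon\to 0}C_2(\epsilon)=\tfrac{C^2}{8}$.
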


\begin{proof}
By lemma \ref{bilaplace_est_3} we have
\begin{align*}
\vert\langle \chi_{\epsilon}^{2k}\Delta^2u, 
\nabla_{(d\Delta_M\chi_{\epsilon}^{2k})^{\#}}u\rangle\vert &\leq
\bigg{[}
\frac{\epsilon}{2} + 
\bigg{(}\frac{C^2\epsilon}{4(1-(2k-3)\epsilon)} \bigg{)}
\bigg{(}
1 - \frac{C^2\epsilon^2}{2} - 
\frac{(2(2k-2)C^2\epsilon^2)(1+2(4k-5)\epsilon)}
{2(1-(4k-5)\epsilon)}
\bigg{)}^{-1} \\
&\hspace{0.5cm}
\bigg{(} 
\frac{\epsilon}{2} + 
\frac{(2k-2)C^2\epsilon^2}{1-(4k-5)\epsilon)} 
\bigg{)}
\bigg{]}\vert\vert\chi_{\epsilon}^{2k}\Delta^2u\vert\vert^2 \\
&\hspace{0.5cm} + 
\bigg{[}
\bigg{(}\frac{C^2\epsilon}{4(1-(2k-3)\epsilon)} \bigg{)}
\bigg{(}
1 - \frac{C^2\epsilon^2}{2} - 
\frac{(2(2k-2)C^2\epsilon^2)(1+2(4k-5)\epsilon)}
{2(1-(4k-5)\epsilon)}
\bigg{)}^{-1} \\
&\hspace{0.5cm} 
\bigg{(} 
\frac{1}{2\epsilon} + 2(2k-2) + \frac{1}{2}
\bigg{)} 
+
\bigg{(}
\frac{(C^2\epsilon)(1+2(2k-3)\epsilon)}
{4(1-(2k-3)\epsilon)}
\bigg{)} 
\bigg{]}\vert\vert u\vert\vert^2.
\end{align*}
Write
\begin{align*}
&\bigg{[}
\frac{\epsilon}{2} + 
\bigg{(}\frac{C^2\epsilon}{4(1-(2k-3)\epsilon)} \bigg{)}
\bigg{(}
1 - \frac{C^2\epsilon^2}{2} - 
\frac{(2(2k-2)C^2\epsilon^2)(1+2(4k-5)\epsilon)}
{2(1-(4k-5)\epsilon)}
\bigg{)}^{-1} 
\bigg{(} 
\frac{\epsilon}{2} + 
\frac{(2k-2)C^2\epsilon^2}{1-(4k-5)\epsilon)} 
\bigg{)}
\bigg{]} \\
&= \\
&\epsilon\bigg{[}
\frac{1}{2} + 
\bigg{(}\frac{C^2}{4(1-(2k-3)\epsilon)} \bigg{)}
\bigg{(}
1 - \frac{C^2\epsilon^2}{2} - 
\frac{(2(2k-2)C^2\epsilon^2)(1+2(4k-5)\epsilon)}
{2(1-(4k-5)\epsilon)}
\bigg{)}^{-1} 
\bigg{(} 
\frac{\epsilon}{2} + 
\frac{(2k-2)C^2\epsilon^2}{1-(4k-5)\epsilon)} 
\bigg{)}
\bigg{]}.
\end{align*}
Defining
\begin{align*}
C_1(\epsilon) &=
\frac{1}{2} + 
\bigg{(}\frac{C^2}{4(1-(2k-3)\epsilon)} \bigg{)}
\bigg{(}
1 - \frac{C^2\epsilon^2}{2} - 
\frac{(2(2k-2)C^2\epsilon^2)(1+2(4k-5)\epsilon)}
{2(1-(4k-5)\epsilon)}
\bigg{)}^{-1} \\
&\hspace{0.5cm}
\bigg{(} 
\frac{\epsilon}{2} + 
\frac{(2k-2)C^2\epsilon^2}{1-(4k-5)\epsilon)} 
\bigg{)}
\end{align*}
we have $\lim{\epsilon\rightarrow 0}C_1(\epsilon) = \frac{1}{2}$.

We define
\begin{align*}
C_2(\epsilon) &=
\bigg{(}\frac{C^2\epsilon}{4(1-(2k-3)\epsilon)} \bigg{)}
\bigg{(}
1 - \frac{C^2\epsilon^2}{2} - 
\frac{(2(2k-2)C^2\epsilon^2)(1+2(4k-5)\epsilon)}
{2(1-(4k-5)\epsilon)}
\bigg{)}^{-1} 
\bigg{(} 
\frac{1}{2\epsilon} + 2(2k-2) + \frac{1}{2}
\bigg{)} \\
&\hspace{0.5cm}+
\bigg{(}
\frac{(C^2\epsilon)(1+2(2k-3)\epsilon)}
{4(1-(2k-3)\epsilon)}
\bigg{)} \\
&=
\bigg{(}\frac{C^2}{4(1-(2k-3)\epsilon)} \bigg{)}
\bigg{(}
1 - \frac{C^2\epsilon^2}{2} - 
\frac{(2(2k-2)C^2\epsilon^2)(1+2(4k-5)\epsilon)}
{2(1-(4k-5)\epsilon)}
\bigg{)}^{-1} 
\bigg{(} 
\frac{1}{2} + 2(2k-2)\epsilon + \frac{\epsilon}{2}
\bigg{)} \\
&\hspace{0.5cm}+
\bigg{(}
\frac{(C^2\epsilon)(1+2(2k-3)\epsilon)}
{4(1-(2k-3)\epsilon)}
\bigg{)}.
\end{align*}
It is easy to see $\lim_{\epsilon \rightarrow 0}C_2(\epsilon) 
= \frac{C^2}{8}$. This finishes the proof.
\end{proof}

\begin{lem}\label{bilaplace_est_4}
Given $u \in W^{4,2}_{loc}(E) \cap L^2(E)$ and $\epsilon > 0$ sufficiently 
small, we have the following
estimate
\begin{align*}
\vert\langle \chi_{\epsilon}^{2k}\Delta^2u, 
\Delta_M(\chi_{\epsilon}^{2k})(\Delta u)\rangle\vert &\leq 
\bigg{[}
\frac{\epsilon}{2} + \bigg{(}\frac{C^2\epsilon}{2}\bigg{)}
\bigg{(}
1 - \frac{C^2\epsilon^2}{2} - 
\frac{(2(2k-2)C^2\epsilon^2)(1+2(4k-5)\epsilon)}
{2(1-(4k-5)\epsilon)}
\bigg{)}^{-1} \\
&\hspace{0.5cm}
\bigg{(} 
\frac{\epsilon}{2} + \frac{(2k-2)C^2\epsilon^2}
{(1-(4k-5)\epsilon)}
\bigg{)}
\bigg{]}\vert\vert\chi_{\epsilon}^{2k}\Delta^2u\vert\vert^2 \\
&+ 
\bigg{(}\frac{C^2\epsilon}{2}\bigg{)}
\bigg{(}
1 - \frac{C^2\epsilon^2}{2} - 
\frac{(2(2k-2)C^2\epsilon^2)(1+2(4k-5)\epsilon)}
{2(1-(4k-5)\epsilon)}
\bigg{)}^{-1} \\
&\hspace{0.5cm}
\bigg{(}
\frac{1}{2\epsilon} + 2(2k-2) + \frac{1}{2}
\bigg{)}\vert\vert u\vert\vert^2.
\end{align*}
\end{lem}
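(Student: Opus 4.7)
The plan is to follow the same template established in lemmas \ref{bilaplace_est_1} and \ref{bilaplace_est_3}. First I would apply Cauchy--Schwarz and Young's inequality to split the inner product as
\[
|\langle \chi_{\epsilon}^{2k}\Delta^2u,\ \Delta_M(\chi_{\epsilon}^{2k})(\Delta u)\rangle|
\leq \frac{\epsilon}{2}\|\chi_{\epsilon}^{2k}\Delta^2 u\|^2 + \frac{1}{2\epsilon}\|\Delta_M(\chi_{\epsilon}^{2k})(\Delta u)\|^2.
\]
Next I would apply corollary \ref{laplacian_prod_est_1}, which gives the pointwise bound $|\Delta_M(\chi_{\epsilon}^{2k})|\leq C\epsilon\chi_{\epsilon}^{2k-2}$. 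Substituting this into the second term yields
\[
\frac{1}{2\epsilon}\|\Delta_M(\chi_{\epsilon}^{2k})(\Delta u)\|^2 \leq \frac{C^2\epsilon}{2}\|\chi_{\epsilon}^{2k-2}\Delta u\|^2.
\]

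At this point the task is to dispose of $\|\chi_{\epsilon}^{2k-2}\Delta u\|^2$. Here I would invoke proposition \ref{Laplace_est} with the exponent $k$ there replaced by $2k-2$. This produces a leading coefficient of the expected form
\[
\Big(1-\tfrac{C^2\epsilon^2}{2}-\tfrac{2(2k-2)C^2\epsilon^2(1+2(4k-5)\epsilon)}{2(1-(4k-5)\epsilon)}\Big)^{-1}\Big(\tfrac{\epsilon}{2}+\tfrac{(2k-2)C^2\epsilon^2}{1-(4k-5)\epsilon}\Big)
\]
multiplying a term $\|\chi_{\epsilon}^{2(2k-2)}\Delta^2 u\|^2$, together with a second factor multiplying $\|u\|^2$. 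Since $k$ is a large fixed integer, $4k-4\ge 2k$, and because $\chi_{\epsilon}$ takes values in $[0,1]$, we have the pointwise domination $\chi_{\epsilon}^{4k-4}\leq \chi_{\epsilon}^{2k}$, so $\|\chi_{\epsilon}^{2(2k-2)}\Delta^2u\|^2\leq \|\chi_{\epsilon}^{2k}\Delta^2u\|^2$. Collecting the $\|\chi_{\epsilon}^{2k}\Delta^2u\|^2$ contributions (the explicit $\epsilon/2$ from Young together with the $\tfrac{C^2\epsilon}{2}$ factor times the coefficient above) and the $\|u\|^2$ contribution (with the $\tfrac{1}{2\epsilon}+2(2k-2)+\tfrac{1}{2}$ factor from proposition \ref{Laplace_est}) reproduces the stated bound.

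The proof is therefore a matter of carefully tracking constants; the invertibility of the two denominators requires only that $\epsilon$ be chosen so small that $1-\tfrac{C^2\epsilon^2}{2}-\tfrac{2(2k-2)C^2\epsilon^2(1+2(4k-5)\epsilon)}{2(1-(4k-5)\epsilon)}>0$ and $1-(4k-5)\epsilon>0$, both of which hold for $\epsilon$ sufficiently small depending on the fixed $k$. I do not anticipate a genuine obstacle here; the only subtle step is the power-of-cut-off comparison $\chi_{\epsilon}^{4k-4}\leq\chi_{\epsilon}^{2k}$, which is precisely the observation that makes the bound close on $\|\chi_{\epsilon}^{2k}\Delta^2u\|^2$ rather than on a different power, and which is tacitly used in the previous lemmas of this section.
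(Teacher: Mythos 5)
Your proposal is correct and matches the paper's proof step for step: Cauchy--Schwarz plus Young to split off $\frac{\epsilon}{2}\|\chi_{\epsilon}^{2k}\Delta^2 u\|^2$, corollary \ref{laplacian_prod_est_1} to bound $|\Delta_M(\chi_{\epsilon}^{2k})|$, proposition \ref{Laplace_est} with $2k-2$ in place of $k$, and the domination $\chi_{\epsilon}^{2(2k-2)}\le\chi_{\epsilon}^{2k}$ to close the estimate.
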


\begin{proof}
Applying Cauchy-Schwarz and Young's inequality gives
\begin{align*}
\vert\langle \chi_{\epsilon}^{2k}\Delta^2u, 
\Delta_M(\chi_{\epsilon}^{2k})(\Delta u)\rangle\vert &\leq 
\frac{\epsilon}{2}\vert\vert\chi_{\epsilon}^{2k}\Delta^2u\vert\vert^2 + 
\frac{1}{2\epsilon}\vert\vert(\Delta_M\chi_{\epsilon}^{2k})\Delta u\vert
\vert^2 \\
&\leq 
\frac{\epsilon}{2}\vert\vert\chi_{\epsilon}^{2k}\Delta^2u\vert\vert^2
+
\frac{C^2\epsilon}{2}\vert\vert\chi_{\epsilon}^{2k-2}\Delta u\vert\vert^2
\end{align*}
where to get the second inequality, we have applied 
corollary \ref{laplacian_prod_est_1} to estimate the 
$\Delta_M(\chi_{\epsilon}^{2k})$ term.

Applying proposition \ref{Laplace_est} to estimate the 
$\vert\vert\chi_{\epsilon}^{2k-2}\Delta u\vert\vert^2$ term gives
\begin{align*}
\vert\langle \chi_{\epsilon}^{2k}\Delta^2u, 
\Delta_M(\chi_{\epsilon}^{2k})(\Delta u)\rangle\vert &\leq 
\frac{\epsilon}{2}\vert\vert\chi_{\epsilon}^{2k}\Delta^2u\vert\vert^2 +
\bigg{(}\frac{C^2\epsilon}{2}\bigg{)}
\bigg{(}
1 - \frac{C^2\epsilon^2}{2} - 
\frac{(2(2k-2)C^2\epsilon^2)(1+2(4k-5)\epsilon)}
{2(1-(4k-5)\epsilon)}
\bigg{)}^{-1} \\
&\hspace{0.5cm}
\bigg{(}
\frac{\epsilon}{2} + 
\frac{(2k-2)C^2\epsilon^2}{(1-(4k-5)\epsilon)}
\bigg{)}\vert\vert\chi_{\epsilon}^{2(2k-2)}\Delta^2u\vert\vert^2 \\
&\hspace{0.5cm} +
\bigg{(}\frac{C^2\epsilon}{2}\bigg{)}
\bigg{(}
1 - \frac{C^2\epsilon^2}{2} - 
\frac{(2(2k-2)C^2\epsilon^2)(1+2(4k-5)\epsilon)}
{2(1-(4k-5)\epsilon)}
\bigg{)}^{-1} \\
&\hspace{0.5cm}
\bigg{(}
\frac{1}{2\epsilon} + 2(2k-2) + \frac{1}{2}
\bigg{)}\vert\vert u\vert\vert^2.
\end{align*}
Using the fact that $\chi_{\epsilon}^{2(2k-2)} \leq \chi_{\epsilon}^{2k}$
gives the statement of the lemma.
\end{proof}

\begin{cor}\label{bilaplace_est_4_a}
For $\epsilon > 0$ sufficiently small, we can write
\begin{equation*}
\vert\langle \chi_{\epsilon}^{2k}\Delta^2u, 
\Delta_M(\chi_{\epsilon}^{2k})(\Delta u)\rangle\vert \leq 
\epsilon C_1(\epsilon)\vert\vert\chi_{\epsilon}^{2k}\Delta^2u\vert\vert^2
+
C_2(\epsilon)\vert\vert u\vert\vert^2
\end{equation*}
where $C_1(\epsilon)$ and $C_2(\epsilon)$ are constants, depending 
on $\epsilon$, such that 
$\lim_{\epsilon \rightarrow 0} C_1(\epsilon) < \infty$ and 
$\lim_{\epsilon \rightarrow 0} C_2(\epsilon) < \infty$.
\end{cor}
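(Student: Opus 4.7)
The plan is to deduce this corollary directly from Lemma \ref{bilaplace_est_4} by regrouping the coefficients so as to isolate a factor of $\epsilon$ in front of $\|\chi_\epsilon^{2k}\Delta^2 u\|^2$ and then verify that the resulting $\epsilon$-dependent coefficients have finite limits. This follows precisely the same recipe used in Corollaries \ref{bilaplace_est_1_a}, \ref{bilaplace_est_2_a}, and \ref{bilaplace_est_3_a}, so no new analytic input is required; the argument is purely algebraic.

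First I would take the coefficient of $\|\chi_\epsilon^{2k}\Delta^2 u\|^2$ appearing in Lemma \ref{bilaplace_est_4}, namely
\begin{equation*}
\frac{\epsilon}{2} + \left(\frac{C^2\epsilon}{2}\right)\left(1 - \frac{C^2\epsilon^2}{2} - \frac{(2(2k-2)C^2\epsilon^2)(1+2(4k-5)\epsilon)}{2(1-(4k-5)\epsilon)}\right)^{-1}\left(\frac{\epsilon}{2} + \frac{(2k-2)C^2\epsilon^2}{1-(4k-5)\epsilon}\right),
\end{equation*}
and factor out one power of $\epsilon$, defining
\begin{equation*}
C_1(\epsilon) := \frac{1}{2} + \left(\frac{C^2}{2}\right)\left(1 - \frac{C^2\epsilon^2}{2} - \frac{(2(2k-2)C^2\epsilon^2)(1+2(4k-5)\epsilon)}{2(1-(4k-5)\epsilon)}\right)^{-1}\left(\frac{\epsilon}{2} + \frac{(2k-2)C^2\epsilon^2}{1-(4k-5)\epsilon}\right).
\end{equation*}
The rightmost parenthesis tends to $0$ as $\epsilon \to 0$, while the inverted bracket tends to $1$, so $\lim_{\epsilon \to 0} C_1(\epsilon) = \tfrac{1}{2} < \infty$.

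Next I would treat the coefficient of $\|u\|^2$, namely
\begin{equation*}
C_2(\epsilon) := \left(\frac{C^2\epsilon}{2}\right)\left(1 - \frac{C^2\epsilon^2}{2} - \frac{(2(2k-2)C^2\epsilon^2)(1+2(4k-5)\epsilon)}{2(1-(4k-5)\epsilon)}\right)^{-1}\left(\frac{1}{2\epsilon} + 2(2k-2) + \frac{1}{2}\right).
\end{equation*}
The apparent singularity as $\epsilon \to 0$ is canceled by the $\epsilon$ in the prefactor: distributing gives
\begin{equation*}
C_2(\epsilon) = \left(\frac{C^2}{2}\right)\left(1 - \frac{C^2\epsilon^2}{2} - \frac{(2(2k-2)C^2\epsilon^2)(1+2(4k-5)\epsilon)}{2(1-(4k-5)\epsilon)}\right)^{-1}\left(\frac{1}{2} + \left(2(2k-2)+\tfrac{1}{2}\right)\epsilon\right),
\end{equation*}
from which $\lim_{\epsilon \to 0} C_2(\epsilon) = \tfrac{C^2}{4} < \infty$. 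Substituting $C_1(\epsilon)$ and $C_2(\epsilon)$ back into the bound of Lemma \ref{bilaplace_est_4} yields the desired estimate. The only subtle point is the need for $\epsilon$ small enough that the inverted bracket remains positive, which was already required in Lemma \ref{bilaplace_est_4} itself, so no additional restriction on $\epsilon$ is introduced.
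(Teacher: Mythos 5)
Your proof is correct and follows exactly the approach the paper indicates (the paper simply says the proof ``follows exactly the same lines as the proof of corollary \ref{bilaplace_est_3_a}'' without writing it out). Your factoring of $\epsilon$ and the resulting limits $\lim_{\epsilon\to 0}C_1(\epsilon)=\tfrac{1}{2}$ and $\lim_{\epsilon\to 0}C_2(\epsilon)=\tfrac{C^2}{4}$ are what one obtains following that recipe.
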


The proof of this corollary follows exactly the same lines as the proof
of corollary \ref{bilaplace_est_3_a}.

\begin{lem}\label{bilaplace_est_5}
Given $\epsilon$, we have the following estimate
\begin{align*}
\vert\langle \chi_{\epsilon}^{2k}\Delta^2u, u\Delta_M^2\chi_{\epsilon}^{2k}\rangle\vert 
\leq
\frac{\epsilon}{2}\vert\vert\chi_{\epsilon}^{2k}\Delta^2u\vert\vert^2 +
\frac{C^2\epsilon}{2}\vert\vert u\vert\vert^2
\end{align*}
\end{lem}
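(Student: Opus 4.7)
The plan is to follow the same pattern used in Lemmas \ref{bilaplace_est_3} and \ref{bilaplace_est_4}: apply Cauchy--Schwarz and Young's inequality with parameter $\epsilon$, then use the pointwise cut-off estimate from Corollary \ref{bilaplace_prod_est_1} to absorb the derivative factors on $\chi_\epsilon^{2k}$.

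More precisely, I would first write
\begin{equation*}
\vert\langle \chi_{\epsilon}^{2k}\Delta^2u, u\Delta_M^2\chi_{\epsilon}^{2k}\rangle\vert \leq \frac{\epsilon}{2}\vert\vert \chi_{\epsilon}^{2k}\Delta^2 u\vert\vert^2 + \frac{1}{2\epsilon}\vert\vert u \Delta_M^2\chi_{\epsilon}^{2k}\vert\vert^2,
\end{equation*}
which is immediate from Cauchy--Schwarz followed by Young's inequality $ab \leq \tfrac{\epsilon}{2}a^2 + \tfrac{1}{2\epsilon}b^2$. This produces the correct $\tfrac{\epsilon}{2}\vert\vert\chi_\epsilon^{2k}\Delta^2 u\vert\vert^2$ term on the right-hand side.

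For the second term, I would invoke Corollary \ref{bilaplace_prod_est_1} applied with the power $2k$ in place of $k$, which gives the pointwise bound $\vert\Delta_M^2(\chi_\epsilon^{2k})\vert \leq C\epsilon\,\chi_\epsilon^{2k-4}$ for some constant $C > 0$. Squaring and using $\chi_\epsilon \leq 1$ (so that $\chi_\epsilon^{2(2k-4)} \leq 1$) yields
\begin{equation*}
\vert\vert u\Delta_M^2\chi_\epsilon^{2k}\vert\vert^2 \leq C^2\epsilon^2 \vert\vert \chi_\epsilon^{2k-4}\,u\vert\vert^2 \leq C^2\epsilon^2 \vert\vert u\vert\vert^2.
\end{equation*}
Multiplying by $\tfrac{1}{2\epsilon}$ produces precisely $\tfrac{C^2\epsilon}{2}\vert\vert u\vert\vert^2$, which matches the stated right-hand side after combining with the first estimate.

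There is essentially no obstacle here: the estimate is strictly easier than Lemmas \ref{bilaplace_est_3} and \ref{bilaplace_est_4} because $\Delta_M^2\chi_\epsilon^{2k}$ is a scalar multiplier acting on $u$ itself (no derivative of $u$ appears in the second slot), so no intermediate application of Proposition \ref{covariant_est_1} or Proposition \ref{Laplace_est} is required. The only subtlety is to make sure that the power $2k-4 \geq 0$ so that $\chi_\epsilon^{2k-4}$ is a bona fide bounded factor, which is fine for $k$ a large fixed positive integer, as assumed throughout Section \ref{sec_laplacian_derivatives}.
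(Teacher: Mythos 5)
Your proof is correct and follows essentially the same route as the paper: Cauchy--Schwarz with Young's inequality using parameter $\epsilon$, followed by the pointwise bound $\vert\Delta_M^2(\chi_\epsilon^{2k})\vert \leq C\epsilon\,\chi_\epsilon^{2k-4}$ from Corollary \ref{bilaplace_prod_est_1} and the observation $\chi_\epsilon^{2k-4}\leq 1$.
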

\begin{proof}
Apply Cauchy-Schwarz and Young's inequality to obtain.
\begin{align*}
\vert\langle \chi_{\epsilon}^{2k}\Delta^2u, u\Delta_M^2\chi_{\epsilon}^{2k}\rangle\vert 
&\leq
\frac{\epsilon}{2}\vert\vert\chi_{\epsilon}^{2k}\Delta^2u\vert\vert^2 + 
\frac{1}{2\epsilon}\vert\vert u\Delta_M^2\chi_{\epsilon}^{2k}\vert\vert^2 \\
&\leq
\frac{\epsilon}{2}\vert\vert\chi_{\epsilon}^{2k}\Delta^2u\vert\vert^2 +
\frac{C^2\epsilon}{2}\vert\vert\chi_{\epsilon}^{2k-4} u\vert\vert^2 \\
&\leq 
\frac{\epsilon}{2}\vert\vert\chi_{\epsilon}^{2k}\Delta^2u\vert\vert^2 +
\frac{C^2\epsilon}{2}\vert\vert u\vert\vert^2
\end{align*}
where to get the second inequality we have applied proposition
\ref{bilaplace_prod_est_1}, and to get the third inequality we have
used the fact that $\chi_{\epsilon}^{2k-4} \leq 1$. 

\end{proof}

From this lemma, it is straightforward to see that we have the following
corollary.

\begin{cor}\label{bilaplace_est_5_a}
For $\epsilon > 0$ sufficiently small, we can write
\begin{equation*}
\vert\langle \chi_{\epsilon}^{2k}\Delta^2u, u\Delta_M^2\chi_{\epsilon}^{2k}
\rangle\vert \leq 
\epsilon C_1(\epsilon)\vert\vert\chi_{\epsilon}^{2k}\Delta^2u\vert\vert^2
+
C_2(\epsilon)\vert\vert u\vert\vert^2
\end{equation*}
where $C_1(\epsilon)$ and $C_2(\epsilon)$ are constants, depending 
on $\epsilon$, such that 
$\lim_{\epsilon \rightarrow 0} C_1(\epsilon) < \infty$ and 
$\lim_{\epsilon \rightarrow 0} C_2(\epsilon) < \infty$.
\end{cor}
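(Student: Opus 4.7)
The proof is an immediate repackaging of Lemma \ref{bilaplace_est_5}; unlike the earlier corollaries in this section (e.g., Corollaries \ref{bilaplace_est_1_a}, \ref{bilaplace_est_2_a}, \ref{bilaplace_est_3_a}), there is essentially nothing to rearrange, because the coefficient on $\|\chi_{\epsilon}^{2k}\Delta^2 u\|^2$ already carries an explicit factor of $\epsilon$ and the coefficient on $\|u\|^2$ is already a single monomial in $\epsilon$. The plan is therefore simply to read off $C_1$ and $C_2$ and verify their limits.

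First, I would invoke Lemma \ref{bilaplace_est_5} to obtain
\begin{equation*}
\vert\langle \chi_{\epsilon}^{2k}\Delta^2u, u\,\Delta_M^2\chi_{\epsilon}^{2k}\rangle\vert
\leq
\frac{\epsilon}{2}\,\vert\vert\chi_{\epsilon}^{2k}\Delta^2u\vert\vert^2 +
\frac{C^2\epsilon}{2}\,\vert\vert u\vert\vert^2,
\end{equation*}
which I would rewrite as
\begin{equation*}
\vert\langle \chi_{\epsilon}^{2k}\Delta^2u, u\,\Delta_M^2\chi_{\epsilon}^{2k}\rangle\vert
\leq \epsilon\cdot\tfrac{1}{2}\,\vert\vert\chi_{\epsilon}^{2k}\Delta^2u\vert\vert^2 + \tfrac{C^2\epsilon}{2}\,\vert\vert u\vert\vert^2
\end{equation*}
to match the template of the corollary.

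Next, I would define
\begin{equation*}
C_1(\epsilon) := \tfrac{1}{2}, \qquad C_2(\epsilon) := \tfrac{C^2\epsilon}{2},
\end{equation*}
and observe that both are finite for every $\epsilon > 0$ (in particular for all sufficiently small $\epsilon$), and that
\begin{equation*}
\lim_{\epsilon\to 0} C_1(\epsilon) = \tfrac{1}{2} < \infty, \qquad \lim_{\epsilon\to 0} C_2(\epsilon) = 0 < \infty,
\end{equation*}
which are precisely the required finiteness conditions.

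The hard part of this corollary, to the extent that one exists, was already absorbed into Lemma \ref{bilaplace_est_5}, where the $\Delta_M^2(\chi_{\epsilon}^{2k})$ factor was bounded via Corollary \ref{bilaplace_prod_est_1} and the surviving factor $\chi_{\epsilon}^{2k-4}$ was discarded using $\chi_{\epsilon}^{2k-4}\leq 1$. Consequently, no further application of Proposition \ref{covariant_est_1} or Proposition \ref{Laplace_est} is needed, and there are no denominators of the form $1 - O(\epsilon)$ or $1 - O(\epsilon^2)$ that must be controlled for small $\epsilon$. The result is thus a two-line verification, and its sole purpose is to put the bound into the uniform form $\epsilon C_1(\epsilon)\|\chi_{\epsilon}^{2k}\Delta^2u\|^2 + C_2(\epsilon)\|u\|^2$ that will be combined with Corollaries \ref{bilaplace_est_1_a}--\ref{bilaplace_est_4_a} when proving Theorem \ref{main_theorem_1}.
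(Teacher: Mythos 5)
Your proposal is correct and matches the paper's (implicit) argument: the paper states only that the corollary ``is straightforward to see'' from Lemma \ref{bilaplace_est_5}, and indeed one just factors the $\epsilon$ out of $\frac{\epsilon}{2}$ and reads off $C_1(\epsilon)=\frac12$, $C_2(\epsilon)=\frac{C^2\epsilon}{2}$, both with finite limits as $\epsilon\to 0$. Nothing further is required.
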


The following proposition can be seen as a Bilaplacian version of
Milatovic's Laplacian estimate, given in lemma 3.6 of 
\cite{milatovic}.

\begin{prop}\label{bilaplace_est_6}
Given $u \in W^{4,2}_{loc}(E) \cap L^2(E)$ and $\epsilon > 0$ sufficiently 
small, we have the following
estimate
\begin{equation}
\vert\vert\chi_{\epsilon}^{2k}\Delta^2 u\vert\vert^2 \leq
\bigg{(}\frac{1}{\big{(}1-\epsilon C_1(\epsilon)\big{)}}\bigg{)}
\vert\vert\Delta^2(\chi_{\epsilon}^{2k}u)\vert\vert^2 +
\frac{C_2(\epsilon, \epsilon_1)}{\big{(}1-\epsilon C_1(\epsilon)\big{)}}
\vert\vert u\vert\vert^2
\end{equation}
where $C_1(\epsilon)$, $C_2(\epsilon)$ 
are constants depending on $\epsilon$ such that,
$\lim_{\epsilon \rightarrow 0} C_1(\epsilon) < \infty$ and
$\lim_{\epsilon \rightarrow 0} C_2(\epsilon) < \infty$.
\end{prop}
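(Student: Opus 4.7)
The plan is to use the product formula \eqref{bi-laplace_product} with $f = \chi_{\epsilon}^{2k}$ to rewrite $\chi_{\epsilon}^{2k}\Delta^2 u$ as $\Delta^2(\chi_{\epsilon}^{2k}u)$ plus an explicit sum of lower order error terms, and then to absorb those errors using precisely the five corollaries established immediately above. Concretely, setting $f = \chi_{\epsilon}^{2k}$ in \eqref{bi-laplace_product} and solving for $\chi_{\epsilon}^{2k}\Delta^2u$ gives
\begin{equation*}
\chi_{\epsilon}^{2k}\Delta^2u = \Delta^2(\chi_{\epsilon}^{2k}u) + A, \qquad A := \sum_{j=1}^{5} A_j,
\end{equation*}
where $A_1 = 2\nabla_{(d\chi_{\epsilon}^{2k})^{\#}}\Delta u$, $A_2 = -2\Delta_M(\chi_{\epsilon}^{2k})\Delta u$, $A_3 = 2\Delta\nabla_{(d\chi_{\epsilon}^{2k})^{\#}}u$, $A_4 = 2\nabla_{(d\Delta_M(\chi_{\epsilon}^{2k}))^{\#}}u$, and $A_5 = -(\Delta_M^2\chi_{\epsilon}^{2k})u$.

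The clean algebraic move is to square the identity $\chi_{\epsilon}^{2k}\Delta^2u - A = \Delta^2(\chi_{\epsilon}^{2k}u)$ in $L^2(E)$ and then discard the $\|A\|^2$ term which has the right sign. This yields
\begin{equation*}
\|\chi_{\epsilon}^{2k}\Delta^2u\|^2 = \|\Delta^2(\chi_{\epsilon}^{2k}u)\|^2 + 2\langle \chi_{\epsilon}^{2k}\Delta^2u, A\rangle - \|A\|^2 \leq \|\Delta^2(\chi_{\epsilon}^{2k}u)\|^2 + 2\sum_{j=1}^{5} \bigl|\langle \chi_{\epsilon}^{2k}\Delta^2u, A_j\rangle\bigr|,
\end{equation*}
so the key point is to obtain a bound of the form $|\langle \chi_{\epsilon}^{2k}\Delta^2 u, A_j\rangle| \leq \epsilon D_{1,j}(\epsilon)\|\chi_{\epsilon}^{2k}\Delta^2u\|^2 + D_{2,j}(\epsilon)\|u\|^2$ for each $j$, with $\lim_{\epsilon\to 0} D_{i,j}(\epsilon) < \infty$. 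This is precisely the content of corollaries \ref{bilaplace_est_1_a} ($j=1$), \ref{bilaplace_est_4_a} ($j=2$, up to the factor of $2$ in front of $\Delta_M\chi_{\epsilon}^{2k}$), \ref{bilaplace_est_2_a} ($j=3$), \ref{bilaplace_est_3_a} ($j=4$) and \ref{bilaplace_est_5_a} ($j=5$); no new estimation work is required here.

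Summing these five corollaries and collecting the coefficients $\tilde C_1(\epsilon) := 2\sum_j D_{1,j}(\epsilon)$ and $\tilde C_2(\epsilon) := 2\sum_j D_{2,j}(\epsilon)$, both of which have finite limits as $\epsilon \to 0$, the displayed inequality becomes
\begin{equation*}
\bigl(1 - \epsilon \tilde C_1(\epsilon)\bigr)\|\chi_{\epsilon}^{2k}\Delta^2u\|^2 \leq \|\Delta^2(\chi_{\epsilon}^{2k}u)\|^2 + \tilde C_2(\epsilon)\|u\|^2.
\end{equation*}
Since $\epsilon \tilde C_1(\epsilon) \to 0$ as $\epsilon \to 0$, we may choose $\epsilon$ small enough so that $1 - \epsilon \tilde C_1(\epsilon) > 0$ and divide through, which yields the stated inequality with $C_1(\epsilon) = \tilde C_1(\epsilon)$ and $C_2(\epsilon) = \tilde C_2(\epsilon)$. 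There is no real obstacle here: all the analytic work has already been carried out in the previous corollaries, and the only subtlety is bookkeeping — one must make sure that the coefficient of $\|\Delta^2(\chi_{\epsilon}^{2k}u)\|^2$ remains exactly $1$ (and not strictly larger) in the main step, which is why we use the expansion of $\|\chi_{\epsilon}^{2k}\Delta^2u - A\|^2$ rather than a Cauchy--Schwarz--Young estimate on $\langle \chi_{\epsilon}^{2k}\Delta^2u, \Delta^2(\chi_{\epsilon}^{2k}u)\rangle$, the latter approach being what would otherwise inflate the leading constant away from $1$.
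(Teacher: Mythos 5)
Your proof is correct, and it is genuinely cleaner than the paper's. You both start from the identity $\chi_{\epsilon}^{2k}\Delta^2u - A = \Delta^2(\chi_{\epsilon}^{2k}u)$, but the paper proceeds by fully expanding $\langle\Delta^2(\chi_{\epsilon}^{2k}u),\Delta^2(\chi_{\epsilon}^{2k}u)\rangle$ as an inner product of the six-term sum with itself, yielding twenty extra terms beyond $\|\chi_{\epsilon}^{2k}\Delta^2u\|^2$: the five cross terms $\langle \chi_{\epsilon}^{2k}\Delta^2u, A_j\rangle$, the five diagonal terms $\|A_j\|^2$, and the ten off-diagonal cross terms $\langle A_i, A_j\rangle$. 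The paper then estimates every one of these, invoking corollaries \ref{bilaplace_est_1_a}--\ref{bilaplace_est_5_a} for the first five and then carrying out additional estimates (commutation formula, Cauchy--Schwarz, the corollaries again) for the remaining fifteen. Your trick of writing $\|\chi_{\epsilon}^{2k}\Delta^2u\|^2 = \|\Delta^2(\chi_{\epsilon}^{2k}u)\|^2 + 2\,\mathrm{Re}\langle\chi_{\epsilon}^{2k}\Delta^2u, A\rangle - \|A\|^2$ and discarding the manifestly non-negative $\|A\|^2$ eliminates those fifteen extra estimates entirely: only the five corollaries already in hand are needed, and the leading coefficient of $\|\Delta^2(\chi_{\epsilon}^{2k}u)\|^2$ remains exactly $1$ without any extra care. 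The one cosmetic point is that you should write $2\,\mathrm{Re}\langle\chi_{\epsilon}^{2k}\Delta^2u, A\rangle$ rather than $2\langle\chi_{\epsilon}^{2k}\Delta^2u, A\rangle$ since the inner product is Hermitian, but that does not affect the bound $2\,\mathrm{Re}\langle B,A\rangle \le 2\sum_j|\langle B, A_j\rangle|$ you use immediately afterwards.
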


\begin{proof}
Using formula \eqref{bi-laplace_product} 
we expand $\langle \Delta^2(\chi_{\epsilon}^{2k}u), 
\Delta^2(\chi_{\epsilon}^{2k}u) \rangle$ and obtain
\begin{align*}
\langle \Delta^2(\chi_{\epsilon}^{2k}u), 
\Delta^2(\chi_{\epsilon}^{2k}u) \rangle &=
\bigg{\langle} 
\chi_{\epsilon}^{2k}\Delta^2u - 2\nabla_{(d\chi_{\epsilon}^{2k})^{\#}}\Delta u 
+ 2(\Delta_M\chi_{\epsilon}^{2k})(\Delta u) - 
2\Delta\nabla_{(d\chi_{\epsilon}^{2k})^{\#}}u \\
&\hspace{1cm} - 
2\nabla_{(d\Delta_M\chi_{\epsilon}^{2k})^{\#}}u + (\Delta_M^2\chi_{\epsilon}^{2k})u, 
\chi_{\epsilon}^{2k}\Delta^2u - 2\nabla_{(d\chi_{\epsilon}^{2k})^{\#}}\Delta u 
+ 2(\Delta_M\chi_{\epsilon}^{2k})(\Delta u) \\
&\hspace{1cm} - 
2\Delta\nabla_{(d\chi_{\epsilon}^{2k})^{\#}}u 
 - 
2\nabla_{(d\Delta_M\chi_{\epsilon}^{2k})^{\#}}u + (\Delta_M^2\chi_{\epsilon}^{2k})u
\bigg{\rangle} \\
&=
\langle \chi_{\epsilon}^{2k}\Delta^2u, \chi_{\epsilon}^{2k}\Delta^2u \rangle  
- 4Re\langle \chi_{\epsilon}^{2k}\Delta^2u, \nabla_{(d\chi_{\epsilon}^{2k})^{\#}}\Delta u \rangle 
-4Re \langle \chi_{\epsilon}^{2k}\Delta^2u, \Delta\nabla_{(d\chi_{\epsilon}^{2k})^{\#}}u\rangle \\
&\hspace{1cm}
-4Re \langle \chi_{\epsilon}^{2k}\Delta^2u, \nabla_{
(d\Delta_M\chi_{\epsilon}^{2k})^{\#}}u\rangle 
-4Re  \langle \chi_{\epsilon}^{2k}\Delta^2u, (\Delta_M\chi_{\epsilon}^{2k})(\Delta u) \rangle \\
&\hspace{1cm}
+ 2Re\langle \chi_{\epsilon}^{2k}\Delta^2u, u\Delta_M^2\chi_{\epsilon}^{2k}\rangle
+ 4\langle  \nabla_{(d\chi_{\epsilon}^{2k})^{\#}}\Delta u, \nabla_{(d\chi_{\epsilon}^{2k})^{\#}}\Delta u\rangle \\
&\hspace{1cm} 
+ 8Re\langle\nabla_{(d\chi_{\epsilon}^{2k})^{\#}}\Delta u, 
\Delta\nabla_{(d\chi_{\epsilon}^{2k})^{\#}}u\rangle +
8Re\langle\nabla_{(d\chi_{\epsilon}^{2k})^{\#}}\Delta u, 
\nabla_{(d\Delta_M\chi_{\epsilon}^{2k})^{\#}}u\rangle \\
&\hspace{1cm} +
8Re\langle\nabla_{(d\chi_{\epsilon}^{2k})^{\#}}\Delta u, 
(\Delta_M\chi_{\epsilon}^{2k})(\Delta u)\rangle -
4Re\langle\nabla_{(d\chi_{\epsilon}^{2k})^{\#}}\Delta u, u\Delta_M^2\chi_{\epsilon}^{2k}
\rangle \\
&\hspace{1cm} + 
4Re\langle\Delta\nabla_{(d\chi_{\epsilon}^{2k})^{\#}}u, 
\Delta\nabla_{(d\chi_{\epsilon}^{2k})^{\#}}u\rangle +
8Re\langle \Delta\nabla_{(d\chi_{\epsilon}^{2k})^{\#}}u, 
\nabla_{(d\Delta_M\chi_{\epsilon}^{2k})^{\#}}u \rangle \\
&\hspace{1cm} +
8Re\langle \Delta\nabla_{(d\chi_{\epsilon}^{2k})^{\#}}u, 
(\Delta_M\chi_{\epsilon}^{2k})(\Delta u) \rangle -
4Re\langle \Delta\nabla_{(d\chi_{\epsilon}^{2k})^{\#}}u, 
u\Delta_M^2\chi_{\epsilon}^{2k}\rangle \\
&\hspace{1cm} +  %%%%%
4\langle \nabla_{(d\Delta_M\chi_{\epsilon}^{2k})^{\#}}u, 
\nabla_{(d\Delta_M\chi_{\epsilon}^{2k})^{\#}}u\rangle +
8Re\langle \nabla_{(d\Delta_M\chi_{\epsilon}^{2k})^{\#}}u, 
(\Delta_M\chi_{\epsilon}^{2k})\Delta u\rangle \\
&\hspace{1cm} -
4Re\langle \nabla_{(d\Delta_M\chi_{\epsilon}^{2k})^{\#}}u, 
u\Delta_M^2(\chi_{\epsilon}^{2k})\rangle +
4\langle (\Delta_M\chi_{\epsilon}^{2k})(\Delta u), 
(\Delta_M\chi_{\epsilon}^{2k})(\Delta u)
\rangle \\
&\hspace{1cm} -
4Re\langle (\Delta_M\chi_{\epsilon}^{2k})(\Delta u), u\Delta_M^2\chi_{\epsilon}^{2k}\rangle 
+
\langle u\Delta_M^2\chi_{\epsilon}^{2k}, u\Delta_M^2\chi_{\epsilon}^{2k}\rangle 
\end{align*}
This implies we can write
\begin{align}
\langle \chi_{\epsilon}^{2k}\Delta^2u, \chi_{\epsilon}^{2k}\Delta^2u \rangle  
&=
\langle \Delta^2(\chi_{\epsilon}^{2k}u), \Delta^2(\chi_{\epsilon}^{2k}u) \rangle \label{bilaplace_est_6_0}\\
&\hspace{1cm}+ 
4Re\langle \chi_{\epsilon}^{2k}\Delta^2u, \nabla_{(d\chi_{\epsilon}^{2k})^{\#}}\Delta u \rangle 
+4Re \langle \chi_{\epsilon}^{2k}\Delta^2u, \Delta\nabla_{(d\chi_{\epsilon}^{2k})^{\#}}u\rangle \nonumber\\
&\hspace{1cm}
+4Re \langle \chi_{\epsilon}^{2k}\Delta^2u, \nabla_{
(d\Delta_M\chi_{\epsilon}^{2k})^{\#}}u\rangle 
+4Re  \langle \chi_{\epsilon}^{2k}\Delta^2u, (\Delta_M\chi_{\epsilon}^{2k})(\Delta u) \rangle \nonumber\\
&\hspace{1cm}
- 2Re\langle \chi_{\epsilon}^{2k}\Delta^2u, u\Delta_M^2\chi_{\epsilon}^{2k}\rangle
- 4\langle  \nabla_{(d\chi_{\epsilon}^{2k})^{\#}}\Delta u, \nabla_{(d\chi_{\epsilon}^{2k})^{\#}}\Delta u\rangle \nonumber\\
&\hspace{1cm} 
- 8Re\langle\nabla_{(d\chi_{\epsilon}^{2k})^{\#}}\Delta u, 
\Delta\nabla_{(d\chi_{\epsilon}^{2k})^{\#}}u\rangle 
-8Re\langle\nabla_{(d\chi_{\epsilon}^{2k})^{\#}}\Delta u, 
\nabla_{(d\Delta_M\chi_{\epsilon}^{2k})^{\#}}u\rangle \nonumber\\ 
&\hspace{1cm} 
-8Re\langle\nabla_{(d\chi_{\epsilon}^{2k})^{\#}}\Delta u, 
(\Delta_M\chi_{\epsilon}^{2k})(\Delta u)\rangle 
+4Re\langle\nabla_{(d\chi_{\epsilon}^{2k})^{\#}}\Delta u, u\Delta_M^2\chi_{\epsilon}^{2k} 
\rangle \nonumber\\
&\hspace{1cm}  
-4Re\langle\Delta\nabla_{(d\chi_{\epsilon}^{2k})^{\#}}u, 
\Delta\nabla_{(d\chi_{\epsilon}^{2k})^{\#}}u\rangle 
-8Re\langle \Delta\nabla_{(d\chi_{\epsilon}^{2k})^{\#}}u,
\nabla_{(d\Delta_M\chi_{\epsilon}^{2k})^{\#}}u \rangle \nonumber\\
&\hspace{1cm} 
-8Re\langle \Delta\nabla_{(d\chi_{\epsilon}^{2k})^{\#}}u, 
(\Delta_M\chi_{\epsilon}^{2k})(\Delta u) \rangle 
+4Re\langle \Delta\nabla_{(d\chi_{\epsilon}^{2k})^{\#}}u, 
u\Delta_M^2\chi_{\epsilon}^{2k}\rangle \nonumber\\
&\hspace{1cm} 
-4\langle \nabla_{(d\Delta_M\chi_{\epsilon}^{2k})^{\#}}u, 
\nabla_{(d\Delta_M\chi_{\epsilon}^{2k})^{\#}}u\rangle 
-8Re\langle \nabla_{(d\Delta_M\chi_{\epsilon}^{2k})^{\#}}u, 
(\Delta_M\chi_{\epsilon}^{2k})\Delta u\rangle \nonumber\\
&\hspace{1cm} 
+4Re\langle \nabla_{(d\Delta_M\chi_{\epsilon}^{2k})^{\#}}u, 
u\Delta_M^2(\chi_{\epsilon}^{2k})\rangle 
-4\langle (\Delta_M\chi_{\epsilon}^{2k})(\Delta u), 
(\Delta_M\chi_{\epsilon}^{2k})(\Delta u)
\rangle \nonumber\\ 
&\hspace{1cm} 
+4Re\langle (\Delta_M\chi_{\epsilon}^{2k})(\Delta u), u\Delta_M^2\chi_{\epsilon}^{2k}\rangle 
-\langle u\Delta_M^2\chi_{\epsilon}^{2k}, u\Delta_M^2\chi_{\epsilon}^{2k}\rangle \nonumber
\end{align}

Using corollaries \ref{bilaplace_est_1_a}, \ref{bilaplace_est_2_a}, 
\ref{bilaplace_est_3_a}, \ref{bilaplace_est_4_a}, 
and \ref{bilaplace_est_5_a}.
The terms

\begin{align}
&4Re\langle \chi_{\epsilon}^{2k}\Delta^2u, \nabla_{(d\chi_{\epsilon}
^{2k})^{\#}}\Delta u \rangle \\
&4Re \langle \chi_{\epsilon}^{2k}\Delta^2u, \Delta\nabla_{(d\chi_{\epsilon}^
{2k})^{\#}}u\rangle \\
&4Re \langle \chi_{\epsilon}^{2k}\Delta^2u, \nabla_{
(d\Delta_M\chi_{\epsilon}
^{2k})^{\#}}u\rangle \\
&4Re  \langle \chi_{\epsilon}^{2k}\Delta^2u, (\Delta_M\chi_{\epsilon}^{2k})
(\Delta u) \rangle \\
&- 2Re\langle \chi_{\epsilon}^{2k}\Delta^2u, u\Delta_M^2\chi_{\epsilon}^{2k}
\rangle
\end{align}
can all be bounded above by 
\begin{align*}
\epsilon C_1(\epsilon)\vert\vert\chi_{\epsilon}^{2k}\Delta^2u\vert\vert^2 
+
C_2(\epsilon)\vert\vert u\vert\vert^2
\end{align*}
where $\lim_{\epsilon \rightarrow 0}C_1(\epsilon) < \infty$, and
$\lim_{\epsilon \rightarrow 0}C_2(\epsilon) < \infty$.

The terms 

\begin{align}
&- 4\langle  \nabla_{(d\chi_{\epsilon}^{2k})^{\#}}\Delta u, \nabla_{(d
\chi_{\epsilon}^{2k})^{\#}}\Delta u\rangle \label{bilaplace_est_6_a} \\
&-4Re\langle\Delta\nabla_{(d\chi_{\epsilon}^{2k})^{\#}}u, 
\Delta\nabla_{(d\chi_{\epsilon}^{2k})^{\#}}u\rangle 
\label{bilaplace_est_6_b} \\
&-4\langle \nabla_{(d\Delta_M\chi_{\epsilon}^{2k})^{\#}}u, 
\nabla_{(d\Delta_M\chi_{\epsilon}^{2k})^{\#}}u\rangle 
\label{bilaplace_est_6_c} \\
&-4\langle (\Delta_M\chi_{\epsilon}^{2k})(\Delta u), 
(\Delta_M\chi_{\epsilon}
^{2k})(\Delta u)\rangle \label{bilaplace_est_6_d} \\
&2\langle u\Delta_M^2\chi_{\epsilon}^{2k}, u\Delta_M^2\chi_{\epsilon}^{2k}
\rangle \label{bilaplace_est_6_e}
\end{align}
can all be bounded above by
\begin{align*}
\epsilon C_1(\epsilon)\vert\vert\chi_{\epsilon}^{2k}\Delta^2u\vert\vert^2 + 
C_2(\epsilon)\vert\vert u\vert\vert^2
\end{align*}
where $\lim_{\epsilon \rightarrow 0}C_1(\epsilon) < \infty$, and
$\lim_{\epsilon \rightarrow 0}C_2(\epsilon) < \infty$.

The proof of this follows exactly how we proved 
lemmas \ref{bilaplacian_est_1} to \ref{bilaplace_est_5}. We will outline
how to do the case of 
$-4Re\langle\Delta\nabla_{(d\chi_{\epsilon}^{2k})^{\#}}u, 
\Delta\nabla_{(d\chi_{\epsilon}^{2k})^{\#}}u\rangle$. 

We have
\begin{align*}
\vert\vert\Delta\nabla_{(d\chi_{\epsilon}^{2k})^{\#}}u\vert\vert^2  &= 
\vert\vert \nabla_{(d\chi_{\epsilon}^{2k})^{\#}}\Delta u + 
\nabla_{(d\chi_{\epsilon}^{2k})^{\#}}(Rm + F) * u + 
(Rm + F)*\nabla_{(d\chi_{\epsilon}^{2k})^{\#}}u\vert\vert^2 \\
&\leq C\bigg{(}
\vert\vert \nabla_{(d\chi_{\epsilon}^{2k})^{\#}}\Delta u\vert\vert^2 +
\vert\vert \nabla_{(d\chi_{\epsilon}^{2k})^{\#}}(Rm + F) * u\vert\vert^2
+ \vert\vert (Rm + F)*\nabla_{(d\chi_{\epsilon}^{2k})^{\#}}u\vert\vert^2
\bigg{)} \\
&=
C\bigg{(}
\vert\vert 
2k\chi_{\epsilon}^{2k-1} \nabla_{(d\chi_{\epsilon})^{\#}}\Delta u
\vert\vert^2 +
\vert\vert 2k\chi_{\epsilon}^{2k-1}
\nabla_{(d\chi_{\epsilon})^{\#}}(Rm + F) * u\vert\vert^2 \\
&\hspace{1cm}+
\vert\vert 2k\chi_{\epsilon}^{2k-1}
(Rm + F)*\nabla_{(d\chi_{\epsilon})^{\#}}u\vert\vert^2
\bigg{)} \\
&\leq 
4k^2C^2\epsilon^2\vert\vert 
\chi_{\epsilon}^{2k-1} \nabla\Delta u\vert\vert^2 +
4k^2C^2\epsilon^2\vert\vert\chi_{\epsilon}^{2k-1}u\vert\vert^2 + 
4k^2C^2\epsilon^2\vert\vert\nabla u\vert\vert^2
\end{align*}
where to get the first equality we have applied the commutation formula
given by corollary \ref{connection_3}, and to get the last inequality 
we have used \eqref{cut-off_bound_3} and our bounded geometry assumptions
(A1) and (A2).

The way to proceed now is to estimate the term 
$\vert\vert \chi_{\epsilon}^{2k-1} \nabla\Delta u\vert\vert^2$ and
$\vert\vert\nabla u\vert\vert^2$, using proposition \ref{covariant_est_1}.
In doing so, we obtain
\begin{align*}
\vert\vert\Delta\nabla_{(d\chi_{\epsilon}^{2k})^{\#}}u\vert\vert^2 &\leq
\frac{4k^2C^2\epsilon^2}{2(1-(2k-1)\epsilon)}
\vert\vert\chi_{\epsilon}^{2k}\Delta^2u\vert\vert^2 + 
\frac{(4k^2C^2\epsilon^2)(1+(2k-1)\epsilon)}
{2(1-(2k-1)\epsilon)}\vert\vert
\chi_{\epsilon}^{2k-2}\Delta u\vert\vert^2 \\
&\hspace{0.5cm} + 
\frac{4k^2C^2\epsilon^2}{2(1-(2k-1)\epsilon)}
\vert\vert\chi_{\epsilon}^{2k}\Delta u\vert\vert^2 +
\frac{(4k^2C^2\epsilon^2)(1+(2k-1)\epsilon)}
{2(1-(2k-1)\epsilon)}\vert\vert
\chi_{\epsilon}^{2k-2}u\vert\vert^2 \\
&\hspace{0.5cm} +
4k^2C^2\epsilon^2\vert\vert\chi_{\epsilon}^{2k-1}u\vert\vert^2 \\
&\leq
\frac{4k^2C^2\epsilon^2}{2(1-(2k-1)\epsilon)}
\vert\vert\chi_{\epsilon}^{2k}\Delta^2u\vert\vert^2 \\
&\hspace{0.5cm}+ 
\bigg{[}
\bigg{(}\frac{(4k^2C^2\epsilon^2)(1+(2k-1)\epsilon)}
{2(1-(2k-1)\epsilon)} \bigg{)}
+
\bigg{(}
\frac{4k^2C^2\epsilon^2}{2(1-(2k-1)\epsilon)} 
\bigg{)}
\bigg{]}\vert\vert \chi_{\epsilon}^{2k-2}\Delta u\vert\vert^2 \\
&\hspace{0.5cm}+ 
\bigg{[}
\bigg{(}
\frac{(4k^2C^2\epsilon^2)(1+(2k-1)\epsilon)}
{2(1-(2k-1)\epsilon)}
\bigg{)}
+ 
4k^2C^2\epsilon^2
\bigg{]}\vert\vert u\vert\vert^2
\end{align*}
where to get the second inequality we have used the fact that
$\chi_{\epsilon}^{2k} \leq \chi_{\epsilon}^{2k-2} \leq 1$.

Using proposition \ref{Laplace_est}, we
estimate $\vert\vert\chi_{\epsilon}^{2k}\Delta u\vert\vert^2$
and $\vert\vert\chi_{\epsilon}^{2k}\Delta u\vert\vert^2$ to obtain
\begin{align*}
\vert\vert\Delta\nabla_{(d\chi_{\epsilon}^{2k})^{\#}}u\vert\vert^2 &\leq
\frac{4k^2C^2\epsilon^2}{2(1-(2k-1)\epsilon)}
\vert\vert\chi_{\epsilon}^{2k}\Delta^2u\vert\vert^2 
+
\bigg{[}
\bigg{(}\frac{(4k^2C^2\epsilon^2)(1+(2k-1)\epsilon)}
{2(1-(2k-1)\epsilon)} \bigg{)}
+
\bigg{(}
\frac{4k^2C^2\epsilon^2}{2(1-(2k-1)\epsilon)} 
\bigg{)}
\bigg{]} \\
&\hspace{0.5cm}
\bigg{(}
1 - \frac{C^2\epsilon^2}{2} - 
\frac{(2(2k-2)C^2\epsilon^2)(1+2(4k-5)\epsilon)}
{2(1-(4k-5)\epsilon)}
\bigg{)}^{-1} \\
&\hspace{0.5cm}
\bigg{(}
\frac{\epsilon}{2} + \frac{(2k-2)C^2\epsilon^2}
{1-(4k-5)\epsilon}
\bigg{)}\vert\vert\chi_{\epsilon}^{2(2k-2)}\Delta^2u\vert\vert^2 \\
&\hspace{0.5cm} +
\bigg{[}
\bigg{(}\frac{(4k^2C^2\epsilon^2)(1+(2k-1)\epsilon)}
{2(1-(2k-1)\epsilon)} \bigg{)}
+
\bigg{(}
\frac{4k^2C^2\epsilon^2}{2(1-(2k-1)\epsilon)} 
\bigg{)}
\bigg{]} \\
&\hspace{0.5cm}
\bigg{(}
1 - \frac{C^2\epsilon^2}{2} - 
\frac{(2(2k-2)C^2\epsilon^2)(1+2(4k-5)\epsilon)}
{2(1-(4k-5)\epsilon)}
\bigg{)}^{-1}
\bigg{(}
\frac{1}{2\epsilon} + 2(2k-2) + \frac{1}{2}
\bigg{)}\vert\vert u\vert\vert^2 \\
&\leq
\bigg{[}
\bigg{(}
\frac{4k^2C^2\epsilon^2}{2(1-(2k-1)\epsilon)}
\bigg{)} +
\bigg{[}
\bigg{(}\frac{(4k^2C^2\epsilon^2)(1+(2k-1)\epsilon)}
{2(1-(2k-1)\epsilon)} \bigg{)}
+
\bigg{(}
\frac{4k^2C^2\epsilon^2}{2(1-(2k-1)\epsilon)} 
\bigg{)}
\bigg{]} \\
&\hspace{0.5cm}
\bigg{(}
1 - \frac{C^2\epsilon^2}{2} - 
\frac{(2(2k-2)C^2\epsilon^2)(1+2(4k-5)\epsilon)}
{2(1-(4k-5)\epsilon)}
\bigg{)}^{-1} \\
&\hspace{0.5cm}
\bigg{(}
\frac{\epsilon}{2} + \frac{(2k-2)C^2\epsilon^2}
{1-(4k-5)\epsilon}
\bigg{)}
\bigg{]}\vert\vert\chi_{\epsilon}^{2k}\Delta^2u\vert\vert^2 \\
&\hspace{0.5cm}+
\Bigg{[}
\bigg{[}
\bigg{(}\frac{(4k^2C^2\epsilon^2)(1+(2k-1)\epsilon)}
{2(1-(2k-1)\epsilon)} \bigg{)}
+
\bigg{(}
\frac{4k^2C^2\epsilon^2}{2(1-(2k-1)\epsilon)} 
\bigg{)}
\bigg{]} \\
&\hspace{0.5cm}
\bigg{(}
1 - \frac{C^2\epsilon^2}{2} - 
\frac{(2(2k-2)C^2\epsilon^2)(1+2(4k-5)\epsilon)}
{2(1-(4k-5)\epsilon)}
\bigg{)}^{-1}
\bigg{(}
\frac{1}{2\epsilon} + 2(2k-2) + \frac{1}{2}
\bigg{)} \\
&\hspace{0.5cm}+
\bigg{[}
\bigg{(}
\frac{(4k^2C^2\epsilon^2)(1+(2k-1)\epsilon)}
{2(1-(2k-1)\epsilon)}
\bigg{)}
+ 
4k^2C^2\epsilon^2
\bigg{]}
\Bigg{]}\vert\vert u\vert\vert^2
\end{align*}
where to get the second inequality we have used the fact that
$\chi_{\epsilon}^{2(2k-2)} \leq \chi_{\epsilon}^{2k}$.

We then observe that we can write the coefficient of the 
$\vert\vert\chi_{\epsilon}^{2k}\Delta^2u\vert\vert^2$ term as
\begin{align*}
&\bigg{[}
\bigg{(}
\frac{4k^2C^2\epsilon^2}{2(1-(2k-1)\epsilon)}
\bigg{)} +
\bigg{[}
\bigg{(}\frac{(4k^2C^2\epsilon^2)(1+(2k-1)\epsilon)}
{2(1-(2k-1)\epsilon)} \bigg{)}
+
\bigg{(}
\frac{4k^2C^2\epsilon^2}{2(1-(2k-1)\epsilon)} 
\bigg{)}
\bigg{]} \\
&\hspace{0.5cm}
\bigg{(}
1 - \frac{C^2\epsilon^2}{2} - 
\frac{(2(2k-2)C^2\epsilon^2)(1+2(4k-5)\epsilon)}
{2(1-(4k-5)\epsilon)}
\bigg{)}^{-1} 
\bigg{(}
\frac{\epsilon}{2} + \frac{(2k-2)C^2\epsilon^2}
{1-(4k-5)\epsilon}
\bigg{)}
\bigg{]} \\
&= \\
&\epsilon\bigg{[}
\bigg{(}
\frac{4k^2C^2\epsilon}{2(1-(2k-1)\epsilon)}
\bigg{)} +
\bigg{[}
\bigg{(}\frac{(4k^2C^2\epsilon)(1+(2k-1)\epsilon)}
{2(1-(2k-1)\epsilon)} \bigg{)}
+
\bigg{(}
\frac{4k^2C^2\epsilon}{2(1-(2k-1)\epsilon)} 
\bigg{)}
\bigg{]} \\
&\hspace{0.5cm}
\bigg{(}
1 - \frac{C^2\epsilon^2}{2} - 
\frac{(2(2k-2)C^2\epsilon^2)(1+2(4k-5)\epsilon)}
{2(1-(4k-5)\epsilon)}
\bigg{)}^{-1} 
\bigg{(}
\frac{\epsilon}{2} + \frac{(2k-2)C^2\epsilon^2}
{1-(4k-5)\epsilon}
\bigg{)}
\bigg{]}.
\end{align*}
We then define
\begin{align*}
C_1(\epsilon) &=
\bigg{[}
\bigg{(}
\frac{4k^2C^2\epsilon}{2(1-(2k-1)\epsilon)}
\bigg{)} +
\bigg{[}
\bigg{(}\frac{(4k^2C^2\epsilon)(1+(2k-1)\epsilon)}
{2(1-(2k-1)\epsilon)} \bigg{)}
+
\bigg{(}
\frac{4k^2C^2\epsilon}{2(1-(2k-1)\epsilon)} 
\bigg{)}
\bigg{]} \\
&\hspace{0.5cm}
\bigg{(}
1 - \frac{C^2\epsilon^2}{2} - 
\frac{(2(2k-2)C^2\epsilon^2)(1+2(4k-5)\epsilon)}
{2(1-(4k-5)\epsilon)}
\bigg{)}^{-1} 
\bigg{(}
\frac{\epsilon}{2} + \frac{(2k-2)C^2\epsilon^2}
{1-(4k-5)\epsilon}
\bigg{)}
\bigg{]}.
\end{align*}
It is easy to see $\lim_{\epsilon \rightarrow 0} C_1(\epsilon) = 0$.

We define
\begin{align*}
C_2(\epsilon) &=
\Bigg{[}
\bigg{[}
\bigg{(}\frac{(4k^2C^2\epsilon^2)(1+(2k-1)\epsilon)}
{2(1-(2k-1)\epsilon)} \bigg{)}
+
\bigg{(}
\frac{4k^2C^2\epsilon^2}{2(1-(2k-1)\epsilon)} 
\bigg{)}
\bigg{]} \\
&\hspace{0.5cm}
\bigg{(}
1 - \frac{C^2\epsilon^2}{2} - 
\frac{(2(2k-2)C^2\epsilon^2)(1+2(4k-5)\epsilon)}
{2(1-(4k-5)\epsilon)}
\bigg{)}^{-1}
\bigg{(}
\frac{1}{2\epsilon} + 2(2k-2) + \frac{1}{2}
\bigg{)} \\
&\hspace{0.5cm}+
\bigg{[}
\bigg{(}
\frac{(4k^2C^2\epsilon^2)(1+(2k-1)\epsilon)}
{2(1-(2k-1)\epsilon)}
\bigg{)}
+ 
4k^2C^2\epsilon^2
\bigg{]}
\Bigg{]}.
\end{align*}
It is also easy to see that 
$\lim_{\epsilon \rightarrow 0} C_2(\epsilon) = 0$.

A similar approach can be used to establish the required estimates
for \eqref{bilaplace_est_6_a}, 
\eqref{bilaplace_est_6_c}, \eqref{bilaplace_est_6_d}, 
\eqref{bilaplace_est_6_e}.

%%%%%%%%%%%%%%%%%%%%%%%%%%%%%%%%%%%%%%%%%%%%%%%%%%%%%%%%%%%%%%%%%%%%%%

The next step is to look at the ten terms
\begin{align}
&- 8Re\langle\nabla_{(d\chi_{\epsilon}^{2k})^{\#}}\Delta u, 
\Delta\nabla_{(d\chi_{\epsilon}^{2k})^{\#}}u\rangle \\
&-8Re\langle\nabla_{(d\chi_{\epsilon}^{2k})^{\#}}\Delta u, 
\nabla_{(d\Delta_M\chi_{\epsilon}^{2k})^{\#}}u\rangle \\
&-8Re\langle\nabla_{(d\chi_{\epsilon}^{2k})^{\#}}\Delta u,
(\Delta_M\chi_{\epsilon}^{2k})(\Delta u)\rangle  \\
&4Re\langle\nabla_{(d\chi_{\epsilon}^{2k})^{\#}}\Delta u, u
\Delta_M^2\chi_{\epsilon}^{2k} \\
&-8Re\langle \Delta\nabla_{(d\chi_{\epsilon}^{2k})^{\#}}u, 
\nabla_{(d\Delta_M\chi_{\epsilon}^{2k})^{\#}}u \rangle \\
&-8Re\langle \Delta\nabla_{(d\chi_{\epsilon}^{2k})^{\#}}u, 
(\Delta_M\chi_{\epsilon}^{2k})(\Delta u) \rangle \\
&4Re\langle \Delta\nabla_{(d\chi_{\epsilon}^{2k})^{\#}}u, 
u\Delta_M^2\chi_{\epsilon}^{2k}\rangle \\
&-8Re\langle \nabla_{(d\Delta_M\chi_{\epsilon}^{2k})^{\#}}u, 
(\Delta_M\chi_{\epsilon}^{2k})\Delta u\rangle \\
&4Re\langle \nabla_{(d\Delta_M\chi_{\epsilon}^{2k})^{\#}}u, 
u\Delta_M^2(\chi_{\epsilon}^{2k})\rangle \\
&-4\langle (\Delta_M\chi_{\epsilon}^{2k})(\Delta u), 
(\Delta_M\chi_{\epsilon}^{2k})
(\Delta u)\rangle. 
\end{align}
These can all also be bounded above by a term of the form
$\epsilon C_1(\epsilon)
\vert\vert \chi_{\epsilon}^{2k}\Delta^2u\vert\vert^2 + 
C_2(\epsilon)\vert\vert u\vert\vert^2$, with 
$\lim_{\epsilon\rightarrow 0}C_1(\epsilon) < \infty$ and
$\lim_{\epsilon\rightarrow 0}C_2(\epsilon) < \infty$.

To see this, one applies
Cauchy-Schwarz together with Young's inequality, and then uses
the fact that we obtained such a bound for each of
\eqref{bilaplace_est_6_a}, \eqref{bilaplace_est_6_b}, 
\eqref{bilaplace_est_6_c}, \eqref{bilaplace_est_6_d}, 
\eqref{bilaplace_est_6_e}.

Let us give an example of how to do this with the term 
$-8Re\langle \nabla_{(d\chi_{\epsilon}^{2k})^{\#}}\Delta u, 
\Delta\nabla_{(d\chi_{\epsilon}^{2k})^{\#}}u \rangle$.

Start by applying Cauchy-Schwartz and Youngs inequality to obtain
\begin{align*}
8\vert\langle \nabla_{(d\chi_{\epsilon}^{2k})^{\#}}\Delta u, 
\Delta\nabla_{(d\chi_{\epsilon}^{2k})^{\#}}u \rangle\vert \leq 
{4}\vert\vert \nabla_{(d\chi_{\epsilon}^{2k})^{\#}}\Delta u\vert\vert^2
+ 4\vert\vert\Delta\nabla_{(d\chi_{\epsilon}^{2k})^{\#}}u\vert\vert^2.
\end{align*}

We then see that 
$\vert\vert \nabla_{(d\chi_{\epsilon}^{2k})^{\#}}\Delta u\vert\vert^2$
and 
$\vert\vert\Delta\nabla_{(d\chi_{\epsilon}^{2k})^{\#}}u\vert\vert^2$
are the two terms from \eqref{bilaplace_est_6_a} and 
\eqref{bilaplace_est_6_b} respectively, which can be bounded above
by $\epsilon C_1(\epsilon)
\vert\vert \chi_{\epsilon}^{2k}\Delta^2u\vert\vert^2 + 
C_2(\epsilon)\vert\vert u\vert\vert^2$. Hence we obtain
\begin{align*}
8\vert\langle \nabla_{(d\chi_{\epsilon}^{2k})^{\#}}\Delta u, 
\Delta\nabla_{(d\chi_{\epsilon}^{2k})^{\#}}u \rangle\vert &\leq
8\epsilon C_1(\epsilon)
\vert\vert \chi_{\epsilon}^{2k}\Delta^2u\vert\vert^2 + 
8C_2(\epsilon)\vert\vert u\vert\vert^2 \\
&=
\epsilon C_1(\epsilon)
\vert\vert \chi_{\epsilon}^{2k}\Delta^2u\vert\vert^2 + 
C_2(\epsilon)\vert\vert u\vert\vert^2
\end{align*}
where to get the second equality we are simply absorbing the 8 into the 
constant $C_1(\epsilon)$ and $C_2(\epsilon)$, and denoting these new
constants again by $C_1(\epsilon)$ and $C_2(\epsilon)$. This shows
what we wanted to show.

Putting all this together into \eqref{bilaplace_est_6_0}, we obtain
\begin{equation*}
\vert\vert\chi_{\epsilon}^{2k}\Delta^2 u\vert\vert^2 \leq 
\vert\vert\Delta^2(\chi_{\epsilon}^{2k}u)\vert\vert^2 +
\epsilon C_1(\epsilon)
\vert\vert\chi_{\epsilon}^{2k}\Delta^2 u\vert\vert^2  +
C_2(\epsilon)
\vert\vert u\vert\vert^2
\end{equation*}
which implies
\begin{equation*}
\big{(}1-\epsilon C_1(\epsilon)\big{)}
\vert\vert\chi_{\epsilon}^{2k}\Delta^2 u\vert\vert^2 \leq 
\vert\vert\Delta^2(\chi_{\epsilon}^{2k}u)\vert\vert^2 +
C_2(\epsilon)
\vert\vert u\vert\vert^2.
\end{equation*}
Choosing $\epsilon$ small enough so that $1-\epsilon C_1(\epsilon) > 0$, 
we obtain
\begin{equation*}
\vert\vert\chi_{\epsilon}^{2k}\Delta^2 u\vert\vert^2 \leq 
\frac{1}{\big{(}1-\epsilon C_1(\epsilon)\big{)}}
\vert\vert\Delta^2(\chi_{\epsilon}^{2k}u)\vert\vert^2 +
\frac{C_2(\epsilon)}{\big{(}1-\epsilon C_1(\epsilon)\big{)}}
\vert\vert u\vert\vert^2
\end{equation*}
which establishes the proposition.

\end{proof}

\section{Proof of theorem \ref{main_theorem_1}}
\label{proof_main_theorem_1}

In this section we prove theorem \ref{main_theorem_1}. We will start
with an important lemma, and then move on to the proof of the theorem.

Let $T = \Delta^4 + V$ be as in the statement of theorem 
\ref{main_theorem_1}. We define the minimal operator associated 
to $T$ by $T_{min}u := Tu$ with domain $D_{min} = C_c^{\infty}(E)$.
We then define the maximal operator associated to $T$ as the adjoint
of the minimal operator. That is, $T_{max} := (T_{min})^*$, where
for a linear densely defined operator $L$, we let $L^*$ denote the 
adjoint. The domain of the operator $T_{max}$ can be defined
distributionally as
\begin{equation}
D_{max} = \{u \in L^2(E) : Tu \in L^2(E)\} \label{max_domain}
\end{equation}
where $Tu$ is to be understood in the distributional sense, and
we have that $T_{max}u := Tu$ for $u \in D_{max}$.

The following lemma can be seen as a Bilaplacian version of 
Milatovic's lemma 4.1 in \cite{milatovic}.

\begin{lem}\label{main_lemma_1}
Assume that $V$ satisfies the hypotheses of 
theorem \ref{main_theorem_1}. Assume $u \in Dom(T_{max})$ and 
$T_{max}u = i\lambda u$, for some $\lambda \in \R$. Then given 
$\epsilon > 0$ sufficiently small, we have the following estimate
\begin{equation*}
\vert\vert \Delta^2(\chi_{\epsilon}^{2k}u)\vert\vert^2 \leq 
\frac{C_2(\epsilon)}{1-2\epsilon C_1(\epsilon)}\vert\vert u\vert\vert^2
+ 
2
\langle (q\circ r)(u), \chi_{\epsilon}^{4k}u\rangle
\end{equation*}
where $C_1(\epsilon)$ and $C_2(\epsilon)$ are constants depending on 
$\epsilon$ such that
$\lim_{\epsilon\rightarrow 0}C_1(\epsilon) < \infty$ and
$\lim_{\epsilon\rightarrow 0}C_2(\epsilon) < \infty$.
\end{lem}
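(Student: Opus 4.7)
The strategy is to express $\vert\vert \Delta^2(\chi_{\epsilon}^{2k}u)\vert\vert^2$ through integration by parts as $\langle \chi_{\epsilon}^{2k}u,\Delta^4(\chi_{\epsilon}^{2k}u)\rangle$, and then use the eigen-equation $\Delta^4 u = i\lambda u - Vu$ (which follows from $T_{max}u = i\lambda u$) to feed the lower bound $V \geq -q(r)I$ directly into the pairing $\langle (q\circ r)u, \chi_{\epsilon}^{4k}u\rangle$. Every other contribution will be an error term already controlled by the machinery of sections \ref{sec_covariant_derivatives}, \ref{derivative_est_powers_cut_offs} and \ref{sec_laplacian_derivatives}.

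As a preliminary, since $V \in L^{\infty}_{loc}(End E)$ one has $\Delta^4 u = i\lambda u - Vu \in L^2_{loc}(E)$; because assumption \textbf{(A1)} makes $\Delta^4$ a uniformly elliptic operator of order eight with locally bounded coefficients, elliptic regularity yields $u \in W^{8,2}_{loc}(E)$. Consequently $\chi_{\epsilon}^{2k}u$ and $\chi_{\epsilon}^{2k}\Delta^2 u$ are compactly supported Sobolev sections of sufficient regularity and every integration by parts below is legitimate; in particular
\begin{equation*}
\vert\vert\Delta^2(\chi_{\epsilon}^{2k}u)\vert\vert^2 = \langle \chi_{\epsilon}^{2k}u, \Delta^4(\chi_{\epsilon}^{2k}u)\rangle.
\end{equation*}
Applying \eqref{bi-laplace_product} twice, once as $\Delta^2(\chi_{\epsilon}^{2k}u) = \chi_{\epsilon}^{2k}\Delta^2 u + S$ and once as $\Delta^2(\chi_{\epsilon}^{2k}\Delta^2 u) = \chi_{\epsilon}^{2k}\Delta^4 u + S_{*}$ (where $S_{*}$ has the same five-term structure as $S$ but with $\Delta^2 u$ in the section slot in place of $u$), gives $\Delta^4(\chi_{\epsilon}^{2k}u) = \chi_{\epsilon}^{2k}\Delta^4 u + S_{*} + \Delta^2 S$. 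Substituting $\Delta^4 u = i\lambda u - Vu$, rewriting $\langle \chi_{\epsilon}^{2k}u, \Delta^2 S\rangle = \langle \Delta^2(\chi_{\epsilon}^{2k}u), S\rangle$ by integration by parts, and taking real parts (so the $i\lambda\vert\vert\chi_{\epsilon}^{2k}u\vert\vert^2$ term drops, and the self-adjointness of $V$ together with $V \geq -q(r)I$ yields $-\langle \chi_{\epsilon}^{4k}u, Vu\rangle \leq \langle (q\circ r)u, \chi_{\epsilon}^{4k}u\rangle$), one arrives at
\begin{equation*}
\vert\vert\Delta^2(\chi_{\epsilon}^{2k}u)\vert\vert^2 \leq \langle (q\circ r)u, \chi_{\epsilon}^{4k}u\rangle + \vert Re\langle \chi_{\epsilon}^{2k}u, S_{*}\rangle\vert + \vert Re\langle \Delta^2(\chi_{\epsilon}^{2k}u), S\rangle\vert.
\end{equation*}

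The second error term splits through $\Delta^2(\chi_{\epsilon}^{2k}u) = \chi_{\epsilon}^{2k}\Delta^2 u + S$ into $\langle \chi_{\epsilon}^{2k}\Delta^2 u, S\rangle + \vert\vert S\vert\vert^2$; distributed over the five summands of $S$ this is precisely the sum of quantities bounded in corollaries \ref{bilaplace_est_1_a}, \ref{bilaplace_est_2_a}, \ref{bilaplace_est_3_a}, \ref{bilaplace_est_4_a} and \ref{bilaplace_est_5_a}, while $\vert\vert S\vert\vert^2$ is controlled by proposition \ref{covariant_est_1} together with the cutoff derivative estimates of section \ref{derivative_est_powers_cut_offs}. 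Each such bound has the form $\epsilon C_1(\epsilon)\vert\vert\chi_{\epsilon}^{2k}\Delta^2 u\vert\vert^2 + C_2(\epsilon)\vert\vert u\vert\vert^2$ with the $C_i(\epsilon)$ bounded as $\epsilon \to 0$. Converting each residual $\vert\vert\chi_{\epsilon}^{2k}\Delta^2 u\vert\vert^2$ into $\vert\vert\Delta^2(\chi_{\epsilon}^{2k}u)\vert\vert^2$ by proposition \ref{bilaplace_est_6}, absorbing the resulting $\vert\vert\Delta^2(\chi_{\epsilon}^{2k}u)\vert\vert^2$-term on the right into the left, and using $(1 - 2\epsilon C_1(\epsilon))^{-1} \leq 2$ for $\epsilon$ sufficiently small, produces the stated inequality.

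The main obstacle is the term $\langle \chi_{\epsilon}^{2k}u, S_{*}\rangle$: since $S_{*}$ contains derivatives of $u$ up to order six (inside the $\Delta^3 u$ summand), it is not directly covered by section \ref{sec_laplacian_derivatives}. The remedy is further integration by parts that shift some of the Laplacians onto the $\chi_{\epsilon}^{2k}u$ factor, to be absorbed into derivatives of $\chi_{\epsilon}^{2k}$ of order up to four (controlled by section \ref{derivative_est_powers_cut_offs}), and symmetrise the rest onto $\Delta^2 u$, so that every surviving inner product is again of the kind estimated in section \ref{sec_laplacian_derivatives}. Carrying out this bookkeeping, and verifying that all resulting constants remain bounded as $\epsilon \to 0$, is where the bulk of the technical work sits.
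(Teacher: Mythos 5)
Your proposal follows a genuinely different route from the paper. You move all four Laplacians onto the cut-off side, writing $\vert\vert\Delta^2(\chi_{\epsilon}^{2k}u)\vert\vert^2 = \langle \chi_{\epsilon}^{2k}u, \Delta^4(\chi_{\epsilon}^{2k}u)\rangle$ and then expanding $\Delta^4(\chi_{\epsilon}^{2k}u) = \chi_{\epsilon}^{2k}\Delta^4 u + S_* + \Delta^2 S$. The paper instead works from $T_{max}u=i\lambda u$ directly, pairs against $\chi_{\epsilon}^{4k}u$ (note the exponent $4k$, not $2k$), and moves only \emph{two} Laplacians:
\[
\langle \Delta^4 u, \chi_{\epsilon}^{4k}u\rangle = \langle \Delta^2 u, \Delta^2(\chi_{\epsilon}^{4k}u)\rangle.
\]
Expanding $\Delta^2(\chi_{\epsilon}^{4k}u)$ via \eqref{bi-laplace_product} then produces $\vert\vert\chi_{\epsilon}^{2k}\Delta^2 u\vert\vert^2$ plus error terms of the form $\langle\Delta^2 u,\, (\text{terms with at most }\Delta u)\rangle$ — precisely the shape already handled in sections \ref{sec_covariant_derivatives}--\ref{sec_laplacian_derivatives}. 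The workhorse identity \eqref{bilaplace_est_6_0} and proposition \ref{bilaplace_est_6} then convert $\vert\vert\chi_{\epsilon}^{2k}\Delta^2 u\vert\vert^2$ into $\vert\vert\Delta^2(\chi_{\epsilon}^{2k}u)\vert\vert^2$ plus more errors of the same already-estimated type, and one absorbs. No term analogous to your $S_*$ ever appears.

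This is where your proof has a genuine gap. Your $S_*$ contains pieces such as $\nabla_{(d\chi_{\epsilon}^{2k})^{\#}}\Delta^3 u$ and $\Delta\nabla_{(d\chi_{\epsilon}^{2k})^{\#}}\Delta^2 u$ — these are seventh-order in $u$, not sixth as you state — and they are not of the $\langle\chi_{\epsilon}^{2k}\Delta^2 u,\,(\text{low order})\rangle$ form that the corollaries of section \ref{sec_laplacian_derivatives} bound. You correctly identify this as the obstacle and propose to resolve it by further integration by parts, pushing derivatives onto the cut-off side and ``symmetrising the rest onto $\Delta^2 u$,'' but you do not carry this out, and it is not a routine bookkeeping exercise: symmetrising $\langle \chi_{\epsilon}^{2k}u,\, \nabla_{(d\chi_{\epsilon}^{2k})^{\#}}\Delta^3 u\rangle$ produces mixed terms like $\langle \nabla\Delta(\chi_{\epsilon}^{2k}u),\, \Delta^2 u\rangle$ with derivative imbalances that the existing localised estimates do not cover directly, so new estimates of that type would have to be derived and controlled in $\epsilon$. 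The paper's split by twos (together with the $4k$ cut-off exponent, deliberately chosen so that $\chi_{\epsilon}^{4k}$ factors as $\chi_{\epsilon}^{2k}\cdot\chi_{\epsilon}^{2k}$ in the middle) is precisely engineered to avoid generating these higher-order error terms in the first place. The rest of your sketch — the treatment of $\langle\Delta^2(\chi_{\epsilon}^{2k}u),S\rangle = \langle\chi_{\epsilon}^{2k}\Delta^2 u,S\rangle + \vert\vert S\vert\vert^2$ via corollaries \ref{bilaplace_est_1_a}--\ref{bilaplace_est_5_a}, the use of the potential bound, and the absorption via proposition \ref{bilaplace_est_6} — is sound and matches what the paper does; the unaddressed $S_*$ term is the one missing piece.
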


\begin{proof}
$T_{\max}u = i\lambda u$ implies $\Delta^4u + Vu = i\lambda u$. As 
$V \in L^{\infty}_{loc}(End\mathcal{E})$ and $u \in L^2(E)$, elliptic regularity, see theorem 10.3.6 in \cite{nicolaescu}, implies $u \in W^{8,2}_{loc}(E)$. Thus, integrating by
parts gives 
\begin{align*}
i\lambda\langle u, \chi_{\epsilon}^{4k}u\rangle &= 
\langle \Delta^4u + Vu, \chi_{\epsilon}^{4k}u\rangle \\
&= \langle \Delta^4u, \chi_{\epsilon}^{4k}u\rangle + 
\langle Vu, \chi_{\epsilon}^{4k}u\rangle \\
&= \langle \Delta^2u, \Delta^2(\chi_{\epsilon}^{4k}u)\rangle + 
\langle Vu, \chi_{\epsilon}^{4k}u\rangle.
\end{align*}

Using formula \eqref{bi-laplace_product}, we can write
\begin{align*}
\Delta^2(\chi_{\epsilon}^{4k}u) = 
\chi_{\epsilon}^{4k}\Delta^2u &- 2\nabla_{(d\chi_{\epsilon}^{4k})^{\#}}\Delta u +
 2\Delta_M(\chi_{\epsilon}^{4k})\Delta u - 2\Delta\nabla_{(d\chi_{\epsilon}^{4k})^{\#}}u 
\\
&
 - 2\nabla_{(d\Delta_M(\chi_{\epsilon}^{4k}))^{\#}}u + 
 (\Delta_M^2\chi_{\epsilon}^{4k})u. 
\end{align*}
Substituting this into the above, we get
\begin{align}
i\lambda\langle u, \chi_{\epsilon}^{4k}u\rangle = 
\langle \Delta^2u, \chi_{\epsilon}^{4k}\Delta^2u\rangle &- 
2\langle \Delta^2u, \Delta\nabla_{(d\chi_{\epsilon}^{4k})^{\#}}u\rangle 
-2\langle \Delta^2u, \nabla_{(d\chi_{\epsilon}^{4k})^{\#}}\Delta u\rangle 
\label{main_lemma_1_eqn} \\
&+
2\langle \Delta^2u, \Delta_M(\chi_{\epsilon}^{4k})\Delta u \rangle 
-2\langle \Delta^2u, \nabla_{(d\Delta_M(\chi_{\epsilon}^{4k}))^{\#}}u \rangle 
\nonumber \\
&+ 
\langle \Delta^2u, (\Delta_M^2\chi_{\epsilon}^{4k})u\rangle + 
\langle Vu, \chi_{\epsilon}^{4k}u\rangle. \nonumber
\end{align}

Taking real parts of the above equation gives
\begin{align*}
0 = \vert\vert\chi_{\epsilon}^{2k}\Delta^2u\vert\vert^2 &- 
2Re\langle \Delta^2u, \Delta\nabla_{(d\chi_{\epsilon}^{4k})^{\#}}u\rangle \\
&-2Re\langle\Delta^2u, \nabla_{(d\chi_{\epsilon}^{4k})^{\#}}\Delta u\rangle +
2Re\langle \Delta^2u, \Delta_M(\chi_{\epsilon}^{4k})\Delta u \rangle \\
&-2Re\langle \Delta^2u, \nabla_{(d\Delta_M(\chi_{\epsilon}^{4k}))^{\#}}u \rangle 
+Re\langle \Delta^2u, (\Delta_M^2\chi_{\epsilon}^{4k})u\rangle \\
&+ \langle Vu, \chi_{\epsilon}^{4k}u\rangle
\end{align*}
which in turn implies
\begin{align*}
\vert\vert\chi_{\epsilon}^{2k}\Delta^2u\vert\vert^2 &= 
2Re\langle \Delta^2u, \Delta\nabla_{(d\chi_{\epsilon}^{4k})^{\#}}u\rangle 
+ 2Re\langle\Delta^2u, \nabla_{(d\chi_{\epsilon}^{4k})^{\#}}\Delta u\rangle \\
&\hspace{0.5cm} 
-2Re\langle \Delta^2u, \Delta_M(\chi_{\epsilon}^{4k})\Delta u \rangle 
+2Re\langle \Delta^2u, \nabla_{(d\Delta_M(\chi_{\epsilon}^{4k}))^{\#}}u \rangle \\
&\hspace{0.5cm}
-Re\langle \Delta^2u, (\Delta_M^2\chi_{\epsilon}^{4k})u\rangle -
\langle Vu, \chi_{\epsilon}^{4k}u\rangle.
\end{align*}

Using the commutation formula from corollary \ref{connection_3}, we write 
the above as 
\begin{align}
\vert\vert \chi_{\epsilon}^{2k}\Delta^2u\vert\vert^2 &= 2Re\langle \Delta^2u, \nabla_{(d\chi_{\epsilon}^{4k})^{\#}}\Delta u\rangle 
+ 2Re\langle \Delta^2u, \nabla_{(d\chi_{\epsilon}^{4k})^{\#}}(Rm+F)*u\rangle \label{main_lemma_1_a}  \\
&\hspace{0.5cm} + 
2Re\langle \Delta^2u, (Rm+F)*\nabla_{(d\chi_{\epsilon}^{4k})^{\#}}\rangle + 
2Re\langle\Delta^2u, \nabla_{(d\chi_{\epsilon}^{4k})^{\#}}\Delta u\rangle \nonumber  \\
&\hspace{0.5cm} 
 -2Re\langle \Delta^2u, \Delta_M(\chi_{\epsilon}^{4k})\Delta u \rangle 
+2Re\langle \Delta^2u, \nabla_{(d\Delta_M(\chi_{\epsilon}^{4k}))^{\#}}u \rangle \nonumber  \\
&\hspace{0.5cm} 
-Re\langle \Delta^2u, (\Delta_M^2\chi_{\epsilon}^{4k})u\rangle 
-
\langle Vu, \chi_{\epsilon}^{4k}u\rangle \nonumber  
\end{align}

We now claim that the first seven terms in the above equation can be
bounded above by
$\epsilon C_1(\epsilon) \vert\vert\chi_{\epsilon}^{2k}\Delta^2u
\vert\vert^2 + C_2(\epsilon)\vert\vert u\vert\vert^2$, where
$C_1(\epsilon)$ and $C_2(\epsilon)$ are constants depending on
$\epsilon$. Furthermore, we have that
$\lim_{\epsilon \rightarrow 0}C_1(\epsilon) < \infty$ and
$\lim_{\epsilon \rightarrow 0}C_2(\epsilon) < \infty$. 

The proof of this
follows exactly how we proved lemmas \ref{bilaplacian_est_1} to 
\ref{bilaplace_est_5}, and corollaries \ref{bilaplace_est_1_a} to 
\ref{bilaplace_est_5_a}. We briefly give the details for the term
$2Re\langle \Delta^2u, \nabla_{(d\Delta_M(\chi_{\epsilon}^{4k}))^{\#}}u 
\rangle$.

Recall, by formula \eqref{dlaplace_form_2}, we can write 
$d\Delta_M(\chi_{\epsilon}^{4k}) = \chi_{\epsilon}^{4k-3}G_3$, where
$G_3$ is defined by formula \eqref{dlaplace_form_1}.
Therefore, we can write 
\begin{align*}
\langle \Delta^2u, \nabla_{(d\Delta_M(\chi_{\epsilon}^{4k}))^{\#}}u 
\rangle &= 
\langle \Delta^2u, 
\chi_{\epsilon}^{4k-3}
\nabla_{(G_3)^{\#}}u 
\rangle \\
&=
\langle \chi_{\epsilon}^{2k}\Delta^2u, 
\chi_{\epsilon}^{2k-3}
\nabla_{(G_3)^{\#}}u 
\rangle.
\end{align*}
Applying Cauchy-Schwarz and Young's inequality, we obtain
\begin{align*}
\vert \langle \Delta^2u, \nabla_{(d\Delta_M(\chi_{\epsilon}^{4k}))^{\#}}u 
\rangle
\vert &\leq
\frac{\epsilon}{2}\vert\vert\chi_{\epsilon}^{2k}\Delta^2u\vert\vert^2 +
\frac{1}{2\epsilon}\vert\vert\chi_{\epsilon}^{2k-3}
\nabla_{(G_3)^{\#}}u\vert\vert^2 \\
&\leq \frac{\epsilon}{2}\vert\vert\chi_{\epsilon}^{2k}\Delta^2u\vert\vert^2
+
\frac{C^2\epsilon}{2}\vert\vert\chi_{\epsilon}^{2k-3}\nabla u\vert\vert^2
\end{align*}
where to get the second inequality, we have used \eqref{star_notation_ex} 
and applied 
lemma \ref{G3_est_3}.

We then proceed by estimating the 
$\vert\vert\chi_{\epsilon}^{2k-3}\nabla u\vert\vert^2$ term by using
proposition \ref{covariant_est_1}. One then immediately gets that
$\vert \langle \Delta^2u, \nabla_{(d\Delta_M(\chi_{\epsilon}^{4k}))^{\#}}u 
\rangle
\vert$
is bounded above by a quantity of the form 
$\epsilon C_1(\epsilon) \vert\vert\chi_{\epsilon}^{2k}\Delta^2u
\vert\vert^2 + C_2(\epsilon)\vert\vert u\vert\vert^2$, with
$\lim_{\epsilon \rightarrow 0}C_1(\epsilon) < \infty$ and
$\lim_{\epsilon \rightarrow 0}C_2(\epsilon) < \infty$. The required 
estimate for 
$2Re\langle \Delta^2u, \nabla_{(d\Delta_M(\chi_{\epsilon}^{4k}))^{\#}}u 
\rangle$ then follows.

The next step in the proof of the lemma is to observe that, using 
equation \eqref{bilaplace_est_6_0}, we can rewrite equation 
\eqref{main_lemma_1_a} to obtain

\begin{align}
\langle \Delta^2(\chi_{\epsilon}^{2k}u), \Delta^2(\chi_{\epsilon}^{2k}u) \rangle  
&=
-4Re\langle \chi_{\epsilon}^{2k}\Delta^2u, \nabla_{(d\chi_{\epsilon}^{2k})^{\#}}\Delta u \rangle \label{main_lemma_1_b} \\
&\hspace{1cm}
-4Re \langle \chi_{\epsilon}^{2k}\Delta^2u, \Delta\nabla_{(d\chi_{\epsilon}^{2k})^{\#}}u\rangle \nonumber \\
&\hspace{1cm}
-4Re \langle \chi_{\epsilon}^{2k}\Delta^2u, \nabla_{
(d\Delta_M\chi_{\epsilon}^{2k})^{\#}}u\rangle 
-4Re  \langle \chi_{\epsilon}^{2k}\Delta^2u, (\Delta_M\chi_{\epsilon}^{2k})(\Delta u) \rangle \nonumber\\
&\hspace{1cm}
+ 2Re\langle \chi_{\epsilon}^{2k}\Delta^2u, u\Delta_M^2\chi_{\epsilon}^{2k}\rangle
+ 4\langle  \nabla_{(d\chi_{\epsilon}^{2k})^{\#}}\Delta u, \nabla_{(d\chi_{\epsilon}^{2k})^{\#}}\Delta u\rangle \nonumber\\
&\hspace{1cm} 
+ 8Re\langle\nabla_{(d\chi_{\epsilon}^{2k})^{\#}}\Delta u, 
\Delta\nabla_{(d\chi_{\epsilon}^{2k})^{\#}}u\rangle 
+8Re\langle\nabla_{(d\chi_{\epsilon}^{2k})^{\#}}\Delta u, 
\nabla_{(d\Delta_M\chi_{\epsilon}^{2k})^{\#}}u\rangle \nonumber\\ 
&\hspace{1cm} 
+8Re\langle\nabla_{(d\chi_{\epsilon}^{2k})^{\#}}\Delta u, 
(\Delta_M\chi_{\epsilon}^{2k})(\Delta u)\rangle 
-4Re\langle\nabla_{(d\chi_{\epsilon}^{2k})^{\#}}\Delta u, u\Delta_M^2\chi_{\epsilon}^{2k} 
\rangle \nonumber\\
&\hspace{1cm}  
+4Re\langle\Delta\nabla_{(d\chi_{\epsilon}^{2k})^{\#}}u, 
\Delta\nabla_{(d\chi_{\epsilon}^{2k})^{\#}}u\rangle 
+8Re\langle \Delta\nabla_{(d\chi_{\epsilon}^{2k})^{\#}}u,
\nabla_{(d\Delta_M\chi_{\epsilon}^{2k})^{\#}}u \rangle \nonumber\\
&\hspace{1cm} 
+8Re\langle \Delta\nabla_{(d\chi_{\epsilon}^{2k})^{\#}}u, 
(\Delta_M\chi_{\epsilon}^{2k})(\Delta u) \rangle 
-4Re\langle \Delta\nabla_{(d\chi_{\epsilon}^{2k})^{\#}}u, 
u\Delta_M^2\chi_{\epsilon}^{2k}\rangle \nonumber\\
&\hspace{1cm} 
+4\langle \nabla_{(d\Delta_M\chi_{\epsilon}^{2k})^{\#}}u, 
\nabla_{(d\Delta_M\chi_{\epsilon}^{2k})^{\#}}u\rangle 
+8Re\langle \nabla_{(d\Delta_M\chi_{\epsilon}^{2k})^{\#}}u, 
(\Delta_M\chi_{\epsilon}^{2k})\Delta u\rangle \nonumber\\
&\hspace{1cm} 
-4Re\langle \nabla_{(d\Delta_M\chi_{\epsilon}^{2k})^{\#}}u, 
u\Delta_M^2(\chi_{\epsilon}^{2k})\rangle 
+4\langle (\Delta_M\chi_{\epsilon}^{2k})(\Delta u), 
(\Delta_M\chi_{\epsilon}^{2k})(\Delta u)
\rangle \nonumber\\ 
&\hspace{1cm} 
-4Re\langle (\Delta_M\chi_{\epsilon}^{2k})(\Delta u), u\Delta_M^2\chi_{\epsilon}^{2k}\rangle 
+\langle u\Delta_M^2\chi_{\epsilon}^{2k}, u\Delta_M^2\chi_{\epsilon}^{2k}\rangle \nonumber \\
&\hspace{1cm}+ 2Re\langle \Delta^2u, \nabla_{(d\chi_{\epsilon}^{4k})^{\#}}\Delta u\rangle 
+ 2Re\langle \Delta^2u, \nabla_{(d\chi_{\epsilon}^{4k})^{\#}}(Rm+F)*u\rangle \nonumber  \\
&\hspace{1cm} + 
2Re\langle \Delta^2u, (Rm+F)*\nabla_{(d\chi_{\epsilon}^{4k})^{\#}}\rangle + 
2Re\langle\Delta^2u, \nabla_{(d\chi_{\epsilon}^{4k})^{\#}}\Delta u\rangle \nonumber  \\
&\hspace{1cm} 
 -2Re\langle \Delta^2u, \Delta_M(\chi_{\epsilon}^{4k})\Delta u \rangle 
+2Re\langle \Delta^2u, \nabla_{(d\Delta_M(\chi_{\epsilon}^{4k}))^{\#}}u \rangle \nonumber  \\
&\hspace{1cm} 
-Re\langle \Delta^2u, (\Delta_M^2\chi_{\epsilon}^{4k})u\rangle 
-
\langle Vu, \chi_{\epsilon}^{4k}u\rangle. \nonumber  
\end{align}

We now note that the first twenty terms can be bounded above by
$\epsilon C_1(\epsilon)
\vert\vert\chi_{\epsilon}^{2k}\Delta^2u\vert\vert^2 +
+ C_2(\epsilon)\vert\vert u\vert\vert^2$, this was shown
in corollaries \ref{bilaplace_est_1_a} to \ref{bilaplace_est_5_a}.
The next seven terms can also be bounded by the same quantity, this was
shown above. Furthermore, these constants that depend on $\epsilon$
satisfy the following limit conditions, 
$\lim_{\epsilon\rightarrow 0}C_1(\epsilon) < \infty$ and 
$\lim_{\epsilon\rightarrow 0}C_2(\epsilon) < \infty$.

This means we can obtain the following estimate
\begin{equation*}
\vert\vert \Delta^2(\chi_{\epsilon}^{2k}u)\vert\vert^2 \leq 
\epsilon C_1(\epsilon)
\vert\vert\chi_{\epsilon}^{2k}\Delta^2u\vert\vert^2 
+ C_2(\epsilon)\vert\vert u\vert\vert^2 - 
\langle Vu, \chi_{\epsilon}^{4k}u\rangle. 
\end{equation*}

Using our assumption on the potential $V$, we know that 
\begin{equation*}
-\langle Vu, \chi_{\epsilon}^{4k}u\rangle \leq 
\langle (q\circ r)(u), \chi_{\epsilon}^{4k}u\rangle
\end{equation*}
which implies
\begin{equation*}
\vert\vert \Delta^2(\chi_{\epsilon}^{2k}u)\vert\vert^2 \leq 
\epsilon C_1(\epsilon)
\vert\vert\chi_{\epsilon}^{2k}\Delta^2u\vert\vert^2 
+ C_2(\epsilon)\vert\vert u\vert\vert^2 +
\langle (q\circ r)(u), \chi_{\epsilon}^{4k}u\rangle.
\end{equation*}
Applying proposition \ref{bilaplace_est_6}, we can estimate
the term $\vert\vert\chi_{\epsilon}^{2k}\Delta^2u\vert\vert^2$ in the above
equation and obtain
\begin{equation*}
\vert\vert \Delta^2(\chi_{\epsilon}^{2k}u)\vert\vert^2 \leq 
\frac{\epsilon C_1(\epsilon)}{1-\epsilon C_1(\epsilon)}
\vert\vert \Delta^2(\chi_{\epsilon}^{2k}u)\vert\vert^2 + 
\frac{\epsilon  C_1(\epsilon) C_2(\epsilon)}{1-\epsilon  C_1(\epsilon)}
\vert\vert u\vert\vert^2 + C_2(\epsilon)\vert\vert u\vert\vert^2 +
\langle (q\circ r)(u), \chi_{\epsilon}^{4k}u\rangle
\end{equation*}
which implies
\begin{equation*}
\bigg{(} 
1 - \frac{\epsilon C_1(\epsilon)}{1-\epsilon C_1(\epsilon)}
\bigg{)}\vert\vert \Delta^2(\chi_{\epsilon}^{2k}u)\vert\vert^2 \leq
\frac{C_2(\epsilon)}{1-\epsilon C_1(\epsilon)}\vert\vert u\vert\vert^2 +
\langle (q\circ r)(u), \chi_{\epsilon}^{4k}u\rangle.
\end{equation*}
By choosing $\epsilon$ sufficiently small, we can make it so that
$1 - \frac{\epsilon C_1(\epsilon)}{1-\epsilon C_1(\epsilon)} = 
\frac{1- 2\epsilon C_1(\epsilon)}{1-\epsilon C_1(\epsilon)} > 0$.
Dividing the above equation by this, we obtain
\begin{equation*}
\vert\vert \Delta^2(\chi_{\epsilon}^{2k}u)\vert\vert^2 \leq 
\frac{C_2(\epsilon)}{1-2\epsilon C_1(\epsilon)}\vert\vert u\vert\vert^2
+ 
\frac{1- \epsilon C_1(\epsilon)}{1-2\epsilon C_1(\epsilon)}
\langle (q\circ r)(u), \chi_{\epsilon}^{4k}u\rangle.
\end{equation*}

Observing that we can write 
$\frac{1- \epsilon C_1(\epsilon)}{1-2\epsilon C_1(\epsilon)} =
1 + \frac{\epsilon C_1(\epsilon)}{1-2\epsilon C_1(\epsilon)}$. It is 
easy to see that, for $\epsilon$ sufficiently small, 
$\frac{1- \epsilon C_1(\epsilon)}{1-2\epsilon C_1(\epsilon)} < 2$.
Here we have used the fact that we know that 
$\lim_{\epsilon \rightarrow 0}C_1(\epsilon) < \infty$ and
$\lim_{\epsilon \rightarrow 0}C_2(\epsilon) < \infty$.
Putting this observation into the above estimate gives
\begin{equation*}
\vert\vert \Delta^2(\chi_{\epsilon}^{2k}u)\vert\vert^2 \leq 
\frac{C_2(\epsilon)}{1-2\epsilon C_1(\epsilon)}\vert\vert u\vert\vert^2
+ 
2
\langle (q\circ r)(u), \chi_{\epsilon}^{4k}u\rangle
\end{equation*}
provided $\epsilon$ is sufficiently small. This finishes the proof of the
lemma.

\end{proof}

Armed with the above lemma, we can now give the proof of 
theorem \ref{main_theorem_1}.

\begin{proof}[\textbf{proof of theorem \ref{main_theorem_1}}]

We will follow the strategy Milatovich employs in \cite{milatovic}.

Suppose $u \in Dom(T_{max})$ satisfies $T_{\max}u = i\lambda u$, for 
some $\lambda \in \R$. The essential 
self adjointness of $T\vert_{C_{c}^{\infty}}$ will follow if we can show 
that $u = 0$.

For $\epsilon > 0$ define 
\begin{equation*}
G_{\epsilon} = \{x \in M : r(x) \leq \frac{4}{\epsilon} \},
\end{equation*}
where $r(x)$ is as in \eqref{distance}. Let 
$q$ be as in the statement of theorem \ref{main_theorem_1}. We then have
\begin{align*}
\langle (q\circ r)u, \chi_{\epsilon}^{4k}u\rangle &\leq 
\int_{G_{\epsilon}}q(r(x))\vert u(x)\vert^2d\mu \\
&\leq 
q\bigg{(}\frac{4}{\epsilon}\bigg{)}\vert\vert u\vert\vert^2.
\end{align*}

Using lemma \ref{main_lemma_1}, and the assumption that $q(s) = O(s)$, 
we 
have
\begin{align}
\vert\vert \Delta^2(\chi_{\epsilon}^{2k}u)\vert\vert^2 &\leq 
\bigg{(}\frac{C_2(\epsilon)}{1-2\epsilon C_1(\epsilon)} 
\bigg{)}\vert\vert u\vert\vert^2 + 
\frac{C}{\epsilon}\vert\vert u\vert\vert^2 \nonumber \\
&= \bigg{(}\frac{C_2(\epsilon)}{1-2\epsilon C_1(\epsilon)} + 
\frac{C}{\epsilon} \bigg{)}\vert\vert u\vert\vert^2 
\label{square_est}
\end{align}
for some constant $C > 0$.

Taking 
imaginary parts in equation \eqref{main_lemma_1_eqn}, we obtain
 \begin{align*}
 \lambda \langle u, \chi_{\epsilon}^{4k}u\rangle = &-2Im\langle \Delta^2u, \Delta\nabla_{(d\chi_{\epsilon}^{4k})^{\#}}u\rangle
 -2Im\langle \Delta^2u, \nabla_{(d\chi_{\epsilon}^{4k})^{\#}}\Delta u\rangle  \\
 &+ 
 2Im\langle \Delta^2u,  \Delta_M(\chi_{\epsilon}^{4k})\Delta u \rangle  
 -2Im\langle \Delta^2u, \nabla_{(d\Delta_M(\chi_{\epsilon}^{4k}))^{\#}}u \rangle \\
 &+ 
 Im\langle \Delta^2u, (\Delta_M^2\chi_{\epsilon}^{4k})u\rangle. 
 \end{align*}

Simplifying the right hand side of the above, using 
\eqref{derivative_powers_a}, \eqref{derivative_powers_b}
and \eqref{dlaplace_form_2}, we see that we can write

\begin{align}
\lambda \langle u, \chi_{\epsilon}^{4k}u\rangle = &
-2Im\langle 2\chi_{\epsilon}^{2k}\Delta^2u, \Delta\nabla_{(d\chi_{\epsilon}^{2k})^{\#}}u\rangle
 -2Im\langle 2\chi_{\epsilon}^{2k}\Delta^2u, 
 \nabla_{(d\chi_{\epsilon}^{2k})^{\#}}\Delta u\rangle  \label{main_thm_1_a} \\
 &+ 
 2Im\langle \chi_{\epsilon}^{2k}\Delta^2u,  \chi_{\epsilon}^{2k-2}G_1\Delta u \rangle  
 -2Im\langle \chi_{\epsilon}^{2k}\Delta^2u, \chi_{\epsilon}^{2k-3}
 \nabla_{(G_3)^{\#}}u \rangle \nonumber \\
 &+ 
 Im\langle \chi_{\epsilon}^{2k}\Delta^2u, 
 \chi_{\epsilon}^{2k-4}G_2u\rangle. \nonumber
\end{align}

In the proof of lemma \ref{main_lemma_1} (see \eqref{main_lemma_1_a}) we 
explained how we could bound each of the terms on the right hand side of 
\eqref{main_thm_1_a}
by 
$\epsilon C_1(\epsilon)\vert\vert \chi_{\epsilon}^{2k}\Delta^2\vert\vert^2
+ C_2(\epsilon)\vert\vert u\vert\vert^2$, where 
$\lim_{\epsilon\rightarrow 0}C_1(\epsilon) < \infty$ and
$\lim_{\epsilon\rightarrow 0}C_2(\epsilon) < \infty$.

Combining this with proposition \ref{bilaplace_est_6}, we see that each 
of the terms on the right hand side of \eqref{main_thm_1_a} can be bounded 
above by
\begin{equation*}
\frac{\epsilon C_1(\epsilon)}{1-\epsilon C_1(\epsilon}\vert\vert
\Delta^2(\chi_{\epsilon}^{2k}u)\vert\vert^2 + 
\bigg{(}
\frac{\epsilon C_1(\epsilon)C_2(\epsilon)}{1-\epsilon C_1(\epsilon)}
+ C_2(\epsilon)\bigg{)}
\vert\vert u\vert\vert^2.
\end{equation*}

Using this bound in \eqref{main_thm_1_a}, we obtain
\begin{align*}
\vert\lambda\vert\vert\vert \chi_{\epsilon}^{2k}u\vert\vert^2 \leq 
\frac{\epsilon C_1(\epsilon)}{1-\epsilon C_1(\epsilon}\vert\vert
\Delta^2(\chi_{\epsilon}^{2k}u)\vert\vert^2 + 
\bigg{(}
\frac{\epsilon C_1(\epsilon)C_2(\epsilon)}{1-\epsilon C_1(\epsilon)}
+ C_2(\epsilon)\bigg{)}
\vert\vert u\vert\vert^2.
\end{align*} 
 
We can then bound the first term on the right hand side of the above
inequality, by using \eqref{square_est}, to obtain
\begin{align*}
\vert\lambda\vert\vert\vert \chi_{\epsilon}^{2k}u\vert\vert^2 \leq 
\bigg{(}
\frac{\epsilon C_1(\epsilon)C_2(\epsilon)}
{(1-\epsilon C_1(\epsilon))(1-2\epsilon C_1(\epsilon))} + 
\frac{CC_1(\epsilon)}{1-\epsilon C_1(\epsilon)} + 
\frac{\epsilon C_1(\epsilon)C_2(\epsilon)}{1-\epsilon C_1(\epsilon)}
+ C_2(\epsilon)
\bigg{)}\vert\vert u\vert\vert^2
\end{align*}

%%%%%%%%%%%%%%%%%%%%%%%%%%%%%%%%%%%%%%%%%%%%%%%%%%%%%%%%%%%%%%%%%%%

Letting $\epsilon \rightarrow 0$ in the above inequality, we obtain
\begin{align*}
\lim_{\epsilon \rightarrow 0}
\vert\lambda\vert\vert\vert \chi_{\epsilon}^{2k}u\vert\vert^2 \leq 
\widetilde{C}\vert\vert u\vert\vert^2
\end{align*} 
where $\widetilde{C} := 
\lim_{\epsilon \rightarrow 0}
(CC_1(\epsilon) + C_2(\epsilon)) < \infty$.

By the dominated convergence theorem, we have that
$\lim_{\epsilon \rightarrow 0}
\vert\lambda\vert\vert\vert \chi_{\epsilon}^{2k}u\vert\vert^2 = 
\vert\lambda\vert\vert\vert u\vert\vert^2$. Hence we obtain
\begin{equation*}
\vert\lambda\vert\vert\vert u\vert\vert^2 \leq 
\widetilde{C}\vert\vert u\vert\vert^2.
\end{equation*}
 As $\vert\lambda\vert$ may be chosen arbitrarily large, we see that the above 
 inequality implies $u = 0$.
 
\end{proof}

\section{Localised derivative estimates for the magnetic Laplacian}
\label{localised_derivative_mag_lap}

In this section, we obtain several localised derivative estimates
that will be needed for the proof of theorem \ref{main_theorem_2}.
These can be seen as analogous, for the magnetic Laplacian, of the 
estimates we obtained in
section \ref{sec_covariant_derivatives} and 
\ref{sec_laplacian_derivatives}.

We will denote the magnetic differential $d_A$ on functions by
$\nabla$. 

We assume we have a function $w \in C^4(M)$ with $w \geq 1$. 
Let $h = w^{-1}$, so that $0 \leq h \leq 1$.

Using \eqref{Laplace_comp} and \eqref{laplace_product_formula},
we have the following formulas for the Laplacian and Bilaplacian of 
$h^{1/2}$.

\begin{equation}
\Delta_M(h^{1/2}) = 
\frac{-1}{4}h^{-3/2}|dh|^2 + \frac{1}{2}h^{-1/2}\Delta_M h
\label{derivative_h_1}
\end{equation}

\begin{align}
\Delta_M^2(h^{1/2}) = &-\frac{1}{4}h^{-3/2}\Delta_M \vert dh\vert^2  
- \frac{3}{4}h^{-5/2}\langle dh, d\vert dh\vert^2\rangle -
\frac{15}{16}h^{-7/2}\vert dh\vert^2\vert dh\vert^2 
\label{derivative_h_2}\\
& + \frac{3}{8}h^{-5/2}\vert dh\vert^2\Delta_M h + 
\frac{1}{2}h^{-1/2}\Delta_M^2h + \frac{1}{2}h^{-3/2}\langle dh, 
d\Delta_M h\rangle
+ \frac{3}{4}h^{-3/2}(\Delta_M h)\vert dh\vert^2 \nonumber\\
& -\frac{1}{2}h^{-3/2}(\Delta_M h)^2 \nonumber
\end{align}

In the statement of theorem \ref{main_theorem_2}, we had various 
derivative assumptions that we imposed on $h$. As we will be making
use of these assumptions in obtaining localised derivative estimates,
we list them here for convenience.

\begin{itemize}
\item $\vert dh\vert \leq \sigma h^{7/4}$

\item $\vert \Delta_M h\vert \leq \sigma h^{3/4}$

\item $\vert \Delta_M^2h\vert \leq \sigma h^{1/2}$

\item $\vert \Delta_M \vert dh\vert^2\vert \leq \sigma h^{3/2}$

\item $\vert d\vert dh\vert^2\vert \leq \sigma h^{22/8}$

\item $\vert d\Delta_M h\vert \leq \sigma h^{3/2}$ 
\end{itemize}

With these assumptions, it is easy to obtain the following estimates.

\begin{itemize}
\item $\vert dh^{1/2}\vert \leq \sigma h^{5/4}$

\item $\vert \Delta_M h^{1/2}\vert \leq \sigma h^{1/4}$ if $0 < \sigma \leq 1$

\item $\vert \Delta_M^2(h^{1/2})\vert \leq \sigma$ if 
$286\sigma + 18\sigma^2 + 15\sigma^3 \leq 4$

\item $\vert d\Delta_M h^{1/2}\vert \leq \sigma h$
\end{itemize}

The conditions $0 < \sigma \leq 1$ and 
$286\sigma + 18\sigma^2 + 15\sigma^3 \leq 4$ will be satisfied in our
context, as we will taking $\sigma$ small. Therefore, they 
can be safely ignored.

We now obtain several localised derivative estimates that will be useful 
in the proof of theorem \ref{main_theorem_2}.

\begin{prop}\label{mag_covariant_est_1}
Given $u \in C_c^{\infty}(M)$ and
$\sigma > 0$ sufficiently small, we have the following estimate
\begin{equation*}
\vert\vert h^k\nabla u\vert\vert^2 \leq \frac{1}{2(1-k\sigma)}
\vert\vert h^{k+1}\Delta_Au\vert\vert^2 +
 \frac{1+2k\sigma}{2(1-k\sigma)}\vert\vert h^{k-1}u\vert\vert^2.
\end{equation*}
\end{prop}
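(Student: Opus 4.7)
The plan is to mimic the proof of Proposition \ref{covariant_est_1} verbatim, with the cut-off $\chi_\epsilon$ replaced by the function $h$ and the cut-off derivative bound \eqref{cut-off_bound_3} replaced by the hypothesis $|dh| \leq \sigma h^{7/4}$. Since $w \geq 1$ gives $0 \leq h \leq 1$, any auxiliary positive power of $h$ produced in the estimates can be harmlessly bounded by $1$.

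First I would start with
\[
\|h^k \nabla u\|^2 = \langle h^{2k}\nabla u, \nabla u\rangle
\]
and integrate by parts. Since $h^{2k}$ is real-valued with compact support in any domain where $u$ has compact support, and $\nabla = d_A$ obeys the Leibniz rule $d_A(h^{2k}u) = 2k h^{2k-1}(dh)u + h^{2k}d_Au$, the analogue of \eqref{adjoint_derivative} holds in the form
\[
\nabla^\dagger(h^{2k}\nabla u) = h^{2k}\Delta_A u - 2kh^{2k-1}\nabla_{(dh)^\#}u.
\]
Substituting and taking absolute values gives
\[
\|h^k\nabla u\|^2 \leq \langle h^{2k}|\Delta_Au|,|u|\rangle + 2k\langle h^{2k-1}|dh||\nabla u|,|u|\rangle.
\]

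Next, I would rewrite the right-hand side into the bilinear shape $h^{k+1}\Delta_Au \cdot h^{k-1}u$ and $h^k\nabla u \cdot h^{k-1}u$. The first term already factors as $\langle h^{k+1}|\Delta_Au|, h^{k-1}|u|\rangle$. For the second, the hypothesis $|dh| \leq \sigma h^{7/4}$ and $h \leq 1$ give
\[
h^{2k-1}|dh| \leq \sigma h^{2k-1+7/4} = \sigma h^{3/4}\cdot h^k\cdot h^{k-1} \leq \sigma h^k\cdot h^{k-1},
\]
so that
\[
2k\langle h^{2k-1}|dh||\nabla u|,|u|\rangle \leq 2k\sigma \langle h^k|\nabla u|, h^{k-1}|u|\rangle.
\]

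Finally, applying Cauchy–Schwarz and Young's inequality to both terms yields
\[
\|h^k\nabla u\|^2 \leq \tfrac{1}{2}\bigl(\|h^{k+1}\Delta_Au\|^2 + \|h^{k-1}u\|^2\bigr) + k\sigma\bigl(\|h^k\nabla u\|^2 + \|h^{k-1}u\|^2\bigr),
\]
and rearranging gives
\[
(1-k\sigma)\|h^k\nabla u\|^2 \leq \tfrac{1}{2}\|h^{k+1}\Delta_Au\|^2 + \bigl(\tfrac{1}{2}+k\sigma\bigr)\|h^{k-1}u\|^2.
\]
Choosing $\sigma$ small enough that $1 - k\sigma > 0$ and dividing through produces exactly the claimed inequality. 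There is no genuine obstacle here; the only thing to be careful about is verifying that the adjoint identity works with the magnetic differential and that the factor $h^{3/4} \leq 1$ is indeed absorbed, so that the overall constant in front of $h^k|\nabla u|\cdot h^{k-1}|u|$ really is $\sigma$ (not $\sigma$ times some positive power of $h$ that could in principle blow up — it cannot, since $h \leq 1$).
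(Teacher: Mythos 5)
Your proposal is correct and is precisely the route the paper intends: the paper's only statement about the proof is that it ``follows exactly the same lines as the proof of proposition \ref{covariant_est_1}," and that is what you do, replacing $\chi_\epsilon$ by $h$, \eqref{cut-off_bound_3} by \eqref{h_assump_1}, and using $0\leq h\leq 1$ to discard the leftover power of $h$. One small slip: in the displayed chain the middle expression should read $\sigma h^{2k-1+7/4}=\sigma h^{7/4}\cdot h^{k}\cdot h^{k-1}$, not $\sigma h^{3/4}\cdot h^{k}\cdot h^{k-1}$ (or else replace the ``$=$" with ``$\leq$"); this does not affect the conclusion since $h^{7/4}\leq 1$.
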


The proof of the above proposition follows exactly the same lines
as the proof of proposition \ref{covariant_est_1}.

We also have the following estimate, which follows from applying 
proposition \ref{mag_covariant_est_1} to $\Delta_Au$.

\begin{prop}\label{mag_covariant_est_2}
Given $u \in C_c^{\infty}(M)$ and
$\sigma > 0$ sufficiently small, we have the following estimate
\begin{equation*}
\vert\vert h^k\nabla\Delta_A u\vert\vert^2 \leq 
\frac{1}{2(1-k\sigma)}\vert\vert h^k\Delta_A^2u\vert\vert^2 + 
\bigg{(}\frac{1}{1-k\sigma} \bigg{)}\bigg{(}\frac{1}{2} + k\sigma\bigg{)}
\vert\vert h^k\Delta_Au\vert\vert^2
\end{equation*}
\end{prop}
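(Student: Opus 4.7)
The plan is the one indicated by the author: apply proposition \ref{mag_covariant_est_1} with the test function $v := \Delta_A u$ in place of $u$. Since $u \in C_c^{\infty}(M)$ and $\Delta_A$ is a second-order differential operator with smooth coefficients (the magnetic potential $A$ being a smooth real one-form), $v = \Delta_A u$ again lies in $C_c^{\infty}(M)$, so the hypotheses of proposition \ref{mag_covariant_est_1} are met by $v$.

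Carrying out the substitution yields an inequality of the form
\begin{equation*}
\vert\vert h^k \nabla v\vert\vert^2 \leq \frac{1}{2(1-k\sigma)}\vert\vert h^{k+1}\Delta_A v\vert\vert^2 + \frac{1+2k\sigma}{2(1-k\sigma)}\vert\vert h^{k-1}v\vert\vert^2,
\end{equation*}
which, after re-expressing $\nabla v = \nabla\Delta_A u$, $\Delta_A v = \Delta_A^2 u$ and $v = \Delta_A u$, and adjusting the powers of $h$ on the right-hand side using the pointwise monotonicity $0 < h \leq 1$ (which is immediate from $w \geq 1$ and $h = w^{-1}$), rewrites to the stated estimate. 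The coefficient $\tfrac{1+2k\sigma}{2(1-k\sigma)}$ is equivalent to $\tfrac{1}{1-k\sigma}\bigl(\tfrac{1}{2}+k\sigma\bigr)$, matching the form displayed in the proposition.

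There is no serious obstacle. The proof reduces to a single substitution into the already-established proposition \ref{mag_covariant_est_1}, combined with trivial bookkeeping. The only items to verify are that $v \in C_c^{\infty}(M)$ (immediate from $\Delta_A$ being a differential operator with smooth coefficients, hence preserving compact support) and that the coefficients and powers of $h$ match the stated form after the substitution.
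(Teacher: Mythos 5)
The plan you propose — substitute $v := \Delta_A u$ into Proposition \ref{mag_covariant_est_1} and then massage the powers of $h$ — does not close, because the monotonicity inequality runs the wrong way for the second term. The substitution gives
\begin{equation*}
\vert\vert h^k\nabla\Delta_A u\vert\vert^2 \leq \frac{1}{2(1-k\sigma)}\vert\vert h^{k+1}\Delta_A^2u\vert\vert^2 + \frac{1+2k\sigma}{2(1-k\sigma)}\vert\vert h^{k-1}\Delta_Au\vert\vert^2.
\end{equation*}
Since $0 < h \leq 1$, the map $a\mapsto h^a$ is decreasing, so indeed $h^{k+1}\leq h^k$ and you may loosen the first term to $\frac{1}{2(1-k\sigma)}\vert\vert h^{k}\Delta_A^2u\vert\vert^2$. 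But for the second term $h^{k-1}\geq h^{k}$, so $\vert\vert h^{k-1}\Delta_A u\vert\vert \geq \vert\vert h^{k}\Delta_A u\vert\vert$, and replacing $h^{k-1}$ by $h^{k}$ there would make the upper bound strictly smaller — which is not permitted. Your ``trivial bookkeeping'' only works on the $\Delta_A^2 u$ factor and fails on the $\Delta_A u$ factor, leaving the claimed estimate with $h^k$ in both positions unproved.

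What the paper really means by ``applying Proposition \ref{mag_covariant_est_1} to $\Delta_A u$'' is that one reruns the integration-by-parts argument of that proposition with $\Delta_A u$ as the test object, which allows a different split of the powers of $h$. Concretely, one writes $\vert\vert h^k\nabla\Delta_A u\vert\vert^2 = \langle h^{2k}\Delta_A^2 u,\Delta_A u\rangle - \langle\nabla_{(dh^{2k})^\#}\Delta_A u,\Delta_A u\rangle$, splits $h^{2k}$ symmetrically as $h^{k}\cdot h^{k}$ in both terms, uses $\vert dh\vert\leq\sigma h^{7/4}$ and $h\leq1$ so the second term is bounded by $2k\sigma\langle h^{k}\vert\nabla\Delta_A u\vert, h^{k}\vert\Delta_A u\vert\rangle$, and then applies Cauchy--Schwarz and Young to get $\frac{1}{2}\vert\vert h^k\Delta_A^2 u\vert\vert^2 + (\frac{1}{2}+k\sigma)\vert\vert h^k\Delta_A u\vert\vert^2 + k\sigma\vert\vert h^k\nabla\Delta_A u\vert\vert^2$; absorbing the last term gives the statement. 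The split $h^{k+1}\cdot h^{k-1}$ used in the proof of Proposition \ref{mag_covariant_est_1} is tailored to balance $\Delta_A u$ against $u$; here the two slots carry $\Delta_A^2 u$ and $\Delta_A u$ (or $\nabla\Delta_A u$ and $\Delta_A u$), so the even split is the correct one, and that is precisely what cannot be recovered from the already-proved estimate as a black box.
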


The next step is to obtain an estimate for the magnetic Laplacian, 
analogous to 
proposition \ref{Laplace_est}. The estimate we need will be slightly 
different to that of \ref{Laplace_est}, and so we give the proof of this 
estimate.

\begin{lem}\label{mag_laplace_est_1}
Given $u \in C_c^{\infty}(M)$ and
$\sigma > 0$ sufficiently small, we have the following estimate
\begin{align*}
\vert\vert h^{1/2}\Delta_Au\vert\vert^2 &\leq 
\bigg{(}
\frac{\sqrt{\sigma}}{2} + \frac{\sigma^2}{2(1-\frac{7}{4}\sigma)}
 \bigg{)}
\bigg{(}
1 - \sigma^2\bigg{(}\frac{1}{1-\frac{7}{4}\sigma} \bigg{)}
\bigg{(}\frac{1}{2} + \frac{7\sigma}{4} \bigg{)} - \frac{\sigma^2}{2}
\bigg{)}^{-1}\vert\vert h^{1/2}\Delta_A^2u\vert\vert^2 \\ 
&\hspace{0.5cm} +
\bigg{(}\frac{1}{2\sqrt{\sigma}} + \frac{3}{2} \bigg{)}
\bigg{(}
1 - \sigma^2\bigg{(}\frac{1}{1-\frac{7}{4}\sigma} \bigg{)}
\bigg{(}\frac{1}{2} + \frac{7\sigma}{4} \bigg{)} - \frac{\sigma^2}{2}
\bigg{)}^{-1}\vert\vert u\vert\vert^2.
\end{align*}
\end{lem}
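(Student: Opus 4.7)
The plan is to integrate by parts and expand
$$\|h^{1/2}\Delta_A u\|^2 = \langle h\Delta_A u,\Delta_A u\rangle = \langle \Delta_A(h\Delta_A u), u\rangle,$$
where integration by parts is justified since $u \in C_c^\infty(M)$. Applying the product formula \eqref{laplace_product_formula} (in its magnetic-Laplacian form, which has the same shape because $d_A$ satisfies a Leibniz rule against real functions) to $\Delta_A(h\Delta_A u)$ then yields
$$\|h^{1/2}\Delta_A u\|^2 = \langle h\Delta_A^2 u, u\rangle - 2\langle \nabla_{(dh)^{\#}}\Delta_A u, u\rangle + \langle (\Delta_M h)\Delta_A u, u\rangle.$$

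I will estimate each of the three terms on the right via Cauchy--Schwarz and Young's inequality, in each case making essential use of the pointwise bounds on $h$ assumed in theorem \ref{main_theorem_2}. For the first term, splitting $h = h^{1/2}\cdot h^{1/2}$ and using Young with weight $\sqrt\sigma$ gives a contribution bounded by $\tfrac{\sqrt\sigma}{2}\|h^{1/2}\Delta_A^2 u\|^2 + \tfrac{1}{2\sqrt\sigma}\|u\|^2$, where $h\le1$ absorbs the weight on the $u$-factor. For the third term, the hypothesis $|\Delta_M h| \leq \sigma h^{3/4}$ together with the split $h^{3/4} = h^{1/2}\cdot h^{1/4}$ produces the bound $\tfrac{\sigma^2}{2}\|h^{1/2}\Delta_A u\|^2 + \tfrac{1}{2}\|u\|^2$.

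The key step is the middle term. Using $|\nabla_{(dh)^{\#}}\Delta_A u| \leq |dh||\nabla\Delta_A u|$ in the sense of \eqref{star_notation_ex} together with $|dh| \leq \sigma h^{7/4}$, Young's inequality $2AB \leq A^2 + B^2$ applied to $A = \sigma h^{7/4}|\nabla\Delta_A u|$ and $B = |u|$ yields
$$2\,|\langle \nabla_{(dh)^{\#}}\Delta_A u, u\rangle| \leq \sigma^2\|h^{7/4}\nabla\Delta_A u\|^2 + \|u\|^2.$$
Proposition \ref{mag_covariant_est_2} with $k = 7/4$ then controls $\|h^{7/4}\nabla\Delta_A u\|^2$ in terms of $\|h^{7/4}\Delta_A^2 u\|^2$ and $\|h^{7/4}\Delta_A u\|^2$, and the observation that $h^{7/4} \leq h^{1/2}$ (since $h \leq 1$) produces the precise coefficients $\tfrac{\sigma^2}{2(1-\frac{7}{4}\sigma)}$ and $\tfrac{\sigma^2}{1-\frac{7}{4}\sigma}\bigl(\tfrac{1}{2}+\tfrac{7}{4}\sigma\bigr)$ attached to $\|h^{1/2}\Delta_A^2 u\|^2$ and $\|h^{1/2}\Delta_A u\|^2$ respectively.

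Collecting all three estimates and moving the $\|h^{1/2}\Delta_A u\|^2$ contributions to the left gives this quantity multiplied by
$$1 - \sigma^2\Bigl(\tfrac{1}{1-\frac{7}{4}\sigma}\Bigr)\Bigl(\tfrac{1}{2}+\tfrac{7}{4}\sigma\Bigr) - \tfrac{\sigma^2}{2},$$
which is positive for $\sigma > 0$ small, while the right-hand side is exactly $\bigl(\tfrac{\sqrt\sigma}{2}+\tfrac{\sigma^2}{2(1-\frac{7}{4}\sigma)}\bigr)\|h^{1/2}\Delta_A^2 u\|^2 + \bigl(\tfrac{1}{2\sqrt\sigma}+\tfrac{3}{2}\bigr)\|u\|^2$. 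Dividing through yields the claimed inequality. The proof has no serious obstacle beyond careful bookkeeping of the algebraic constants; it is essentially a magnetic-Laplacian analogue of proposition \ref{Laplace_est}, with the cut-off estimates \eqref{cut-off_bound_2}--\eqref{cut-off_bound_3} replaced throughout by the pointwise hypotheses \eqref{h_assump_1}--\eqref{h_assump_2} on $h$.
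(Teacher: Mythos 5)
Your proof is correct and follows essentially the same route as the paper: integrate by parts to produce $\langle h\Delta_A^2 u, u\rangle - 2\langle\nabla_{(dh)^\#}\Delta_A u, u\rangle + \langle(\Delta_M h)\Delta_A u, u\rangle$, estimate each term by Cauchy--Schwarz and Young's inequality using the pointwise hypotheses on $h$, control the middle term via Proposition \ref{mag_covariant_est_2} with $k = 7/4$ together with $h^{7/4}\le h^{1/2}$, and move the $\|h^{1/2}\Delta_A u\|^2$ contributions to the left before dividing. Your constant tracking agrees with the lemma's statement (and, incidentally, resolves a small internal typo in the paper's intermediate display, where $\tfrac{7\sigma}{2}$ appears where $\tfrac{7\sigma}{4}$ is intended).
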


\begin{proof}
We start by integrating by parts, and using 
\eqref{laplace_product_formula} to obtain
\begin{align*}
\langle h^{1/2}\Delta_Au, h^{1/2}\Delta_Au\rangle &= 
\langle \Delta_A(h\Delta_Au), u\rangle \\
&= \langle h\Delta_A^2u, u\rangle - 
2\langle \nabla_{(dh)^{\#}}\Delta_Au, u\rangle +
\langle (\Delta_M h)(\Delta_Au), u\rangle.
\end{align*}
 Applying Cauchy-Schwarz and Young's inequality, we obtain
\begin{align*}
\langle h^{1/2}\Delta_Au, h^{1/2}\Delta_Au\rangle &\leq 
\frac{\sqrt{\sigma}}{2}\vert\vert h\Delta_A^2u\vert\vert^2 + 
\frac{1}{2\sqrt{\sigma}}\vert\vert u\vert\vert^2 + 
\vert\vert \nabla_{(dh)^{\#}}\Delta_Au\vert\vert^2 + 
\vert\vert u\vert\vert^2 + 
\frac{1}{2}\vert\vert (\Delta_M h)\Delta_Au\vert\vert^2 +
\frac{1}{2}\vert\vert u\vert\vert^2 \\
&\leq 
\frac{\sqrt{\sigma}}{2}\vert\vert h\Delta_A^2u\vert\vert^2 + 
\sigma^2\vert\vert h^{7/4}\nabla\Delta_Au\vert\vert^2 + 
\frac{\sigma^2}{2}\vert\vert h^{3/4}\Delta_Au\vert\vert^2 + 
\bigg{(} 
\frac{1}{2\sqrt{\sigma}} + 1 + \frac{1}{2}
\bigg{)}\vert\vert u\vert\vert^2
\end{align*}
where to get the second inequality we have applied the assumptions on
$\vert dh\vert$ and $\vert \Delta_M h\vert$, and used 
\eqref{star_notation_ex}.

Applying proposition \ref{mag_covariant_est_2} to estimate the
$\vert\vert h^{7/4}\nabla\Delta_Au\vert\vert^2$ term, we get
\begin{align*}
\langle h^{1/2}\Delta_Au, h^{1/2}\Delta_Au\rangle &\leq 
\bigg{(} 
\frac{\sqrt{\sigma}}{2} + \frac{\sigma^2}{2(1-\frac{7}{4}\sigma)}
\bigg{)}
\vert\vert h^{1/2}\Delta_A^2u\vert\vert^2 + 
(\sigma^2)\bigg{(}\frac{1}{1-\frac{7}{4}\sigma} \bigg{)}
\bigg{(}\frac{1}{2} + \frac{7\sigma}{2} \bigg{)}
\vert\vert h^{1/2}\Delta_Au\vert\vert^2 \\
&\hspace{0.5cm}+
\bigg{(}\frac{\sigma^2}{2} \bigg{)}\vert\vert h^{1/2}\Delta_Au\vert\vert^2
+ \bigg{(}\frac{1}{2\sqrt{\sigma}} + 1 + \frac{1}{2} \bigg{)}
\vert\vert u\vert\vert^2.
\end{align*}

This implies
\begin{align*}
\bigg{(}
1 - (\sigma^2)\bigg{(}\frac{1}{1-\frac{7}{4}\sigma} \bigg{)}
\bigg{(}\frac{1}{2} + \frac{7\sigma}{4} \bigg{)} - \frac{\sigma^2}{2}
\bigg{)}\vert\vert h^{1/2}\Delta_Au\vert\vert^2 &\leq
\bigg{(} 
\frac{\sqrt{\sigma}}{2} + \frac{\sigma^2}{2(1-\frac{7}{4}\sigma)}
\bigg{)}
\vert\vert h^{1/2}\Delta_A^2u\vert\vert^2 \\
&\hspace{0.5cm}
+ 
\bigg{(}\frac{1}{2\sqrt{\sigma}} + 1 + \frac{1}{2} \bigg{)}
\vert\vert u\vert\vert^2.
\end{align*}
Choosing $\sigma > 0$ small enough so that
$1 - (\sigma^2)\bigg{(}\frac{1}{1-\frac{7}{4}\sigma} \bigg{)}
\bigg{(}\frac{1}{2} + \frac{7\sigma}{4} \bigg{)} - \frac{\sigma^2}{2} > 0$,
we obtain
\begin{align*}
\vert\vert h^{1/2}\Delta_Au\vert\vert^2 &\leq
\bigg{(} 
\frac{\sqrt{\sigma}}{2} + \frac{\sigma^2}{2(1-\frac{7}{4}\sigma)}
\bigg{)}
\bigg{(}
1 - (\sigma^2)\bigg{(}\frac{1}{1-\frac{7}{4}\sigma} \bigg{)}
\bigg{(}\frac{1}{2} + \frac{7\sigma}{4} \bigg{)} - \frac{\sigma^2}{2}
\bigg{)}^{-1}\vert\vert h^{1/2}\Delta_A^2u\vert\vert^2 \\
&\hspace{0.5cm}
+ 
\bigg{(}\frac{1}{2\sqrt{\sigma}} + 1 + \frac{1}{2} \bigg{)}
\bigg{(}
1 - (\sigma^2)\bigg{(}\frac{1}{1-\frac{7}{4}\sigma} \bigg{)}
\bigg{(}\frac{1}{2} + \frac{7\sigma}{4} \bigg{)} - \frac{\sigma^2}{2}
\bigg{)}^{-1}\vert\vert u\vert\vert^2
\end{align*}
which proves the lemma.
\end{proof}

We will also need the following estimate. The proof of which follows
in exactly the same manner as the proof of the above lemma.

\begin{lem}\label{mag_laplace_est_2}
Given $u \in C_c^{\infty}(M)$ and
$\sigma > 0$ sufficiently small, we have the following estimate
\begin{align*}
\vert\vert h^{1/4}\Delta_Au\vert\vert^2 &\leq 
\bigg{(} 
\frac{\sqrt{\sigma}}{2} + \frac{\sigma^2}{2(1-\frac{5}{4}\sigma)}
\bigg{)}
\bigg{(}
1 - (\sigma^2)\bigg{(}\frac{1}{1-\frac{5}{4}\sigma} \bigg{)}
\bigg{(}\frac{1}{2} + \frac{5\sigma}{4} \bigg{)} - \frac{\sigma^2}{2}
\bigg{)}^{-1}\vert\vert h^{1/2}\Delta_A^2u\vert\vert^2 \\
&\hspace{0.5cm} +
\bigg{(}
\frac{1}{2\sqrt{\sigma}} + \frac{3}{2}
\bigg{)}
\bigg{(}
1 - (\sigma^2)\bigg{(}\frac{1}{1-\frac{5}{4}\sigma} \bigg{)}
\bigg{(}\frac{1}{2} + \frac{5\sigma}{4} \bigg{)} - \frac{\sigma^2}{2}
\bigg{)}^{-1}\vert\vert u\vert\vert^2.
\end{align*}
\end{lem}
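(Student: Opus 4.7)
The plan is to mirror the proof of Lemma \ref{mag_laplace_est_1}, with the key observation that since we now want to bound $\Vert h^{1/4}\Delta_A u\Vert^2$ rather than $\Vert h^{1/2}\Delta_A u\Vert^2$, the natural multiplier to push through integration by parts is $h^{1/2}$ (rather than $h$), so that the derivatives that fall on the cut-off weight are derivatives of $h^{1/2}$ instead of $h$. This is why the statement involves the constant $5/4$ (coming from $\vert d h^{1/2}\vert \leq \sigma h^{5/4}$) in place of the $7/4$ that appears in Lemma \ref{mag_laplace_est_1} (which came from $\vert dh\vert\leq\sigma h^{7/4}$).

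First I would write
\begin{equation*}
\Vert h^{1/4}\Delta_A u\Vert^2 \;=\; \langle \Delta_A(h^{1/2}\Delta_A u),\, u\rangle
\end{equation*}
by integration by parts (which is legal since $u\in C_c^\infty(M)$), and then use the product formula \eqref{laplace_product_formula} to expand the right-hand side as
\begin{equation*}
\langle h^{1/2}\Delta_A^2 u,u\rangle - 2\langle \nabla_{(dh^{1/2})^{\#}}\Delta_A u,u\rangle + \langle (\Delta_M h^{1/2})\Delta_A u,u\rangle.
\end{equation*}
Then I would apply Cauchy--Schwarz and Young's inequality to each of the three terms, pairing the first with the weight $\sqrt{\sigma}$ (so as to match the form of the final estimate), and using \eqref{star_notation_ex} together with the pointwise bounds $\vert dh^{1/2}\vert\leq \sigma h^{5/4}$ and $\vert \Delta_M h^{1/2}\vert\leq \sigma h^{1/4}$ (both noted in the text preceding this lemma) to convert the resulting norms into $\sigma^2\Vert h^{5/4}\nabla\Delta_A u\Vert^2$ and $\tfrac{\sigma^2}{2}\Vert h^{1/4}\Delta_A u\Vert^2$ respectively.

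The final step is to apply Proposition \ref{mag_covariant_est_2} (with $k=5/4$) to control $\Vert h^{5/4}\nabla\Delta_A u\Vert^2$ in terms of $\Vert h^{5/4}\Delta_A^2 u\Vert^2$ and $\Vert h^{5/4}\Delta_A u\Vert^2$, and then use the trivial inequality $h\leq 1$ to bound $h^{5/4}\leq h^{1/2}$ in the first factor and $h^{5/4}\leq h^{1/4}$ in the second. Collecting the resulting $\Vert h^{1/4}\Delta_A u\Vert^2$ contributions on the left-hand side yields
\begin{equation*}
\Bigl(1-\sigma^2\Bigl(\tfrac{1}{1-\tfrac{5}{4}\sigma}\Bigr)\Bigl(\tfrac{1}{2}+\tfrac{5\sigma}{4}\Bigr)-\tfrac{\sigma^2}{2}\Bigr)\Vert h^{1/4}\Delta_A u\Vert^2 \leq \Bigl(\tfrac{\sqrt{\sigma}}{2}+\tfrac{\sigma^2}{2(1-\tfrac{5}{4}\sigma)}\Bigr)\Vert h^{1/2}\Delta_A^2 u\Vert^2 + \Bigl(\tfrac{1}{2\sqrt{\sigma}}+\tfrac{3}{2}\Bigr)\Vert u\Vert^2,
\end{equation*}
and then, choosing $\sigma>0$ small enough that the prefactor on the left is strictly positive and dividing through, one reads off the claimed inequality.

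There is really no significant obstacle here: the argument is an almost verbatim transcription of the proof of Lemma \ref{mag_laplace_est_1}, with the single bookkeeping difference being that one weights by $h^{1/2}$ and invokes the derivative bounds for $h^{1/2}$ (giving the exponent $5/4$) rather than weighting by $h$ and using the bounds for $h$ itself (which gave $7/4$). The only mild care needed is in using $h\leq 1$ to absorb the excess power of $h$ coming out of Proposition \ref{mag_covariant_est_2}, and in verifying that the constants track exactly as stated.
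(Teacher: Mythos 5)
Your proposal is correct and follows precisely the approach the paper intends: the paper itself states that the proof of Lemma~\ref{mag_laplace_est_2} "follows in exactly the same manner as the proof of the above lemma," and your argument is the faithful transcription, replacing the multiplier $h$ by $h^{1/2}$ (hence \eqref{laplace_product_formula} applied with $f=h^{1/2}$), invoking the derived pointwise bounds $\vert dh^{1/2}\vert\leq\sigma h^{5/4}$ and $\vert\Delta_M h^{1/2}\vert\leq\sigma h^{1/4}$ in place of those for $h$, applying Proposition~\ref{mag_covariant_est_2} with exponent $5/4$, and using $0<h\leq1$ to absorb excess powers. The bookkeeping of the constants $\bigl(\tfrac{\sqrt\sigma}{2}+\tfrac{\sigma^2}{2(1-\tfrac54\sigma)}\bigr)$, $\tfrac{1}{2\sqrt\sigma}+\tfrac32$, and the prefactor $1-\sigma^2\bigl(\tfrac{1}{1-\tfrac54\sigma}\bigr)\bigl(\tfrac12+\tfrac{5\sigma}{4}\bigr)-\tfrac{\sigma^2}{2}$ all track correctly.
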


We need the following estimates

\begin{lem}\label{mag_main_est_1}
Given $u \in C_c^{\infty}(M)$ and
$\sigma > 0$ sufficiently small, we have the following estimate
\begin{align*}
\vert \langle h^{1/2}\Delta_A^2u, \nabla_{(dh^{1/2})^{\#}}\Delta_Au\rangle
\vert &\leq 
\bigg{[}
\frac{\sigma}{2} + \bigg{(}\frac{\sigma}{2}\bigg{)}
\bigg{(}
\frac{1}{2(1-\frac{5}{4}\sigma)}\bigg{)} + \\
&\hspace{0.5cm}
\bigg{(}\frac{\sigma}{2}\bigg{)}
\bigg{(}\frac{1}{1-\frac{5}{4}\sigma} \bigg{)}
\bigg{(}\frac{1}{2} + \frac{5}{4}\sigma\bigg{)}
\bigg{(}
\frac{\sqrt{\sigma}}{2} + \frac{\sigma^2}{2(1-\frac{7}{4}\sigma)}
 \bigg{)} \\
&\hspace{0.5cm} 
\bigg{(}
1 - \sigma^2\bigg{(}\frac{1}{1-\frac{7}{4}\sigma} \bigg{)}
\bigg{(}\frac{1}{2} + \frac{7\sigma}{4} \bigg{)} - \frac{\sigma^2}{2}
\bigg{)}^{-1}\bigg{]}
\vert\vert h^{1/2}
\Delta_A^2u\vert\vert^2 \\
&\hspace{0.5cm}+
\bigg{(}\frac{\sigma}{2}\bigg{)}
\bigg{(}\frac{1}{1-\frac{5}{4}\sigma} \bigg{)}
\bigg{(}\frac{1}{2} + \frac{5}{4}\sigma\bigg{)}
\bigg{(}\frac{1}{2\sqrt{\sigma}} + \frac{3}{2} \bigg{)} \\
&\hspace{0.5cm}
\bigg{(}
1 - \sigma^2\bigg{(}\frac{1}{1-\frac{7}{4}\sigma} \bigg{)}
\bigg{(}\frac{1}{2} + \frac{7\sigma}{4} \bigg{)} - \frac{\sigma^2}{2}
\bigg{)}^{-1}\vert\vert u\vert\vert^2.
\end{align*}
\end{lem}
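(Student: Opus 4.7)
The plan is to start with Cauchy-Schwarz and Young's inequality applied to the inner product, introducing a small parameter $\sigma$ to balance the two resulting $L^2$-norms. This gives
$$\vert \langle h^{1/2}\Delta_A^2u, \nabla_{(dh^{1/2})^{\#}}\Delta_Au\rangle\vert \leq \frac{\sigma}{2}\vert\vert h^{1/2}\Delta_A^2u\vert\vert^2 + \frac{1}{2\sigma}\vert\vert \nabla_{(dh^{1/2})^{\#}}\Delta_Au\vert\vert^2.$$
I would then use the star-notation pointwise bound \eqref{star_notation_ex}, namely $\vert \nabla_{(dh^{1/2})^{\#}}\Delta_Au\vert \leq \vert dh^{1/2}\vert\,\vert\nabla\Delta_Au\vert$, together with the derivative assumption $\vert dh^{1/2}\vert \leq \sigma h^{5/4}$ derived from the hypotheses of theorem \ref{main_theorem_2}. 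This turns the second term on the right into $\tfrac{\sigma}{2}\vert\vert h^{5/4}\nabla\Delta_Au\vert\vert^2$, cleanly absorbing one factor of $\sigma$.

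Next I would invoke proposition \ref{mag_covariant_est_2} with $k=5/4$ to trade the covariant derivative for the magnetic Laplacian:
$$\vert\vert h^{5/4}\nabla\Delta_Au\vert\vert^2 \leq \frac{1}{2(1-\tfrac{5}{4}\sigma)}\vert\vert h^{5/4}\Delta_A^2u\vert\vert^2 + \frac{1}{1-\tfrac{5}{4}\sigma}\Bigl(\frac{1}{2}+\frac{5}{4}\sigma\Bigr)\vert\vert h^{5/4}\Delta_Au\vert\vert^2.$$
Since $0 \leq h \leq 1$ we have the pointwise monotonicity $h^{5/4} \leq h^{1/2}$, which lets me replace both $\vert\vert h^{5/4}\Delta_A^2u\vert\vert^2$ and $\vert\vert h^{5/4}\Delta_Au\vert\vert^2$ by the corresponding quantities with weight $h^{1/2}$, so that the $\Delta_A^2$-term can be absorbed into the coefficient of $\vert\vert h^{1/2}\Delta_A^2u\vert\vert^2$ already present.

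The final step is to apply lemma \ref{mag_laplace_est_1} to the surviving $\vert\vert h^{1/2}\Delta_Au\vert\vert^2$ term, which introduces the additional factor $\bigl(1-\sigma^2(1-\tfrac{7}{4}\sigma)^{-1}(\tfrac{1}{2}+\tfrac{7\sigma}{4}) - \tfrac{\sigma^2}{2}\bigr)^{-1}$ and splits the result into a contribution proportional to $\vert\vert h^{1/2}\Delta_A^2u\vert\vert^2$ (with coefficient $\tfrac{\sqrt{\sigma}}{2} + \tfrac{\sigma^2}{2(1-\tfrac{7}{4}\sigma)}$) and one proportional to $\vert\vert u\vert\vert^2$ (with coefficient $\tfrac{1}{2\sqrt{\sigma}} + \tfrac{3}{2}$). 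Collecting the three contributions to the coefficient of $\vert\vert h^{1/2}\Delta_A^2u\vert\vert^2$, namely $\tfrac{\sigma}{2}$ from the initial Young's inequality, $\tfrac{\sigma}{2}\cdot\tfrac{1}{2(1-\tfrac{5}{4}\sigma)}$ from the $\Delta_A^2$-term in proposition \ref{mag_covariant_est_2}, and the compound term coming through lemma \ref{mag_laplace_est_1}, yields precisely the stated inequality.

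There is no serious obstacle here; the only point that needs care is ensuring $\sigma$ is small enough that the denominators appearing in proposition \ref{mag_covariant_est_2} and lemma \ref{mag_laplace_est_1} remain strictly positive, and that the monotonicity $h^{5/4}\leq h^{1/2}$ is legitimately applied under the integral sign (which holds because $h\leq 1$ is a pointwise hypothesis). The remainder is routine bookkeeping of constants.
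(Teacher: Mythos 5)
Your proposal reproduces the paper's proof step for step: Cauchy-Schwarz plus Young's inequality with parameter $\sigma$, the pointwise bound $\lvert dh^{1/2}\rvert \le \sigma h^{5/4}$ via the star-notation estimate, proposition \ref{mag_covariant_est_2} with weight $h^{5/4}$, the monotonicity $h^{5/4}\le h^{1/2}$, and finally lemma \ref{mag_laplace_est_1} on the $\lVert h^{1/2}\Delta_A u\rVert^2$ remainder. The argument and the bookkeeping of constants are correct and identical to the paper's.
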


\begin{proof}
By Cauchy-Schwarz and Young's inequality we have
\begin{align}
\vert \langle h^{1/2}\Delta_A^2u, \nabla_{(dh^{1/2})^{\#}}\Delta_Au\rangle\vert 
&\leq 
\frac{\sigma}{2}\vert\vert h^{1/2}\Delta_A^2u\vert\vert^2 + 
\frac{1}{2\sigma}\vert\vert\nabla_{(dh^{1/2})^{\#}}\Delta_Au\vert\vert^2 
\nonumber \\
&\leq \frac{\sigma}{2}\vert\vert h^{1/2}\Delta_A^2u\vert\vert^2 +
\frac{\sigma}{2}\vert\vert h^{5/4}\nabla\Delta_Au\vert\vert^2
\end{align}
where to get the second inequality we have used \eqref{star_notation_ex},
and
the fact that 
$\vert dh^{1/2}\vert \leq \sigma h^{5/4}$.

We then use proposition \ref{mag_covariant_est_2} to estimate the
$\vert\vert h^{5/4}\nabla\Delta_Au\vert\vert^2$ term, and obtain
\begin{align*}
\vert \langle h^{1/2}\Delta_A^2u, \nabla_{(dh^{1/2})^{\#}}\Delta_Au\rangle
\vert &\leq
\frac{\sigma}{2}\vert\vert h^{1/2}\Delta_A^2u\vert\vert^2 + 
\bigg{(}\frac{\sigma}{2}\bigg{)}
\bigg{(}
\frac{1}{2(1-\frac{5}{4}\sigma)}\bigg{)}
\vert\vert h^{5/4}\Delta_A^2u\vert\vert^2 \\
&\hspace{0.5cm}
+ 
\bigg{(}\frac{\sigma}{2}\bigg{)}
\bigg{(}\frac{1}{1-\frac{5}{4}\sigma} \bigg{)}
\bigg{(}\frac{1}{2} + \frac{5}{4}\sigma\bigg{)}
\vert\vert h^{5/4}\Delta_Au\vert\vert^2 \\
&\leq
\frac{\sigma}{2}\vert\vert h^{1/2}\Delta_A^2u\vert\vert^2 + 
\bigg{(}\frac{\sigma}{2}\bigg{)}
\bigg{(}
\frac{1}{2(1-\frac{5}{4}\sigma)}\bigg{)}
\vert\vert h^{1/2}\Delta_A^2u\vert\vert^2 \\
&\hspace{0.5cm}
+ 
\bigg{(}\frac{\sigma}{2}\bigg{)}
\bigg{(}\frac{1}{1-\frac{5}{4}\sigma} \bigg{)}
\bigg{(}\frac{1}{2} + \frac{5}{4}\sigma\bigg{)}
\vert\vert h^{1/2}\Delta_Au\vert\vert^2 
\end{align*}
where to get the second inequality we have used $h^{5/4} \leq h^{1/2}$.
Applying lemma \ref{mag_laplace_est_1} to estimate the 
$\vert\vert h^{1/2}\Delta_Au\vert\vert^2$ term, we obtain
\begin{align*}
\vert \langle h^{1/2}\Delta_A^2u, \nabla_{(dh^{1/2})^{\#}}\Delta_Au\rangle
\vert &\leq
\frac{\sigma}{2}\vert\vert h^{1/2}\Delta_A^2u\vert\vert^2 + 
\bigg{(}\frac{\sigma}{2}\bigg{)}
\bigg{(}
\frac{1}{2(1-\frac{5}{4}\sigma)}\bigg{)}
\vert\vert h^{1/2}\Delta_A^2u\vert\vert^2 \\
&\hspace{0.5cm}
+ 
\bigg{(}\frac{\sigma}{2}\bigg{)}
\bigg{(}\frac{1}{1-\frac{5}{4}\sigma} \bigg{)}
\bigg{(}\frac{1}{2} + \frac{5}{4}\sigma\bigg{)}
\bigg{(}
\frac{\sqrt{\sigma}}{2} + \frac{\sigma^2}{2(1-\frac{7}{4}\sigma)}
 \bigg{)} \\
&\hspace{0.5cm} 
\bigg{(}
1 - \sigma^2\bigg{(}\frac{1}{1-\frac{7}{4}\sigma} \bigg{)}
\bigg{(}\frac{1}{2} + \frac{7\sigma}{4} \bigg{)} - \frac{\sigma^2}{2}
\bigg{)}^{-1}\vert\vert h^{1/2}\Delta_A^2u\vert\vert^2 \\ 
&\hspace{0.5cm} +
\bigg{(}\frac{\sigma}{2}\bigg{)}
\bigg{(}\frac{1}{1-\frac{5}{4}\sigma} \bigg{)}
\bigg{(}\frac{1}{2} + \frac{5}{4}\sigma\bigg{)}
\bigg{(}\frac{1}{2\sqrt{\sigma}} + \frac{3}{2} \bigg{)} \\
&\hspace{0.5cm}
\bigg{(}
1 - \sigma^2\bigg{(}\frac{1}{1-\frac{7}{4}\sigma} \bigg{)}
\bigg{(}\frac{1}{2} + \frac{7\sigma}{4} \bigg{)} - \frac{\sigma^2}{2}
\bigg{)}^{-1}\vert\vert u\vert\vert^2 \\
&=
\bigg{[}
\frac{\sigma}{2} + \bigg{(}\frac{\sigma}{2}\bigg{)}
\bigg{(}
\frac{1}{2(1-\frac{5}{4}\sigma)}\bigg{)} + \\
&\hspace{0.5cm}
\bigg{(}\frac{\sigma}{2}\bigg{)}
\bigg{(}\frac{1}{1-\frac{5}{4}\sigma} \bigg{)}
\bigg{(}\frac{1}{2} + \frac{5}{4}\sigma\bigg{)}
\bigg{(}
\frac{\sqrt{\sigma}}{2} + \frac{\sigma^2}{2(1-\frac{7}{4}\sigma)}
 \bigg{)} \\
&\hspace{0.5cm} 
\bigg{(}
1 - \sigma^2\bigg{(}\frac{1}{1-\frac{7}{4}\sigma} \bigg{)}
\bigg{(}\frac{1}{2} + \frac{7\sigma}{4} \bigg{)} - \frac{\sigma^2}{2}
\bigg{)}^{-1}\bigg{]}
\vert\vert h^{1/2}
\Delta_A^2u\vert\vert^2 \\
&\hspace{0.5cm}+
\bigg{(}\frac{\sigma}{2}\bigg{)}
\bigg{(}\frac{1}{1-\frac{5}{4}\sigma} \bigg{)}
\bigg{(}\frac{1}{2} + \frac{5}{4}\sigma\bigg{)}
\bigg{(}\frac{1}{2\sqrt{\sigma}} + \frac{3}{2} \bigg{)} \\
&\hspace{0.5cm}
\bigg{(}
1 - \sigma^2\bigg{(}\frac{1}{1-\frac{7}{4}\sigma} \bigg{)}
\bigg{(}\frac{1}{2} + \frac{7\sigma}{4} \bigg{)} - \frac{\sigma^2}{2}
\bigg{)}^{-1}\vert\vert u\vert\vert^2.
\end{align*}
This proves the lemma.

\end{proof}

We then obtain the following corollary.

\begin{cor}\label{mag_main_est_1_a}
Given $u \in C_c^{\infty}(M)$ and
$\sigma > 0$ sufficiently small, we have
\begin{align*}
\vert \langle h^{1/2}\Delta_A^2u, \nabla_{(dh^{1/2})^\#}\Delta_Au\rangle\vert
\leq
\sigma C_1(\sigma)\vert\vert  h^{1/2}\Delta_A^2u\vert\vert^2 + 
C_2(\sigma)\vert\vert u\vert\vert^2
\end{align*}
where $C_1(\sigma)$ and $C_2(\sigma)$ are constants, depending 
on $\sigma$, such that
$\lim_{\sigma \rightarrow 0} C_1(\sigma) < \infty$ and 
$\lim_{\sigma \rightarrow 0}C_2(\sigma) = 0$.
\end{cor}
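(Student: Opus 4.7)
The plan is to start directly from Lemma \ref{mag_main_est_1}, which already provides an explicit upper bound of the desired form, and then repackage the coefficients. Specifically, I would identify the coefficient of $\|h^{1/2}\Delta_A^2u\|^2$ with $\sigma C_1(\sigma)$ by pulling a factor of $\sigma$ out of each of the three summands inside the bracket, and identify the coefficient of $\|u\|^2$ directly as $C_2(\sigma)$. The whole argument is then a bookkeeping verification that the two limits behave as claimed.

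For $C_1(\sigma)$, after factoring $\sigma$ out of the bracket, I would set
\begin{align*}
C_1(\sigma) &= \frac{1}{2} + \frac{1}{4(1-\tfrac{5}{4}\sigma)} \\
&\quad + \frac{1}{2}\bigg(\frac{1}{1-\tfrac{5}{4}\sigma}\bigg)\bigg(\frac{1}{2} + \frac{5\sigma}{4}\bigg)\bigg(\frac{\sqrt{\sigma}}{2} + \frac{\sigma^2}{2(1-\tfrac{7}{4}\sigma)}\bigg)\bigg(1 - \frac{\sigma^2}{1-\tfrac{7}{4}\sigma}\Big(\tfrac{1}{2}+\tfrac{7\sigma}{4}\Big) - \frac{\sigma^2}{2}\bigg)^{-1}.
\end{align*}
The first two summands are manifestly bounded for small $\sigma$, and the third summand contains the factor $\tfrac{\sqrt{\sigma}}{2} + \tfrac{\sigma^2}{2(1-\tfrac{7}{4}\sigma)}$, which vanishes as $\sigma \to 0$, so $\lim_{\sigma\to 0}C_1(\sigma) = \tfrac{1}{2} + \tfrac{1}{4} = \tfrac{3}{4} < \infty$.

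For $C_2(\sigma)$, I would set it equal to the coefficient of $\|u\|^2$ in Lemma \ref{mag_main_est_1}. The only potentially worrying factor is $\frac{1}{2\sqrt{\sigma}} + \frac{3}{2}$, which blows up at $\sigma = 0$; however, it is multiplied by the prefactor $\sigma/2$, so the product behaves like $\tfrac{\sqrt{\sigma}}{4} + O(\sigma)$ as $\sigma \to 0$. Combined with the remaining bounded factors $\bigl(1-\tfrac{5}{4}\sigma\bigr)^{-1}$, $\bigl(\tfrac{1}{2}+\tfrac{5\sigma}{4}\bigr)$, and the inverse-bracket factor (which tends to $1$), this gives $\lim_{\sigma\to 0}C_2(\sigma) = 0$ as required.

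The main (and only) subtlety is ensuring that the $\tfrac{1}{2\sqrt{\sigma}}$ singularity is absorbed by the $\sigma$ prefactor; this was already arranged in the proof of Lemma \ref{mag_main_est_1} by the choice to apply Young's inequality with the asymmetric split $\tfrac{\sigma}{2}(\cdot) + \tfrac{1}{2\sigma}(\cdot)$ inside the estimation of $\|\nabla_{(dh^{1/2})^{\#}}\Delta_A u\|^2$ (rather than with $\sqrt{\sigma}$), leaving one factor of $\sigma$ to cancel against the $\sigma^{-1/2}$. Once that factorisation is in place, the corollary is immediate from Lemma \ref{mag_main_est_1} by simple algebraic regrouping, and no further analytic work is required.
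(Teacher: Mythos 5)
Your proposal matches the paper's proof essentially line for line: both start from Lemma \ref{mag_main_est_1}, factor a single $\sigma$ out of the coefficient of $\|h^{1/2}\Delta_A^2 u\|^2$ to define $C_1(\sigma)$, and take $C_2(\sigma)$ to be the coefficient of $\|u\|^2$ directly. Your computation $\lim_{\sigma\to 0} C_1(\sigma) = \tfrac{1}{2} + \tfrac{1}{4} = \tfrac{3}{4}$ is in fact the correct evaluation — the paper reports this limit as $1$, which is a small arithmetic slip, but since the corollary only asserts finiteness the discrepancy is immaterial. Your tracing of the $\sigma^{-1/2}$ singularity — the $\tfrac{1}{2\sqrt{\sigma}}$ arises one level deeper, from the $\sqrt{\sigma}$ split used in Lemma \ref{mag_laplace_est_1}, and is absorbed because the outer Young split in Lemma \ref{mag_main_est_1} together with the bound $|dh^{1/2}| \leq \sigma h^{5/4}$ leaves a surplus factor of $\sigma$ — is a correct and useful observation, even if the phrase "inside the estimation of $\|\nabla_{(dh^{1/2})^\#}\Delta_A u\|^2$" slightly misattributes where the $\sigma/(2)$–$1/(2\sigma)$ split actually occurs.
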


\begin{proof}
Using lemma \ref{mag_main_est_1}, we can write
\begin{align*}
\vert \langle h^{1/2}\Delta_A^2u, \nabla_{(dh^{1/2})^{\#}}\Delta_Au\rangle
\vert &\leq 
\bigg{[}
\frac{\sigma}{2} + \bigg{(}\frac{\sigma}{2}\bigg{)}
\bigg{(}
\frac{1}{2(1-\frac{5}{4}\sigma)}\bigg{)} + \\
&\hspace{0.5cm}
\bigg{(}\frac{\sigma}{2}\bigg{)}
\bigg{(}\frac{1}{1-\frac{5}{4}\sigma} \bigg{)}
\bigg{(}\frac{1}{2} + \frac{5}{4}\sigma\bigg{)}
\bigg{(}
\frac{\sqrt{\sigma}}{2} + \frac{\sigma^2}{2(1-\frac{7}{4}\sigma)}
 \bigg{)} \\
&\hspace{0.5cm} 
\bigg{(}
1 - \sigma^2\bigg{(}\frac{1}{1-\frac{7}{4}\sigma} \bigg{)}
\bigg{(}\frac{1}{2} + \frac{7\sigma}{4} \bigg{)} - \frac{\sigma^2}{2}
\bigg{)}^{-1}\bigg{]}
\vert\vert h^{1/2}
\Delta_A^2u\vert\vert^2 \\
&\hspace{0.5cm}+
\bigg{(}\frac{\sigma}{2}\bigg{)}
\bigg{(}\frac{1}{1-\frac{5}{4}\sigma} \bigg{)}
\bigg{(}\frac{1}{2} + \frac{5}{4}\sigma\bigg{)}
\bigg{(}\frac{1}{2\sqrt{\sigma}} + \frac{3}{2} \bigg{)} \\
&\hspace{0.5cm}
\bigg{(}
1 - \sigma^2\bigg{(}\frac{1}{1-\frac{7}{4}\sigma} \bigg{)}
\bigg{(}\frac{1}{2} + \frac{7\sigma}{4} \bigg{)} - \frac{\sigma^2}{2}
\bigg{)}^{-1}\vert\vert u\vert\vert^2.
\end{align*}
Writing the coefficient of $\vert\vert h^{1/2}\Delta_A^2u\vert\vert^2$
as
\begin{align*}
&\frac{\sigma}{2} + \bigg{(}\frac{\sigma}{2}\bigg{)}
\bigg{(}
\frac{1}{2(1-\frac{5}{4}\sigma)}\bigg{)} \\
&+ 
\bigg{(}\frac{\sigma}{2}\bigg{)}
\bigg{(}\frac{1}{1-\frac{5}{4}\sigma} \bigg{)}
\bigg{(}\frac{1}{2} + \frac{5}{4}\sigma\bigg{)}
\bigg{(}
\frac{\sqrt{\sigma}}{2} + \frac{\sigma^2}{2(1-\frac{7}{4}\sigma)}
 \bigg{)} 
\bigg{(}
1 - \sigma^2\bigg{(}\frac{1}{1-\frac{7}{4}\sigma} \bigg{)}
\bigg{(}\frac{1}{2} + \frac{7\sigma}{4} \bigg{)} - \frac{\sigma^2}{2}
\bigg{)}^{-1} \\
&=
\sigma
\bigg{[}
\frac{1}{2} + \bigg{(}\frac{1}{2}\bigg{)}
\bigg{(}
\frac{1}{2(1-\frac{5}{4}\sigma)}\bigg{)} \\
&+ 
\bigg{(}\frac{1}{2}\bigg{)}
\bigg{(}\frac{1}{1-\frac{5}{4}\sigma} \bigg{)}
\bigg{(}\frac{1}{2} + \frac{5}{4}\sigma\bigg{)}
\bigg{(}
\frac{\sqrt{\sigma}}{2} + \frac{\sigma^2}{2(1-\frac{7}{4}\sigma)}
 \bigg{)} 
\bigg{(}
1 - \sigma^2\bigg{(}\frac{1}{1-\frac{7}{4}\sigma} \bigg{)}
\bigg{(}\frac{1}{2} + \frac{7\sigma}{4} \bigg{)} - \frac{\sigma^2}{2}
\bigg{)}^{-1}
\bigg{]}
\end{align*}
and defining
\begin{align*}
C_1(\sigma) &=
\frac{1}{2} + \bigg{(}\frac{1}{2}\bigg{)}
\bigg{(}
\frac{1}{2(1-\frac{5}{4}\sigma)}\bigg{)} \\
&\hspace{0.5cm}+ 
\bigg{(}\frac{1}{2}\bigg{)}
\bigg{(}\frac{1}{1-\frac{5}{4}\sigma} \bigg{)}
\bigg{(}\frac{1}{2} + \frac{5}{4}\sigma\bigg{)}
\bigg{(}
\frac{\sqrt{\sigma}}{2} + \frac{\sigma^2}{2(1-\frac{7}{4}\sigma)}
 \bigg{)} 
\bigg{(}
1 - \sigma^2\bigg{(}\frac{1}{1-\frac{7}{4}\sigma} \bigg{)}
\bigg{(}\frac{1}{2} + \frac{7\sigma}{4} \bigg{)} - \frac{\sigma^2}{2}
\bigg{)}^{-1}.
\end{align*}
It is easy to see that $\lim_{\sigma \rightarrow 0}C_1(\sigma) = 1$.

We then define $C_2(\sigma)$ as coefficient of $\vert\vert u\vert\vert^2$ 
\begin{align*}
C_2(\sigma) &=
\bigg{(}\frac{\sigma}{2}\bigg{)}
\bigg{(}\frac{1}{1-\frac{5}{4}\sigma} \bigg{)}
\bigg{(}\frac{1}{2} + \frac{5}{4}\sigma\bigg{)}
\bigg{(}\frac{1}{2\sqrt{\sigma}} + \frac{3}{2} \bigg{)} \\
&\hspace{0.5cm}
\bigg{(}
1 - \sigma^2\bigg{(}\frac{1}{1-\frac{7}{4}\sigma} \bigg{)}
\bigg{(}\frac{1}{2} + \frac{7\sigma}{4} \bigg{)} - \frac{\sigma^2}{2}
\bigg{)}^{-1} \\
&=
\bigg{(}\frac{\sqrt{\sigma}}{2}\bigg{)}
\bigg{(}\frac{1}{1-\frac{5}{4}\sigma} \bigg{)}
\bigg{(}\frac{1}{2} + \frac{5}{4}\sigma\bigg{)}
\bigg{(}\frac{1}{2} + \frac{3\sqrt{\sigma}}{2} \bigg{)} \\
&\hspace{0.5cm}
\bigg{(}
1 - \sigma^2\bigg{(}\frac{1}{1-\frac{7}{4}\sigma} \bigg{)}
\bigg{(}\frac{1}{2} + \frac{7\sigma}{4} \bigg{)} - \frac{\sigma^2}{2}
\bigg{)}^{-1}
\end{align*}
from which it is easy to see that 
$\lim_{\sigma \rightarrow 0}C_2(\sigma) = 0$.
\end{proof}

\begin{lem}\label{mag_main_est_2}
Given $u \in C_c^{\infty}(M)$ and
$\sigma > 0$ sufficiently small, we have the following estimate
\begin{align*}
\vert\vert \Delta_A\nabla_{({dh^{1/2}}^{\#})}u\vert\vert^2 &\leq
\bigg{[}
\frac{\sigma^2}{1-\frac{5}{4}\sigma} + (2\sigma^2)
\bigg{(}\frac{1+\frac{5}{4}\sigma}{2(1-\frac{5}{4}\sigma)} \bigg{)}
\bigg{(}\frac{\sqrt{\sigma}}{2} + \frac{\sigma^2}{2(1-\frac{5}{4}\sigma)} 
\bigg{)} \\
&\hspace{0.5cm}
\bigg{(}
1 - \sigma^2\bigg{(}\frac{1}{1-\frac{5}{4}\sigma} \bigg{)}
\bigg{(}1 + \frac{5\sigma}{4} \bigg{)} - \frac{\sigma^2}{2}
\bigg{)}^{-1} \\
&\hspace{0.5cm}
+
\bigg{(}\frac{C^2\sigma^2}{1-\frac{5}{4}\sigma} \bigg{)}
\bigg{(}\frac{\sqrt{\sigma}}{2} + \frac{\sigma^2}{2(1-\frac{7}{4}\sigma)} 
\bigg{)} \\
&\hspace{0.5cm}
\bigg{(}
1 - \sigma^2\bigg{(}\frac{1}{1-\frac{7}{4}\sigma} \bigg{)}
\bigg{(}\frac{1}{2} + \frac{7\sigma}{4} \bigg{)} - \frac{\sigma^2}{2}
\bigg{)}^{-1}
\bigg{]}\vert\vert h^{1/2}\Delta_A^2u\vert\vert^2 \\
&\hspace{0.5cm} +
\bigg{[}
(2\sigma^2)
\bigg{(}\frac{1+\frac{5}{2}\sigma}{2(1-\frac{5}{4}\sigma)} \bigg{)}
\bigg{(}\frac{1}{\sqrt{\sigma}} + \frac{3}{2} \bigg{)}
\bigg{(}
1 - \sigma^2\bigg{(}\frac{1}{1-\frac{5}{4}\sigma} \bigg{)}
\bigg{(}1 + \frac{5\sigma}{4} \bigg{)} - \frac{\sigma^2}{2}
\bigg{)}^{-1} \\
&\hspace{0.5cm} +
\bigg{(}\frac{C^2\sigma^2}{1-\frac{5}{4}\sigma} \bigg{)}
\bigg{(}\frac{1}{2\sqrt{\sigma}} + \frac{3}{2} \bigg{)}
\bigg{(}
1 - \sigma^2\bigg{(}\frac{1}{1-\frac{7}{4}\sigma} \bigg{)}
\bigg{(}\frac{1}{2} + \frac{7\sigma}{4} \bigg{)} - \frac{\sigma^2}{2}
\bigg{)}^{-1} \\
&\hspace{0.5cm} +
\bigg{(}\frac{(C^2\sigma^2)(1+ \frac{5\sigma}{2})}{(1-\frac{5}{4}\sigma)} 
\bigg{)} +
2C^2\sigma^2
\bigg{]}\vert\vert u\vert\vert^2.
\end{align*}
\end{lem}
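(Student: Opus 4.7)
The plan is to reduce $\Vert\Delta_A\nabla_{(dh^{1/2})^\#}u\Vert^2$ to the three basic quantities $\Vert h^{1/2}\Delta_A^2u\Vert^2$, $\Vert h^{1/2}\Delta_Au\Vert^2$ (and $\Vert h^{1/4}\Delta_Au\Vert^2$), and $\Vert u\Vert^2$ using the commutation formula, after which the already-established estimates of this section can be applied mechanically. The crucial first step is to apply corollary \ref{connection_3} (exactly as was done inside the proof of proposition \ref{bilaplace_est_6}) in order to write
\begin{equation*}
\Delta_A\nabla_{(dh^{1/2})^\#}u = \nabla_{(dh^{1/2})^\#}\Delta_Au + \nabla_{(dh^{1/2})^\#}(Rm+F)\ast u + (Rm+F)\ast\nabla_{(dh^{1/2})^\#}u.
\end{equation*}
Taking norms and using $(a+b+c)^2\le 4(a^2+b^2+c^2)$ (absorbing the $4$ into the universal constants), I would then apply \eqref{star_notation_ex}, the bounded geometry bound $|\nabla^{j}(Rm+F)|\le C$ for $j=0,1$, and the derivative bound $|dh^{1/2}|\le\sigma h^{5/4}$ to obtain
\begin{equation*}
\Vert \Delta_A\nabla_{(dh^{1/2})^\#}u\Vert^2 \le 2\sigma^2\Vert h^{5/4}\nabla\Delta_Au\Vert^2 + 2C^2\sigma^2\Vert h^{5/4}u\Vert^2 + C^2\sigma^2\Vert h^{5/4}\nabla u\Vert^2.
\end{equation*}

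Next, I would bound each of the three terms using the tools in this section. For the first, proposition \ref{mag_covariant_est_2} with $k=5/4$ gives control by $\Vert h^{5/4}\Delta_A^2u\Vert^2$ and $\Vert h^{5/4}\Delta_Au\Vert^2$; since $0\le h\le 1$ I can replace $h^{5/4}$ by $h^{1/2}$ throughout, and then apply lemma \ref{mag_laplace_est_1} to $\Vert h^{1/2}\Delta_Au\Vert^2$ in order to reduce it to $\Vert h^{1/2}\Delta_A^2u\Vert^2$ and $\Vert u\Vert^2$. For the curvature-times-$u$ term, $h^{5/4}\le 1$ reduces it immediately to $\Vert u\Vert^2$ with coefficient $C^2\sigma^2$. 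For the third term, proposition \ref{mag_covariant_est_1} with $k=5/4$ yields control by $\Vert h^{9/4}\Delta_Au\Vert^2$ and $\Vert h^{1/4}u\Vert^2$; since $h^{9/4}\le h^{1/4}$, this is dominated by $\Vert h^{1/4}\Delta_Au\Vert^2$ plus $\Vert u\Vert^2$, and then lemma \ref{mag_laplace_est_2} reduces $\Vert h^{1/4}\Delta_Au\Vert^2$ to the two target quantities $\Vert h^{1/2}\Delta_A^2u\Vert^2$ and $\Vert u\Vert^2$.

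Assembling these three reductions and collecting the coefficients of $\Vert h^{1/2}\Delta_A^2u\Vert^2$ and $\Vert u\Vert^2$ respectively yields exactly the displayed estimate. The main obstacle is not conceptual but bookkeeping: the coefficients produced by propositions \ref{mag_covariant_est_1}, \ref{mag_covariant_est_2} and lemmas \ref{mag_laplace_est_1}, \ref{mag_laplace_est_2} each carry denominators of the form $(1-\tfrac{k}{4}\sigma)$ and $(1-\sigma^2(\cdots))$ that must be tracked through the composition, and $\sigma$ must be chosen small enough that every such denominator remains positive. Once this is done, the two final coefficients one reads off match the bracketed expressions in the lemma's right-hand side; in particular each ends up linear or better in $\sigma^2$, so as a corollary one obtains a statement analogous to corollary \ref{mag_main_est_1_a}, namely $\Vert\Delta_A\nabla_{(dh^{1/2})^\#}u\Vert^2 \le \sigma\widetilde{C}_1(\sigma)\Vert h^{1/2}\Delta_A^2u\Vert^2 + \widetilde{C}_2(\sigma)\Vert u\Vert^2$ with $\widetilde{C}_1,\widetilde{C}_2$ remaining bounded (in fact $\widetilde{C}_2\to 0$) as $\sigma\to 0$, which is the form in which the estimate will be consumed in section \ref{proof_main_theorem_2}.
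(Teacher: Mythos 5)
Your proposal follows the paper's proof almost verbatim: the same application of corollary \ref{connection_3} to commute $\Delta_A$ past $\nabla_{(dh^{1/2})^\#}$, the same split into the three squared norms, the same use of the bounded-geometry bounds and $|dh^{1/2}|\le\sigma h^{5/4}$, and the same reduction via propositions \ref{mag_covariant_est_1} and \ref{mag_covariant_est_2} followed by lemmas \ref{mag_laplace_est_1} and \ref{mag_laplace_est_2} to land on $\Vert h^{1/2}\Delta_A^2u\Vert^2$ and $\Vert u\Vert^2$. The only differences are trivial bookkeeping (e.g.\ the universal constant in $(a+b+c)^2\le C(a^2+b^2+c^2)$), and your outline is in fact slightly more explicit than the paper, which compresses the final step into a single sentence.
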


\begin{proof}
Using the commutation formula \eqref{connection_3}, we can write
\begin{align*}
\vert\vert \Delta_A\nabla_{(dh^{1/2})^{\#}}u\vert\vert^2 &=
\vert\vert \nabla_{(dh^{1/2})^{\#}}\Delta_Au + 
\nabla_{(dh^{1/2})^{\#}}(Rm+F)*u 
+ (Rm+F)*\nabla_{(dh^{1/2})^{\#}}u \vert\vert^2 \\
&\leq 2\vert\vert \nabla_{(dh^{1/2})^{\#}}\Delta_Au\vert\vert^2 + 
2\vert\vert \nabla_{(dh^{1/2})^{\#}}(Rm+F)*u \vert\vert^2 + 
2\vert\vert (Rm+F)*\nabla_{(dh^{1/2})^{\#}}u \vert\vert^2 \\
&\leq 2\sigma^2\vert\vert h^{5/4}\nabla\Delta_Au\vert\vert^2 + 
2C^2\sigma\vert\vert h^{5/4}u\vert\vert^2 + 
2C^2\sigma^2\vert\vert h^{5/4}\nabla u\vert\vert^2 \\
&\leq 
2\sigma^2\vert\vert h^{5/4}\nabla\Delta_Au\vert\vert^2 + 
2C^2\sigma\vert\vert u\vert\vert^2 + 
2C^2\sigma^2\vert\vert h^{5/4}\nabla u\vert\vert^2
\end{align*}
were to get the inequality on the third line we have used 
\eqref{star_notation_ex}, and the estimate
of $\vert dh^{1/2}\vert$. To get the last inequality, we have used the
fact that $h^{5/4} \leq 1$.

Applying proposition \ref{mag_covariant_est_1} to estimate the term
$\vert\vert h^{5/4}\nabla u\vert\vert^2$, and
proposition \ref{mag_covariant_est_2} to estimate the term
$\vert\vert h^{5/4}\nabla\Delta_Au\vert\vert^2$, proves the lemma.

\end{proof}

\begin{cor}\label{mag_main_est_2_a}
Given $u \in C_c^{\infty}(M)$ and
$\sigma > 0$ sufficiently small, we have 
\begin{equation*}
\vert\vert \Delta_A\nabla_{({dh^{1/2}}^{\#})}u\vert\vert^2 \leq
\sigma C_1(\sigma)\vert\vert h^{1/2}\Delta_A^2u\vert\vert^2 + 
C_2(\sigma)\vert\vert u\vert\vert^2
\end{equation*}
where $C_1(\sigma)$ and $C_2(\sigma)$ are constants depending on
$\sigma$, and such that
$\lim_{\sigma \rightarrow 0}C_1(\sigma) < \infty$ and
$\lim_{\sigma \rightarrow 0}C_2(\sigma) = 0$.
\end{cor}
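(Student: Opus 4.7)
The plan is to imitate the proof of Corollary \ref{mag_main_est_1_a} essentially verbatim, starting from the explicit bound supplied by Lemma \ref{mag_main_est_2}. Concretely, I would take the right-hand side of Lemma \ref{mag_main_est_2}, recognise that every summand making up the coefficient of $\Vert h^{1/2}\Delta_A^2 u\Vert^2$ carries a factor of at least $\sigma^2$, and then pull one explicit $\sigma$ out front. What remains inside the brackets is then bounded (in fact $O(\sigma)$) as $\sigma \to 0$, because in each term we are left with expressions of the form $\sigma/(1-\tfrac{5}{4}\sigma)$, $\sigma\cdot\bigl(\tfrac{\sqrt{\sigma}}{2}+\cdots\bigr)\cdot(\ldots)^{-1}$, or $\sigma\cdot\bigl(\tfrac{\sqrt{\sigma}}{2}+\cdots\bigr)\cdot(\ldots)^{-1}$, each of which manifestly has a finite (indeed zero) limit as $\sigma\to 0$. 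Defining $C_1(\sigma)$ to be this remaining bracket, we get $\sigma C_1(\sigma)\Vert h^{1/2}\Delta_A^2 u\Vert^2$ with $\lim_{\sigma\to 0}C_1(\sigma)<\infty$ as required.

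For the coefficient of $\Vert u\Vert^2$, I would define $C_2(\sigma)$ to be that coefficient directly and then check each summand separately. The first two summands involve a product of $\sigma^2$ with a factor of the form $\tfrac{1}{\sqrt{\sigma}}+\tfrac{3}{2}$, giving an overall order $\sigma^{3/2}$; the third summand $\tfrac{(C^2\sigma^2)(1+\tfrac{5\sigma}{2})}{1-\tfrac{5}{4}\sigma}$ and the fourth summand $2C^2\sigma^2$ are both of order $\sigma^2$. In every case the denominators $1-\tfrac{5}{4}\sigma$, $1-\tfrac{7}{4}\sigma$, and $1-\sigma^2(\ldots)(\ldots)-\tfrac{\sigma^2}{2}$ are bounded away from zero for $\sigma$ sufficiently small, so each summand tends to $0$ as $\sigma\to 0$. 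Hence $\lim_{\sigma\to 0}C_2(\sigma)=0$, which is in fact stronger than the claimed finiteness.

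There is no real obstacle here: the bound in Lemma \ref{mag_main_est_2} was already arranged so that the powers of $\sigma$ in both coefficients are high enough that the desired factorisation and limits work out automatically. The only delicate point is checking that the denominators appearing in the formula from Lemma \ref{mag_main_est_2} remain positive for $\sigma$ in the chosen range, which is the standard smallness condition on $\sigma$ used throughout this section. Accordingly the proof is a short book-keeping exercise entirely parallel to Corollary \ref{mag_main_est_1_a}.
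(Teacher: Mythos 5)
Your proposal matches the paper's proof of Corollary \ref{mag_main_est_2_a} essentially verbatim: the paper likewise factors a single $\sigma$ out of the coefficient of $\Vert h^{1/2}\Delta_A^2 u\Vert^2$ in Lemma \ref{mag_main_est_2}, calls the remaining bracket $C_1(\sigma)$ (and in fact notes $\lim_{\sigma\to 0}C_1(\sigma)=0$), and takes $C_2(\sigma)$ to be the coefficient of $\Vert u\Vert^2$, observing that each summand is $O(\sigma^{3/2})$ or better so $\lim_{\sigma\to 0}C_2(\sigma)=0$. The book-keeping you describe, including the remark about the denominators staying bounded away from zero for small $\sigma$, is precisely what the paper does.
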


\begin{proof}
By lemma \ref{mag_main_est_2} we have
\begin{align*}
\vert\vert \Delta_A\nabla_{({dh^{1/2}}^{\#})}u\vert\vert^2 &\leq
\bigg{[}
\frac{\sigma^2}{1-\frac{5}{4}\sigma} + (2\sigma^2)
\bigg{(}\frac{1+\frac{5}{4}\sigma}{2(1-\frac{5}{4}\sigma)} \bigg{)}
\bigg{(}\frac{\sqrt{\sigma}}{2} + \frac{\sigma^2}{2(1-\frac{5}{4}\sigma)} 
\bigg{)} \\
&\hspace{0.5cm}
\bigg{(}
1 - \sigma^2\bigg{(}\frac{1}{1-\frac{5}{4}\sigma} \bigg{)}
\bigg{(}1 + \frac{5\sigma}{4} \bigg{)} - \frac{\sigma^2}{2}
\bigg{)}^{-1} \\
&\hspace{0.5cm}
+
\bigg{(}\frac{C^2\sigma^2}{1-\frac{5}{4}\sigma} \bigg{)}
\bigg{(}\frac{\sqrt{\sigma}}{2} + \frac{\sigma^2}{2(1-\frac{7}{4}\sigma)} 
\bigg{)}
\bigg{(}
1 - \sigma^2\bigg{(}\frac{1}{1-\frac{7}{4}\sigma} \bigg{)}
\bigg{(}\frac{1}{2} + \frac{7\sigma}{4} \bigg{)} - \frac{\sigma^2}{2}
\bigg{)}^{-1}
\bigg{]}\vert\vert h^{1/2}\Delta_A^2u\vert\vert^2 \\
&\hspace{0.5cm} +
\bigg{[}
(2\sigma^2)
\bigg{(}\frac{1+\frac{5}{2}\sigma}{2(1-\frac{5}{4}\sigma)} \bigg{)}
\bigg{(}\frac{1}{\sqrt{\sigma}} + \frac{3}{2} \bigg{)}
\bigg{(}
1 - \sigma^2\bigg{(}\frac{1}{1-\frac{5}{4}\sigma} \bigg{)}
\bigg{(}1 + \frac{5\sigma}{4} \bigg{)} - \frac{\sigma^2}{2}
\bigg{)}^{-1} \\
&\hspace{0.5cm} +
\bigg{(}\frac{C^2\sigma^2}{1-\frac{5}{4}\sigma} \bigg{)}
\bigg{(}\frac{1}{2\sqrt{\sigma}} + \frac{3}{2} \bigg{)}
\bigg{(}
1 - \sigma^2\bigg{(}\frac{1}{1-\frac{7}{4}\sigma} \bigg{)}
\bigg{(}\frac{1}{2} + \frac{7\sigma}{4} \bigg{)} - \frac{\sigma^2}{2}
\bigg{)}^{-1} \\
&\hspace{0.5cm} +
\bigg{(}\frac{(C^2\sigma^2)(1+ \frac{5\sigma}{2})}{(1-\frac{5}{4}\sigma)} 
\bigg{)} +
2C^2\sigma^2
\bigg{]}\vert\vert u\vert\vert^2.
\end{align*}
We can write the coefficient of 
$\vert\vert h^{1/2}\Delta_A^2u\vert\vert^2$ as
\begin{align*}
&\frac{\sigma^2}{1-\frac{5}{4}\sigma} + (2\sigma^2)
\bigg{(}\frac{1+\frac{5}{4}\sigma}{2(1-\frac{5}{4}\sigma)} \bigg{)}
\bigg{(}\frac{\sqrt{\sigma}}{2} + \frac{\sigma^2}{2(1-\frac{5}{4}\sigma)} 
\bigg{)} 
\bigg{(}
1 - \sigma^2\bigg{(}\frac{1}{1-\frac{5}{4}\sigma} \bigg{)}
\bigg{(}1 + \frac{5\sigma}{4} \bigg{)} - \frac{\sigma^2}{2}
\bigg{)}^{-1} \\
&\hspace{0.5cm}
+
\bigg{(}\frac{C^2\sigma^2}{1-\frac{5}{4}\sigma} \bigg{)}
\bigg{(}\frac{\sqrt{\sigma}}{2} + \frac{\sigma^2}{2(1-\frac{7}{4}\sigma)} 
\bigg{)}
\bigg{(}
1 - \sigma^2\bigg{(}\frac{1}{1-\frac{7}{4}\sigma} \bigg{)}
\bigg{(}\frac{1}{2} + \frac{7\sigma}{4} \bigg{)} - \frac{\sigma^2}{2}
\bigg{)}^{-1} \\
&= \\
\sigma
&\bigg{[}
\frac{\sigma}{1-\frac{5}{4}\sigma} + (2\sigma)
\bigg{(}\frac{1+\frac{5}{4}\sigma}{2(1-\frac{5}{4}\sigma)} \bigg{)}
\bigg{(}\frac{\sqrt{\sigma}}{2} + \frac{\sigma^2}{2(1-\frac{5}{4}\sigma)} 
\bigg{)} 
\bigg{(}
1 - \sigma^2\bigg{(}\frac{1}{1-\frac{5}{4}\sigma} \bigg{)}
\bigg{(}1 + \frac{5\sigma}{4} \bigg{)} - \frac{\sigma^2}{2}
\bigg{)}^{-1} \\
&\hspace{0.5cm}
+
\bigg{(}\frac{C^2\sigma}{1-\frac{5}{4}\sigma} \bigg{)}
\bigg{(}\frac{\sqrt{\sigma}}{2} + \frac{\sigma^2}{2(1-\frac{7}{4}\sigma)} 
\bigg{)}
\bigg{(}
1 - \sigma^2\bigg{(}\frac{1}{1-\frac{7}{4}\sigma} \bigg{)}
\bigg{(}\frac{1}{2} + \frac{7\sigma}{4} \bigg{)} - \frac{\sigma^2}{2}
\bigg{)}^{-1}
\bigg{]}.
\end{align*}
Defining 
\begin{align*}
C_1(\sigma) &=
\frac{\sigma}{1-\frac{5}{4}\sigma} + (2\sigma)
\bigg{(}\frac{1+\frac{5}{4}\sigma}{2(1-\frac{5}{4}\sigma)} \bigg{)}
\bigg{(}\frac{\sqrt{\sigma}}{2} + \frac{\sigma^2}{2(1-\frac{5}{4}\sigma)} 
\bigg{)} 
\bigg{(}
1 - \sigma^2\bigg{(}\frac{1}{1-\frac{5}{4}\sigma} \bigg{)}
\bigg{(}1 + \frac{5\sigma}{4} \bigg{)} - \frac{\sigma^2}{2}
\bigg{)}^{-1} \\
&\hspace{0.5cm}
+
\bigg{(}\frac{C^2\sigma}{1-\frac{5}{4}\sigma} \bigg{)}
\bigg{(}\frac{\sqrt{\sigma}}{2} + \frac{\sigma^2}{2(1-\frac{7}{4}\sigma)} 
\bigg{)}
\bigg{(}
1 - \sigma^2\bigg{(}\frac{1}{1-\frac{7}{4}\sigma} \bigg{)}
\bigg{(}\frac{1}{2} + \frac{7\sigma}{4} \bigg{)} - \frac{\sigma^2}{2}
\bigg{)}^{-1}
\end{align*}
it is easy to see that $\lim_{\sigma\rightarrow 0}C_1(\sigma) = 0$.

We then define $C_2(\sigma)$ to be the coefficient of 
$\vert\vert u\vert\vert^2$
\begin{align*}
C_2(\sigma) &= 
(2\sigma^2)
\bigg{(}\frac{1+\frac{5}{2}\sigma}{2(1-\frac{5}{4}\sigma)} \bigg{)}
\bigg{(}\frac{1}{\sqrt{\sigma}} + \frac{3}{2} \bigg{)}
\bigg{(}
1 - \sigma^2\bigg{(}\frac{1}{1-\frac{5}{4}\sigma} \bigg{)}
\bigg{(}1 + \frac{5\sigma}{4} \bigg{)} - \frac{\sigma^2}{2}
\bigg{)}^{-1} \\
&\hspace{0.5cm} +
\bigg{(}\frac{C^2\sigma^2}{1-\frac{5}{4}\sigma} \bigg{)}
\bigg{(}\frac{1}{2\sqrt{\sigma}} + \frac{3}{2} \bigg{)}
\bigg{(}
1 - \sigma^2\bigg{(}\frac{1}{1-\frac{7}{4}\sigma} \bigg{)}
\bigg{(}\frac{1}{2} + \frac{7\sigma}{4} \bigg{)} - \frac{\sigma^2}{2}
\bigg{)}^{-1} \\
&\hspace{0.5cm} +
\bigg{(}\frac{(C^2\sigma^2)(1+ \frac{5\sigma}{2})}{(1-\frac{5}{4}\sigma)} 
\bigg{)} +
2C^2\sigma^2.
\end{align*}
It is easy to see that $\lim_{\sigma \rightarrow 0}C_2(\sigma) = 0$.

\end{proof}

\section{Proof of theorem \ref{main_theorem_2}}
\label{proof_main_theorem_2}

In this section we prove theorem \ref{main_theorem_2}. A key 
component of the proof is to obtain an estimate analogous to
proposition \ref{bilaplace_est_6}. 

In order to obtain such an estimate, we will proceed along the same lines
as we did for proposition \ref{bilaplace_est_6}. 

Let $u \in C_c^{\infty}(M)$,
we start by 
expanding out the following inner product.

\begin{align}
Re\langle \Delta_A^4u, hu\rangle &= Re\langle \Delta_A^2u,\Delta_A^2(hu)\rangle \nonumber \\ 
&= Re\langle \Delta_A^2u, h\Delta_A^2u + 2(\Delta_M h)(\Delta_Au) - 
2\nabla^A_{(dh)^\#}\Delta_Au + u\Delta_M^2h - 2\nabla_{(d\Delta_M h)^\#}^Au 
-2\Delta_A\nabla_{(dh)^\#}^Au\rangle \nonumber \\
&= 
Re\langle h^{1/2}\Delta_A^2u, h^{1/2}\Delta_A^2u\rangle + 
2Re\langle \Delta_A^2u, (\Delta_M h)(\Delta_Au)\rangle 
-2Re\langle \Delta_A^2u, \nabla_{(dh)^\#}^A\Delta_Au\rangle \nonumber \\
& \hspace{0.5cm}+ 
Re\langle \Delta_A^2u, u\Delta_M^2h\rangle - 
2Re\langle \Delta_A^2u, \nabla_{(dh)^\#}^Au\rangle -2Re\langle \Delta_A^2u, 
\Delta_A\nabla^A_{(dh)^\#}u\rangle \nonumber \\
&=
Re\langle \Delta_A^2(h^{1/2}u), \Delta_A^2(h^{1/2}u)\rangle \label{mag_eq_1}
\\
& \hspace{0.5cm} +
 4Re\langle h^{1/2}\Delta_A^2u, \nabla_{(dh^{1/2})^\#}\Delta_Au\rangle 
\nonumber \\
& \hspace{0.5cm}+
4Re\langle h^{1/2}\Delta_A^2u, \Delta_A\nabla_{dh^{1/2}}u\rangle  
+
4Re\langle h^{1/2}\Delta_A^2u, \nabla_{(d\Delta_M h^{1/2})^\#}u\rangle \nonumber \\
& \hspace{0.5cm}+
4Re\langle h^{1/2}\Delta_A^2u, (\Delta_M h^{1/2})(\Delta_Au)\rangle - 
2Re\langle h^{1/2}\Delta_A^2u, u\Delta_M^2h^{1/2}\rangle \nonumber \\
&\hspace{0.5cm}
-4\langle \nabla_{(dh^{1/2})^\#}\Delta_Au, \nabla_{(dh^{1/2})^\#}\Delta_Au\rangle 
-8Re\langle \nabla_{(dh^{1/2})^\#}\Delta_Au, \Delta_A\nabla_{(dh^{1/2})^\#}u\rangle \nonumber \\
&\hspace{0.5cm}
-8Re\langle \nabla_{(dh^{1/2})^\#}\Delta_Au, \nabla_{(d\Delta_M h^{1/2})^\#}u\rangle 
-8Re\langle \nabla_{(dh^{1/2})^\#}\Delta_Au, (\Delta_M h^{1/2})(\Delta_Au)\rangle \nonumber \\
&\hspace{0.5cm}
+ 4Re\langle \nabla_{(dh^{1/2})^\#}\Delta_Au, u\Delta_M^2h^{1/2}\rangle 
- 4\langle \Delta_A\nabla_{(dh^{1/2})^\#}u, \Delta_A\nabla_{(dh^{1/2})^\#}u\rangle \nonumber \\
&\hspace{0.5cm}
-8Re\langle \Delta_A\nabla_{(dh^{1/2})^\#}u, \nabla_{(d\Delta_M h^{1/2})^\#}u\rangle 
-8 Re\langle \Delta_A\nabla_{(dh^{1/2})^\#}u, (\Delta_M h^{1/2})(\Delta_Au)\rangle \nonumber \\
&\hspace{0.5cm}
+ 4Re\langle \Delta_A\nabla_{(dh^{1/2})^\#}u, u\Delta_M^2h^{1/2}\rangle 
-4\langle \nabla_{(d\Delta_M h^{1/2})^\#}u, \nabla_{(d\Delta_M h^{1/2})^\#}u\rangle \nonumber \\
&\hspace{0.5cm}
-8Re\langle \nabla_{(d\Delta_M h^{1/2})^\#}u, (\Delta_M h^{1/2})(\Delta_Au)\rangle 
+ 4Re\langle \nabla_{(d\Delta_M h^{1/2})^\#}u, u\Delta_M^2h^{1/2}\rangle 
\nonumber \\
&\hspace{0.5cm}
-4 \langle (\Delta_M h^{1/2})(\Delta_Au), (\Delta_M h^{1/2})(\Delta_Au)\rangle 
+ 4Re\langle (\Delta_M h^{1/2})(\Delta_Au), u\Delta_M^2h^{1/2}\rangle 
\nonumber \\
&\hspace{0.5cm}
-\langle u\Delta_M^2h^{1/2}, u\Delta_M^2h^{1/2}\rangle 
+2Re\langle \Delta_A^2u, (\Delta_M h)(\Delta_Au)\rangle \nonumber \\
&\hspace{0.5cm}
-2Re\langle \Delta_A^2u, \nabla_{(dh)^\#}\Delta_Au\rangle 
+ Re\langle \Delta_A^2u, u\Delta_M^2 h\rangle \nonumber \\
&\hspace{0.5cm}
-2Re\langle \Delta_A^2u, \nabla_{(d\Delta_M h)^\#}u\rangle 
-2Re\langle \Delta_A^2u, \Delta_A\nabla_{(dh)^\#}u\rangle \nonumber
\end{align}

As in the proof of proposition \ref{bilaplace_est_6}, we need to
estimate the terms on the right hand side.

Each of the last twenty five terms on the right hand side of
equation \eqref{mag_eq_1} are given as the real part of a complex number.
We now claim that the absolute value of these complex numbers
can all be bounded above by
\begin{equation*}
\sigma C_1(\sigma)\vert\vert h^{1/2}\Delta_A^2u\vert\vert^2 + 
C_2(\sigma)\vert\vert u\vert\vert^2
\end{equation*}
where $0 < \sigma < 1$, and
$C_1(\sigma)$, $C_2(\sigma) > 0$ are constants, depending on 
$\sigma$, such that
$\lim_{\sigma \rightarrow 0}C_1(\sigma) <\infty$ and 
$\lim_{\sigma \rightarrow 0}C_2(\sigma) = 0$.

We start by looking at the five terms:

\begin{align}
&4Re\langle h^{1/2}\Delta_A^2u, \nabla_{(dh^{1/2})^\#}\Delta_Au\rangle 
\label{ineq_1} \\
&
4Re\langle h^{1/2}\Delta_A^2u, \Delta_A\nabla_{dh^{1/2}}u\rangle  
\label{ineq_2} \\
&
4Re\langle h^{1/2}\Delta_A^2u, \nabla_{(d\Delta_M h^{1/2})^\#}u\rangle 
\label{ineq_3} \\
& 
4Re\langle h^{1/2}\Delta_A^2u, (\Delta_M h^{1/2})(\Delta_Au)\rangle 
\label{ineq_4} \\
&
2Re\langle h^{1/2}\Delta_A^2u, u\Delta_M^2h^{1/2}\rangle  
\label{ineq_5}
\end{align}

Lemma \ref{mag_main_est_1} and corollary \ref{mag_main_est_1_a}
show how to estimate the term \eqref{ineq_1}. A similar proof
shows how to bound each of the terms,
\eqref{ineq_2}, \eqref{ineq_3}, \eqref{ineq_4}, 
\eqref{ineq_5}, above by 
$\sigma C_1(\sigma)\vert\vert h^{1/2}\Delta_A^2u\vert\vert^2 + 
C_2(\sigma)\vert\vert u\vert\vert^2$, with 
$\lim_{\sigma \rightarrow 0}C_1(\sigma) <\infty$ and 
$\lim_{\sigma \rightarrow 0}C_2(\sigma) = 0$.
We note that in estimating \eqref{ineq_2} one needs to make 
use of corollary \ref{connection_3}.

The next step is to estimate the five terms:
\begin{align}
&-4\langle \nabla_{(dh^{1/2})^\#}\Delta_Au, \nabla_{(dh^{1/2})^\#}
\Delta_Au\rangle \label{ineq_6}\\
& 
-4
\langle \Delta_A\nabla_{(dh^{1/2})^\#}u, \Delta_A\nabla_{(dh^{1/2})^\#}u
\rangle \label{ineq_7}\\
&
-4\langle \nabla_{(d\Delta_M h^{1/2})^\#}u, \nabla_{(d\Delta_M h^{1/2})^\#}u
\rangle \label{ineq_8}\\
&
-4 \langle (\Delta_M h^{1/2})(\Delta_Au), (\Delta_M h^{1/2})(\Delta_Au)
\rangle \label{ineq_9}\\
&
-\langle u\Delta_M^2h^{1/2}, u\Delta_M^2h^{1/2}\rangle  
\label{ineq_10}
\end{align}

Lemma \ref{mag_main_est_2} and corollary \ref{mag_main_est_2_a} show
how to estimate \eqref{ineq_7}. A similar proof shows that
we can estimate \eqref{ineq_6}, \eqref{ineq_8}, 
\eqref{ineq_9}, \eqref{ineq_10} in the same way.

We then look at the ten terms:

\begin{align}
& -8Re\langle \nabla_{(dh^{1/2})^\#}\Delta_Au, \Delta_A
\nabla_{(dh^{1/2})^\#}u\rangle \label{ineq_11} \\
&
-8Re\langle \nabla_{(dh^{1/2})^\#}\Delta_Au, \nabla_{(d\Delta_M h^{1/2})^
\#}u\rangle \label{ineq_12}\\
&
-8Re\langle \nabla_{(dh^{1/2})^\#}\Delta_Au, (\Delta_M h^{1/2})(\Delta_Au)
\rangle \label{ineq_13}\\
&
4Re\langle \nabla_{(dh^{1/2})^\#}\Delta_Au, u\Delta_M^2h^{1/2}\rangle 
\label{ineq_14}\\
&
-8Re\langle \Delta_A\nabla_{(dh^{1/2})^\#}u, \nabla_{(d\Delta_M h^{1/2})^
\#}u\rangle \label{ineq_15}\\
&
-8 Re\langle \Delta_A\nabla_{(dh^{1/2})^\#}u, (\Delta_M h^{1/2})
(\Delta_Au)\rangle \label{ineq_16}\\
&
4Re\langle \Delta_A\nabla_{(dh^{1/2})^\#}u, u\Delta_M^2h^{1/2}\rangle 
\label{ineq_17}\\
&
-8Re\langle \nabla_{(d\Delta_M h^{1/2})^\#}u, (\Delta_M h^{1/2})(\Delta_Au)\rangle \label{ineq_18}\\
&
+ 4Re\langle \nabla_{(d\Delta_M h^{1/2})^\#}u, u\Delta_M^2h^{1/2}\rangle 
\label{ineq_19}\\
&
4Re\langle (\Delta_M h^{1/2})(\Delta_Au), u\Delta_M^2h^{1/2}\rangle. 
\label{ineq_20}
\end{align}

We can bound the absolute vale of each of these ten terms by applying
Cauchy-Schwarz and Young's inequality, followed by the estimates we
obtained for \eqref{ineq_6}, \eqref{ineq_7}, 
\eqref{ineq_8}, \eqref{ineq_9}, \eqref{ineq_10}.

The last five terms we have to estimate are:
\begin{align}
& 2Re\langle \Delta_A^2u, (\Delta_M h)(\Delta_Au)\rangle 
\label{ineq_21}\\
& -2Re\langle \Delta_A^2u, \nabla_{(dh)^{\#}}\Delta_Au\rangle 
\label{ineq_22}\\
&Re\langle \Delta_A^2u, u\Delta_M^2h\rangle 
\label{ineq_23}\\
& 2Re\langle \Delta_A^2u, \nabla_{(d\Delta_M h)^{\#}}u\rangle 
\label{ineq_24}\\
& 2Re\langle \Delta_A^2u, \Delta_A\nabla_{(dh)^{\#}}u\rangle
\label{ineq_25}
\end{align}

This is done in an analogous way to \eqref{ineq_1}, \eqref{ineq_2}, 
\eqref{ineq_3}, \eqref{ineq_4}, \eqref{ineq_5}, and using our 
assumptions on $h$. For example, looking at \eqref{ineq_21} we have
\begin{align*}
\vert\langle \Delta_A^2u, (\Delta_M h)(\Delta_Au)\rangle \vert &\leq
\vert \langle \vert\Delta_A^2u\vert, 
\vert\Delta_M h\vert\vert \Delta_A\vert\rangle\vert \\
&\leq
\vert \langle \vert\Delta_A^2u\vert, 
\sigma h^{3/4} \vert \Delta_A\vert\rangle\vert \\
&\leq \vert \langle h^{1/2}\vert\Delta_A^2u\vert, 
\sigma h^{1/4} \vert \Delta_A\vert\rangle\vert \\
&\leq
\frac{\sigma}{2}\vert\vert h^{1/2}\Delta_A^2u\vert\vert^2 + 
\frac{\sigma}{2}\vert\vert h^{1/4}\Delta_Au\vert\vert^2
\end{align*}
where to get the second inequality we have used the assumption that
$\vert \Delta_M h\vert \leq \sigma h^{3/4}$, and to get the last 
inequality we have applied Cauchy-Schwarz and Young's inequality.
We then apply lemma \ref{mag_laplace_est_2} to estimate the second
term, in the above last inequality, and thereby obtain the required
estimate.

We remark that each of the above estimates involved obtaining a 
$\sigma > 0$ sufficiently small. As there were only finitely many
estimates involved, what we see is that we can find a 
$0 < \sigma_0 < 1$ such that if $0 < \sigma \leq \sigma_0 < 1$
then the above estimates for the right hand side of 
\eqref{mag_eq_1} hold.

This allows us to obtain an analogue of proposition \ref{bilaplace_est_6}.
The proof of which is exactly similar to the proof of
proposition \ref{bilaplace_est_6}.

\begin{prop}\label{mag_bilaplace_est_main}
There exists $0 < \sigma_0 < 1$, such that given 
$u \in C_c^{\infty}(M)$ and $0 < \sigma \leq \sigma_0$,
we have
\begin{equation*}
\vert\vert h^{1/2}\Delta_A^2u\vert\vert^2 \leq 
\bigg{(}
\frac{1}{1- \sigma C_1(\sigma)}\bigg{)}
\vert\vert\Delta_A^2(h^{1/2}u)\vert\vert^2 + 
\bigg{(}
\frac{C_2(\sigma)}{1-\sigma C_1(\sigma)}\bigg{)} 
\vert\vert u\vert\vert^2
\end{equation*}
where $C_1(\sigma)$ and $C_2(\sigma)$ are constants, depending 
on $\sigma$, such that
$\lim_{\sigma \rightarrow 0} C_1(\sigma) < \infty$ and 
$\lim_{\sigma \rightarrow 0}C_2(\sigma) = 0$.
\end{prop}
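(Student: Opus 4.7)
The plan is to mirror the strategy used in the proof of Proposition \ref{bilaplace_est_6}, but now with the magnetic Laplacian in place of the Bochner Laplacian and with powers of the weight $h^{1/2}$ playing the role of powers of the cut-off function $\chi_\epsilon^k$. The starting point is the identity \eqref{mag_eq_1}, which already rewrites $\Re\langle \Delta_A^4 u, hu\rangle$ as $\Re\langle \Delta_A^2(h^{1/2}u), \Delta_A^2(h^{1/2}u)\rangle$ plus a sum of twenty-five cross terms involving $\Delta_A^2 u$, $\Delta_A u$, $\nabla\Delta_A u$, $\Delta_A\nabla u$, and various derivatives of $h^{1/2}$. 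Also, since $u \in C_c^{\infty}(M)$, integration by parts gives $\Re\langle \Delta_A^4 u, hu\rangle = \Re\langle h^{1/2}\Delta_A^2 u, h^{1/2}\Delta_A^2 u\rangle = \|h^{1/2}\Delta_A^2 u\|^2$, so \eqref{mag_eq_1} can be rearranged to isolate $\|h^{1/2}\Delta_A^2 u\|^2$ on the left-hand side in terms of $\|\Delta_A^2(h^{1/2}u)\|^2$ and the twenty-five cross terms.

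Next, I would argue that each of those twenty-five cross terms can be bounded above in absolute value by an expression of the form
\[
\sigma\, C_1(\sigma)\|h^{1/2}\Delta_A^2 u\|^2 + C_2(\sigma)\|u\|^2,
\]
where $C_1(\sigma)$ and $C_2(\sigma)$ remain finite (in fact $C_2(\sigma)\to 0$) as $\sigma\to 0$. For the five terms \eqref{ineq_1}--\eqref{ineq_5}, this is exactly what Lemma \ref{mag_main_est_1} and Corollary \ref{mag_main_est_1_a} establish for \eqref{ineq_1}, and the remaining four follow by the same pattern using the derivative bounds on $h^{1/2}$, $\Delta_M h^{1/2}$, $\Delta_M^2 h^{1/2}$, together with Proposition \ref{mag_covariant_est_2} and Lemma \ref{mag_laplace_est_1}; for \eqref{ineq_2} one additionally needs the commutation formula of Corollary \ref{connection_3}. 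For \eqref{ineq_6}--\eqref{ineq_10}, Lemma \ref{mag_main_est_2} and Corollary \ref{mag_main_est_2_a} treat \eqref{ineq_7}, and the other four squared-norm terms are handled identically. The ten mixed terms \eqref{ineq_11}--\eqref{ineq_20} are reduced to these squared norms by Cauchy--Schwarz followed by Young's inequality. Finally, the five terms \eqref{ineq_21}--\eqref{ineq_25} are controlled directly by the pointwise bounds $|\Delta_M h|\leq \sigma h^{3/4}$, $|dh|\leq\sigma h^{7/4}$, $|\Delta_M^2 h|\leq \sigma h^{1/2}$, $|d\Delta_M h|\leq \sigma h^{3/2}$, together with Cauchy--Schwarz, Young's inequality, and Lemma \ref{mag_laplace_est_2}.

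Summing the twenty-five estimates (absorbing constants into $C_1(\sigma), C_2(\sigma)$), and inserting into the rearranged version of \eqref{mag_eq_1}, yields
\[
\|h^{1/2}\Delta_A^2 u\|^2 \leq \|\Delta_A^2(h^{1/2}u)\|^2 + \sigma C_1(\sigma)\|h^{1/2}\Delta_A^2 u\|^2 + C_2(\sigma)\|u\|^2.
\]
Each of the finitely many intermediate estimates requires $\sigma$ to be taken small enough; since there are only finitely many such requirements, one can choose a single threshold $0 < \sigma_0 < 1$ so that all of them hold simultaneously and, in addition, $1 - \sigma C_1(\sigma) > 0$ for all $0 < \sigma \leq \sigma_0$. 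Dividing through by $1-\sigma C_1(\sigma)$ then gives the claimed inequality.

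The main obstacle is purely bookkeeping rather than conceptual: one must carefully verify that every one of the twenty-five terms admits the claimed form with $\lim_{\sigma\to 0} C_2(\sigma) = 0$ (so that the error truly vanishes in the limit) rather than merely $\lim C_2(\sigma) < \infty$, and that the finitely many smallness requirements on $\sigma$ can be satisfied simultaneously by a single $\sigma_0$. The delicate cases are those involving $\Delta_A \nabla_{(dh^{1/2})^\#} u$, where the commutation formula of Corollary \ref{connection_3} introduces curvature terms, and those involving $\Delta_M^2 h^{1/2}$, whose bound $\sigma$ (rather than $\sigma h^{\alpha}$ for some positive $\alpha$) requires the condition $286\sigma + 18\sigma^2 + 15\sigma^3 \leq 4$ flagged earlier to be packaged into the threshold $\sigma_0$.
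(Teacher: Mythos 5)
Your outline follows the paper's argument essentially verbatim --- deferring to the mechanism of Proposition \ref{bilaplace_est_6}, invoking the same chain of estimates (Lemma \ref{mag_main_est_1}, Corollary \ref{mag_main_est_1_a}, Lemma \ref{mag_main_est_2}, Corollary \ref{mag_main_est_2_a}), and splitting the error terms into the same $5+5+10+5$ groupings --- but the rearrangement step in your first paragraph is false as written. Integration by parts gives $\Re\langle\Delta_A^4 u, hu\rangle = \Re\langle\Delta_A^2 u, \Delta_A^2(hu)\rangle$, which is \emph{not} equal to $\|h^{1/2}\Delta_A^2 u\|^2$: the two differ by the five cross terms that \eqref{bi-laplace_product} produces from $\Delta_A^2(hu)$, namely the terms \eqref{ineq_21}--\eqref{ineq_25}, which the paper records explicitly in the third line of \eqref{mag_eq_1}. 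The way to isolate $\|h^{1/2}\Delta_A^2 u\|^2$ is to compare that third line against the final expansion of \eqref{mag_eq_1}; the terms \eqref{ineq_21}--\eqref{ineq_25} occur in both and cancel, leaving $\|h^{1/2}\Delta_A^2 u\|^2$ equal to $\|\Delta_A^2(h^{1/2}u)\|^2$ plus \emph{twenty} cross terms, not twenty-five. Equivalently, one can avoid $\Re\langle\Delta_A^4 u, hu\rangle$ entirely and, as in \eqref{bilaplace_est_6_0}, expand $\langle\Delta_A^2(h^{1/2}u),\Delta_A^2(h^{1/2}u)\rangle$ directly via \eqref{bi-laplace_product}. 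Since the twenty surviving cross terms admit the same $\sigma C_1(\sigma)\|h^{1/2}\Delta_A^2 u\|^2 + C_2(\sigma)\|u\|^2$ bound that you invoke, the final inequality still holds and the remainder of your proposal matches the paper's proof; only the opening identity needs to be replaced by the cancellation argument above.
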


We are now in a position to prove theorem \ref{main_theorem_2}. 
We will follow the strategy of Milatovic. We start with the following
abstract result of Sohr, see \cite{sohr}.

\begin{lem}\label{sohr}
Assume that $T_1$ and $T_2$ are non-negative self-adjoint 
operators on a Hilbert space $\mathcal{H}$ with inner product
$\langle \cdot, \cdot\rangle$ and norm $\vert\vert\cdot\vert\vert$.
Assume that $Dom(T_1)\cap Dom(T_2)$ is dense in $\mathcal{H}$.
Additionally, assume that there exists constants $\tau > 0$ and
$0 \leq \xi < 1$ such that
\begin{equation}
\langle T_2u, u\rangle \leq \tau\vert\vert u\vert\vert^2, 
\text{ for all } u \in Dom(T_2) \label{sohr_1},
\end{equation}
\begin{equation}
Re\langle T_1u, T_2^{-1}u\rangle + \xi\vert\vert u\vert\vert^2 \geq 0, 
\text{ for all } u \in Dom(T_1) \label{sohr_2}.
\end{equation}

Then, the operator $T_1 + T_2$ is self-adjoint on 
$Dom(T_1)\cap Dom(T_2)$.
\end{lem}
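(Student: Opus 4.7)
The plan is to exploit the two hypotheses to show that $T_1 + T_2$, which is symmetric and densely defined on $\mathrm{Dom}(T_1) \cap \mathrm{Dom}(T_2)$ (by self-adjointness of $T_1$ and $T_2$), has full range after a suitable spectral translation. By the standard criterion, establishing surjectivity of $T_1 + T_2 + \mu I$ for some $\mu > 0$, combined with symmetry, yields self-adjointness. The roles of the two hypotheses are complementary: $(\ref{sohr_1})$ provides quantitative control on $T_2$ so that the bounded operator $T_2^{-1}$ (interpreted via spectral calculus, regularised if necessary) is available, while $(\ref{sohr_2})$ makes $T_2^{-1} T_1 + \xi I$ accretive, with the constraint $\xi < 1$ supplying the slack that will close all estimates.

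Concretely, I would first rewrite the equation $(T_1 + T_2 + \mu)u = f$ by formally applying $T_2^{-1}$ to obtain
\[
(I + T_2^{-1} T_1 + \mu T_2^{-1}) u = T_2^{-1} f.
\]
For $\mu$ sufficiently large that $\mu T_2^{-1}$ dominates the defect $\xi I$ from $(\ref{sohr_2})$, the left-hand side becomes strictly accretive and bounded below, and solvability for every $f \in \mathcal{H}$ would follow from the Lumer--Phillips theorem (or an m-accretive extension argument). Having produced $u \in \mathrm{Dom}(T_1)$, the identity $T_2 u = f - T_1 u - \mu u \in \mathcal{H}$ places $u \in \mathrm{Dom}(T_2)$ by closedness of $T_2$, so $u$ sits in the required common domain. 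Self-adjointness of $T_1 + T_2$ on $\mathrm{Dom}(T_1) \cap \mathrm{Dom}(T_2)$ then follows from the general fact that a symmetric operator with a real point in its resolvent set is self-adjoint.

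The main obstacle will be making the formal manipulation above rigorous, since a composition of unbounded operators such as $T_2^{-1} T_1$ need not be densely defined or closable on $\mathrm{Dom}(T_1)$ alone. The standard remedy is a regularisation scheme using Yosida approximants $T_i^{\varepsilon} := T_i(I + \varepsilon T_i)^{-1}$, which are bounded and self-adjoint: one solves the approximate equation $(T_1^{\varepsilon} + T_2^{\varepsilon} + \mu) u_{\varepsilon} = f$ by bounded inversion, extracts uniform estimates in $\varepsilon$ from the form-level version of $(\ref{sohr_2})$, and passes to the limit using closedness of $T_1$ and $T_2$. The subtle point is to verify that the limits $u_{\varepsilon} \to u$, $T_1^{\varepsilon} u_{\varepsilon} \to T_1 u$, and $T_2^{\varepsilon} u_{\varepsilon} \to T_2 u$ all hold along a subsequence, which is where the strict inequality $\xi < 1$ enters decisively to provide a coercive lower bound independent of $\varepsilon$.
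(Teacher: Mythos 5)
The paper does not prove this lemma; it is cited verbatim from Sohr's paper \cite{sohr}, so there is no in-paper argument to compare your proposal against. I can only assess your sketch on its own merits.

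First, a point worth flagging explicitly: as printed, condition \eqref{sohr_1} reads $\langle T_2u,u\rangle \leq \tau\|u\|^2$, which asserts \emph{boundedness} of $T_2$ and does \emph{not} give boundedness of $T_2^{-1}$ (the operator that appears in \eqref{sohr_2} and that you invoke). Looking at how the lemma is used in the proof of Theorem \ref{main_theorem_2} (``since $w\geq 1$ we have $\langle T_2u,u\rangle \geq \|u\|^2$, establishing \eqref{sohr_1} with $\tau=1$''), the intended inequality is clearly $\geq$, i.e., $T_2\geq\tau I$. Your hedge ``interpreted via spectral calculus, regularised if necessary'' suggests you felt the tension but didn't resolve it; you should state outright that \eqref{sohr_1} should read $\geq$ and that this makes $T_2^{-1}$ a bounded positive operator with $\|T_2^{-1}\|\leq\tau^{-1}$.

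As for the substance: your plan (show $T_1+T_2$ symmetric on the common domain, then establish full range of $T_1+T_2+\mu$ via an accretivity argument after formally conjugating by $T_2^{-1}$) is a legitimate route, and the Yosida regularization you propose can in fact be made to work. Two corrections and one caution. (i) The shift $\mu$ is not doing what you claim: from \eqref{sohr_2}, $\operatorname{Re}\langle T_2^{-1}T_1 u,u\rangle\geq -\xi\|u\|^2$, and since $T_2^{-1}\geq 0$ one already has $\operatorname{Re}\langle(I+T_2^{-1}T_1+\mu T_2^{-1})u,u\rangle\geq(1-\xi)\|u\|^2$ for \emph{all} $\mu\geq 0$; the strict inequality $\xi<1$, not a large $\mu$, is what supplies coercivity. (ii) ``Solvability would follow from Lumer--Phillips'' is not a side remark but the entire difficulty: Lumer--Phillips requires \emph{maximal} accretivity, i.e., surjectivity of $I+(\cdot)$, which is precisely what you are trying to prove. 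A merely accretive operator built from a composition $T_2^{-1}T_1$ of unbounded operators need not be maximal, so this step is circular as written; the regularization scheme is not an optional cleanup but the actual proof. (iii) Within that scheme, the nontrivial verification you correctly anticipate — that \eqref{sohr_2} persists for the Yosida approximants — does indeed go through: writing $(T_2^\varepsilon)^{-1}=T_2^{-1}+\varepsilon I$ and $v=(I+\varepsilon T_1)^{-1}u$ one finds $\operatorname{Re}\langle T_1^\varepsilon u,(T_2^\varepsilon)^{-1}u\rangle = \operatorname{Re}\langle T_1 v, T_2^{-1}v\rangle + \varepsilon\operatorname{Re}\langle T_1 v, T_2^{-1}T_1 v\rangle + \varepsilon\operatorname{Re}\langle T_1^\varepsilon u,u\rangle \geq -\xi\|v\|^2 \geq -\xi\|u\|^2$, every extra term being nonnegative. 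This computation should appear explicitly; without it your sketch asserts the crucial uniform estimate without demonstrating it. So: right framework, wrong emphasis on $\mu$, and the genuinely hard steps (maximal accretivity and stability of \eqref{sohr_2} under regularization) are signposted rather than carried out.
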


\begin{proof}[\textbf{Proof of theorem} \ref{main_theorem_2}]

Define $T_1u := (\Delta_A^4)u$ with domain $Dom(T_1) = \mathcal{D}_1$
and $T_2u := wu$ with $Dom(T_2) = \mathcal{D}_2$. Under the completeness 
assumption on $M$, we have that the operator 
$(\Delta_A^4)\vert_{C_c^{\infty}(M)}$ is essentially self-adjoint, 
see \cite{chernoff} or \cite{cordes}. Furthermore, we have that its closure
$\overline{(\Delta_A^4)\vert_{C_c^{\infty}(M)}} = T_1$. 
The operator $T_1$ is non-negative, and it is clear that 
the operator $T_2$ is self-adjoint. Moreover, since 
$w \geq 1$ we have 
$\langle T_2u, u\rangle \geq \vert\vert u\vert\vert^2$, for all
$u \in Dom(T_2)$. This establishes \eqref{sohr_1} with $\tau = 1$. 
By lemma \ref{sohr}, the self-adjointness of 
$(\Delta_A^4) + w$ on $\mathcal{D}_1\cap\mathcal{D}_2$ will follow if
we can establish \eqref{sohr_2}. As $T_1 = 
\overline{(\Delta_A^4)\vert_{C_c^{\infty}(M)}}$, it is enough to show
that there exists a constant $0 \leq \xi < 1$ such that
\begin{equation}
Re\langle \Delta_A^4u, hu\rangle + \xi\vert\vert u\vert\vert^2 \geq 0,
\text{ for all } u \in C_c^{\infty}(M). \label{sohr_3}
\end{equation}

Combining proposition \ref{mag_bilaplace_est_main}, 
with the fact that we can estimate the 
absolute value of the complex number, corresponding to each
of the last twenty five terms on the right hand side of equation
\eqref{mag_eq_1}, by 
$\sigma C_1(\sigma)\vert\vert h^{1/2}\Delta_A^2u\vert\vert^2 + 
C_2(\sigma)\vert\vert u\vert\vert^2$. 
We see that we can bound the
absolute value of said complex number by 
\begin{equation*}
\bigg{(}
\frac{\sigma C_1(\sigma)}{1-\sigma C_1(\sigma)}\bigg{)}
\vert\vert\Delta_A^2(h^{1/2}u)\vert\vert^2 + 
\bigg{(}
\frac{\sigma C_1(\sigma)C_2(\sigma)}{1-\sigma C_1(\sigma)} + C_2(\sigma)
\bigg{)}\vert\vert u\vert\vert^2.
\end{equation*}

Using this bound, and the fact that given a complex number $z \in \C$ with
$\vert z \vert \leq \lambda$ then $-\lambda \leq Re(z) \leq \lambda$.
We can then go back to equation \eqref{mag_eq_1} and obtain the estimate
\begin{equation*}
Re\langle \Delta_A^4u, hu\rangle \geq 
\bigg{(}
1 - \frac{\sigma C_1(\sigma)}{1-\sigma C_1(\sigma)}
\bigg{)}\vert\vert\Delta_A^2(h^{1/2}u)\vert\vert^2  -
\bigg{(}
\frac{\sigma C_1(\sigma)C_2(\sigma)}{1-\sigma C_1(\sigma)} + C_2(\sigma)
\bigg{)}\vert\vert u\vert\vert^2
\end{equation*}
where $C_1(\sigma)$ and $C_2(\sigma)$ are constants, depending 
on $\sigma$, such that
$\lim_{\sigma \rightarrow 0} C_1(\sigma) < \infty$ and 
$\lim_{\sigma \rightarrow 0}C_2(\sigma) = 0$.

Choosing $\sigma > 0$ small enough, we can make it so that
$1 - \frac{\sigma C_1(\sigma)}{1-\sigma C_1(\sigma)} > 0$, and so that
$0 \leq 
\frac{\sigma C_1(\sigma)C_2(\sigma)}{1-\sigma C_1(\sigma)} + C_2(\sigma)
< 1$.

This establishes \eqref{sohr_3}, and finishes the proof.

\end{proof}

\section{Application to the separation problem: Proof of corollary  \ref{separation_cor}}\label{App_separation}

Combining theorems \ref{main_theorem_1} and \ref{main_theorem_2}, 
we can give the proof of corollary \ref{separation_cor}.

\begin{proof}[\textbf{Proof of corollary \ref{separation_cor}}]
Let $T_{max}u := (\Delta_A)^4u + wu$ with domain $D_{max}$ as in 
\eqref{max_domain}. Replacing $V$ by $w$ in theorem
\ref{main_theorem_1}, and noting that since $w \geq 1$ we 
have that the conditions of theorem \ref{main_theorem_1}
are satisfied. This implies $T_{max}$ is self-adjoint. Appealing to theorem
\ref{main_theorem_2}, we have that 
$\Delta_A^4 + w$ is self-adjoint on $\mathcal{D}_1 \cap \mathcal{D}_2$.
Therefore, $D_{max} = \mathcal{D}_1 \cap \mathcal{D}_2$, which
implies $\Delta_A^4 + w$ is separated.
\end{proof}

\section{Concluding remarks: The case of higher order even perturbations}\label{conclusion}

In this section, we want to outline why the techniques of this paper, which
are modelled on the approach of Milatovic in \cite{milatovic}, cannot be made to
work for higher order perturbations of the form
$\Delta^{2n} + V$ for $n > 2$.

The setup will be as before. $(M, g)$ will be a fixed Riemannian manifold
admitting bounded geometry. $(E, h)$ will be a Hermitian vector bundle
over $M$, with Hermitian metric $h$. We will assume $\nabla$ is a 
metric connection on $E$. Furthermore, so as to make the discussion 
easier,
we will assume the triple
$(E, h, \nabla)$ admits bounded geometry.

Before we discuss the key issues with such a generalisation, we would like
to mention that the results of this paper do go through for perturbations
of the form $\Delta^6 + V$. It is when the power of the Laplacian exceeds
six that the techniques seem to break down.

A key ingredient in proving theorem \ref{main_theorem_1} is the 
estimate obtained in lemma \ref{main_lemma_1}. If we look back at the
proof of lemma \ref{main_lemma_1}, we see that it needed the estimate
obtained in proposition \ref{bilaplace_est_6}.
In fact, the estimates
carried out in sections \ref{sec_covariant_derivatives} and 
\ref{sec_laplacian_derivatives}, were done so primarily for the reason
of obtaining proposition \ref{bilaplace_est_6}.

If we are going
to prove a version of theorem \ref{main_theorem_1} for higher 
order perturbations of the form $\Delta^{2n} + V$, where $V$ is a 
potential satisfying the same assumptions as in theorem 
\ref{main_theorem_1}, we are going to need an analogous estimate
as the one stated in proposition \ref{bilaplace_est_6}. That is, 
given $u \in W^{2n,2}_{loc}(E)\cap L^2(E)$ and $\epsilon > 0$ sufficiently
small,
we want an estimate of
the form
\begin{equation*}
\vert\vert\chi_{\epsilon}^{nk}\Delta^n u\vert\vert^2 \leq
\bigg{(}\frac{1}{\big{(}1-\epsilon C_1(\epsilon)\big{)}}\bigg{)}
\vert\vert\Delta^n(\chi_{\epsilon}^{nk}u)\vert\vert^2 +
\frac{C_2(\epsilon, \epsilon_1)}{\big{(}1-\epsilon C_1(\epsilon)\big{)}}
\vert\vert u\vert\vert^2
\end{equation*}
where 
$C_1(\epsilon)$ and $C_2(\epsilon)$ are constants depending on 
$\epsilon$ such that
$\lim_{\epsilon\rightarrow 0}C_1(\epsilon) < \infty$ and
$\lim_{\epsilon\rightarrow 0}C_2(\epsilon) < \infty$.

In general, the techniques of this paper cannot be used to obtain such
an estimate, when $n > 3$. To make this discussion concrete and 
illustrative let us see why this is the case with the operator
$\Delta^8 + V$.

We want to obtain an estimate of the form
\begin{equation*}
\vert\vert\chi_{\epsilon}^{4k}\Delta^4 u\vert\vert^2 \leq
\bigg{(}\frac{1}{\big{(}1-\epsilon C_1(\epsilon)\big{)}}\bigg{)}
\vert\vert\Delta^4(\chi_{\epsilon}^{4k}u)\vert\vert^2 +
\frac{C_2(\epsilon, \epsilon_1)}{\big{(}1-\epsilon C_1(\epsilon)\big{)}}
\vert\vert u\vert\vert^2
\end{equation*}

Looking back at the proof of proposition \ref{bilaplace_est_6}, we see 
that we must look at the term
$\langle \Delta^4(\chi_{\epsilon}^{4k}u), \Delta^4(\chi_{\epsilon}^{4k}u)
\rangle$.

We can write
\begin{align*}
\Delta^4(\chi_{\epsilon}^{4k}u) &= \Delta^2\Delta^2(\chi_{\epsilon}^{4k}u)
\\
&=\Delta^2\bigg{(}
\chi_{\epsilon}^{4k}\Delta^2u - 
2\nabla_{(d\chi_{\epsilon}^{4k})^{\#}}\Delta u + 
2\Delta_M(\chi_{\epsilon}^{4k})\Delta u - 2\Delta
\nabla_{(d\chi_{\epsilon}^{4k})^{\#}}u - 
2\nabla_{(d\Delta_M\chi_{\epsilon}^{4k})^{\#}}u + 
(\Delta_M^2\chi_{\epsilon}^{4k})u
\bigg{)} \\
&=
\Delta^2(\chi_{\epsilon}^{4k}\Delta^2u) - 
2\Delta^2\nabla_{(d\chi_{\epsilon}^{4k})^{\#}}\Delta u + 
2\Delta^2(\Delta_M(\chi_{\epsilon}^{4k})\Delta u) - 2
\Delta^3\nabla_{(d\chi_{\epsilon}^{4k})^{\#}}u - 2\Delta^2
\nabla_{(d\Delta_M\chi_{\epsilon}^{4k})^{\#}}u \\
&\hspace{0.5cm}+ 
\Delta^2((\Delta^2_M\chi_{\epsilon}^{4k})u)
\end{align*}

We then substitute this into 
$\langle \Delta^4(\chi_{\epsilon}^{4k}u), \Delta^4(\chi_{\epsilon}^{4k}u)
\rangle$. This gives a number of inner products, and we want to bound
the absolute value of each such inner product above by
\begin{equation*}
\bigg{(}\frac{1}{\big{(}1-\epsilon C_1(\epsilon)\big{)}}\bigg{)}
\vert\vert\Delta^4(\chi_{\epsilon}^{4k}u)\vert\vert^2 +
\frac{C_2(\epsilon, \epsilon_1)}{\big{(}1-\epsilon C_1(\epsilon)\big{)}}
\vert\vert u\vert\vert^2.
\end{equation*}

The problem is that some of the inner product terms that come out
contain the term $\Delta^3\nabla_{(d\chi_{\epsilon}^{4k})^{\#}}u$. 
For example, if we do the above substitution we get the term
$\vert\vert \Delta^3\nabla_{(d\chi_{\epsilon}^{4k})^{\#}}u\vert\vert^2$.
If we follow the approach in the proof of proposition 
\ref{bilaplace_est_6}. We see that the way to estimate such a term is
to use the commutation formula, see section \ref{commutation_formulae}.

For this situation, we will then need to use the following formula
\begin{equation*}
\Delta^3\nabla_{(d\chi_{\epsilon}^{4k})^{\#}} u = 
\nabla_{(d\chi_{\epsilon}^{4k})^{\#}} \Delta^3 u - 
\sum_{j=0}^5 \bigg{(}
(\nabla_M^{(5-j)}Rm + \nabla^{(5-j)} F)*\nabla^ju\bigg{)}
_{(d\chi_{\epsilon}^{4k})^{\#}}.
\end{equation*}

In the above formula, we see that there is a term of the form
$\bigg{(}(Rm + F) * \nabla^5u\bigg{)}_{(d\chi_{\epsilon}^{4k})^{\#}}$.
Therefore, we will need to estimate the term
$\bigg{\vert}\bigg{\vert} 
\bigg{(}(Rm + F) * \nabla^5u\bigg{)}_{(d\chi_{\epsilon}^{4k})^{\#}}
\bigg{\vert}\bigg{\vert}^2$ above by 
\begin{equation*}
\bigg{(}\frac{1}{\big{(}1-\epsilon C_1(\epsilon)\big{)}}\bigg{)}
\vert\vert\Delta^4(\chi_{\epsilon}^{4k}u)\vert\vert^2 +
\frac{C_2(\epsilon, \epsilon_1)}{\big{(}1-\epsilon C_1(\epsilon)\big{)}}
\vert\vert u\vert\vert^2.
\end{equation*}

As 
$\bigg{(}(Rm + F) * \nabla^5u\bigg{)}_{(d\chi_{\epsilon}^{4k})^{\#}}$
contains a fifth order covariant derivative of $u$, we see that this
is not possible. The best we could hope for is to try and
obtain an estimate that bounds
$\bigg{\vert}\bigg{\vert} 
\bigg{(}(Rm + F) * \nabla^5u\bigg{)}_{(d\chi_{\epsilon}^{4k})^{\#}}
\bigg{\vert}\bigg{\vert}^2$
above by a term involving 
$\vert\vert\Delta^5(\chi_{\epsilon}^{4k}u)\vert\vert^2$. 
Unfortunately, such an estimate is not enough to obtain an analogue
of theorem \ref{main_theorem_1} for the operator 
$\Delta^8 + V$.

In general, for the operator $\Delta^{2n} + V$. We find that
applying the commutation formula, see section 
\ref{commutation_formulae}, to certain terms will give 
covariant derivatives of order $n < j < 2n$. Unfortunately, such terms
can only be bounded above by terms containing $\Delta^j$, and
as $j > n$ this means the techniques we used to prove theorem 
\ref{main_theorem_1} will not go through.

\end{document}